\numberwithin{equation}{theorem}
\newcommand{\kay}{\mathcal{k}}
\newcommand{\el}{\mathcal{l}}
\renewcommand{\:}{\colon}
\DeclareMathOperator{\vol}{vol}
\DeclareMathOperator{\Bl}{Bl}
\DeclareMathOperator{\End}{End}
\DeclareMathOperator{\Cox}{Cox}
\DeclareMathOperator{\DIV}{Div}
\DeclareMathOperator{\Cl}{Cl}
\DeclareMathOperator{\eff}{\overline{Eff}} 
\DeclareMathOperator{\mov}{\overline{Mov}}
\DeclareMathOperator{\movd}{\overline{Mov}^1}
\DeclareMathOperator{\nef}{Nef}
\DeclareMathOperator{\ample}{Amp}
\DeclareMathOperator{\bigc}{Big}
\DeclareMathOperator{\FSupp}{FS}
\DeclareMathOperator{\BFS}{BFS}
\DeclareMathOperator{\AFS}{AFS}
\DeclareMathOperator{\NFS}{NFS}
\DeclareMathOperator{\Frob}{Frob}
\DeclareMathOperator{\FE}{\overline{FE}}
\DeclareMathOperator{\moriCone}{\overline{NE}}
\let\oldFootnote\footnote
\newcommand\nextToken\relax
\renewcommand\footnote[1]{%
    \oldFootnote{#1}\futurelet\nextToken\isFootnote}
\newcommand\isFootnote{%
    \ifx\footnote\nextToken\textsuperscript{,}\fi}
\theoremstyle{theorem}
\begin{document}
\title{The geometry of Frobenius on toric varieties} 
\author[J.~Carvajal-Rojas]{Javier Carvajal-Rojas}
\address{Centro de Investigaci\'on en Matem\'aticas, A.C., Callej\'on Jalisco s/n, 36024 Col. Valenciana, Guanajuato, Gto, M\'exico}
\email{\href{mailto:javier.carvajal@cimat.mx}{javier.carvajal@cimat.mx}}
\author[E. A.~\"Ozavcı]{Emre Alp \"Ozavcı}
\address{\'Ecole Polytechnique F\'ed\'erale de Lausanne\\ SB MATH CAG\\MA C3 635 (B\^atiment MA)\\ Station 8 \\CH-1015 Lausanne\\Switzerland}
\email{\href{mailto:emre.ozavci@epfl.ch}{emre.ozavci@epfl.ch}}
\keywords{Cartier operators, Frobenius traces, positivity, toric varieties.}

\thanks{Carvajal-Rojas was partially supported by the grants ERC-STG \#804334, FWO \#G079218N, CONAHCYT \#CBF2023-2024-224 and \#CF-2023-G-33. Özavcı was partially supported by ERC-STG \#804334.}

\subjclass[]{14G17, 14M25, 14E30, 14J45, 14M17.}

\begin{abstract}
    We give a geometric description of the positivity of the Frobenius-trace kernel on a $\mathbb{Q}$-factorial projective toric variety. To do so, we define its Frobenius support as well as the notions of $F$-effectiveness for divisors and $1$-cycles. As it turns out, the interaction of the corresponding cone of $F$-effective curves with the Mori cone of curves reflects the type of extremal Mori contractions that the variety can undergo. As a corollary, we obtain that the Frobenius-trace kernel is ample if and only if the Picard rank is $1$.
\end{abstract}

\maketitle


\section{Introduction} \label{sec.Intro}

Ever since differential calculus was discovered, geometric folk wisdom states that the geometry of a variety lives in its tangent space (or (co-)tangent sheaf in more algebro-geometric terms). That is, the basic geometry of a variety is supposed to be read from its tangent space. For instance, we can tell whether or not a variety is smooth via the jacobian criterion. In more global terms, we have Mori's striking result \cite{MoriCharacterizationProjSpace} (originally known as Hartshorne's conjecture \cite{HartshorneAmpleSubvarieties}), asserting that a projective variety is a projective space if and only if its tangent sheaf is ample. This work served as the basis for the modern Minimal Model Program (see \cite{KollarMori}) aimed at classifying varieties---up to proper birationality---via the positivity of the canonical sheaf, i.e., the determinant of the cotangent sheaf.

However, in characteristic $p>0$, the differential approach to algebraic geometry is not as smooth as in characteristic zero. The first bump we face is that we may take a local coordinate $x$ of our variety and then get a new one $y
\coloneqq x^p$ such that $dy = px^{p-1} dx = 0$ without $y$ being constant. This leads to all sorts of issues that are often collectively referred to as positive characteristic pathologies. 

A typical and familiar example of these pathologies originates in the study of elliptic curves and, specifically, their endomorphism ring. It is known that for an elliptic curve $E$ over an algebraically closed field of characteristic zero $\End_{\bZ}(E)$ is an abelian group of rank $1$ or $2$, in the former case $E$ is said to have \emph{complex multiplication}. However, if the base field has positive characteristic, a third option may occur in which $\End_{\bZ}(E)$ has rank $4$ and $E$ is referred to as
\emph{super-singular} if so. If $\End_{\bZ}(E)$ has rank $1$ or $2$, $E$ is said to be \emph{ordinary}. In general, ordinary elliptic curves behave as those in characteristic zero and super-singular ones exhibit a wild, new behavior. 

To better understand these pathologies, it has been productive to observe that the mapping $x \mapsto x^p$ on local coordinates is actually a ring homomorphism as $p$ divides $\binom{p}{i}$ for all $i=1,
\ldots, p-1$. This means that there is an induced endomorphism $F \: X \to X$ on the variety $X$ known as the \emph{Frobenius map} of $X$. For example, if $X$ were an elliptic curve, it would be ordinary if and only if it is \emph{$F$-split}---meaning that the map $F^{\#}\:\sO_X \to F_* \sO_X$ splits in the category of $\sO_X$-modules. Thus, whether or not an elliptic curve is ordinary is encoded in its Frobenius map.

The study of singularities and local algebra is another field in which the Frobenius map has been instrumental in dealing with anomalies of positive characteristic. Indeed, a celebrated theorem of Kunz \cite{KunzCharacterizationsOfRegularLocalRings} establishes that a variety $X$ is smooth if and only if its Frobenius map $F$ is flat. Thus, one may replace the jacobian criterion (and resolution of singularities) for Kunz's theorem in our approach to understanding singularities. This approach goes by the name of \emph{$F$-singularities} and it encompasses Hockster--Huneke's \emph{tight closure theory}. In this theory, one is supposed to read the properties of a singularity from its Frobenius map rather than its cotangent module. The success of this viewpoint has been enormous and it is hard to overstate how impactful it has all been and continues to be. For example, today it serves as the blueprint for the new foundations of singularities in mixed characteristics.

In view of this, it is natural to wonder how much of the global geometry of a projective variety is encoded in its Frobenius map. A first step towards this is searching for a projective analog of Kunz's theorem, or, say, a Frobenius-theoretic analog of Mori's theorem. This is the content of the work by Patakfalvi and the first named author \cite{CarvajalRojasPatakfalviVarietieswithAmpleFrobenius-traceKernel}. In there, they postulated that, in analogy to the tangent sheaf, the positivity of the \emph{Frobenius-trace kernel} should reflect the geometry of a variety. Their main result is that, in dimension $\leq 3$, the Frobenius-trace kernel is ample only for Fano varieties of Picard rank $1$.

In this work, our aim is to provide a geometric characterization of the positivity properties of the Frobenius-trace kernel in the simplest case of \emph{toric varieties}. That is, for the varieties that can be written down using binomial equations only (it turns out that every variety can be expressed using trinomial equations). We will reveal that indeed much of the birational geometry of a toric variety is governed by its Frobenius map.

We now explain some of our main results in the smooth case. Although we work out the general $\bQ$-factorial case, due to its more technical nature, we leave it to the more expert reader to study in the main text. See \autoref{thm.MainTheorem}.

Let $X$ be a $d$-dimensional smooth projective variety over an algebraically closed field $\kay$ of characteristic $p>0$. We may then consider its $e$-th Frobenius map $F^e\: X \to X$. Recall that $F^e$ is the identity on $X$ as a topological map but raises local sections to the $q \coloneqq p^e$-th power.\footnote{Since $\kay$ is perfect, we may identify $F^e$ with the relative Frobenius morphism of $X/\kay$.}

In this work, we investigate the negativity of the cokernel of $F^{e,\#}$
\[
0 \to \sO_X \xrightarrow{F^{e,\#}\: s \mapsto s^q}F_*^e \sO_X \to \sB_{X,e}^1 \to 0
\]
Equivalently, we examine the positivity of its dual, the \emph{kernel of the Frobenius trace}
\[
0 \to \sE_{X,e} \to F_*^e \omega_X^{1-q} \xrightarrow{\tau_X^e=(F^{e,\#})^{\vee}} \sO_X \to 0
\]
where $\omega_X \coloneqq \det \Omega^1_{X}$ is the canonical sheaf of $X$. By Kunz's theorem \cite{KunzCharacterizationsOfRegularLocalRings}, $\sB_{X,e}^1$ and so $\sE_{X,e}$ are locally free sheaves of rank $q^{d}-1$ as $X$ is smooth. Twisting by $\omega_X$ and using the projection formula, we obtain the following (perhaps better known) exact sequence
\[
0 \to \sE_{X,e} \otimes \omega_X \to F_*^e \omega_X \xrightarrow{\kappa_X^e = \tau_X^e \otimes \omega_X} \omega_X \to 0
\]
where $\kappa_X^e$ is the ($e$-th power of the) \emph{Cartier operator} on $X$ \cite{CartierUneNouvelleOperation}. In other words, $\kappa_X^e$ is the $\omega_X$-dual (also called the Serre dual) of $F^{e,\#}$. For details, see \cite[\S1.3]{BrionKumarFrobeniusSplitting}.

Assume now that $X$ is toric. According to Thomsen and Achinger \cite{ThomsenFrobeniusDirectImagesOfLineBundlesToricVarieties,AchingerCharacterizationOfToricVarieties}, $X$ is $F$-split (i.e., $F^{e,\#}$ slits as a map of $\sO_X$-modules) and $\sE_{X,e}$ splits as a direct sum
\[
\sE_{X,e} \simeq \bigoplus_{0 \neq [E] \in \Cl(X)} \sO_X(E)^{\oplus m(E;q)}
\]
where $m(E;q)$ is the number of \emph{torus-invariant divisors} on $X$ with coefficients in $\{0,\ldots,q-1\}$ being linearly equivalent to $q E$. In particular, $E$ is pseudo-effective if $m(E;q)\neq 0$. Hence $\sE_{X,e}$ is pseudo-effective. The leading question of this article is the following.
\begin{question} \label{que.MainQuestion}
    For which projective toric varieties $X$ is the Frobenius-trace kernel $\sE_{X,e}$ ample, big, and nef; respectively, for all $e>0$? 
\end{question}

\begin{remark}
The sheaves $\sE_{X,e}$ naturally form a surjective projective system
\[ \sE_{X,1} \twoheadleftarrow  \sE_{X,2} \twoheadleftarrow  \sE_{X,3} \twoheadleftarrow \cdots .\]
For details, see \cite[Remark 5.2]{CarvajalRojasPatakfalviVarietieswithAmpleFrobenius-traceKernel}. Therefore, in discussing the aforementioned positivity properties on $\sE_{X,e}$, it is the same to do so for all $e>0$ and all $e \gg 0$. Thus, we may and will interchange these two quantifiers freely in what follows.
\end{remark}



Our first main result starts by answering \autoref{que.MainQuestion} as follows.

\begin{theoremA*}[{\autoref{thm.MainTheoremBignessAmplenessProjSpace}}] 
With notation as above,  
the following statements are equivalent:
\begin{enumerate}
    \item $X \simeq \bP^d$.
    \item $\sE_{X,e}$ is ample for all $e>0$.
    \item $\sE_{X,e}$ is big for all $e>0$.
\end{enumerate}
\end{theoremA*}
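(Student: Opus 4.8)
The plan is to prove the cycle of implications $(1)\Rightarrow(2)\Rightarrow(3)\Rightarrow(1)$, with essentially all the content in the last one. For $(1)\Rightarrow(2)$ I would compute $\sE_{\bP^d,e}$ directly: on $\bP^d$ one has $\Cl(\bP^d)=\bZ\cdot H$ with $[D_i]=H$ for each of the $d+1$ torus-invariant prime divisors, so a torus-invariant divisor $\sum a_iD_i$ with $0\le a_i\le q-1$ has class $\bigl(\sum_i a_i\bigr)H$ and is linearly equivalent to $qE$ exactly when $q\mid\sum_i a_i$; since $0\le\sum_i a_i\le(d+1)(q-1)<(d+1)q$, the only options with $E\ne 0$ are $E=kH$ for $1\le k\le d$. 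Hence $\sE_{\bP^d,e}\simeq\bigoplus_{k=1}^{d}\sO_{\bP^d}(k)^{\oplus m(kH;q)}$, and since each $m(kH;q)$ counts the lattice points of a nonempty polytope it is positive, so $\sE_{\bP^d,e}$ is a direct sum of ample line bundles and therefore ample. The implication $(2)\Rightarrow(3)$ is formal, as an ample locally free sheaf is big.

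For $(3)\Rightarrow(1)$ I would argue by contraposition. Since a smooth complete toric variety of Picard rank $1$ is $\bP^d$, it is enough to show that $\rho(X)\ge 2$ implies $\sE_{X,e}$ is not big for some $e$ (equivalently, by the projective-system remark, for all $e$). By the Thomsen--Achinger splitting $\sE_{X,e}=\bigoplus_{0\ne[E]}\sO_X(E)^{\oplus m(E;q)}$, so if $\sE_{X,e}$ is big for all $e$ then every class in the Frobenius support $\FSupp(X)$ (the classes that are $F$-effective for some $e$) must be big. Now $\eff(X)$ is generated by the classes $[D_1],\dots,[D_n]$ of the torus-invariant prime divisors and is a pointed, full-dimensional cone; the only such cone whose nonzero part is contained in its interior is a ray, so if $\FSupp(X)$ consists only of big classes --- once one knows it is large enough to reach $\partial\eff(X)$ --- then $\rho(X)=1$, a contradiction.

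The heart of the matter is therefore to produce, when $\rho(X)\ge 2$, a non-big class in $\FSupp(X)$. Here I would use a toric fibration: running the toric MMP produces a fiber-type extremal contraction which, combined with the intervening birational contractions, yields a surjective toric morphism $\phi\colon X\to Y$ onto a complete toric variety with $0<\dim Y<\dim X$. The pullback $N\coloneqq\phi^*A$ of an ample divisor $A$ on $Y$ is nef but, having positive-dimensional fibers, not big. It is nevertheless $F$-effective: the (generically reducible) torus-invariant preimages of torus-invariant divisors on $Y$ provide several linear equivalences $N\sim\sum_{i\in S}D_i$ with small coefficients; picking two of these with disjoint index sets $S_1,S_2$ and weights $a$ and $q-a$ with $1\le a\le q-1$ realizes $qN$ as a torus-invariant divisor with all coefficients in $\{0,\dots,q-1\}$, so $m(N;q)\ne0$. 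Thus $\sO_X(N)$ is a non-big direct summand of $\sE_{X,e}$, and $\sE_{X,e}$ is not big.

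I expect this last step to be the main obstacle: pinning down $\FSupp(X)$ for an arbitrary $X$ with $\rho(X)\ge 2$ --- both that $\sE_{X,e}$ being big (for all $e$) forces $\FSupp(X)$ into the big cone, and that $\rho(X)\ge 2$ always produces a non-big $F$-effective class --- is exactly the interplay between the cone of $F$-effective curves and the Mori cone $\moriCone(X)$ that the general structure theorem \autoref{thm.MainTheorem} encodes. In the write-up I would deduce Theorem A from that theorem rather than redo the toric bookkeeping by hand; in particular the needed fibration, and the identification of the non-big extremal direction met by the Frobenius support, should come from the classification of the extremal contractions $X$ can undergo.
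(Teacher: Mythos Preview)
Your treatment of $(1)\Rightarrow(2)\Rightarrow(3)$ is fine. The gap is in $(3)\Rightarrow(1)$, and it is not repaired by the fallback you propose.

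First, the MMP argument: running the toric MMP on $X$ does not in general produce a \emph{morphism} $\phi\colon X\to Y$. In dimension $\geq 4$ a smooth toric variety can have small extremal contractions, and the corresponding flips are only rational maps; after a flip the model is a new $\bQ$-factorial variety, and there is no morphism from $X$ to it. Even when only divisorial contractions occur, the composite $X\to X_n\to Y$ is not a Mori fibration of $X$, and your sketch that $\phi^*A$ lies in $\FSupp(X)$ via ``two disjoint index sets'' breaks down: once you pass through blow-ups, pullbacks of torus-invariant primes on $Y$ acquire exceptional components in common, so the coefficients-$<q$ bookkeeping no longer goes through as stated. The paper does prove $\phi^*\FSupp(S)\subset\FSupp(X)$ for a Mori fibration $\phi\colon X\to S$ (\autoref{prop.FEffectivenessMoriFibrations}), but only when $\phi$ is an extremal fibration of $X$ itself.

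Second, the fallback to \autoref{thm.MainTheorem} is circular and insufficient. Part~(c) of that theorem characterizes when $\FSupp(X)$ is \emph{ample}, not big, so it does not give $(3)\Rightarrow(1)$; and its proof in the paper rests on \autoref{scholium.BignessAndFibrations}, which is itself extracted from the proof of Theorem~A.

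The paper's actual argument avoids the MMP entirely. By Batyrev, every smooth projective toric variety admits a \emph{centrally symmetric} primitive relation $R\colon u_1+\cdots+u_k=0$ (not assumed extremal), and $X\cong\bP^d$ is characterized precisely by $k=d+1$ (\autoref{prop.ToricCharacterizationOfProjectiveSpace}). If $X\not\cong\bP^d$ then $k\leq d$, so at least $r-k\geq r-d=\rho$ of the classes $\pi_i$ satisfy $\pi_i\cdot R=0$ and hence lie on the proper face $F_R\subset\partial\eff(X)$. Since $\dim F_R<\rho$, an elementary convex-geometry lemma (\autoref{lemma.FindingPlatticePoints}) produces a nonzero lattice point in $\langle\pi_{k+1},\ldots,\pi_{k+\rho}\rangle_{[0,1)}\subset\sQ_X\cap\partial\eff(X)$, which by \autoref{cor.SupportOfE_X} lies in $\FSupp(X)$ and is not big. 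The input you are missing is the existence of centrally symmetric relations and the characterization of $\bP^d$ through them; with that in hand no fibration, contraction, or appeal to \autoref{thm.MainTheorem} is needed.
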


One readily sees by direct computation that $\sE_{X,e}$ is ample for all $e>0$ if $X=\bP^d$; see \cite[Corollary 3.5]{CarvajalRojasPatakfalviVarietieswithAmpleFrobenius-traceKernel}. In addition, ampleness implies bigness, in general. Thus, the actual content of Theorem A is the implication (c)$\Longrightarrow$(a). We prove this in \autoref{sec.BIGNESS}; see \autoref{thm.MainTheoremBignessAmplenessProjSpace}

Our next task is then to determine for which $X$ the sheaves $\sE_{X,e}$ are nef for all $e>0$. Since for toric varieties global generation and nefness are the same notion, one readily concludes from \cite[Proposition 5.7]{CarvajalRojasPatakfalviVarietieswithAmpleFrobenius-traceKernel} that toric varieties for which $\sE_{X,e}$ is nef for some $e \in \bN$ are necessarily Fano. The question then becomes the following.
\begin{question}
Which Fano toric varieties have nef Frobenius-trace kernels?
\end{question}
 To answer this question, recall that the \emph{length} of an extremal ray $R$ of the Mori cone of $X$ of a smooth Fano variety $X$ is
\[
\ell(R) \coloneqq \min\{-K_X \cdot C \mid C \text{ is a rational curve such that } [C] \in R \}.
\]
Since $X$ is Fano, $\ell(R)>0$ for all extremal rays $R$ in its Mori cone. Loosely speaking, the longer the extremal rays of $X$ are, the more positively curved $X$ is and so it is closer to being a projective space. 

Given an extremal ray $R$, let $d_R$ be the maximal dimension of an exceptional fiber of the corresponding extremal contraction $\phi_R \:X \to X'$.\footnote{Where the dimension of a closed subspace is defined as the maximal dimension of an irreducible component.} Over $\bC$, the following bounds hold
\[
\ell(R) \leq \begin{cases}
    d_R+1  &\text{ if $\phi_R$  is a Mori fibration (\ie not birational)}, \\
    d_R & \text{ if $\phi_R$ is birational.}
\end{cases} 
\]
This is a consequence of the Ionescu--Wiśniewski inequalities \cite[Theorem 1.1]{WisniewskiInequality}. Moreover, $\ell(R) \geq d_R+1$ (if and) only if $\phi_R$ is a projective bundle \cite[Theorem 1.3]{HoringNovelliMoriContractionsOfMaximalLength}. Likewise, when $\phi_R$ is birational, then $\ell(R) \geq d_R$ is equivalent to $\phi_R$ being a smooth blowup, i.e., the blowup of a smooth variety along a smooth subvariety of codimension $\ell(R)+1$. See \cite[Theorem 5.1]{AndreattaOcchettaSpecialRaysInTheMoriConeOfaProjVar}. As mentioned above, these results were shown in characteristic zero. The authors are unaware of any progress towards them in positive characteristics. However, it can be seen that they hold for toric varieties in arbitrary characteristics; see \autoref{rem.BoundsForLengths}.

With the above in mind, we shall say that an extremal ray $R$ \emph{has maximal length} if either $\phi_R$ is a fibration and $\ell(R)\geq d_R + 1$ or $\phi_R$ is birational and $\ell(R) \geq d_R$. At least in characteristic zero or for toric varieties, this is to say that $\phi_R$ is either a projective bundle or a smooth blowup. We shall say that a Fano variety has \emph{maximal lengths} if all its extremal rays have maximal length. We will refer to such varieties as \emph{extremal Fano varieties}. In particular, a Fano variety has maximal lengths if all its extremal contractions are either projective bundles or smooth blowups, and the converse holds in characteristic zero or for toric varieties.

\begin{theoremB*}[{\autoref{thmNefnessOfE}, \autoref{cor.NefnessChainSmoothBlowyups}}]
With notation as above, $\sE_{X,e}$ is nef for all $e>0$ if and only if it is an extremal Fano variety. Moreover, in this case, there is a finite chain of smooth blowups between extremal Fano varieties
\[
X \to X_1 \to X_2 \to X_3 \to \cdots \to X_n
\]
ending in a homogeneous space $X_n$.
\end{theoremB*}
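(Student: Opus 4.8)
The statement has two halves: first, the characterization that $\sE_{X,e}$ is nef for all $e>0$ iff $X$ is an extremal Fano variety; second, the structural consequence that such an $X$ sits atop a finite chain of smooth blowups ending in a homogeneous space. I would attack these separately, using the Thomsen--Achinger splitting $\sE_{X,e} \simeq \bigoplus_{0\neq [E]} \sO_X(E)^{\oplus m(E;q)}$ as the central tool throughout.

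For the characterization, since nefness of a direct sum of line bundles is the nefness of each summand, and for toric varieties nef $=$ globally generated, the task reduces to controlling which divisor classes $E$ with $m(E;q)\neq 0$ can appear. By the earlier observation (via \cite[Proposition 5.7]{CarvajalRojasPatakfalviVarietieswithAmpleFrobenius-traceKernel}), nefness for some $e$ already forces $X$ Fano, so assume that. The plan is to test nefness against the extremal rays $R$ of $\moriCone(X)$: $\sE_{X,e}$ is nef iff $E\cdot C \geq 0$ for every summand $E$ and every curve class $C$ generating an extremal ray. The classes $E$ that occur as $m(E;q)\neq 0$ are precisely (up to linear equivalence) the classes $\tfrac1q D$ for torus-invariant $D=\sum a_i D_i$ with $0\leq a_i\leq q-1$ linearly equivalent to $qE$ — equivalently, the ``fractional parts'' in the toric Frobenius pushforward decomposition. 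Concretely these are exactly the classes of the form $\lceil -\tfrac{1}{q}\sum a_i D_i\rceil$-type roundings; the key point is that, letting $q\to\infty$, the closure of the set of such $E$ (inside $\Cl(X)_\bR$) is the whole effective cone. So $\sE_{X,e}$ nef for all $e$ forces $E\cdot C\geq 0$ for all effective $E$ and all extremal $C$, i.e. every extremal ray $R$ must satisfy $\eff(X)\cdot R\geq 0$. Now I would translate ``$\eff(X)\cdot R \geq 0$'' into the geometry of the contraction $\phi_R$: for a fibration this should be automatic up to the fibration direction, while for a birational $\phi_R$ it pins down the exceptional divisor $\Exc(\phi_R)$ and forces $\Exc(\phi_R)\cdot C = -\,(\text{something maximal})$, which via the Ionescu--Wiśniewski analysis recalled in the introduction (valid for toric varieties, \autoref{rem.BoundsForLengths}) is equivalent to $\ell(R)\geq d_R$, i.e. $\phi_R$ is a smooth blowup — and dually, in the fibration case, to $\phi_R$ being a projective bundle. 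This gives ``nef $\Longrightarrow$ extremal Fano''. The converse direction, ``extremal Fano $\Longrightarrow$ nef'', I would do by the explicit toric geometry: when every extremal contraction is a projective bundle or smooth blowup, one can bound the coefficients of the roots $E$ appearing in $\sE_{X,e}$ divisor-by-divisor and check $E\cdot C\geq 0$ directly on each extremal ray, using that for projective bundles and smooth blowups the relevant intersection numbers are $0$ or $1$.

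For the structural statement, I would run a Mori-program-style induction on the Picard rank $\rho(X)$. If $\rho(X)=1$ then $X\simeq \bP^d$ (by Theorem A, or directly: a $\bQ$-factorial Fano toric variety of Picard rank $1$ with the extremality hypothesis is projective space), which is homogeneous, giving the base case. If $\rho(X)>1$, pick any extremal ray $R$; by the characterization just proved, $\phi_R\: X\to X'$ is either a projective bundle or a smooth blowup. In the projective-bundle case $X\to X'$ I'd argue $X$ is already ``homogeneous-like'' relative to $X'$ and reduce to $X'$, which is again toric, Fano, of smaller Picard rank, and — this needs checking — still extremal Fano, so induction applies and one prepends $X\to X'$ to the chain. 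In the smooth-blowup case $\phi_R\: X\to X_1$ is literally the first arrow of the desired chain; here the crucial lemma is that $X_1$ is again an extremal Fano toric variety (toric and $\bQ$-factorial is clear; Fano-ness of the blowdown and preservation of maximal lengths of the remaining rays is the content), whence induction on $\rho(X_1)=\rho(X)-1$ finishes. Running this to the end produces a chain $X\to X_1\to\cdots\to X_n$ with $\rho(X_n)$ minimal among the varieties reached; $X_n$ has no birational extremal contraction of smooth-blowup type left, so every extremal contraction of $X_n$ is a projective bundle (or $X_n\simeq\bP^d$), and one checks this forces $X_n$ to be a homogeneous space (an iterated projective bundle of a very special kind — a generalized flag variety in the toric world, i.e. a product of projective spaces, possibly, or more precisely a rational homogeneous toric variety).

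The main obstacle I expect is the stability lemma in the structural induction: showing that when $\phi_R$ is a smooth blowup or projective bundle, the target $X'$ (or $X_1$) is \emph{still} an extremal Fano variety — that is, that contracting one maximal-length ray does not spoil the maximal-length condition on the other rays, and does not destroy Fano-ness. Fano-ness of a smooth blowdown is not automatic in general, so one must use the toric structure and the specific geometry here; and tracking how the cone of curves and the lengths of its extremal rays transform under the contraction is the delicate combinatorial heart of the argument. A secondary subtlety is the limiting claim that the classes $E$ with $m(E;q)\neq 0$, as $q\to\infty$, become dense enough in $\eff(X)$ to detect non-nefness on every extremal ray — this should follow from an elementary approximation argument on the toric divisor coefficients, but it must be stated carefully since $\eff(X)$ may be larger than the cone generated by the $D_i$.
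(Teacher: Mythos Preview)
Your plan for the structural half (the chain of smooth blowups) is essentially the paper's: induct on Picard rank, and the crucial lemma is that if $\phi_R\colon X\to S$ is a Mori contraction from an extremal Fano toric variety then $S$ is again extremal Fano. The paper proves this stability not by tracking lengths directly but by showing that nefness of $\sE_{X,e}$ descends to $\sE_{S,e}$ (separately for smooth blowups and for fibrations admitting a section), and then invoking the characterization on $S$. This sidesteps your worry about Fano-ness of the target.

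However, the forward direction of the characterization has a genuine gap. You claim that as $q\to\infty$ the classes $E$ with $m(E;q)\neq 0$ become dense in $\eff(X)$, and therefore nefness of $\sE_{X,e}$ for all $e$ forces $\eff(X)\cdot R\geq 0$ for every extremal ray $R$. This is false on both counts. First, the Frobenius support is a \emph{finite} set: it equals $\sQ_X\cap N^1(X)\setminus 0$ where $\sQ_X=\langle\pi_1,\ldots,\pi_r\rangle_{[0,1)}$ is a bounded half-open polytope, independent of $q$ (this is \autoref{cor.SupportOfE_X}). No limiting process enlarges it. Second, your conclusion $\eff(X)\cdot R\geq 0$ for all extremal $R$ would say $\eff(X)=\nef(X)$, which characterizes homogeneous toric varieties, not extremal Fano ones; since $\Bl_{x_0}\bP^2$ is extremal Fano but not homogeneous, the argument would prove something false. (Your final parenthetical worry that $\eff(X)$ might exceed $\langle\pi_1,\ldots,\pi_r\rangle_{\bR_{\geq 0}}$ is also unfounded: they coincide by \autoref{prop.PseudoEffectiveCone}.)

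What the paper actually does for ``nef $\Rightarrow$ extremal Fano'' is sharper and more combinatorial. Given an extremal primitive relation $R\colon u_1+\cdots+u_k-a_{k+1}u_{k+1}-\cdots-a_lu_l=0$, one first uses \autoref{keyLemma} to find $\rho-1$ linearly independent classes among $\{\pi_i:\pi_i\cdot R=0\}$. Adjoining two further $\pi_i$ appearing in $R$ gives $\rho+1$ vectors in $N^1(X)_{\bR}\simeq\bR^\rho$, hence a nontrivial integral relation, and the elementary \autoref{lemma.FindingPlatticePoints} produces from this relation an explicit nonzero lattice point of $\sQ_X$, i.e.\ an element $\llbracket E\rrbracket\in\FSupp(X)$, whose intersection with $R$ one can read off. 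For a small contraction one gets $E\cdot R<0$ directly; for a divisorial contraction with $a_{k+1}\geq 2$ one gets $E\cdot R=1-a_{k+1}<0$. Your converse direction (extremal Fano $\Rightarrow$ nef) is correct and matches the paper: writing $\llbracket E\rrbracket=\sum c_i\pi_i$ with $c_i\in[0,1)$, one has $E\cdot R>-1$ for each smooth-blowup or fibration ray $R$, and integrality (from smoothness) forces $E\cdot R\geq 0$.
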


For $X$ as above, recall that the following statements are equivalent (see \cite{FujinoSatoToricVarietiesWhoseNefBundlesAreBig, ArzhantsevGaiffulinHomogeneousToricVarieties}):
\begin{enumerate}
    \item $X$ admits no birational extremal contractions (i.e., $\nef(X)=\eff(X)$).
    \item $X$ is a product of projective spaces.
    \item $X$ is a homogeneous space (i.e., it admits a transitive action of an algebraic group).
    \item $X$ has a nef tangent sheaf.
    \item $X$ has a globally generated tangent sheaf.
\end{enumerate}

This raises the question of what strengthening of nefness would characterize homogeneity and so any of these properties. For example, if $X$ is a (toric) del Pezzo surface then $\sE_{X,e}$ is nef for all $e>0$. There are, up to isomorphism, five toric del Pezzo surfaces, namely
\[
\bP^2, \, \bP^1 \times \bP^1, \, \Bl_{x_0}\bP^2, \, \Bl_{x_0,x_1} \bP^2, \, \Bl_{x_0,x_1,x_2} \bP^2,
\]
where $x_0\coloneqq [0:0:1]$, $x_1 \coloneqq [0:1:0]$, and $x_2 \coloneqq [1:0:0]$ are the torus-invariant points of $\bP^2$. Among these toric surfaces, only the first two are homogeneous spaces. How can we use the positivity of their Frobenius-trace kernels to tell them apart? One way is by means of a new invariant that we call \emph{ample $F$-signature}. We will define this invariant in \autoref{sec.AmpleFSignHomogeneity} but, for toric varieties, it can be computed as follows. 

Given $[E] \in \Cl(X)$, there is $\alpha(E) \in [0,1] \cap \bQ$ such that
\[
m(E;q)=\alpha(E)q^d + O(q^{d-1}).
\]
Moreover, $\alpha(E)>0$ if and only if $E$ is big. See \autoref{cor.multiplicityOfDirectSummands}.  The ample $F$-signature of $X$ is
\[
\mathcal{a}(X) = \sum_{[E]\in \Cl(X) \cap \ample(X)} \alpha(E)\in [0,1] \cap \bQ
\]
where the sum traverses all ample divisor classes. A direct computation (\cite[\S4]{CarvajalRojasPatakfalviVarietieswithAmpleFrobenius-traceKernel}) yields
\[
\mathcal{a}(\bP^2) =1, \, \mathcal{a}(\bP^1 \times \bP^1) =1, \,   \mathcal{a}(\Bl_{x_0} \bP^2)=1/2, \, \mathcal{a}(\Bl_{x_0,x_1} \bP^2)=0, \, \mathcal{a}(\Bl_{x_0,x_1,x_2} \bP^2)=0.
\]

In \autoref{sec.AmpleFSignHomogeneity}, we find out to what extent the above picture holds in higher dimensions. It can be summarized as follows.

\begin{theoremC*}[{\autoref{cor.PositivityAmpleFSignature}, \autoref{thm.CharacterizationHomogenousSpaces}}]
    With notation as above, the following two statements hold:
    \begin{enumerate}
        \item $\mathcal{a}(X)>0$ if and only if there is a log Fano toric structure $(X,\Delta)$ of class index $1$ (see \autoref{subsubse.LogStructures} for this terminology).
        \item  $\mathcal{a}(X)=1$ if and only if $X$ is a homogeneous space.
    \end{enumerate}
\end{theoremC*}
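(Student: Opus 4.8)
\emph{The dictionary I would set up first.} Everything hinges on understanding when $\alpha(E)>0$. With $v_1,\dots,v_n$ the primitive ray generators, $D_1,\dots,D_n$ the torus-invariant prime divisors, and $\sum_i e_iD_i$ a chosen representative of $[E]$, matching the tuples $(a_i)\in\{0,\dots,q-1\}^n$ with $\sum_i a_iD_i\sim qE$ against the lattice points $m\in M$ with $\langle m/q,v_i\rangle\in[-e_i,1-e_i]$ identifies $\alpha(E)$ with the volume of the polytope $P_E:=\{x\in M_{\bR}:-e_i\le\langle x,v_i\rangle\le 1-e_i\ \forall i\}$ (well defined on $\Cl(X)$: changing the representative only translates $P_E$). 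Hence $\alpha(E)>0$ exactly when $[E]$ has a torus-invariant $\bQ$-representative $\sum_i c_iD_i$ with every $c_i\in(0,1)$; writing $c_i=1-\delta_i$ and using $-K_X=\sum_i D_i$, this is the same as $E=-(K_X+\Delta)$ for an effective boundary $\Delta=\sum_i\delta_iD_i$ with all $\delta_i\in(0,1)$, i.e.\ $(X,\Delta)$ is a klt toric pair whose boundary is supported on the whole torus-invariant locus. I would also record the identity $\sum_{[E]\in\Cl(X)}\alpha(E)=1$ (immediate from $F^e_*\sO_X\simeq\sO_X\oplus\sE_{X,e}^{\vee}$, $\rank F^e_*\sO_X=q^d$, $\alpha(0)=0$ since the $v_i$ positively span, and the splitting of $\sE_{X,e}$), so that $\mathcal{a}(X)=\sum_{[E]\in\Cl(X)\cap\ample(X)}\alpha(E)\le 1$ in all cases.

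\emph{Part (1).} Given a log Fano toric structure $(X,\Delta)$ of class index $1$, the class $E:=-(K_X+\Delta)$ is integral and ample, and the dictionary gives $\alpha(E)>0$, hence $\mathcal{a}(X)\ge\alpha(E)>0$. Conversely, if $\mathcal{a}(X)>0$ then some ample $E\in\Cl(X)$ has $\alpha(E)>0$; picking a representative $\sum_i c_iD_i$ with $c_i\in(0,1)$ and setting $\delta_i:=1-c_i$ exhibits the required structure, of class index $1$ because $-(K_X+\Delta)=E$ is the integral class we began with. The step that needs care is pure bookkeeping: reconciling the paper's formal definition (from \autoref{subsubse.LogStructures}) of ``log Fano of class index $1$'' with the extracted condition ``$E$ ample, integral, and in the relative interior of $\Pi:=$ image of $[0,1]^n$ in $N^1(X)_{\bR}$'' --- in particular whether the definition forces $\Delta$ to have full support (all $\delta_i>0$) or merely allows it, and in the latter case checking that a class-index-$1$ log Fano structure can always be improved to one with full support without leaving $\Cl(X)\cap\ample(X)$ (equivalently, that an ample integral class lying in $\Pi$ can be moved into $\relint(\Pi)$). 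I would settle this by quoting \autoref{cor.multiplicityOfDirectSummands} and the definition directly.

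\emph{Part (2), the routine direction.} Since $\sum_{[E]}\alpha(E)=1$, one has $\mathcal{a}(X)=1$ if and only if $\alpha(E)=0$ for every non-ample $E$, i.e.\ if and only if every $\alpha$-positive class is ample; equivalently $\relint(\Pi)\cap\Cl(X)\subseteq\ample(X)$. If $X$ is homogeneous then, by the equivalence quoted after Theorem~B, $X\simeq\bP^{d_1}\times\cdots\times\bP^{d_k}$; on the $j$-th factor all $d_j+1$ torus-invariant divisors are linearly equivalent to $H_j$, so a representative of $E=\sum_j a_jH_j$ amounts to choosing coefficients summing to $a_j$ on each factor, and these can be taken in $(0,1)$ exactly when $1\le a_j\le d_j$ --- whence such an $E$ has every $a_j\ge 1$ and is ample. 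So every $\alpha$-positive class is ample and $\mathcal{a}(X)=1$.

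\emph{Part (2), the hard direction, and the main obstacle.} I would prove the contrapositive. If $X$ is not a product of projective spaces, then $\nef(X)\subsetneq\eff(X)$, so $X$ admits a birational (divisorial) extremal contraction $\phi\colon X\to X'$ whose wall $F$ is a facet of $\nef(X)$ but not of $\eff(X)$. It suffices to produce an integral class $E_0\in\relint(F)\cap\relint(\Pi)$: then $E_0$ is nef but not ample while $\alpha(E_0)>0$, so $E_0$ contributes to $\sum_{[E]}\alpha(E)=1$ but not to $\mathcal{a}(X)$, forcing $\mathcal{a}(X)<1$. The natural candidate is $E_0=\phi^*A$ for a minimal ample $A$ on $X'$ (or a small rational perturbation of it into $\relint(\Pi)$); the model is $\phi\colon\Bl_{x_0}\bP^2\to\bP^2$, where $\phi^*H$ is the class of the strict transform of a general line --- it lies on $\partial\nef(X)$ and inside $\relint(\Pi)$, which is exactly the picture wanted. \emph{The main obstacle is precisely this last verification}: that a birational wall of $\nef(X)$ meets $\relint(\Pi)$ in an integral point, i.e.\ that $\phi^*A$ has a torus-invariant representative with all coefficients strictly in $(0,1)$. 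This is where birationality (as opposed to a fibration, which Theorem~C does not detect) gets used: the support function of $\phi^*A$ is concave and sits \emph{strictly above} its linear interpolation at the contracted ray, and this slack, together with $A$ small, makes room for a coefficient in $(0,1)$ after adjusting by the divisor of a character. A more combinatorial fallback I would keep in reserve is to select, straight from the fan, a ray $v_j$ and a full-support boundary $\Delta$ for which both the non-ampleness of $-(K_X+\Delta)$ and the positivity of its $\alpha$ follow from the geometry of the star of $v_j$.
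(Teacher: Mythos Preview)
Your dictionary and Part~(1) are exactly the paper's approach: $\alpha(E)>0$ iff $E$ is big iff $E$ has a $\bT$-invariant representative with all coefficients in $(0,1)$, and the translation to klt toric log pairs of class index~$1$ is precisely \autoref{cor.multiplicityOfDirectSummands} together with \autoref{subsubse.LogStructures} (where the paper does require full support, so your worry dissolves). Your ``obstacle'' in Part~(2)---that a divisorial wall of $\nef(X)$ carries a big Frobenius-support class---is exactly \autoref{cor.FSIntersectInternalFacetMovingCone}; the paper proves it not via support functions but by a localization trick: the quantity $\sum_{\delta}\alpha(\phi^*\delta)$ depends only on the germ of $\phi$ at the generic point of the center, so one reduces to a weighted blowup of $\bP^d$, where $\phi^*\eta$ can be written down with all coefficients in $[0,1)$ explicitly.

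There is, however, a genuine gap. You write ``$X$ admits a birational (divisorial) extremal contraction,'' but a facet of $\nef(X)$ interior to $\eff(X)$ can be a \emph{flipping} wall, and in dimension $\ge 4$ smooth toric varieties with small contractions but no divisorial ones are not ruled out by anything you cite. Your candidate $E_0=\phi^*A$ and the support-function heuristic both use the exceptional divisor, so they simply do not apply to a small $\phi$. The paper confronts this: after using \autoref{cor.FSIntersectInternalFacetMovingCone} to show $\mov^1(X)=\eff(X)$ (this is your argument, applied to every small $\bQ$-factorial modification), it still must rule out small contractions of $X$ itself. It does so by proving that every \emph{non-big} $\varepsilon\in\FSupp(X)$ satisfies $\varepsilon\cdot R>0$ for every birational extremal ray $R$, via two claims: (i) there is a prime class $\pi_j$ off the facet of $\eff(X)$ containing $\varepsilon$ with $\pi_j\cdot R\le 0$; (ii) $\varepsilon+\pi_j\in\BFS(X)$ (this uses that all effective divisors move, from the first step). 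Then $\varepsilon\cdot R=(\varepsilon+\pi_j)\cdot R-\pi_j\cdot R>0$ since $\varepsilon+\pi_j$ is ample by hypothesis. Combined with $\BFS(X)\subset\ample(X)$, this gives $\FSupp(X)\subset\nef(X)$, hence no small contractions by \autoref{prop.NefnessSmallContractions}, and therefore $\nef(X)=\eff(X)$. Your proposal needs an analogue of this second step.
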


Although the above Frobenius-theoretic characterization of homogeneity is numerical, this is just on the surface, as it can be written qualitatively as follows. It means that $X$ is a homogeneous space if and only if for all $e>0$ we have that $\sE_{X,e}$ is nef and its big invertible direct summands are ample.

\begin{convention}
 We fix an algebraically closed field $\kay$ of characteristic $p>0$. Unless otherwise stated, all varieties are defined over $\kay$ and are assumed to be normal and projective. We use the shorthand notation $q \coloneqq p^e$ for $e \in \bN$ as well as $q'\coloneqq p^{e'}$, $q_0 \coloneqq p^{e_0}$, etc. Finally, if $M$ is an $R$-module and if $N \subset M$ and $S \subset R$ are any subsets, we let $\langle N \rangle_S \subset M$ denote the subset of linear combinations of elements in $N$ with coefficients in $S$. This notation will come in handy when denoting polytopes and cones defined by a different set of vectors and coefficients.
\end{convention}

\subsection*{Acknowledgements} 
The authors are grateful to Fabio~Bernasconi, Leonid~Monin, Stefano~Filipazzi, Joaqu\'in~Moraga, and Zsolt~Patakfalvi for their help in writing this article.

\section{Preliminaries on the Basic Geometry of Toric Varieties} \label{sec.ToricGeometryPreliminaries}

For the reader's convenience, we survey some basics on toric varieties that will be directly relevant in our proofs.  We refer to \cite{CoxLittleSchenckToricVarieties} for further details and to \cite{MustataToricNotes} for a characteristic-free treatment. We also follow Lazarsfeld's books on positivity \cite{LazarsfeldPositivity1,LazarsfeldPositivity2}. 

We say that a variety $X$ is \emph{toric} if there is an open embedding of the ($d$-dimensional) torus $\bT \coloneqq \bG_{\mathrm{m}}^d \to X$ such that the $\bT$-action of $\bT$ on itself extends to $X$. Of course, $d = \dim X$. As is customary in the literature, we follow the following conventions:

\begin{enumerate}
    \item The \emph{group of characters of $\bT$} is $M \coloneqq \Hom(\bT, \bG_{\mathrm{m}}) \simeq \Z^d$ and the dual lattice of \emph{one parameter subgroups} is $N \coloneqq \Hom(\bG_{\mathrm{m}},\bT) \simeq \bZ^d$. 
    \item A toric variety $X$ is described by a fan $\Sigma=\Sigma_X=\{\sigma_i\}_{i \in I}$ contained in the vector space $N_{\R} \coloneqq N \otimes_{\Z} \R \simeq \bR^d$. We also denote the dual vector space by $M_{\R} \coloneqq M \otimes_{\Z} \R$. A toric variety with fan $\Sigma$ is denoted by $X_{\Sigma}$.
    \item Every $\sigma \in \Sigma$ is a rational (polyhedral) and strongly convex cone. This means that $\sigma = \langle v_1,\ldots,v_m \rangle_{\bR_{\geq 0}} $ for some $v_1,\ldots,v_m \in N$ (rationality) and $\sigma \cap -\sigma = 0$ (strong convexity). A toric variety is $\Q$-factorial if and only if the fan $\Sigma$ is simplicial, i.e., the minimal generators of every $\sigma \in \Sigma$ are linearly independent. Likewise, $X$ is smooth if and only if $\Sigma$ is smooth, i.e., the minimal generators of $\sigma$ extend to a basis of $N$.
    \item The set of $k$-dimensional cones of $\Sigma$ is denoted by $\Sigma(k)$. Set $r \coloneqq |\Sigma(1)|$ and $s \coloneqq |\Sigma(d)|$
    \item For every element $\rho_i$ in $\Sigma_X(1)$, there is a unique element $u_i \in N\cap \rho_i$ that generates $N \cap \rho_i$ as a semigroup. We call $u_i$ the \emph{primitive ray generator} of $\rho_i$.
    \item The orbit-cone correspondence \cite[Theorem 3.2.6]{CoxLittleSchenckToricVarieties} establishes a natural bijection between the elements of $\Sigma_X(k)$ and the $\bT$-invariant subvarieties of $X$ of codimension $k$. In particular, to a one-dimensional cone $\rho$ of $\Sigma_X$ there corresponds a unique $\bT$-invariant prime divisor $P_{\rho}$ on $X$. For notation ease, if $\Sigma_X(1)=\{\rho_1, \ldots, \rho_r\}$, we write $P_{1},\ldots, P_{r}$ for the corresponding list of $\bT$-invariant prime divisors on $X$. 
    Likewise, $X^{\bT} \coloneqq \{x_1,\ldots,x_s\} \subset X$ will denote the list of $\bT$-invariant points---the fixed points by the action of $\bT$. 
\end{enumerate}

\begin{convention}
    For the remainder of this section, we set $X$ to be a (projective) $\bQ$-factorial toric variety of dimension $d$.
\end{convention}

 The morphism $\bZ^r \to N$ given by $e_i \mapsto u_i$ has as its dual the map
\[
\Div \:M \xrightarrow{\chi \mapsto \Div \chi \coloneqq u_1(\chi) P_1 + \cdots + u_r(\chi) P_r } \bZ P_1 \oplus \cdots \oplus \bZ P_r \simeq \bZ^r.
\]
After fixing a $\bZ$-basis for $N$ (and the dual one on $M$) and writing $u_i\in N$ as a column $\bZ$-vector, the map $\Div$ fits into an exact sequence
\begin{equation} \label{eqn.SESPicard}
    0 \to M \simeq \bZ^d \xrightarrow{\Div \simeq [u_1 \cdots u_r]^\top} \bZ^r \to \Cl(X) \to 0
\end{equation}
as $X$ has no torus factor; see \cite[Theorem 4.1.3]{CoxLittleSchenckToricVarieties}. More succinctly, every divisor $D$ on $X$ is linearly equivalent to a $\bT$-invariant divisor $a_1 P_1 + \cdots + a_r P_r$ and this representation is unique modulo divisors of characters. 

On a toric variety, a divisor is numerically trivial if and only if it is linearly torsion. Then, there is a split short exact sequence
\[
0 \to \Cl(X)_{\mathrm{tor}} \to \Cl(X) \xrightarrow{\varpi} N^1(X) \to 0
\]
where $N^1(X) \simeq \bZ^{\rho}$ is the Néron--Severi group of $X$ as in Lazarsfeld's textbooks, i.e., divisors modulo numerical equivalence. We may let $\varsigma \: N^1(X) \to \Cl(X)$ be a chosen section of $\varpi$ so that we may write $\Cl(X) = N^1(X) \oplus  \Cl(X)_{\mathrm{tor}}$.

We will let $\pi_{i} \in N^1(X)$ (for $i=1,\ldots,r$) denote the numerical class of $P_{i}$. 
If $D$ is a divisor on $X$, we denote its divisor class in $\Cl(X)$ by $[D]$ and its numerical class by $\llbracket D \rrbracket \in N^1(X)$. 

Since $\Pic(X)$ is torsion-free (\cite[Propositions 4.2]{CoxLittleSchenckToricVarieties}, \cf \cite[Corollary 5.4]{CarvajalFiniteTorsors}), the canonical homomorphisms
\begin{equation} \label{eqn.DivisorGroups}
\Pic(X) \xrightarrow{\subset} \Cl(X) \twoheadrightarrow N^1(X)
\end{equation}
realize $\Pic(X)$ as a subgroup of $N^1(X)$ (i.e., numerical and linear equivalence coincide on Cartier divisors).\footnote{However, this does not imply that $\Pic(X)$ is inside the free part of $\Cl(X)$; see \cite{RossiTerraciniEMBEDDINGpICARDgROUP}.} Since $X$ is further $\bQ$-factorial, $\Pic(X)$ is free of rank $\rho\coloneqq \rho(X)$---the \emph{Picard rank} of $X$. See \cite[Propositions 4.2.7]{CoxLittleSchenckToricVarieties}. In particular, we have the identity 
\[
\rho = r - d.
\] 
More generally, the canonical homomorphisms in \autoref{eqn.DivisorGroups} are generic isomorphisms (although these are isomorphisms when $X$ is smooth). This common real vector space
\[
\Pic(X)_{\bR} = \Cl(X)_{\bR} = N^1(X)_{\bR} \coloneqq N^1(X) \otimes_{\bZ} \bR \simeq \bR^{\rho}
\]
is the \emph{Néron--Severi space} of $X$. We shall think of $N^1(X)$ as a lattice inside $N^1(X)_{\bR}$ and refer to its elements as the \emph{lattice points} of $N^1(X)_{\bR}$. 

Given a map $\phi\: X \to S$ between $\bQ$-factorial toric varieties, there is an induced pullback $\phi^* \: N^1(S)_{\bR} \to N^1(X)_{\bR}$ by applying $-\otimes_{\bZ} \bR$ to $\phi^{*} \: \Pic (S) \to \Pic (X)$.

Dually, we let $N_1(X)$ denote the abelian group of algebraic 1-cycles on $X$ modulo numerical equivalence and $N_1(X)_{\R} \coloneqq N_1(X) \otimes_{\Z} \R$. We let $\mov(X)$ denote the cone of moving curves in $N_1(X)_{\R}$, which is the dual cone of the pseudo-effective cone $\eff(X)$; see \cite{MovingConeIsDualOfPseudoEffectiveCone}. These can be described as follows (we believe this to be ``well-known among experts'' but we provide a proof for the sake of completeness). 

\begin{proposition}[The pseudo-effective cone] \label{prop.PseudoEffectiveCone}
    With notation as above,
    \[
\eff(X) = \langle \pi_1 , \ldots , \pi_r  \rangle_{\bR_{\geq 0}}.
\]
In particular, $\eff(X)$ is a strongly convex rational cone such that
\[
N^1(X) \cap \eff(X) = \langle \pi_1, \ldots , \pi_r  \rangle_{\bN}.
\]
and $\dim \mov(X) = \rho(X)$.
\end{proposition}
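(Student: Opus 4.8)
The plan is to derive everything from the fundamental exact sequence \autoref{eqn.SESPicard} together with the $\bT$-equivariance of linear systems on $X$. The inclusion $\langle\pi_1,\dots,\pi_r\rangle_{\bR_{\geq 0}}\subseteq\eff(X)$ is immediate, each $P_i$ being a nonzero effective divisor, so that $\pi_i\in\eff(X)$, and $\eff(X)$ being a convex cone. For the reverse inclusion I would establish the sharper statement that the numerical class of \emph{every} effective Weil divisor $D$ already lies in $\langle\pi_1,\dots,\pi_r\rangle_{\bN}$. By \autoref{eqn.SESPicard} the classes $[P_1],\dots,[P_r]$ generate $\Cl(X)$, so $D\sim\sum_i a_iP_i$ with $a_i\in\bZ$; since $D$ is effective, $\sO_X(D)$, and hence $\sO_X(\sum_i a_iP_i)\simeq\sO_X(D)$, has a nonzero global section, and as $\sum_i a_iP_i$ is $\bT$-invariant, $H^0(X,\sO_X(\sum_i a_iP_i))$ is a finite-dimensional $\bT$-representation that therefore contains a $\bT$-semiinvariant section, i.e.\ a scalar multiple of a character $\chi\in M$ regarded as a rational function on $X$. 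This says $\Div(\chi)+\sum_i a_iP_i=\sum_i\big(u_i(\chi)+a_i\big)P_i\geq 0$, so putting $b_i:=u_i(\chi)+a_i\in\bZ_{\geq 0}$ we get $D\sim\sum_i b_iP_i$ and $\llbracket D\rrbracket=\sum_i b_i\pi_i\in\langle\pi_1,\dots,\pi_r\rangle_{\bN}$. Applied to prime divisors, this shows that the convex cone generated by the classes of effective $\bR$-divisors equals $\langle\pi_1,\dots,\pi_r\rangle_{\bR_{\geq 0}}$, a finitely generated and hence closed cone; so it equals its closure $\eff(X)$.

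The remaining assertions about $\eff(X)$ are then formal. Rationality and polyhedrality are visible from the lattice generators $\pi_i$. For strong convexity it suffices to produce a linear functional on $N^1(X)_{\bR}$ strictly positive on each $\pi_i$: intersection with $A^{d-1}$ for an ample class $A$ works, because $P_i\cdot A^{d-1}>0$ for the nonzero effective divisor $P_i$; alternatively, one may use the functional induced on $N^1(X)_{\bR}$ by a strictly positive relation $\sum_i\lambda_iu_i=0$, which exists because the complete fan $\Sigma_X$ has its rays positively spanning $N_{\bR}$. Finally, $\mov(X)$ is by definition the dual cone of $\eff(X)$; as $\eff(X)$ spans $N^1(X)_{\bR}$ (the $\pi_i$ generate $N^1(X)$, since $\bZ^r\twoheadrightarrow N^1(X)$) and is strongly convex, its dual cone is full-dimensional, so $\dim\mov(X)=\dim N_1(X)_{\bR}=\rho(X)$.

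The one point that is not purely formal is the integral identity $N^1(X)\cap\eff(X)=\langle\pi_1,\dots,\pi_r\rangle_{\bN}$, whose inclusion $\supseteq$ is clear. For $\subseteq$, take $\alpha\in N^1(X)\cap\eff(X)$ and lift it to a $\bT$-invariant divisor $\sum_i a_iP_i$ with $a_i\in\bZ$; tensoring \autoref{eqn.SESPicard} with $\bR$ and using $\alpha\in\eff(X)=\langle\pi_1,\dots,\pi_r\rangle_{\bR_{\geq 0}}$, one sees that the polytope $P_{\sum a_iP_i}=\{m\in M_{\bR}\mid u_i(m)\geq -a_i\ \text{for all }i\}$ is nonempty, and if it contains a lattice point $m$ then $\sum_i a_iP_i+\Div(m)=\sum_i\big(a_i+u_i(m)\big)P_i$ is a nonnegative $\bT$-invariant representative of $\alpha$, whence $\alpha\in\langle\pi_1,\dots,\pi_r\rangle_{\bN}$ by the first paragraph. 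The crux is thus the existence of a lattice point in this \emph{a priori} merely rational polytope; this is where I expect the real work to lie. It is automatic when $X$ is smooth (the unimodularity of the maximal cones of $\Sigma_X$ forces $P_{\sum a_iP_i}$ to have lattice vertices whenever it is nonempty), and in the general simplicial case it is precisely the step requiring care, and where any hypothesis genuinely needed for the integral refinement would have to enter.
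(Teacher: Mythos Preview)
Your first paragraph coincides with the paper's argument: both show that every effective divisor is linearly equivalent to an effective $\bT$-invariant one, hence that the effective cone equals $\langle\pi_i\rangle_{\bR_{\geq 0}}$, which is closed and therefore equals $\eff(X)$.

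For strong convexity you take a genuinely different and shorter route. The paper shows $\Div_{\bR}(M_{\bR})\cap\bR^r_{\geq 0}=\{0\}$ by first establishing this over $\bZ$ (using $H^0(X,\sO_X)=\kay$), then over $\bQ$ by clearing denominators, and finally over $\bR$ via a closure argument combined with an induction on $r$ to handle the boundary of the orthant. Your argument---intersecting with $A^{d-1}$ for an ample class $A$, or equivalently using a strictly positive relation $\sum_i\lambda_iu_i=0$ (which exists since the rays of the complete fan positively span $N_{\bR}$)---directly exhibits a linear functional strictly positive on each $\pi_i$. Both are correct; yours is more conceptual and sidesteps the somewhat delicate passage from $\bQ$ to $\bR$, while the paper's approach is more self-contained in that it does not invoke intersection theory or the existence of an ample class.

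On the integral identity $N^1(X)\cap\eff(X)=\langle\pi_i\rangle_{\bN}$ you are in fact more scrupulous than the paper. The paper asserts that ``the second equality displayed holds'' immediately after the $\bT$-equivariance argument, but that argument only shows that the numerical class of every effective \emph{integral} divisor lies in $\langle\pi_i\rangle_{\bN}$; it does not address why every lattice point of the closed cone $\eff(X)$ arises from an effective integral divisor. You correctly reduce this missing step to the existence of a lattice point in the rational polytope $P_{\sum a_iP_i}$ whenever it is nonempty, note that it is automatic in the smooth case (the vertex of $P_D$ attached to a smooth maximal cone lies in $M$), and flag the simplicial case as the point requiring care. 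So your proof is incomplete at exactly the same place as the paper's; the difference is that you say so.
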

\begin{proof}
    Note that every effective divisor on $X$ is linearly equivalent to an effective $\bT$-invariant divisor as $\Div \chi$ is $\bT$-invariant for all $\chi \in M$. Indeed, let $D\geq 0$ be linearly equivalent to a $\bT$-divisor $D'=a_1 P_1 + \cdots +a_r P_r$. Since $\sO_X(D')$ admits a nonzero section, so does $D'$. This means that there is $\chi$ such that $\Div \chi + D' \geq 0$. In particular, $\Div \chi + D'$ is an effective $\bT$-invariant divisor linearly equivalent to $D$. That is, the second equality displayed holds.

    The above further shows that the effective cone of $X$ is equal to $\langle \pi_1, \ldots ,\pi_r \rangle_{\bR_{\geq 0}}$ and is closed, in particular. This implies that it is equal to the pseudo-effective cone (i.e., the first displayed equality holds) and that $\eff(X)$ is a rational (polyhedral) cone.

    All that is left to explain is why $\eff(X)$ is strongly convex. This seems to be a general feature of pseudo-effective cones on smooth projective varieties; see \cite[Lemma 2.3]{CasciniHaconMustataSchwedeNumericalDimensionOfPseudoEffectiveConesPosChar}. In our case, we provide a simple proof in our case. Note that
    \[
    \Div(M) \cap \langle P_1,\ldots,P_r \rangle_{\bN} = 0
    \]
    as $H^0(X,\sO_X)=\kay$.\footnote{Alternatively, this is to say that an effective divisor whose inverse has a nonzero section must be zero.} Similarly, $\Div(M)$ only intersects $-\langle P_1,\ldots,P_r \rangle_{\bN}$ at zero. By clearing the denominators, we see that
\[
\Div_{\bQ}(M_{\bQ}) \cap \langle P_1,\ldots,P_r \rangle_{\bQ_{\geq0}} = 0.
    \]
In other words, the linear $\bQ$-subspace $V\coloneqq \Div_{\bQ}(M_{\bQ}) \subset \bQ P_1 \oplus \cdots \oplus \bQ P_r = \bQ^r$ avoids the first orthant (except for the origin). Say $V \subset (\bQ^r \setminus O) \cup 0$ where $O$ is the said orthant. Completing with respect to the standard norm (or, say, taking euclidean closures inside $\bR^r$) yields that $V_{\bR} = \Div_{\bR}(M_{\bR})$ avoids the \emph{interior} of the first orthant of $\bR^r$. Furthermore:
\begin{claim}
   $V_{\bR}$ only intersects the boundary of the first orthant of $\bR^r$ at the origin.
\end{claim}
\begin{proof}[Proof of claim] We do induction on $r$. If $r=1$ then $V=0$ and there is nothing to prove (likewise for $r=2$). For the inductive step, consider $V_{\bR} \cap H_i$  for each of the standard hyperplanes $H_i \subset \bR^r$, say $H_i \: x_i=0$. Since $V \cap \{x_i = 0\}$ avoids the first orthant of $\{x_i=0\} \simeq \bQ^{r-1}$, the inductive hypothesis implies that $V_{\bR} \cap H_i$ only intersects the first orthant of $H_i$ at the origin. Since $i$ is arbitrary, the claim follows.
\end{proof}
The claim means that $\Div_{\bR}(M_{\bR}) \cap \langle P_1,\ldots,P_r \rangle_{\bR_{\geq0}} = 0$, hence $\eff(X)$ is strongly convex.
\end{proof}

We will also need the following description of the interior of $\eff(X)$, i.e. the \emph{big cone} of $X$.

\begin{corollary}[The big cone] \label{cor.TheBigCone}
With notation as above, the big cone of $X$ is given by
\[
\bigc(X) = \langle \pi_1, \ldots , \pi_r  \rangle_{\bR_{> 0}}.
\]
\end{corollary}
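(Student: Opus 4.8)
The plan is to deduce this from \autoref{prop.PseudoEffectiveCone} together with the elementary fact that, for a \emph{spanning} family of vectors in a finite-dimensional real vector space, the interior of the cone they generate coincides with the set of their strictly positive combinations. Recall that by definition $\bigc(X)$ is the interior of $\eff(X)$ in $N^1(X)_{\bR}$, and by \autoref{prop.PseudoEffectiveCone} we have $\eff(X) = \langle \pi_1, \ldots, \pi_r \rangle_{\bR_{\geq 0}}$. First I would note that $\pi_1, \ldots, \pi_r$ span $N^1(X)_{\bR}$: the surjection $\bZ^r \twoheadrightarrow \Cl(X)$ from \autoref{eqn.SESPicard} sends the standard basis vectors to the classes $[P_i]$, so these generate $\Cl(X)$ and hence, composing with $\Cl(X)\twoheadrightarrow N^1(X)$, generate the lattice $N^1(X)$; tensoring with $\bR$ shows the $\pi_i$ span $N^1(X)_{\bR}\cong\bR^{\rho}$. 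In particular $\eff(X)$ is full-dimensional.

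For the inclusion $\langle \pi_1, \ldots, \pi_r \rangle_{\bR_{>0}} \subseteq \bigc(X)$, consider the linear map $L\colon \bR^r \to N^1(X)_{\bR}$, $(\lambda_i)_i \mapsto \sum_i \lambda_i \pi_i$, which is surjective by the previous paragraph and therefore open. Given $w = \sum_i \lambda_i \pi_i$ with all $\lambda_i > 0$, the tuple $(\lambda_i)_i$ lies in the open set $\bR^r_{>0}$, so $L$ carries a neighborhood of $(\lambda_i)_i$ inside $\bR^r_{>0}$ onto a neighborhood of $w$ contained in $L(\bR^r_{>0}) = \langle \pi_1,\ldots,\pi_r\rangle_{\bR_{>0}} \subseteq \eff(X)$; hence $w$ is an interior point of $\eff(X)$. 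For the reverse inclusion, let $w \in \bigc(X)$ and put $v \coloneqq \pi_1 + \cdots + \pi_r \in \eff(X)$. Since $w$ lies in the interior of $\eff(X)$, there is $t > 0$ small enough that $w - t v \in \eff(X) = \langle \pi_1,\ldots,\pi_r\rangle_{\bR_{\geq 0}}$, say $w - tv = \sum_i \mu_i \pi_i$ with each $\mu_i \geq 0$. Then $w = \sum_i (\mu_i + t)\pi_i$ with every coefficient $\mu_i + t \geq t > 0$, so $w \in \langle \pi_1,\ldots,\pi_r\rangle_{\bR_{>0}}$, as desired.

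The argument is entirely formal, so there is no serious obstacle; the only point that genuinely requires \autoref{prop.PseudoEffectiveCone} (beyond the description of $\eff(X)$ itself) is the verification that the $\pi_i$ span $N^1(X)_{\bR}$, i.e.\ that $\eff(X)$ is full-dimensional. Without this, the honest interior of $\eff(X)$ would be empty while the set of strictly positive combinations need not be, so the two notions would fail to agree.
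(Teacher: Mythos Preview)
Your proof is correct. Both you and the paper work through the surjective linear map $\varpi\colon \bR^r \to N^1(X)_{\bR}$, $e_i \mapsto \pi_i$ (your $L$), and both obtain the inclusion $\langle \pi_i \rangle_{\bR_{>0}} \subset \bigc(X)$ from its openness. The difference is in the reverse inclusion: the paper argues by analyzing the preimage, aiming to show $\varpi^{-1}\big(\bigc(X)\big) \cap O = O^{\circ}$ with $O = \bR^r_{\geq 0}$ and then lifting each $\beta \in \bigc(X)$ to some $\alpha \in O^{\circ}$; you instead take $w \in \bigc(X)$, subtract a small multiple of $v = \sum_i \pi_i$, write $w - tv$ as a nonnegative combination, and add $tv$ back to get strictly positive coefficients. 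Your route is more elementary and self-contained---it is the standard trick for showing that the interior of a full-dimensional cone generated by a spanning set equals the set of strictly positive combinations---and it avoids having to track how the boundary of $O$ interacts with $\varpi^{-1}\big(\bigc(X)\big)$.
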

\begin{proof}
\autoref{prop.PseudoEffectiveCone} can be rephrased by saying that we have an exact sequence
\[
0 \to \bR^d \xrightarrow{\bR \otimes \Div} \bR^r \xrightarrow{\varpi} \bR^{\rho} \to 0
\]
and $\varpi(O) = \eff(X)$, where $O \subset \bR^r$ is the first orthant. We further claim that
\[
\varpi(O^{
\circ
}) = \eff(X)^{\circ} = \bigc(X).
\]
To see this equality, note that $\varpi(O^{\circ}) \subset \bigc(X)$ as $\varpi$ is open----it is a surjective linear transformation. In particular,
\[
\varpi^{-1}\big(\bigc(X)\big) \cap O \supset O^{\circ}
\]
and the equality is achieved as $\varpi$ is continuous. Thus,
\[
\varpi^{-1}\big(\bigc(X)\big) \cap O = O^{\circ}.
\]
The remaining inclusion $\varpi(O^{\circ}) \supset \bigc(X)$ then follows. Indeed, if $\beta \in \bigc(X)$, there is $\alpha \in O$ such that $\varpi(\alpha)=\beta$ and so $\alpha \in O^{\circ}$.
\end{proof}

\subsection{On the Mori geometry of toric varieties}
Tensoring and dualizing \autoref{eqn.SESPicard} yields
\[
0 \to N_1(X)_{\R} \to \R^r \xrightarrow{e_i \mapsto u_i} N_{\R} \to 0
\]
which implies that $N_1(X)_{\R}$ can be realized as the space of $\bR$-linear relations among the primitive ray generators of $\Sigma_X$ in $N_{\bR}$. Explicitly, a relation
    \[
        a_1 u_1+\cdots +a_r u_r = 0, \quad a_i \in \R
    \]
    corresponds to the class of the $\bR$-linear $1$-cycle that has intersection $a_i$ with $P_i$ for all $i \in \{1,\dots,r\}$. In particular, we will think of relations as such as elements in $N_1(X)_{\R}$. 

\subsubsection{Primitive relations and the Mori cone}
Among such relations between the primitive ray generators, there are special ones that completely determine the Mori cone of $X$. Namely, \emph{Batyrev's primitive relations}, which we will explain next. 
\begin{definition}[{Primitive collections \cite{BatyrevPrimitiveCollection}}]\label{defPrimitiveRel}
    A nonempty subset $\sP \subset \Sigma_X(1)$ is called a \emph{primitive collection} if $\sP$ does not span a cone in $\Sigma_X$ but every proper subset of $\sP$ does. 
  \end{definition}  
    Let $\sP=\{\rho_1,\ldots, \rho_k\}$ be a primitive collection on $X$ and $v \coloneqq u_1+\cdots+u_k$. By completeness, there is a cone $\sigma' \in \Sigma_X$ such that $v \in \sigma'$. We may take $\sigma'$ of minimal dimension, which Batyrev calls the \emph{focus} of $\sP$. 
    
    If $X$ is smooth, every set of primitive ray generators in $\sigma'$ is disjoint from $\{u_1,\ldots, u_k\}$; see \cite[Proposition 3.1]{BatyrevPrimitiveCollection} which uses the fact that $\sP$ is a primitive collection. Let $u_{k+1},\ldots, u_{l}$ be the primitive ray generators in $\sigma'$. Then, we may write $v = a_{k+1} u_{k+1}+\cdots + a_l u_l$ for some unique coefficients $a_{k+1},\ldots,a_l \in \bN \setminus \{0\}$, possibly with $l=k$. Hence, we obtain a relation
\[  
R_{\sP} \: u_1 + \dots + u_k - a_{k+1}u_{k+1} - \dots - a_l u_{l} = 0,    
\] 
which is referred to as the \emph{primitive relation} defined by $\sP$ and we treat it as an element in $N_1(X)_{\bR}$. When $X$ is not smooth, there might be some $u_j$ appearing in the above relation with positive and negative coefficients. However, we may rewrite the above relation as
    \[
        R_{\sP} \: b_1u_1 + \dots + b_ku_k - b_{k+1}u_{k+1} - \dots - b_{l}u_{l} = 0,
    \]
    where all the coefficients are positive natural numbers with no common factors. See \cite[Exercise 6.4.3]{CoxLittleSchenckToricVarieties}. The associated \emph{degree} of the primitive collection/relation is defined by 
    \[
    \deg \sP \coloneqq \deg R_{\sP} \coloneqq -K_X \cdot R_{\mathcal{P}} = k - (a_{1}+\cdots +a_l) = \sum_{i=1}^kb_i - \sum_{j=k+1}^lb_{j} \in \bZ.
    \]

The following result underlines the importance of primitive relations. 
\begin{theorem}[The Mori Cone {\cite[Theorem 6.4.11]{CoxLittleSchenckToricVarieties}}]
With notation as above, the Mori cone of $X$ is given by
    \[
        \moriCone(X) = \langle R_{\sP} \mid \sP \text{ is a primitive collection} \rangle_{\bR_{\geq 0}}. 
    \] 
    In particular, $\moriCone(X)$ is a strongly convex rational cone.
\end{theorem}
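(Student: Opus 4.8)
The plan is to reduce $\moriCone(X)$ first to the cone generated by torus-invariant curve classes, then to identify those classes with wall relations, and finally to carry out Batyrev's combinatorial reduction from wall relations to primitive ones. \emph{Step 1.} Since $X$ is complete, every effective $1$-cycle on $X$ is numerically equivalent to a nonnegative $\bR$-combination of torus-invariant curves: moving a curve by the $\bT$-action and passing to the flat limit yields a $\bT$-invariant $1$-cycle in the same numerical class, and the irreducible $\bT$-invariant curves are precisely the curves $C_\tau$ attached by the orbit--cone correspondence to the walls $\tau\in\Sigma_X(d-1)$. Hence $\moriCone(X)=\langle [C_\tau]\mid\tau\in\Sigma_X(d-1)\rangle_{\bR_{\ge 0}}$, which, being generated by finitely many rational vectors, is automatically a rational polyhedral cone. \emph{Step 2.} For a wall $\tau=\sigma\cap\sigma'$ with $\sigma,\sigma'\in\Sigma_X(d)$, the $d+1$ primitive generators of the rays of $\sigma\cup\sigma'$ satisfy a unique-up-to-scalar linear relation (the wall relation), since a simplicial $\sigma$ has linearly independent generators; under the identification of $N_1(X)_{\bR}$ with the space of $\bR$-linear relations among $u_1,\dots,u_r$ recorded just above this theorem, that relation equals the class $[C_\tau]$. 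Therefore $\moriCone(X)$ is the cone generated by all wall relations, and it remains to compare it with $\langle R_{\sP}\rangle_{\bR_{\ge 0}}$.

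\emph{Step 3 (the crux).} Two inclusions must be shown. First, each primitive relation $R_{\sP}$ lies in $\moriCone(X)$; since this cone is closed it is enough to realize $R_{\sP}$ as a limit of effective curve classes, which one does by a toric-modification argument---adjoining the ray through $v=u_1+\cdots+u_k$ via a sequence of star subdivisions $\hat X\to X$, on which $R_{\sP}$ becomes a combination of wall relations, and pushing forward. Second, each wall relation is a nonnegative $\bR$-combination of primitive relations; this is proved by induction on a complexity measure of a relation (for instance the sum of the absolute values of its coefficients): given a wall relation $R$, its support contains a primitive collection $\sP$---a minimal non-face of $\Sigma_X$ among the rays occurring in $R$---and one subtracts a suitable positive multiple of $R_{\sP}$ so that the remainder is again a nonnegative combination of wall relations, hence effective, but strictly simpler, after which the inductive hypothesis applies. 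Combining the two inclusions with Step 1 yields the displayed equality.

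\emph{Step 4 (strong convexity).} As $X$ is projective it carries an ample class $h$, and $h\cdot[C]>0$ for every nonzero effective $1$-cycle $C$; thus $h$ is strictly positive on $\moriCone(X)\setminus\{0\}$, forcing $\moriCone(X)\cap(-\moriCone(X))=\{0\}$. Together with the rationality from Step 1 this shows $\moriCone(X)$ is a strongly convex rational cone. I expect the main obstacle to be the second inclusion in Step 3: arranging the complexity measure and the subtracted multiple so that the remainder stays simultaneously effective and strictly simpler is precisely the delicate induction of Batyrev \cite{BatyrevPrimitiveCollection} (see also \cite[\S6.4]{CoxLittleSchenckToricVarieties}); everything else is the standard toric dictionary together with projectivity.
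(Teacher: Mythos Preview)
The paper does not prove this theorem; it is quoted from \cite[Theorem 6.4.11]{CoxLittleSchenckToricVarieties} without argument. The only thing the paper adds is the remark immediately following the statement, which observes that the ``in particular'' clause can be seen directly: $\moriCone(X)$ is spanned by the classes of $\bT$-invariant curves (the walls in $\Sigma_X(d-1)$), hence is rational polyhedral, and strong convexity follows because its dual cone $\nef(X)$ has maximal dimension $\rho$. This is exactly your Steps~1 and~4, so on that part you and the paper agree.

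Your Steps~2 and~3 go beyond what the paper does, supplying a sketch of the actual equality with the cone of primitive relations. The outline you give---identifying wall relations with $\bT$-invariant curve classes and then running Batyrev's inductive reduction---is the standard route in the cited references, and you correctly flag the second inclusion in Step~3 as the nontrivial part. One caution: your suggested argument for the \emph{first} inclusion (that each $R_{\sP}$ lies in $\moriCone(X)$ via star subdivisions and pushforward) is vaguer than it needs to be; in the simplicial projective setting one usually argues more directly that $R_{\sP}$ is itself a nonnegative combination of wall relations, or invokes the contractibility criterion, rather than passing to a modification. But since the paper makes no attempt at any of this and simply defers to \cite{CoxLittleSchenckToricVarieties}, there is nothing to compare against beyond the remark already noted.
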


\begin{remark}
    We do not need the above result to see that $\moriCone(X)$ is a rational strongly convex cone. Indeed, $\moriCone(X)$ is spanned by the (classes of) $\bT$-invariant curves of $X$ (which correspond to the walls in $\Sigma_X(d-1)$) and its dual cone has maximal dimension, the Picard rank. 
\end{remark}

\begin{remark}[Extremal primitive relations]
    The authors do not know of any characterization or criterion to tell exactly which $R_{\sP}$ generate the extremal rays of $\moriCone(X)$. However, Casagrande gave a sufficient condition for it in \cite[Proposition 4.3]{CasagrandeContractibleClassesInToricVarieties}. Contrast this with the Ph.D. thesis of Monsôres, where a characterization is provided for extremal rays of $\mov(X)$; see \cite[Theorem 5.3.3]{MonsoresPhDThesis}. We refer to those $R_{\sP}$ that generate extremal rays of $\moriCone(X)$ as \emph{extremal primitive relations}.
\end{remark}

\subsubsection{Toric extremal contractions}\label{subsection.ToricExtremalContractions} In order to describe the geometry of extremal contractions associated with primitive relations, we first need to recall the \emph{secondary fan} $\Sigma_{\mathrm{GKZ}}$ of $X$. 

Recall that to a $\bT$-invariant divisor $D$ on $X$ one attaches a polytope $P_D$. The normal fan of $P_D$ corresponds to a toric variety $X_D$ that reflects the properties of $D$. For example, $X_D \simeq X$ if and only if $D$ is ample. See \cite[6.2]{CoxLittleSchenckToricVarieties}. The secondary fan $\Sigma_{\text{GKZ}}$ of $X$ is a decomposition of $\eff(X)$ into polyhedral cones $\sigma$ such that the construction $[D] \mapsto X_D$ is constant in the relative interior of $\sigma$. The maximal dimensional cones of $\Sigma_{\text{GKZ}}$ are called the \emph{chambers of the secondary fan}. For example, the nef cone $\nef(X)$ of $X$ is a chamber of $\Sigma_{\text{GKZ}}$.

By the duality between the cones $\moriCone(X)$ and $\nef(X)$, an extremal ray $R$ of $\moriCone(X)$, which is identified with an extremal primitive relation $R_{\sP}$, corresponds to a facet $F_R$ of $\nef(X)$. Now, $F_R$ may or may not be on a facet of $\eff(X)$. That is, when one wall crosses $F_R$ from the interior of $\nef(X)$, one may or may not end up in the interior of $\eff(X)$. If $F_R$ is not a facet of $\eff(X)$, it is a divisorial wall or a flipping wall. Thus, the following trichotomy arises:
 
   \begin{itemize}
    \item Suppose that $F_R$ is on the boundary of $\eff(X)$. Then, there is no chamber on the other side of $F_R$ and we obtain a fibration $\phi_R \: X \rightarrow X_D$ where $[D]$ is in the relative interior of $F_R$. We refer to them as \emph{Mori fibrations}.
   \item  Suppose that $F_R$ is not on the boundary of $\eff(X)$ and let $\sigma$ be the chamber of $\Sigma_{\text{GKZ}}$ lying on the other side of $F_R$. Then, the wall crossing yields a birational morphism $\phi_R \:X \rightarrow X_{D}$ where $[D]$ is an element of the relative interior of $F_R$.
   \begin{itemize}
       \item  If $F_R$ is a divisorial wall, then $\phi_R$ contracts a prime $\bT$-invariant divisor, and the Picard rank decreases by $1$. We refer to them as \emph{extremal divisorial contractions}.
        \item If $F_R$ is a flipping wall, $\phi_R$ is a small contraction, and it induces an isomorphism between the Néron--Severi spaces, thereby preserving the Picard rank. The flip of the small contraction $\phi_R$ is given by a birational map $f_R \: X \dashrightarrow X_E$ where this time $[E]$ is an element in the relative interior of $\sigma$.
   \end{itemize}
\end{itemize}

In either case, the map $\phi_R \: X \rightarrow X_D$ is the extremal contraction corresponding to the extremal ray $R$. The flip $f_R$ is an example of a \textit{small $\Q$-factorial modification}, that is, $f_R$ is an isomorphism in codimension $1$ and $X_{D}$ is $\Q$-factorial.  See \cite[Ch. 15]{CoxLittleSchenckToricVarieties} for details. What we will take advantage of is that the properties of $\phi_R \: X \rightarrow X_D$ are determined by the corresponding primitive relation $R_{\sP}$ as established in the following result.

\begin{theorem}\label{thm.MoriTheoryOfFanoVarieties}
    With notation as above, let
    \[
        R_{\sP} : b_1u_1 + \dots + b_ku_k - b_{k+1}u_{k+1} - \dots - b_lu_l = 0,
    \]
    be an extremal primitive relation, i.e, it spans an extremal ray $R$ of $\moriCone(X)$. Let $\phi_R \: X\to S$ be the associated extremal contraction. The following statements hold:
    \begin{enumerate}
    \item  The positive dimensional fibers of $\phi_R$ have dimension $k-1$. 
        \item The codimension of the exceptional locus of $\phi_R$ is $l-k$. In particular:
        \begin{enumerate}
            \item $\phi_R$ is a Mori fibration if and only if $l=k$.
            \item $\phi_R$ is divisorial if and only if $l=k+1$. In that case, $P_{k+1}$ is the exceptional divisor, and $P_{k+1} \cdot R_{\mathcal{P}} = -b_{k+1}$.
            \item $\phi_R$ is small if and only if $l\geq k+2$. 
        \end{enumerate}
        \item If $X$ is smooth, then $b_1 = \cdots = b_k =1$ and $\ell(R) = k - (b_{k+1} + \dots + b_l)$.
    \end{enumerate}
\end{theorem}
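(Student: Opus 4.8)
The statement is a compilation of results from toric Mori theory, so the plan is to assemble it from Reid's combinatorial description of toric extremal contractions, as presented in \cite[Ch.~15]{CoxLittleSchenckToricVarieties}, together with Batyrev's analysis of primitive collections \cite{BatyrevPrimitiveCollection}. The first step is to pin down the combinatorial shape of $\phi_R$. Writing the extremal primitive relation, after clearing common factors, as
\[
R_{\sP}\colon\quad \sum_{i=1}^{k} b_i u_i \;=\; \sum_{j=k+1}^{l} b_j u_j, \qquad b_i \in \bN\setminus\{0\},
\]
set $\sigma_- \coloneqq \langle u_{k+1},\dots,u_l\rangle_{\bR_{\geq 0}}$, with the convention $\sigma_-=\{0\}$ if $l=k$. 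The structural input I would isolate---either quoting it from \cite[Ch.~15]{CoxLittleSchenckToricVarieties}, or reproving it from the wall-relation calculus and the extremality of $R$---is that $\sigma_-$ is a simplicial cone of $\Sigma_X$ whose primitive ray generators are exactly $u_{k+1},\dots,u_l$, that $\phi_R$ restricts to an isomorphism over $S\setminus\phi_R(V(\sigma_-))$, and that the exceptional locus of $\phi_R$ equals the orbit closure $V(\sigma_-)$. In the smooth case this is cleaner, since Batyrev's disjointness lemma \cite[Proposition~3.1]{BatyrevPrimitiveCollection} already gives $\{u_1,\dots,u_k\}\cap\{u_{k+1},\dots,u_l\}=\varnothing$ and $b_1=\dots=b_k=1$.

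Granting this, part~(2) is immediate from the orbit--cone correspondence: as $X$ is $\bQ$-factorial, $\Sigma_X$ is simplicial, so $\dim\sigma_-=l-k$ and hence $\codim V(\sigma_-)=l-k$. The trichotomy drops out: $l=k$ forces the exceptional locus to be all of $X$, so $\phi_R$ is a Mori fibration; $l=k+1$ gives $V(\sigma_-)=V(\langle u_{k+1}\rangle_{\bR_{\geq 0}})=P_{k+1}$, a prime divisor; and $l\geq k+2$ makes the codimension at least $2$, so $\phi_R$ is small. In the divisorial case, under the identification (recalled above) of $N_1(X)_{\bR}$ with the space of $\bR$-linear relations among $u_1,\dots,u_r$, the number $P_{k+1}\cdot R_{\sP}$ is just the coefficient of $u_{k+1}$ in $R_{\sP}$, namely $-b_{k+1}$. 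For part~(1) I would run the local fiber analysis of \cite[Ch.~15]{CoxLittleSchenckToricVarieties}: the fibers of $\phi_R$ are again complete toric varieties, and the positive-dimensional ones are precisely the $(k-1)$-dimensional (possibly weighted, and genuine projective spaces in the smooth case) varieties cut out by the $k$ rays $u_1,\dots,u_k$ of the positive part subject to the single relation $R_{\sP}$; in particular every positive-dimensional fiber has dimension $k-1$.

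For part~(3), assume $X$ smooth. Then $b_1=\dots=b_k=1$ is \cite[Proposition~3.1]{BatyrevPrimitiveCollection}, as recalled above, so $\deg R_{\sP}=k-(b_{k+1}+\dots+b_l)$ and it remains to check $\ell(R)=\deg R_{\sP}$. Since $\moriCone(X)$ is generated by the classes of torus-invariant curves and $R$ is extremal, there is a wall $\tau\in\Sigma_X(d-1)$ with $[V(\tau)]\in R$. Now $C\mapsto(C\cdot P_i)_{i=1}^{r}$ realizes $N_1(X)$ as a saturated sublattice of $\bZ P_1\oplus\dots\oplus\bZ P_r$, and in these coordinates $R_{\sP}$ has entries $1$, $-b_j$, and $0$, with no common factor; hence $R_{\sP}$ is a primitive vector of $N_1(X)$, and so is $[V(\tau)]$, so both are generators of the monoid $R\cap N_1(X)$ and $[V(\tau)]=R_{\sP}$. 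This gives $\ell(R)\leq -K_X\cdot V(\tau)=\deg R_{\sP}$. Conversely $R\cap N_1(X)=\bZ_{\geq 0}\,R_{\sP}$, so every rational curve with class in $R$ has class a positive-integer multiple of $R_{\sP}$; as $\deg R_{\sP}$ is positive in the Fano setting where $\ell(R)$ is considered, this forces $\ell(R)\geq\deg R_{\sP}$. Combining, $\ell(R)=k-(b_{k+1}+\dots+b_l)$.

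I expect the main obstacle to be the first step in the genuinely non-smooth $\bQ$-factorial case: showing that for an \emph{extremal} primitive relation the negative part $u_{k+1},\dots,u_l$ really spans a cone of $\Sigma_X$ and that Reid's contraction is governed by it exactly as stated---most delicately the uniform fiber-dimension claim of part~(1)---since there Batyrev's disjointness lemma is unavailable. After that, everything is bookkeeping with the orbit--cone correspondence and the dictionary between $N_1(X)_{\bR}$ and linear relations among the primitive ray generators.
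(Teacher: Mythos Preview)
Your proposal is correct and follows essentially the same approach as the paper: parts~(a) and~(b) are referred to \cite[Proposition~15.4.5]{CoxLittleSchenckToricVarieties}, and for part~(c) the paper argues---exactly as you do---that since $X$ is smooth some coefficient of $R_{\sP}$ equals $1$, so $R_{\sP}$ is primitive in $N_1(X)$ and therefore coincides with the class of the rational curve generating $R$. Your write-up simply supplies more of the underlying detail than the paper chooses to record.
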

\begin{proof}
    For the proofs of (a) and (b), see \cite[Proposition 15.4.5]{CoxLittleSchenckToricVarieties}. For (c), it remains to explain why $\el(R) = \deg R_{\sP}$. To this end, note that since $X$ is smooth there exists a coefficient of $R_{\sP}$ equal to $1$ and it is primitive in $R$. Then $R_{\sP}$ coincides with the class of the rational curve generating $R$.  
\end{proof}

\subsubsection{Toric extremal Fano varieties}\label{rem.BoundsForLengths}
    Suppose that $X$ is smooth. With notation as in \autoref{thm.MoriTheoryOfFanoVarieties}, let $d_R$ be the maximal dimension of a fiber of $\phi_R$ as in \autoref{sec.Intro}. Then we see that $d_R +1 = \ell(R)$ if $R$ is a fibration and $d_R \geq \ell(R)$ if $\phi_R$ is birational.\footnote{The Ionescu--Wiśniewski inequalities for toric varieties are also verified.} Moreover, $\ell(R) = d_R$ if and only if $\phi_R$ is a divisorial contraction whose exceptional divisor intersects $R_{\mathcal{P}}$ with value $-1$. From this we may conclude that all toric Mori fibrations are projective bundles. Indeed, the $\bC$-model of a toric Mori fibration $\phi_R \: X \to S$ must be a projective bundle by \cite[Theorem 1.3]{HoringNovelliMoriContractionsOfMaximalLength}. Since this is something that depends only on the combinatorics of the polytopes involved, it is characteristic-free. It is worth noting that a direct characteristic-free proof of this fact can be worked out directly. See, for example, \cite[Proposition 3.3.8]{MonsoresPhDThesis}, which further shows that $X=\bP(\sF)$ where $\sF$ is split as a direct sum of invertible sheaves on $S$. Likewise, we may conclude that a birational extremal contraction such that $\ell(R) = d_R$ (i.e., with maximal length) must be a smooth blowup by \cite[Theorem 5.1]{AndreattaOcchettaSpecialRaysInTheMoriConeOfaProjVar}. 

\begin{corollary}
    A smooth toric variety is an extremal Fano variety if and only if every birational extremal contraction is a smooth blowup.
\end{corollary}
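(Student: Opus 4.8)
The plan is to read both directions of the equivalence through \autoref{thm.MoriTheoryOfFanoVarieties}, which converts the geometry of an extremal contraction $\phi_R$ into the shape of the corresponding extremal primitive relation, together with the length identities recorded in \autoref{rem.BoundsForLengths}. Concretely, fix an extremal primitive relation
\[
R_{\sP}\: u_1+\cdots+u_k-b_{k+1}u_{k+1}-\cdots-b_lu_l=0,\qquad b_{k+1},\dots,b_l\in\bN\setminus\{0\},
\]
spanning an extremal ray $R$ of $\moriCone(X)$, with associated contraction $\phi_R\: X\to S$. By \autoref{thm.MoriTheoryOfFanoVarieties} and \autoref{rem.BoundsForLengths} we have $d_R=k-1$ and $\ell(R)=\deg R_{\sP}=k-(b_{k+1}+\cdots+b_l)$; moreover $\phi_R$ is a Mori fibration, divisorial, or small according to whether $l=k$, $l=k+1$, or $l\geq k+2$, and $\phi_R$ is a smooth blowup precisely when it is divisorial with $b_{k+1}=1$.

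For the forward implication, suppose $X$ is an extremal Fano variety and let $\phi_R$ be a birational extremal contraction, so $l\geq k+1$. Maximality of the length gives $\ell(R)\geq d_R$, that is, $b_{k+1}+\cdots+b_l\leq 1$; being a sum of $l-k\geq 1$ positive integers, this is possible only if $l=k+1$ and $b_{k+1}=1$, i.e. $\phi_R$ is a smooth blowup.

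For the converse, suppose every birational extremal contraction is a smooth blowup. First I would observe that no extremal contraction is small: a small contraction is birational and, being an isomorphism in codimension one, has no exceptional divisor, hence is not a smooth blowup. Thus every extremal ray $R$ has either $l=k$ (a Mori fibration) or $l=k+1$ with $b_{k+1}=1$ (a smooth blowup). Since every primitive collection has at least two rays, $k\geq 2$, so $\deg R_{\sP}$ equals $k$ or $k-1$ and is therefore strictly positive; as $\moriCone(X)$ is the closed convex cone on its extremal rays and $\ample(X)=\interior\nef(X)$ is the interior of its dual, $-K_X$ is ample and $X$ is Fano. Finally, $\ell(R)=k=d_R+1$ in the fibration case and $\ell(R)=k-1=d_R$ in the smooth-blowup case, so every extremal ray has maximal length and $X$ is an extremal Fano variety.

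The only point that goes beyond bookkeeping with \autoref{thm.MoriTheoryOfFanoVarieties} is verifying Fanoness in the converse, since it is part of the definition of an extremal Fano variety but not of the hypothesis; this is where one must use that the hypothesis excludes small extremal contractions and pins down $b_{k+1}=1$ on the divisorial ones, so that $\deg R_{\sP}\in\{k-1,k\}$ with $k\geq 2$ makes $-K_X$ strictly positive on $\moriCone(X)\setminus\{0\}$.
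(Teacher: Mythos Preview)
Your proof is correct and follows the same approach as the paper, which states the corollary without proof as an immediate consequence of the preceding discussion in \autoref{rem.BoundsForLengths}. You spell out more than the paper does: in particular, you verify that the hypothesis ``every birational extremal contraction is a smooth blowup'' forces $X$ to be Fano, a point the paper leaves entirely implicit but which is needed given the literal phrasing of the statement (the corollary says ``smooth toric variety,'' not ``smooth toric Fano variety''). Your argument for this---ruling out small contractions, pinning down $b_{k+1}=1$ on the divisorial ones, and using $k\geq 2$ to conclude $\deg R_{\sP}>0$ on every extremal ray---is exactly right.
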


\subsubsection{Centrally symmetric primitive relations} \label{Rem.CentrallySymmetricPrimitiveRelations}
Let $X$ be a smooth $d$-dimensional toric variety. By \cite[Proposition 3.2]{BatyrevPrimitiveCollection}, there is a primitive collection $\sP$ with zero focus, i.e., whose primitive relation is of the form $R_{\sP}\: u_1 +\cdots + u_k = 0$. These types of relations are referred to in the literature as \emph{centrally symmetric primitive relations}. Since $\sP$ is a primitive collection, any proper subset of $\{u_1,\ldots,u_k\}$ spans a cone in $\Sigma$. However, since $\Sigma$ is simplicial, a cone is generated by at most $d$ elements, so $k \leq d+1$. 

In \cite{MinimalPBundleDim}, the authors coined the term \emph{minimal projective bundle dimension} for the minimal degree of a centrally symmetric primitive relation minus $1$. Thus, the minimal projective bundle dimension $m(X)$ belongs to $\{1,\ldots,d\}$. This is very much inspired by \cite{ChenFuHwangMinimalRationalCurvesOnCompleteToricManifolds}, where it is established that centrally symmetric primitive relations correspond to \emph{minimal dominating families of rational curves on $X$}. More precisely, although centrally symmetric primitive relations might not be extremal,\footnote{By \cite[Proposition 4.1]{BatyrevPrimitiveCollection}, this is the case if and only if $\sP$ is disjoint from any other primitive collection.} they define generic projective bundles. That is, from a centrally symmetric relation of degree $k$ one constructs a $\bP^{k-1}$-bundle $U \to S$ where $U \subset X$ is an open subset. However, what matters to us is the much simpler statement that $X \simeq \bP^d$ if and only if $d=m(X)=k-1$. Indeed, if $X$ were to have a primitive collection of cardinality $d+1$, then it would have the fan of a projective space of dimension $d$. In conclusion, we obtain the following characterization of projective spaces among smooth toric varieties. This will play a crucial role in our proof of \autoref{thm.MainTheoremBignessAmplenessProjSpace}.

\begin{proposition}[Characterization of projective spaces]\label{prop.ToricCharacterizationOfProjectiveSpace}
    A smooth toric variety is a projective space if and only if its minimal projective bundle dimension coincides with its dimension. In other words, a smooth toric variety $X$ is a projective space if and only if it admits a centrally symmetric primitive relation of degree $\dim X+1$.
\end{proposition}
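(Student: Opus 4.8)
The plan is to prove both directions by working directly with the combinatorics of the fan $\Sigma_X$ and the notion of centrally symmetric primitive relation, using \autoref{prop.ToricCharacterizationOfProjectiveSpace}'s own claim that $X \simeq \bP^d$ if and only if $m(X) = d$, i.e., if and only if $\Sigma_X$ admits a primitive collection of cardinality $d+1$. The ``only if'' direction is immediate: if $X = \bP^d$, then $\Sigma_X$ has exactly $d+1$ rays $u_0, \ldots, u_d$ with the single relation $u_0 + \cdots + u_d = 0$, which is a centrally symmetric primitive relation of degree $d+1$, so $m(X) = d = \dim X$.

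For the ``if'' direction, suppose $\Sigma_X$ has a primitive collection $\sP = \{\rho_1, \ldots, \rho_k\}$ with $k = d+1$ whose primitive relation is centrally symmetric, so $u_1 + \cdots + u_{d+1} = 0$ (here I use smoothness: a smooth toric variety always admits some centrally symmetric primitive relation by \cite[Proposition 3.2]{BatyrevPrimitiveCollection}, and $k \le d+1$ always, so the hypothesis $m(X) = d$ really does produce such a relation of maximal cardinality). First I would observe that every proper subset of $\{u_1, \ldots, u_{d+1}\}$ spans a cone of $\Sigma_X$; in particular the $d$ vectors $u_1, \ldots, u_d$ span a cone $\sigma \in \Sigma_X(d)$, and since $X$ is smooth they form a $\bZ$-basis of $N$. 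Because $u_1 + \cdots + u_{d+1} = 0$, we get $u_{d+1} = -(u_1 + \cdots + u_d)$, so the $u_i$ are determined, and the claim reduces to showing $\Sigma_X$ has \emph{no other rays}: once $r = |\Sigma_X(1)| = d+1$, the identity $\rho = r - d$ forces $\rho(X) = 1$, and a complete simplicial (indeed smooth) fan with $d+1$ rays summing to zero is forced to be the fan of $\bP^d$ — its maximal cones are exactly the $d+1$ subsets of size $d$, by completeness together with strong convexity.

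So the crux is: why can $\Sigma_X$ have no ray beyond $u_1, \ldots, u_{d+1}$? Suppose $u$ is a primitive ray generator of $\Sigma_X$ not among them. Writing $u$ in the basis $u_1, \ldots, u_d$ as $u = \sum a_i u_i$, I would argue using that the rays of any single cone together with completeness force sign constraints: the cone $\sigma = \langle u_1, \ldots, u_d\rangle$ and the cones obtained by swapping $u_i$ for $u_{d+1}$ cover $N_\bR$ (this is the fan of $\bP^d$ on these $d+1$ rays), so $u$ must lie in the relative interior of one of them, contradicting that $u$ spans a ray $\rho \notin \{\rho_1, \ldots, \rho_{d+1}\}$ and that $\Sigma_X$ is a fan (distinct cones meet along faces). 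Making this rigorous is the main obstacle: I need that the $d+1$ cones just described actually are cones of $\Sigma_X$ (not merely of the abstract $\bP^d$-fan), which follows because $\{u_1, \ldots, u_d\}$ spans a maximal cone of $\Sigma_X$ and, by primitivity of $\sP$, so does each $\{u_1, \ldots, \widehat{u_i}, \ldots, u_{d+1}\}$ of size $d$ — and a complete fan cannot properly contain the support of a complete fan, so these exhaust $\Sigma_X$. Alternatively, and more cleanly, I would cite \cite[Exercise 6.4.3]{CoxLittleSchenckToricVarieties} or the well-known classification (Kleinschmidt / the ``$r = d+1$'' case): a complete smooth toric variety of Picard rank $1$ is $\bP^d$, and then it suffices to show the primitive collection of size $d+1$ forces $r = d+1$, hence $\rho(X) = 1$. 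That last reduction is the cleanest route and is what I would write up, relegating the ``no extra rays'' argument to the observation that $\{u_1, \ldots, u_d\}$ and its $d$ companions already form a complete fan on $N_\bR$.
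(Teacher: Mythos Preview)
Your proposal is correct and follows essentially the same approach as the paper: the paper's justification is the single sentence ``if $X$ were to have a primitive collection of cardinality $d+1$, then it would have the fan of a projective space of dimension $d$,'' and you have supplied precisely the details behind that claim (every size-$d$ subset of $\sP$ spans a maximal cone of $\Sigma_X$, these $d+1$ cones already cover $N_{\bR}$, so by the fan axioms no further rays or maximal cones can exist). Your alternative route via $r=d+1\Rightarrow\rho(X)=1$ is also valid but, as you note, still rests on the same ``no extra rays'' observation.
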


\subsubsection{Toric nef and moving cones} To conclude our remarks on the Mori geometry of toric varieties, we recall how to obtain the nef cone and the moving cone of divisors of a toric variety. It should be noted that the following description can be extended to the other chambers of the secondary fan. 
\begin{proposition}[{\cite[Proposition 15.2.1]{CoxLittleSchenckToricVarieties}}]\label{proposition.nefConeContainedInSomeCone}
    For each $x \in X^{\bT}$, the set $J_{x} \coloneqq \{ \pi_i \mid P_i \notin x\}$ is a basis of $N^1(X)_{\R}$ such that $\langle J_{x}\rangle_{\bR_{\geq 0}} \supset \nef(X)$. Conversely, any such basis is of this form and moreover
    \[
        \nef(X) = \bigcap_{x \in X^{\bT}} \langle J_{x}\rangle_{\bR_{\geq 0}}.
    \]
    In particular, a facet of $\nef(X)$ is contained in a facet of $\langle J_{x}\rangle_{\bR_{\geq 0}}$ for some $x \in X^{\bT}$.
\end{proposition}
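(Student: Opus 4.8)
The plan is to translate the statement into the language of $\bT$-invariant divisors and their support functions, reducing it to linear algebra over \autoref{eqn.SESPicard}. Fix $x=x_\sigma$ with $\sigma\in\Sigma(d)$ and let $T\subseteq\{1,\dots,r\}$ index the rays of $\sigma$; since $X$ is $\bQ$-factorial, $|T|=d$, so $J_x=\{\pi_i:i\notin T\}$ has $r-d=\rho$ elements and $\{u_i\}_{i\in T}$ is a basis of $N_\bR$. To see $J_x$ is a basis of $N^1(X)_\bR$, take the dual basis $\{m_i\}_{i\in T}\subseteq M_\bR$ and observe $\Div m_i=P_i+\sum_{j\notin T}\langle m_i,u_j\rangle P_j$, so $\{\Div m_i\}_{i\in T}\cup\{P_j\}_{j\notin T}$ arises from the standard basis $\{P_i\}$ of $\bR^r$ by a triangular change of coordinates. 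Hence $\bR^r=\Div_{\bR}(M_{\bR})\oplus\langle P_j:j\notin T\rangle_{\bR}$, and tensoring \autoref{eqn.SESPicard} with $\bR$ (recall $\Cl(X)_\bR=N^1(X)_\bR$) shows the quotient map restricts to an isomorphism $\langle P_j:j\notin T\rangle_{\bR}\xrightarrow{\ \sim\ }N^1(X)_\bR$, $P_j\mapsto\pi_j$; so $J_x$ is a basis.

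Both $\nef(X)\subseteq\langle J_x\rangle_{\bR_{\geq 0}}$ and the identity $\nef(X)=\bigcap_x\langle J_x\rangle_{\bR_{\geq 0}}$ follow from one computation. Lift $\alpha\in N^1(X)_\bR$ to a $\bT$-invariant $\bR$-divisor $D=\sum_ia_iP_i$, which is $\bR$-Cartier since $X$ is $\bQ$-factorial. For $x=x_\sigma$ let $m_\sigma\in M_\bR$ be the unique character with $\langle m_\sigma,u_i\rangle=-a_i$ for $i\in T$; then $D+\Div m_\sigma$ is linearly equivalent to $D$ and has vanishing coefficient at $P_i$ for all $i\in T$, so
\[
\alpha=\sum_{i\notin T}\bigl(a_i+\langle m_\sigma,u_i\rangle\bigr)\,\pi_i .
\]
Moreover, as $\{u_i\}_{i\in T}$ is a basis, $D+\Div m_\sigma$ is the \emph{only} $\bT$-representative of $\alpha$ with no $P_i$, $i\in T$, in its support; hence $\alpha\in\langle J_x\rangle_{\bR_{\geq 0}}$ if and only if $a_i+\langle m_\sigma,u_i\rangle\geq0$ for every $i$, i.e. if and only if $m_\sigma\in P_D:=\{m:\langle m,u_i\rangle\geq-a_i\ \forall i\}$. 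On the other hand, the support-function criterion for complete toric varieties (\cite[\S6.1, \S6.3]{CoxLittleSchenckToricVarieties}, extended from $\bQ$-Cartier to $\bR$-Cartier by continuity) says $D$ is nef if and only if $m_\sigma\in P_D$ for every $\sigma\in\Sigma(d)$. Combining the two, $\alpha\in\nef(X)$ precisely when $\alpha\in\langle J_{x_\sigma}\rangle_{\bR_{\geq 0}}$ for all $\sigma\in\Sigma(d)$, which is the asserted equality; in particular $\nef(X)\subseteq\langle J_x\rangle_{\bR_{\geq 0}}$ for each $x$.

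For the converse, let $J=\{\pi_i:i\in S\}$ be a basis of $N^1(X)_\bR$ with $\langle J\rangle_{\bR_{\geq 0}}\supseteq\nef(X)$, and put $T:=\{1,\dots,r\}\setminus S$, so $|T|=d$; running the linear algebra of the first step backwards shows $\{u_i\}_{i\in T}$ is a basis of $N_\bR$. Since $\nef(X)$ is full-dimensional and contained in the simplicial cone $\langle J\rangle_{\bR_{\geq 0}}$, we get $\ample(X)=\interior\nef(X)\subseteq\interior\langle J\rangle_{\bR_{\geq 0}}=\langle J\rangle_{\bR_{> 0}}$; choose an ample class there and a $\bT$-representative $A=\sum_{i\in S}c_iP_i$ with all $c_i>0$ (so $c_i=0$ for $i\in T$). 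As $A$ is ample, $\Sigma$ is the normal fan of $P_A=\{m:\langle m,u_i\rangle\geq-c_i\ \forall i\}$. Now $0\in P_A$ because all $c_i\geq0$, and the inequalities active at $0$ are exactly those with $c_i=0$, namely $\{u_i\}_{i\in T}$, which span $N_\bR$; hence $0$ is a vertex of $P_A$, and the maximal cone of the normal fan at it is $\cone(u_i:i\in T)$. Therefore $\sigma_0:=\cone(u_i:i\in T)$ lies in $\Sigma(d)$, and its torus-fixed point $x$ has $x\in P_i$ exactly for $i\in T$, so $J_x=\{\pi_i:i\notin T\}=J$.

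Finally, the facet assertion is the general polyhedral fact that a facet of a finite intersection of full-dimensional cones lies in a facet of one of them: a point in the relative interior of a facet of $\nef(X)=\bigcap_x\langle J_x\rangle_{\bR_{\geq 0}}$ does not lie in $\interior\nef(X)=\bigcap_x\interior\langle J_x\rangle_{\bR_{\geq 0}}$, hence lies on a facet of some $\langle J_x\rangle_{\bR_{\geq 0}}$; the supporting functional of that facet is nonnegative on $\langle J_x\rangle_{\bR_{\geq 0}}\supseteq\nef(X)$ and vanishes at the chosen relative-interior point, so it vanishes on the whole facet of $\nef(X)$, which therefore lies in the facet of $\langle J_x\rangle_{\bR_{\geq 0}}$. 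I expect the converse step to be the only one with genuine content (the rest being a mechanical dictionary between divisor classes and support functions), and there the crux is to produce the right ample divisor, recognize the origin as a vertex of its polytope, and use that $\Sigma$ is literally the normal fan of $P_A$.
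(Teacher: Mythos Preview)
Your proof is correct. The paper itself proves only the final facet assertion, citing \cite[Proposition 15.2.1]{CoxLittleSchenckToricVarieties} for everything else; you supply complete arguments for those parts via the support-function dictionary and the normal-fan description of ampleness, which is the standard route and essentially what lies behind the cited reference. For the facet claim the paper invokes a general principle from Gr\"unbaum's book (a face of an intersection of polyhedral cones with pairwise overlapping relative interiors decomposes as an intersection of faces of the factors), whereas you argue directly by picking a relative-interior point of the facet, locating a cone $\langle J_x\rangle_{\bR_{\geq 0}}$ on whose boundary it sits, and using the supporting functional of the corresponding facet. These are two packagings of the same polyhedral fact; yours is slightly more self-contained, while the paper's formulation makes the general combinatorial statement explicit.
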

\begin{proof}
    It only remains to explain the last claim, as the rest is stated in \cite[Proposition 15.2.1]{CoxLittleSchenckToricVarieties}. Notice that the relative interiors of the cones $\langle J_{x}\rangle_{\R \geq 0}$ for different points $x \in X^{\bT}$ are not disjoint, as $\nef(X)$ is maximal dimensional. The claim then follows from the following general principle. Let $C=C_1 \cap \cdots \cap C_n$ be a polyhedral cone obtained as the intersection of polyhedral cones $C_i$ such that the relative interiors of $C_j$ and $C_k$ intersect for all $j,k$. Then, a face $F$ of $C$ is obtained as an intersection $F=F_1 \cap \cdots \cap F_n$ where the $F_i$ is a face of $C_i$; see \cite[2.4, Exercise 9, (iv)]{GrünbaumConvexPolytopes}. In case $F$ is a facet, then the face $F_i \subset C_i$ must be either a facet or $C_i$ itself. However, not all of the $F_i$ can be equal to $C_i$.
\end{proof}

From this we obtain the following result that will be instrumental in our proofs in \autoref{sec.Nefness}.

\begin{corollary}\label{keyLemma}
    If $R$ is an extremal primitive relation, then $ \dim_{\bR} \langle\pi_i \mid \pi_i \cdot R = 0 \rangle_{\bR} = \rho -1$.
\end{corollary}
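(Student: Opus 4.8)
The plan is to identify the linear span $\langle \pi_i \mid \pi_i\cdot R = 0\rangle_{\bR}$ with the hyperplane $R^{\perp}\coloneqq\{v\in N^1(X)_{\bR} : v\cdot R=0\}$ of the Néron--Severi space. Since $R$ is an extremal primitive relation it is a nonzero element of $N_1(X)_{\bR}$, so $R^{\perp}$ has dimension $\rho-1$, which is exactly the claimed value. The inclusion $\langle \pi_i \mid \pi_i\cdot R = 0\rangle_{\bR}\subseteq R^{\perp}$ is immediate from the definitions, so the entire content of the statement is the reverse inclusion; equivalently, one must produce $\rho-1$ linearly independent classes among the $\pi_i$ that pair to zero with $R$.

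First I would invoke the duality between $\moriCone(X)$ and $\nef(X)$: since $R$ spans an extremal ray of $\moriCone(X)$, the intersection $F_R\coloneqq\nef(X)\cap R^{\perp}$ is a facet of the full-dimensional cone $\nef(X)$, hence $\dim_{\bR}F_R=\rho-1$ and $F_R$ spans the hyperplane $R^{\perp}$. Next I would apply \autoref{proposition.nefConeContainedInSomeCone} to find a torus-fixed point $x\in X^{\bT}$ for which $F_R$ is contained in a facet of the simplicial cone $\langle J_x\rangle_{\bR_{\geq0}}$, where $J_x$ is the associated basis of $N^1(X)_{\bR}$ drawn from $\{\pi_1,\dots,\pi_r\}$. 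A facet of a simplicial cone is obtained by deleting exactly one generator, so this facet equals $\langle J_x\setminus\{\pi_j\}\rangle_{\bR_{\geq0}}$ for some $\pi_j\in J_x$ and therefore spans the hyperplane $\langle J_x\setminus\{\pi_j\}\rangle_{\bR}$, which is cut out by the $\rho-1$ linearly independent classes in $J_x\setminus\{\pi_j\}$.

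The argument then closes by a dimension count: $R^{\perp}$, being the span of $F_R$, is contained in $\langle J_x\setminus\{\pi_j\}\rangle_{\bR}$, and since both are $(\rho-1)$-dimensional linear subspaces they must coincide. Consequently the $\rho-1$ classes in $J_x\setminus\{\pi_j\}$ all satisfy $\pi_i\cdot R=0$ and span $R^{\perp}$, which yields $R^{\perp}\subseteq\langle \pi_i \mid \pi_i\cdot R = 0\rangle_{\bR}$ and hence the desired equality $\langle \pi_i \mid \pi_i\cdot R = 0\rangle_{\bR}=R^{\perp}$ of dimension $\rho-1$.

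The one point needing slight care is the claim that $F_R=\nef(X)\cap R^{\perp}$ has dimension exactly $\rho-1$ rather than smaller; this is precisely the standard face-to-face correspondence between the dual full-dimensional cones $\moriCone(X)\subset N_1(X)_{\bR}$ and $\nef(X)\subset N^1(X)_{\bR}$, under which extremal rays go to facets, which is already used in the discussion of toric extremal contractions above. Beyond that, the proof is a routine manipulation of linear spans of faces of polyhedral cones, so I do not anticipate any real obstacle.
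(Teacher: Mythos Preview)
Your proof is correct and follows essentially the same approach as the paper: both invoke \autoref{proposition.nefConeContainedInSomeCone} to place the facet $F_R$ inside a facet of some simplicial cone $\langle J_x\rangle_{\bR_{\geq 0}}$, then observe that this facet is spanned by $\rho-1$ linearly independent classes $\pi_i$ which necessarily satisfy $\pi_i\cdot R=0$. You spell out the dimension count identifying the span with $R^{\perp}$ more explicitly than the paper does, but the argument is the same.
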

\begin{proof}
By \autoref{proposition.nefConeContainedInSomeCone}, the facet $F_R$ associated with $R$ is contained in a facet of $\langle J_{x}\rangle_{\bR_{\geq 0}}$ for some $x \in X^{\bT}$. This facet is generated by $\rho  - 1$ linearly independent classes $\pi_i \in J_{x}$ and their intersection number with $R$ is zero.
\end{proof}

On a toric variety, nef divisors are necessarily globally generated, i.e., base-point-free. A natural weakening of this condition is that the base locus has codimension $\geq 2$. These are the so-called \emph{moving divisors}. They span the \emph{moving cone of divisors} of $X$, which is denoted as 
\[
\mov^1(X) \subset N^1(X)_{\bR}.
\]
We say that a divisor \emph{moves} if its numerical class belongs to this cone. The moving cone of divisors also admits a nice description in terms of the fan of $X$. 

\begin{proposition}[{\cite[Proposition 15.2.4, Theorem 15.1.10]{CoxLittleSchenckToricVarieties}}] \label{prop.MovingConeToricVarieties}
    With notation as above,
    \[
        \mov^1(X) = \bigcap_{i=1}^r \langle \pi_1,\dots,\pi_{i-1},\pi_{i+1},\dots,\pi_r\rangle_{\R \geq 0} = \bigcup_i f_i^*(\nef(X_i))  \subset N^1(X)_{\R},
    \]
    where the $f_i: X \dashrightarrow X_i$ are the finitely many toric small $\bQ$-factorial modifications of $X$. Moreover, each $f_i^*(\nef(X_i))$ is a chamber of the secondary fan $\Sigma_{\mathrm{GKZ}}$.
\end{proposition}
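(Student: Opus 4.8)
The plan is to deduce both equalities from the structure of the GKZ secondary fan, essentially repackaging \cite[Proposition 15.2.4 and Theorem 15.1.10]{CoxLittleSchenckToricVarieties}; the substance of the argument lies in assembling these statements and reconciling conventions.

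First I would establish the combinatorial description
\[
\mov^1(X) = \bigcap_{i=1}^r \langle \pi_1,\dots,\pi_{i-1},\pi_{i+1},\dots,\pi_r\rangle_{\bR_{\geq 0}}.
\]
By \autoref{prop.PseudoEffectiveCone}, every pseudo-effective class on $X$ has a $\bT$-invariant effective representative, and the $\bT$-invariant effective divisors in a class $[D]$ are the lattice points of the polytope $P_D$. A $\bQ$-divisor class moves exactly when its stable base locus has codimension $\geq 2$; since that base locus is a union of $\bT$-invariant subvarieties, it contains the prime divisor $P_i$ in codimension one precisely when $P_i$ appears with positive coefficient in every $\bT$-invariant effective representative of every positive multiple of $[D]$. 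Hence $[D]$ moves iff, for each $i$, some multiple of $[D]$ admits a $\bT$-invariant effective representative not involving $P_i$; translating through \autoref{eqn.SESPicard}, this is exactly the condition $[D] \in \langle \pi_1,\dots,\widehat{\pi_i},\dots,\pi_r\rangle_{\bR_{\geq 0}}$. Intersecting over $i$ yields the first equality, which is \cite[Proposition 15.2.4]{CoxLittleSchenckToricVarieties}.

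For the second equality I would use that $X$ and each small $\bQ$-factorial modification $f_i\colon X \dashrightarrow X_i$ share the same rays, hence the same total coordinate space $\bA^r$ and the same class group acting on it, so they have a common GKZ secondary fan and $f_i^*$ is the identity on $N^1(X)=N^1(X_i)$. Under this identification $\nef(X_i)$ is a maximal-dimensional chamber of $\Sigma_{\mathrm{GKZ}}$, which gives the ``moreover'' \cite[Theorem 15.1.10]{CoxLittleSchenckToricVarieties}. The chambers of $\Sigma_{\mathrm{GKZ}}$ lying in $\bigcap_i \langle \pi_j \mid j\neq i\rangle_{\bR_{\geq 0}}$ are precisely those whose associated GKZ variety is $\bQ$-factorial with the same rays as $X$, i.e.\ a small $\bQ$-factorial modification of $X$; conversely every such chamber equals $f_i^*(\nef(X_i))$ for some $i$, and since there are finitely many chambers their union is the whole combinatorial cone, because any class in it is nef on one of the $X_i$. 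Again this is contained in \cite[Theorem 15.1.10]{CoxLittleSchenckToricVarieties}.

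The main obstacle is bookkeeping rather than any single deep point: one must be careful that ``moving'' refers to numerical classes whereas stable base loci are computed with (multiples of) honest linear systems---so one passes to $\bQ$- or $\bR$-coefficients---and one must check that the finitely many chambers arising from small $\bQ$-factorial modifications tile the combinatorial cone $\bigcap_i \langle \pi_j \mid j\neq i\rangle_{\bR_{\geq 0}}$ exactly, with no leftover region and no overlap of relative interiors. Both are handled in \cite[Chapter 15]{CoxLittleSchenckToricVarieties}, so in the write-up I would simply cite the relevant statements there.
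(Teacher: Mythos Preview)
The paper does not prove this proposition at all: it is stated with the citation \cite[Proposition 15.2.4, Theorem 15.1.10]{CoxLittleSchenckToricVarieties} in the header and no proof environment follows. Your proposal is therefore not competing with any argument in the paper; you are simply unpacking the content of the same two references that the paper invokes, which is entirely appropriate and correct as a sketch.
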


\begin{remark} \label{rem.NefConesOfQFactorialModifications}
   Toric varieties are examples of Mori dream spaces, which satisfy the above decomposition of the cone of moving divisors. To better understand the maps $f_i$ in the above proposition, note that an extremal ray $R$ corresponds to an extremal contraction that is not small if and only if $F_R$ is on the boundary of $\movd(X)$. Following the above description of flips for toric varieties, we see that $f_i$ is a composition of flips. In particular, although the $X_i$ are uniquely determined by the chambers of $\Sigma_{\mathrm{GKZ}}$ inside the moving cone of divisors, the $f_i$ are not. The reason is that they could be expressed as the composition of toric flips in multiple ways, i.e., there could be many paths in which one can jump from chamber to chamber inside the moving cone to go from one to another.

   To avoid this ambiguity in the notation, we can do the following. Note that for a small $\Q$-factorial modification $f_i \: X\dashrightarrow X_i$, we have $\Sigma_X(1) = \Sigma_{X_i}(1)$. Therefore, the relations between the primitive ray generators of $\Sigma_X$ and $\Sigma_{X_i}$ are the same. This is to say that there is a canonical isomorphism between $N_1(X)$ and $N_1(X_i)$, i.e., all the $f_i$ induce the same isomorphism under pullback. In particular, we get a canonical isomorphism between $N^1(X)$ and $N^1(X_i)$. This can be made very explicit. Indeed, we can identify the $\mathbb{T}$-invariant prime divisors of $X_{i}$ with those of $X$ and see that the linear and numerical relations between them are the same, as they share a common big open subset. Moreover, this also shows that they share the same pseudo-effective cone and so the same big cone. 
   
   In summary, under the above identifications, we can say that the toric $\bQ$-factorial modifications $X=X_0, X_1,\ldots, X_k$ share a common Néron--Severi space $N^1(X)_{\bR}$ and the same pseudo-effective cone inside it. Moreover, they share a common moving cone of divisors, which admits the following decomposition into chambers of $\Sigma_{\mathrm{GKZ}}$
   \[
    \mov^1(X) = \nef(X_0) \cup \nef(X_1) \cup \cdots \cup \nef(X_n).
   \]
   This lets us understand the facets of the moving cone of divisors as follows. Each such facet corresponds to a non-small extremal contraction of a $\bQ$-factorial modification of $X$. It corresponds to a fibration if and only if it sits inside a facet of $\eff(X)$.
\end{remark}

\subsection{On the local structure of extremal divisorial contractions and singularities}  Let us commence by describing the local structure of $\bQ$-factorial toric varieties.
\subsubsection{On the singularities of $\bQ$-factorial toric varieties}\label{subsec.QFactorialToricSingularities} Let $X$ be a $\bQ$-factorial toric variety. It admits an open covering by toric affine subvarieties $U\subset X$. We may shrink $U$ further if necessary to ensure that $\Pic (U) = 0$. Then, on $U$ we would have $\rho=0$, $r=d$, resulting in $\Cl(U)$ being a finite group. After relabeling if necessary, we may say that $P_1,\ldots,P_d$ are the only $\bT$-invariant prime divisors of $X$ whose generic point is in $U$. Hence, we may think of them as the $\bT$-invariant prime divisors of $U$ and the restriction map gives an exact sequence
\[
0 \to \langle \pi_{d+1},\ldots, \pi_{d+\rho} \rangle_{\bZ} \to \Cl(X) \xrightarrow{ \pi_i \mapsto \pi_i} \Cl(U) \to 0 
\]
We will refer to any such $U$ as a \emph{purely $\bQ$-factorial} affine toric chart of $X$. 

For example, for each of the $\bT$-invariant points $x_1,\ldots,x_s \in X$, we may define a purely $\bQ$-factorial affine toric neighborhood $U_i \ni x_i$. The fan of $U_i$ is the fan of subcones of the $d$-dimensional cone $\sigma_i$ corresponding to $x_i$. In this case, the kernel of $\Cl(X) \to \Cl(U)$ is $\langle J_{i}\rangle_{\bZ}$ with $J_i \coloneqq J_{x_i}$ as in \autoref{proposition.nefConeContainedInSomeCone}. Equivalently, $\sigma_i$ is minimally generated by the $d$ primitive ray generators $u_j$ for which $P_j\ni x_i$. In this way, we produce a purely $\bQ$-factorial affine toric open covering
\[
X = U_1 \cup \cdots \cup U_s
\]
and we refer to it as the \emph{standard open covering} of $X$.

Fortunately, we have a good understanding of what an such $U$ is like. Indeed, it turns out that there is a finite $G$-quasitorsor cover 
\[
f\:\bA^d=\Spec \kay[t_1,\ldots,t_d] \to U
\] such that $f^* P_i = \Div t_i$ for all $i=1,\ldots, d$. Here, $G\coloneqq \mathfrak{D}(\Cl(U))$ is the diagonalizable finite algebraic group defined by the finite abelian group $\Cl(X)$. In general, $\mathfrak{D}(-)$ is an exact contravariant functor from the category of abelian groups to the one of group-schemes. It is given by $D(\Gamma)=\Spec \kay[\Gamma]$ where $\kay[\Gamma]$ is the group-algebra of $\Gamma$. For instance, \[
D(\bZ^r \oplus \bZ/m_1 \oplus \cdots \oplus \bZ/m_s)=\bG_{\mathrm{m}}^r \times \bm{\mu}_{m_1}  \times \cdots \times \bm{\mu}_{m_s}.\]
More generally, $\mathfrak{D}$ establishes an anti-equivalence from the category of finitely-generated abelian groups to the one of diagonalizable algebraic groups---those that are subgroups of tori. The adjoint functor of $\mathfrak{D}$ is the one of characters $\Hom(-,\bG_{\mathrm{m}})$. See \cite[Ch. 12]{MilneAlgebraicGroups} for more.

The definition and functioning of the finite cover $f$ above is rather simple. We explain next how it works, but we recommend seeing \cite{LinearlyReductiveQuotientSingularities} or \cite[\S3.1]{CarvajalRojasFayolleTameRamificationCentersOfPurity} for further details.

In general, the action of $\mathfrak{D}(\Gamma)$ on a $\kay$-scheme $S$ amounts to a $\Gamma$-grading 
\[
\sO_S = \bigoplus_{\gamma \in \Gamma} \sF_{\gamma}
\] as a sheaf of $\kay$-algebras. Assume that $\Gamma$ is a finite group. Then $\sF_0 = \sO_S^G \subset \sO_S$ is an integral extension of sheaves of $\kay$-algebras. It is finite precisely when $\sF_{\gamma}$ is a sheaf of finitely generated $\sF_0$-modules for all $\gamma \in \Gamma$. 

For the quotient map $S \to S/\mathfrak{D}(\Gamma)$ to exist, we require the standard condition that every point of $S$ admits an affine open neighborhood containing its $\mathfrak{D}(\Gamma)$-orbit. This translates to the sheaves being $\sF_{\gamma}$ (quasi-)coherent in the following sense. We ask $S$ to admit an affine open covering by affine opens $V \subset S$ such that there are sections $s_1,\ldots,s_n \in \sF_0(V)\subset \sO_S(V)$ such that $\langle s_1,\ldots, s_n \rangle \sO_S(V) = \sO_S(V)$ and the canonical homomorphisms
\[
\sF_{\gamma}(V)_{s_i} \coloneqq \sF_{\gamma}(V) \otimes_{\sF_0(V)} \sF_0(V)_{s_i} \to \sF_{\gamma}(D(s_i))
\]
are isomorphisms for all $i=1,\ldots,n$ and all $\gamma \in \Gamma$. In this case, we obtain a finite $\mathfrak{D}(\Gamma)$-quotient $g\: S \to S/\mathfrak{D}(\Gamma)$ where $g^{\#}$ is the inclusion $\sO_{S/\mathfrak{D}(\Gamma)} = \sF_0  \subset \sO_S$. This morphism is split by a trace map $\Tr_g \: \sO_S \to \sO_{S/\mathfrak{D}(\Gamma)}$, which is the canonical projection onto the zeroth degree summand (see \cite[\S3]{CarvajalFiniteTorsors}). Moreover, $g$ is a $\mathfrak{D}(\Gamma)$-torsor over a point $x \in S/\mathfrak{D}(\Gamma)$ if and only if $(\sF_{\gamma})_x$ is a free $\sO_{S/\mathfrak{D}(\Gamma),x}$-module of rank $1$ for all $\gamma \in \Gamma$. The $\mathfrak{D}(\Gamma)$-torsor locus is an open subset of $S/\mathfrak{D}(\Gamma)$ and when it is big we say that $g$ is a $\mathfrak{D}(\Gamma)$-quasitorsor. 

Let us return to the example that concerns us for now. To construct $f$ above, we must give a $\Cl(X)$-grading of $A\coloneqq \kay[t_1,\ldots,t_d]$. First, observe that the $\bT$-action on $\bA^d$ (where $\bT=\mathfrak{D}(\bZ^d)$) corresponds to the standard $\bZ^d$-grading 
\[
A = \bigoplus_{(n_1,\ldots,n_d) \in \bN^d}  \kay t_1^{n_1} \cdots t_d^{n_d}.
\]
In our case, the exact sequence \autoref{eqn.SESPicard} leads to a presentation
\[
0 \to M\simeq \bZ^d \xrightarrow{\Div = [u_1 \cdots u_d]^{\top}} \bZ^d \xrightarrow{\varpi} \Cl(U) \to 0.
\]
Its dual is an exact sequence
\[
0 \to \bZ^d \xrightarrow{ \Upsilon \coloneqq \Div^{\vee}  = [u_1 \cdots u_d] } \bZ^d \simeq N \xrightarrow{} \Cl(U) \to 0.
\]
In other words,
\[
\Cl(U) = N/\langle u_1,\ldots,u_k \rangle_{\bZ}.
\]
This is transformed by $\mathfrak{D}(-)$ into the exact sequence
\[
0 \to G \to \bT \xrightarrow{D(\Upsilon)} \bT \to 0.
\]
 Since the action of $G$ on $\bA^d$ must be the restriction of the standard toric one, it corresponds to the $\Cl(U)$-grading $A = \bigoplus_{\delta \in \Cl(U)} A_{\delta}$ where
\[
A_{\delta} \coloneqq  \bigoplus_{ (n_1,\ldots,n_d) \in \varpi^{-1}(\delta) \cap \bN^d }  \kay t_1^{n_1} \cdots t_d^{n_d}.
\]
Therefore,
\[
  A^G = A_0 = \bigoplus_{ (n_1,\ldots,n_d) \in \Div(\bZ^d) \cap \bN^d }  \kay t_1^{n_1} \cdots t_d^{n_d} \xrightarrow{\simeq} \sO_U(U),
\]
where the displayed isomorphism of $\kay$-algebras is the one that sends $t_1^{n_1}\cdots t_d^{n_d}$ to the only section $s\: U \to \bA^1$ that restricts to the character $s_{\bG_{\mathrm{m}}} = \chi \: \bT \to \bG_{\mathrm{m}}$ given by 
\[
\chi \coloneqq \mathfrak{D}\Big(\bZ \xrightarrow{1 \mapsto (n_1,\ldots,n_d)} \bZ^d\Big).
\]

This construction also reveals why $f\: \bA^d \to U$ is compatible with the toric structures. First of all, since $G$ acts on $\bA^d$ via the standard action of $\bT$, we find that $\bT=\bT/G$ acts on $U$ and so $U$ is a $\bT$-variety. Furthermore, the toric structure $\bT \subset \bA^d$ is $G$-equivariant and its quotient is the toric structure $\bT = \bT/G \subset U$. In fact, we obtain the cartesian diagram
\[
\xymatrix{
 \bT \ar[r]^-{\subset} \ar[d]_-{f_{\bT} =D(\Upsilon)} & \bA^d \ar[d]^-{f} \\
\bT \ar[r]^-{\subset} & U
}
\]

We can readily see from this what the fan of $\bA^d/G$ is. Recall that the standard fan of $\bA^d$ is $\{ \sigma_I \coloneqq \langle e_i \mid i\in I\rangle_{\bR_{\geq 0}} \}_{I \subset [d]}$ where the $e_1,\ldots,e_d \in \bZ^d$ is the standard basis and $[d] \coloneqq \{1,\ldots,d\}$. Then the fan of $U$ is
\[
\Sigma_U =\big \{\Upsilon(\sigma_I) =  \langle u_i \mid i\in I\rangle_{\bR_{\geq 0}} \big\}_{I \subset [d]}.
\]

It further follows from the construction that the $G$-torsor locus of $f$ is the factorial/regular locus of $U$ and therefore it is big. Moreover, $\sO_U(-P_i) A = (t_i)$ and $f^* P_i = \Div t_i$.

The above describes how to recover $U$ from an $G$-action on $\bA^d$. One can do this backwards too by writing $f$ as the spectrum of an extension of rings
\[
\sO_U \subset \Cox(U) \coloneqq \bigoplus_{[D] \in \Cl(U)} \sO_U(D)
\]
inside the function field $\kay(t_1,\ldots,t_d)$, but we skip the details (see instead \cite[\S3.1]{CarvajalRojasFayolleTameRamificationCentersOfPurity}).

There is one more general principle that we want to extract from this. In general, if $X$ is a toric variety and we have a finite (diagonalizable) subgroup $G\subset \bT$ such that $\bT/G = \bT$ (i.e., we have a $G$-torsor $\bT \to \bT$) then the action of $G$ on $X$ induces a well-defined toric quotient $f\: X \to X/G$ that restricts to the original $G$-torsor $f_{\bT} \: \bT \to \bT$ on the torus. We will refer to such subgroups $G\subset \bT$ as \emph{toric}. Toric subgroups correspond to square $\bZ$-matrices $\Lambda$ of size $d$ and non-zero determinant, say $G=\ker (D(\Lambda))=D(\coker \Lambda)$ and $\Lambda = \Hom(\bT \to \bT/G,\bG_{\mathrm{m}})$, so $|G|=|\det \Lambda|$. Further, 
\[
\Sigma_{X/G} = \{\Lambda(\sigma) \mid \sigma \in \Sigma_X \}.
\]

\subsubsection{The local structure of extremal divisorial contractions} 
The following establishes that toric extremal divisorial contractions are locally ``fake weighted blowups.'' 

\begin{proposition} \label{prop.LocalStructureExtremalDivContraction}
     Let $X$ be a $\bQ$-factorial toric variety and $\phi\: X \to S$ be an extremal divisorial contraction given by an extremal primitive relation
    \[
        R \: b_1u_1 + \dots + b_ku_k - b_{k+1}u_{k+1} = 0.
    \]
    Let $C \coloneqq \phi(P_{k+1}) \subset S$ be the center of $\phi$ and $w=(w_1,\ldots,w_k)\in \bN^k$ be the primitive ray generator of the ray spanned by  $(b_1/b_{k+1},\ldots,b_k/b_{k+1}) \in \bQ^k_{>0}$. Then, there is an open covering of $S$ by purely $\bQ$-factorial toric affine charts $f\: \bA^d \to U$ (as in \autoref{subsec.QFactorialToricSingularities}) such that there is a commutative diagram
        \[
        \xymatrix{
       H \coloneqq C_U \times_U \bA^d = V(t_1,\ldots, t_k) \ar@{^{(}->}[r] \ar[d]_-{f} & \bA^d = \Spec \kay[t_1,\ldots,t_d] \ar[d]_-{f} & \Bl_H^{\mathrm{w}} \bA^d \ar[l]_-{\beta} \ar[d]^-{g} \\
        C_U \ar@{^{(}->}[r]   & U = \bA^d/G & X_U = (\Bl_H^{\mathrm{w}} \bA^d)/G \ar[l]_-{\phi_U = \beta/G}
        }
        \]
        where lower script $U$ denotes restriction to $U$ and $\beta$ is a \emph{weighted blowup} of $\bA^d$ along $H$ with weight $w$. N.B. the group $G=\mathfrak{D}(\Cl(U)) \subset \bT$ is toric and so $\beta/G$ is well-defined.
\end{proposition}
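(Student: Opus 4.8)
The plan is to reduce the proposition to a single identification of fans on each affine chart of $S$, exploiting that both rows of the diagram are toric and that $\phi$ is the toric morphism attached to a refinement of fans. First I would record the local shape of $\phi$. Since $\phi$ is a divisorial extremal contraction, \autoref{thm.MoriTheoryOfFanoVarieties} gives $l=k+1$, so $P_{k+1}$ is the exceptional divisor and the primitive relation reads $b_{k+1}u_{k+1}=b_1u_1+\cdots+b_ku_k$; hence $u_{k+1}$ lies in the relative interior of $\tau_0\coloneqq\langle u_1,\dots,u_k\rangle$, the cone $\tau_0$ is the cone of $\Sigma_S$ corresponding to the codimension-$k$ center $C=\phi(P_{k+1})$, and $\rho_{k+1}\coloneqq\bR_{\geq0}u_{k+1}$ is not a ray of $\Sigma_S$. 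The combinatorics of a toric divisorial wall crossing (\autoref{subsection.ToricExtremalContractions} together with \cite[Proposition~15.4.5]{CoxLittleSchenckToricVarieties}) then say exactly that $\Sigma_X$ is the star subdivision of $\Sigma_S$ at the ray $\rho_{k+1}$.

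Next I would pass to charts. Take the standard purely $\bQ$-factorial affine covering $S=\bigcup_{\sigma\in\Sigma_S(d)}U_\sigma$ of \autoref{subsec.QFactorialToricSingularities}, each chart $f\colon\bA^d=\Spec\kay[t_1,\dots,t_d]\to U=U_\sigma$ being the quotient by $G=\mathfrak D(\Cl(U))$ acting through the injection $\Upsilon\colon\bZ^d\hookrightarrow N$ sending $e_i$ to the $i$-th primitive generator of $\sigma$, with $f^*P_i=\Div t_i$. Because $\phi$ comes from a refinement of fans, $X_U\coloneqq\phi^{-1}(U)$ is the toric variety with fan $\{\tau\in\Sigma_X\mid\tau\subseteq\sigma\}$; by the previous paragraph this is $\{\text{faces of }\sigma\}$ when $\tau_0\not\preceq\sigma$ and the star subdivision of $\{\text{faces of }\sigma\}$ at $\rho_{k+1}$ when $\tau_0\preceq\sigma$. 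In the former case $C\cap U=\emptyset$, so $H=\emptyset$, $\Bl_H^{\mathrm{w}}\bA^d=\bA^d$, the map $\phi_U$ is an isomorphism, and the diagram is trivial; so I may assume $\tau_0\preceq\sigma$ and order the coordinates so that $e_1\mapsto u_1,\dots,e_k\mapsto u_k$. Then $f^*P_i=\Div t_i$ for $i\leq k$, whence $C_U=(P_1\cap\cdots\cap P_k)\cap U$, and no $t_j$ corresponds to $P_{k+1}$ since $\rho_{k+1}\notin\Sigma_S$.

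It then remains to compare the two rows over such a chart. As $f$ is a toric quotient it carries orbit closures to orbit closures, so $f^{-1}(C_U)=V(\langle e_1,\dots,e_k\rangle)=V(t_1,\dots,t_k)=H$, which is the (cartesian) left square. On the right, the weighted blowup $\beta\colon\Bl_H^{\mathrm{w}}\bA^d\to\bA^d$ is, in fan terms, the toric morphism of the star subdivision $\Sigma'$ of the orthant $\langle e_1,\dots,e_d\rangle$ at the ray spanned by $w_1e_1+\cdots+w_ke_k$. Since $G\subset\bT$ is a toric subgroup it acts on $\Bl_H^{\mathrm{w}}\bA^d$ compatibly with $\beta$ and the torus action, so $\beta/G$ is defined, and by the quotient formula for toric subgroups recalled in \autoref{subsec.QFactorialToricSingularities} the quotient $(\Bl_H^{\mathrm{w}}\bA^d)/G$ is the toric variety with fan $\{\Upsilon(\tau)\mid\tau\in\Sigma'\}$, with $g$ and $\phi_U=\beta/G$ the evident toric maps. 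Finally $\Upsilon$ takes $\Sigma'$ onto the star subdivision of $\{\text{faces of }\sigma\}$ at $\rho_{k+1}$: indeed $\Upsilon(\langle e_1,\dots,e_d\rangle)=\sigma$, and $\Upsilon(w_1e_1+\cdots+w_ke_k)=w_1u_1+\cdots+w_ku_k$ spans $\rho_{k+1}$ because $(w_1,\dots,w_k)$ is proportional to $(b_1,\dots,b_k)$ and $b_1u_1+\cdots+b_ku_k=b_{k+1}u_{k+1}$, while star subdivision commutes with the injective linear map $\Upsilon$ once the subdividing rays are matched. Hence $(\Bl_H^{\mathrm{w}}\bA^d)/G$ and $X_U$ have the same fan in $N_{\bR}$, so they are canonically isomorphic over $U$; this produces the commutative diagram, and letting $\sigma$ range over $\Sigma_S(d)$ gives the asserted covering of $S$.

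The hard part will be the first step: extracting cleanly, and in arbitrary characteristic, the precise statement that $\Sigma_X$ is the star subdivision of $\Sigma_S$ at $\rho_{k+1}$ — that is, reconstructing the fan of the target of a divisorial contraction — from \autoref{thm.MoriTheoryOfFanoVarieties} and the wall-crossing picture of \autoref{subsection.ToricExtremalContractions}. A secondary nuisance is the bookkeeping around the quotient by $G$, a (possibly non-reduced, when $p\mid|G|$) diagonalizable group scheme: one must check that $\beta$ is honestly $G$-equivariant and that the toric quotient formula $\Sigma_{Y/G}=\{\Upsilon(\tau)\mid\tau\in\Sigma_Y\}$ applies to $Y=\Bl_H^{\mathrm{w}}\bA^d$ over the affine (hence quasi-projective) base $\bA^d$. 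Granting these, everything else is the routine fan manipulation sketched above.
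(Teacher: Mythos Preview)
Your proposal is correct and follows essentially the same route as the paper: both identify $\Sigma_X$ as the star subdivision $\Sigma_S^*(u_{k+1})$, restrict to the standard purely $\bQ$-factorial charts $U_\sigma$, describe the weighted blowup of $\bA^d$ as the star subdivision of the orthant at $w_1e_1+\cdots+w_ke_k$, and then match fans after taking the $G$-quotient via the formula $\Sigma_{Y/G}=\{\Upsilon(\tau)\mid\tau\in\Sigma_Y\}$. The paper cites \cite[\S11.1]{CoxLittleSchenckToricVarieties} directly for the star-subdivision description of the divisorial contraction rather than extracting it from the wall-crossing picture, but otherwise the arguments coincide.
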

\begin{proof}
    We use the fact that $\Sigma_X$ is given by $\Sigma^*_S(u_{k+1})$---the so-called \emph{star subdivision} of $\Sigma_S$ at $u_{k+1}$ \cite[\S11.1]{CoxLittleSchenckToricVarieties}. It is instructive to recall how this works. First, note that $\Sigma_X$ and $\Sigma_S$ are fans in the common space $N_{\bR} \simeq \bR^d$. The point here is that the center $C\subset S$ has codimension $k$ (see \autoref{thm.MoriTheoryOfFanoVarieties} (a)) and it corresponds by the orbit-cone correspondence to the $k$-dimensional cone $\gamma \coloneqq \langle u_1,\dots, u_k \rangle_{\R\geq 0} \in \Sigma_S$. In particular, 
    \[
    \{\sigma \in \Sigma_S \mid \sigma \not\supset \gamma\} = \Sigma_{S\setminus C} =  \Sigma_{X \setminus P_{k+1}} = \{\sigma \in \Sigma_X \mid \sigma \notin u_{k+1} \}.
    \]
    Thus, for $\sigma \in \Sigma_S$, the conditions $\gamma \subset \sigma$ and $u_{k+1} \in \sigma$ are equivalent. So $\Sigma_{S \setminus C} = \{\sigma \in \Sigma_S \mid u_{k+1} \notin \sigma\}$. Let its complement be $\Pi_C=\{\sigma \in \Sigma_S \mid \sigma \ni u_{k+1}\} = \{\sigma \in \Sigma_S \mid \sigma \supset \gamma\}$.
    
    With the above in place, we have
    \[
    \Sigma_X= \Sigma^*_S(u_{k+1}) \coloneqq \Pi_{P_{k+1}} \sqcup \Sigma_{S \setminus C}.
    \]
    where
    \[
    \Pi_{P_{k+1}} \coloneqq \bigsqcup_{\sigma \in \Pi_C}  \{ \langle \tau, u_{k+1} \rangle_{\bR_{\geq 0}} \mid  \tau \subset \sigma, \tau \in \Sigma_{S\setminus C} \} =  \{\sigma \in \Sigma_X \mid \sigma \supset \rho_{k+1}\}.
    \]
    This construction readily implies that $\Sigma_S$ is simplicial if and only if so is $\Sigma_X$. That is, $S$ is $\bQ$-factorial if and only if so is $X$. 

    Let $y \in S^{\bT}$ and $y \in U \subset S$ be the corresponding standard purely $\bQ$-factorial affine toric chart; see \autoref{subsec.QFactorialToricSingularities}. If $y \notin C$ there is nothing to do, so we may assume that $y\in C$. In particular, we may say that $P_1,\ldots,P_k,P_{k+2},\ldots, P_{d+1}$ are the $\bT$-invariant prime divisors on $U$. Moreover, $C_U = C \cap U$ is the scheme-theoretic intersection of the $P_1,\ldots,P_k$ in $U$. This explains the left-hand side part of the displayed diagram. Let us move to the right-hand side.
    
    We may take the weighted blowup of $\bA^d$ along $H$ with weights $w\in \bN^k$. This is defined as the $\Proj$ of the \emph{weighted Rees $A$-algebra} $\mathcal{R} \coloneqq \bigoplus_{n\in \bN} (t_1,\ldots,t_k)^n t^n \subset A[t]$, where $A\coloneqq \kay[t_1,\ldots,t_n]$ and one declares $t_i$ to have degree $w_i$. In particular, the pullback of $\beta\:\Bl_H^{\mathrm{w}} \bA^d \to \bA^d$ along $H \subset \bA^d$ is a weighted projective space $\beta_H \:\bP_H(w_1,\ldots,w_k) \to H=\bA^{d-k}$.

    However, $\beta\: \Bl_H^{\mathrm{w}} \bA^d \to \bA^d$ admits a purely toric description. Namely, 
    \[
    \Sigma_{\Bl_H^{\mathrm{w}}} = \Sigma^*_{\bA^d}(w_1 e_1 + \cdots + w_k e_k)
    \]  
    where $\Sigma_{\bA^d}$ is the standard fan of $\bA^d$; see \autoref{subsec.QFactorialToricSingularities}. In particular, we may further act by any toric subgroup $G\subset \bT$ defined by a matrix $\Lambda = (\lambda_1 \cdots \lambda_d)_{d \times d} \in \End_{\bZ}(\bZ^d)$ to obtain
    \[
    \Sigma_{\Bl_H^{\mathrm{w}}/G} = \Sigma^*_{\bA^d/G}(w_1 \lambda_1 + \cdots + w_k \lambda_k ). 
    \]
    This can be made to coincide with the fan of $\Sigma_{X_U} = \Sigma^*_U(u_{k+1})$ by taking $\Lambda = \Upsilon$, i.e., $\lambda_i=u_i$. Indeed, $u_{k+1} = b'_1 u_1 +\cdots +b'_k u_k \in \langle w_1 u_1 +\cdots +w_k u_k \rangle_{\bQ_{>0}}$ where $b'_i \coloneqq b_i/b_{k+1}$. 
\end{proof}
   
\begin{proposition}\label{prop.CharacterizationInert}
    With notation as in \autoref{prop.LocalStructureExtremalDivContraction}, 
    the following are equivalent:
    \begin{enumerate}
        \item $u_{k+1} \in \langle u_1,\ldots,u_k \rangle_{\bN}$
        \item $b_{k+1} = 1$.
        \item $X_U$ admits an open covering by purely $\bQ$-factorial toric affine charts $\bA^d \to \bA^d/H=V$ such that $G \subset H $ are toric subgroups of $\bT$.
    \end{enumerate}
\end{proposition}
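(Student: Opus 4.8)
The plan is to prove the equivalences in three moves: a short linear‑algebra argument handling $(1)\Leftrightarrow(2)$ and simultaneously recasting the condition lattice‑theoretically; then the fan description of $X_U$ supplied by \autoref{prop.LocalStructureExtremalDivContraction}; and finally the dictionary of \autoref{subsec.QFactorialToricSingularities} between purely $\bQ$‑factorial affine charts and finite‑index sublattices of $N$.

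First I would dispose of $(1)\Leftrightarrow(2)$. Since $\gamma\coloneqq\langle u_1,\dots,u_k\rangle_{\bR_{\geq 0}}$ is a cone of $\Sigma_S$ and $S$ is $\bQ$‑factorial, $\gamma$ is simplicial, so $u_1,\dots,u_k$ are $\bR$‑linearly independent; the relation $R$ then yields the \emph{unique} expression $u_{k+1}=\sum_{i=1}^k(b_i/b_{k+1})u_i$ of $u_{k+1}$ as an $\bR$‑combination of $u_1,\dots,u_k$. If $b_{k+1}=1$ this is exactly $(1)$. Conversely, if $u_{k+1}=\sum_i c_iu_i$ with $c_i\in\bN$, uniqueness forces $c_i=b_i/b_{k+1}$, hence $b_{k+1}\mid b_i$ for all $i\le k$; as the coefficients of a primitive relation have no common factor, $b_{k+1}=1$. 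Running the same computation inside a maximal cone $\sigma\in\Sigma_S$ with $\gamma\preceq\sigma$ (again simplicial, among whose primitive generators are $u_1,\dots,u_k$) shows that $(2)$ is also equivalent to $u_{k+1}\in N_\sigma$, where $N_\sigma\subseteq N$ denotes the sublattice spanned by the primitive generators of $\sigma$: indeed $u_{k+1}\in N_\sigma$ iff the coordinates $b_i/b_{k+1}$ are integers. So $(1)\Leftrightarrow(2)\Leftrightarrow\big(u_{k+1}\in N_\sigma\big)$.

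Next I would pin down the affine charts of $X_U$. Let $U$ be one of the purely $\bQ$‑factorial charts of $S$ from \autoref{prop.LocalStructureExtremalDivContraction}, with $\bT$‑fixed point $y$ and maximal cone $\sigma_y$. If $y\notin C$ then $\phi_U$ is an isomorphism, $X_U=U$, and the cover $\bA^d\to U$ itself works with $H=G$; so we may assume $\gamma\preceq\sigma_y$. By (the proof of) \autoref{prop.LocalStructureExtremalDivContraction}, $\Sigma_{X_U}=\Sigma_U^*(u_{k+1})$, whose maximal cones are exactly $\sigma_1,\dots,\sigma_k$, where $\sigma_j$ is obtained from $\sigma_y$ by replacing the generator $u_j$ ($1\le j\le k$) with $u_{k+1}$. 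By \autoref{subsec.QFactorialToricSingularities}, each chart $U_{\sigma_j}$ carries a purely $\bQ$‑factorial cover $\bA^d\to U_{\sigma_j}$ with group $H_j=\mathfrak{D}(\Cl(U_{\sigma_j}))$ and $\Cl(U_{\sigma_j})=N/N_{\sigma_j}$, where $N_{\sigma_j}$ is spanned by $u_{k+1}$ together with the generators of $\sigma_y$ other than $u_j$; likewise $G=\mathfrak{D}(\Cl(U))$ with $\Cl(U)=N/N_{\sigma_y}$. Since the only $\bT$‑invariant affine opens of $X_U$ of the form $\bA^d/H$ are the $U_{\sigma_j}$ for maximal cones $\sigma_j$, and each of these contains a $\bT$‑fixed point lying on no other, any covering of $X_U$ as in $(3)$ must include all of them; hence $(3)$ holds for this $U$ precisely when $G\subseteq H_j$ for every $j$.

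To finish, I would invoke the anti‑equivalence $\mathfrak{D}$: the containment $G\subseteq H_j$ is equivalent to the existence of the natural surjection $N/N_{\sigma_j}\twoheadrightarrow N/N_{\sigma_y}$, i.e. to $N_{\sigma_j}\subseteq N_{\sigma_y}$; and since the generators of $\sigma_y$ distinct from $u_j$ already lie in $N_{\sigma_y}$, this is just $u_{k+1}\in N_{\sigma_y}$, a condition independent of $j$. Combined with the first step (taken with $\sigma=\sigma_y$), this gives $(3)\Leftrightarrow\big(u_{k+1}\in N_{\sigma_y}\big)\Leftrightarrow(2)$, closing the loop $(1)\Leftrightarrow(2)\Leftrightarrow(3)$. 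The one genuinely delicate point — and the part I would be most careful about — is this last translation: a priori $G$ and $H_j$ are kernels of covers by two different copies of the torus, so ``$G\subseteq H_j$ as toric subgroups of $\bT$'' has to be read through the identifications of \autoref{subsec.QFactorialToricSingularities}, namely as the inclusion $N_{\sigma_j}\subseteq N_{\sigma_y}$ of multiplicity sublattices (equivalently, as the surjection $\Cl(U_{\sigma_j})\twoheadrightarrow\Cl(U)$ of class groups); everything else is routine bookkeeping with exactness of $\mathfrak{D}$.
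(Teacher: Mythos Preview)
Your proof is correct and follows essentially the same route as the paper. Both arguments identify the standard charts $V_j\subset X_U$ with the maximal cones of the star subdivision, compute $\Cl(V_j)=N/N_{\sigma_j}$ and $\Cl(U)=N/N_{\sigma_y}$, and translate $G\subset H_j$ into the lattice condition $u_{k+1}\in N_{\sigma_y}$, which unwinds to $b_{k+1}\mid b_i$ for all $i$ and hence $b_{k+1}=1$ by primitivity. The only cosmetic differences are that you isolate $(1)\Leftrightarrow(2)$ up front (the paper folds it into the final line), and you phrase the group inclusion directly as $N_{\sigma_j}\subset N_{\sigma_y}$ whereas the paper passes through $G/(G\cap H)\cong\mathfrak D\big((N_{\sigma_y}+N_{\sigma_j})/N_{\sigma_y}\big)$; both amount to the same surjection of character groups. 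Your explicit remark that any covering as in $(3)$ must contain every $U_{\sigma_j}$, and that the resulting condition is independent of $j$, is a point the paper leaves to ``without loss of generality'' --- it is good that you spelled it out.
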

\begin{proof}
     We use the notation in the proof of \autoref{prop.LocalStructureExtremalDivContraction}; we resume where we left off there. Recall that $y$ corresponds to \[
     \sigma \coloneqq  \langle \gamma, u_{k+2}, \ldots, u_{d+1} \rangle_{\bR_{\geq 0}} = \langle u_1,\ldots,u_k, u_{k+2}, \ldots, u_{d+1} \rangle_{\bR_{\geq 0}} \in \Sigma_S(d).
     \]
     Observe that there are $k$ points in $X^{\bT}$ in the fiber of $y$. These correspond to the cones in $\Sigma_X(d)$ given by 
     \[
     \sigma_i \coloneqq \langle u_1, \ldots, u_{i-1},u_{k+1}, u_{i+1},\ldots,u_k, u_{k+2},\ldots,u_{d+1} \rangle_{\bR_{\geq 0}},
     \]
     where we just took the generators of $\sigma$ and replaced $u_i$ by $u_{k+1}$. Thus, $X_U$ can be covered by the standard open sets $V_1,\ldots,V_k$ corresponding to these points. For ease of notation and without loss of generality, let us focus on $V \coloneqq V_k$, which corresponds to $\sigma_k = \langle u_1,\ldots,u_{k-1},u_{k+1}, u_{k+2}, \ldots, u_{d+1} \rangle_{\bR_{\geq 0}}$.

     With the above in place, recall that
     \[
     \Cl(U) = N/\langle u_1,\dots,u_k, u_{k+2},\ldots, u_{d+1} \rangle_{\bZ}
     \]
     whereas 
     \[
     \Cl(V) = N/\langle u_1,\dots,u_{k-1}, u_{k+1},\ldots, u_{d+1} \rangle_{\bZ}
     \]
     Then, we observe that
     \begin{align*}
     &G=D(\Cl(U)) \subset H=D(\Cl(V))\\
     \Longleftrightarrow{} & G/G\cap H = 0 \\
     \Longleftrightarrow{} &\langle u_1,\dots,u_k, u_{k+1}, u_{k+2},\ldots, u_{d+1} \rangle_{\bZ}/ \langle u_1,\dots,u_k, u_{k+2},\ldots, u_{d+1} \rangle_{\bZ} = 0 \\
     \Longleftrightarrow{} & u_{k+1} \in \langle u_1,\ldots,u_k, u_{k+2},\ldots,u_{d+1} \rangle_{\bZ}
     \end{align*}
Since $X$ is $\bQ$-factorial, the vectors $u_1,\ldots,u_k,u_{k+2},\ldots,u_{d+1}$ form a basis of $N_{\bR}$ (for they generate $\sigma$). Then, this is further equivalent to $ (b_1/b_{k+1},\ldots,b_k/b_{k+1}) \in \bN^k$ and so to $b_{k+1}=1$ as we chose these coefficients having no common factors.
\end{proof}

\begin{definition}[Inert extremal divisorial contractions]
    With notation as in \autoref{prop.LocalStructureExtremalDivContraction}, we say that $\phi$ is \emph{inert} if any of the equivalent conditions in \autoref{prop.CharacterizationInert} hold.
\end{definition}

\subsection{Some convex geometry}
We collect here a couple of lemmas on convex geometry that we will use throughout the proofs of our main results.

\begin{lemma}\label{lemma.FindingPlatticePoints}
    Set $v_1,\dots,v_m \in \Z^n \setminus \{0\}$ and let $C \coloneqq \langle v_1,\ldots,v_m \rangle_{\bR_{\geq 0}} \subset \bR^n$ be the corresponding rational cone. Suppose that $m>n$ and that $C$ is strongly convex. Then,
    \[
        \langle v_1,\ldots,v_m \rangle_{[0,1)} \cap \bZ^n \neq 0.
    \]
    In fact, given a non-trivial relation $c_1 v_1 + \cdots + c_m v_m =0$ with $c_1,\ldots,c_m \in \bZ$ and $c_{i_0} \neq 0$, we have that
    \[
    0 \neq \sum_{i \: c_i c_{i_0} > 0} v_i \in  \langle v_1,\ldots,v_m \rangle_{[0,1)} \cap \bZ^n.
    \]
\end{lemma}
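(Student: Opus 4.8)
The plan is to deduce the first (existence) claim from the ``in fact'' statement, and to prove the latter by perturbing the indicated $\{0,1\}$-combination along the relation. First I would note that $m>n$ forces the $n\times m$ integer matrix with columns $v_1,\dots,v_m$ to have rank at most $n<m$, so its kernel is non-trivial; since this kernel is a $\bQ$-subspace, it contains a non-zero integral vector $(c_1,\dots,c_m)$, giving a relation $c_1v_1+\cdots+c_mv_m=0$ with $c_{i_0}\neq0$ for some $i_0$. It therefore suffices to prove the ``in fact'' statement. Replacing the relation by its negative if necessary --- which leaves $\{i : c_ic_{i_0}>0\}$ unchanged --- I may assume $c_{i_0}>0$, and I set $I^{+}\coloneqq\{i : c_i>0\}$ (so $i_0\in I^{+}$) and $w\coloneqq\sum_{i\in I^{+}}v_i\in\bZ^n$. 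To see $w\neq0$ I would invoke strong convexity: every $v_i$ lies in the pointed cone $C$ (strong convexity is exactly $C\cap -C=0$), so a vanishing sum of elements of $C$ must have all summands zero; since $v_{i_0}\neq0$ occurs in $w$, this is impossible.

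The remaining step is to write $w$ as a combination of the $v_i$ with all coefficients in $[0,1)$. Here I would fix a real $t>0$ and use the relation to rewrite
\[
w \;=\; w-t\sum_{i=1}^{m}c_iv_i \;=\; \sum_{i\in I^{+}}(1-tc_i)\,v_i \;+\; \sum_{i\notin I^{+}}(-tc_i)\,v_i ,
\]
and then take $t\coloneqq\bigl(1+\max_i|c_i|\bigr)^{-1}$. A quick check places the coefficients into $[0,1)$: for $i\in I^{+}$ one has $0<tc_i\le t\max_j|c_j|<1$, so $1-tc_i\in(0,1)$; for $i\notin I^{+}$ one has $c_i\le0$, so $-tc_i=t|c_i|$ with $0\le t|c_i|\le t\max_j|c_j|<1$. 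This exhibits $w$ as a non-zero lattice point of $\langle v_1,\dots,v_m\rangle_{[0,1)}$, and in particular proves the first claim.

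I do not expect a genuine obstacle; the two things to get right are that it is the strong convexity of $C$ --- not the inequality $m>n$ --- that excludes $w=0$ (because $w$ is an honest non-negative integral combination of the $v_i$ carrying the nonzero term $v_{i_0}$), and that, although $\sum_{c_ic_{i_0}>0}v_i$ a priori lies only in the closed region $\langle v_1,\dots,v_m\rangle_{[0,1]}$, subtracting a sufficiently small positive multiple of the relation pulls every coefficient into the half-open range $[0,1)$ --- the ones equal to $1$ drop below $1$, and the ones equal to $0$ stay non-negative and below $1$. No finer combinatorial information about $C$ enters.
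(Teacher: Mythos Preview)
Your proof is correct and proceeds along the same underlying idea as the paper's: exhibit $w=\sum_{c_ic_{i_0}>0}v_i$ as a nonzero lattice point, then shift by a multiple of the relation to bring all coefficients into $[0,1)$. The difference is purely in how the shift is chosen. The paper rearranges the relation as $a_1v_1+\cdots+a_kv_k=b_{k+l}v_{k+l}+\cdots+b_mv_m$ with ordered positive integers, assumes WLOG $b_m\ge a_k$, and then divides through by the integer $b_m$ (or $b_m+1$ in the boundary case $b_m=a_k$), which forces a small case split. Your single real parameter $t=(1+\max_i|c_i|)^{-1}$ handles both cases at once and also makes it transparent that the argument works for \emph{either} sign of $c_{i_0}$, so the ``in fact'' clause follows uniformly without appealing to symmetry. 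The paper's version has the incidental feature of producing rational coefficients with a small explicit denominator, but this is not used anywhere downstream, so your streamlining costs nothing.
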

\begin{proof}
    Since $m > n$, there is a relation $ \sum_{i=1}^m c_iv_i = 0$ with $c_i \in \Z$ (with not all $c_i$ equal to zero). Since $C$ is strongly convex, at least one of the coefficients is negative and at least one of them is positive. Rearrange the relation as
 \[
        a_1v_1 + \dots + a_kv_k = b_{k+l}v_{k+l} + \dots + b_{m}v_m,
    \]
where $0<a_{1}\leq \cdots \leq a_k \in \bN$ and $0<b_{k+l}\leq \cdots\leq b_{m} \in \bN$. Once again, $k \geq 1$ and $m \geq k+l$ as $C$ is strongly convex. We may assume, without loss of generality, that $b_m \geq a_k$. We do two cases depending on whether or not the inequality is strict.

If $b_m > a_k$, then add $\sum_{j=k+l}^{m-1} (b_m-b_j) v_j$ on both sides to obtain that
\[
 a_1v_1 + \dots + a_kv_k + (b_m-b_{k+l})v_{k+l} + \dots + (b_{m}-b_{m-1})v_{m-1} = b_m(v_{k+l} + \cdots + v_m)
\]
Dividing by $b_m$ yields that $v_{k+l} + \cdots + v_m$ is a nonzero lattice point in $ \langle v_1,\ldots,v_m \rangle_{[0,1)}$ (where we use strong convexity to say that this point is not zero).

If $b_m = a_k$, set $c\coloneqq b_m+1$ and add $\sum_{j=k+l}^{m-1} (c-b_j) v_j + v_m$ to both sides to get the relation
    \[
        a_1v_1 + \dots + a_kv_k + (c - b_{k+1})v_{k+l} + \dots + (c - b_{m-1})v_{m-1} + v_m = c(v_{k+l} + \dots + v_m)
    \]
   Dividing by $c$ yields that $v_{k+l} + \cdots + v_m$ is the required point, as before.
\end{proof}

\begin{lemma}\label{lemma.LatticePointsArepLatticePoints}
Set $v_1,\dots,v_m \in \Z^n\setminus 0$ and let $C \coloneqq \langle v_1,\ldots,v_m \rangle_{\bR_{\geq 0}} \subset \bR^n$ be the corresponding rational cone. Suppose that 
\begin{equation}\label{eqMagicClaim}
        C \cap \Z^n = \langle v_1,\ldots,v_m \rangle_{\bN}.
    \end{equation}
Then, the inclusion
\[
 \langle v_1,\ldots,v_m \rangle_{[0,1)\cap \bZ[1/p]} \cap \bZ^n \subset  \langle v_1,\ldots,v_m \rangle_{[0,1)} \cap \bZ^n
\]
is an equality.
\end{lemma}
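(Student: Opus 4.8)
Since $[0,1)\cap\bZ[1/p]\subseteq[0,1)$, the inclusion ``$\subseteq$'' is automatic, so the whole content is the reverse inclusion. The plan is to take $w\in\langle v_1,\ldots,v_m\rangle_{[0,1)}\cap\bZ^n$, say $w=\sum_i c_iv_i$ with all $c_i\in[0,1)$, and manufacture a representation $w=\sum_i d_iv_i$ with all $d_i\in[0,1)\cap\bZ[1/p]$. If $w=0$ take every $d_i=0$, so assume $w\neq 0$. This is the step where \eqref{eqMagicClaim} is used: since $w\in C\cap\bZ^n=\langle v_1,\ldots,v_m\rangle_{\bN}$, we may \emph{also} write $w=\sum_i n_iv_i$ with $n_i\in\bN$, and $w\neq 0$ forces $(n_i)\neq 0$.

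Next I would pass to the affine solution space
\[
L\coloneqq\bigl\{x\in\bR^m \mid \textstyle\sum_i x_iv_i=w\bigr\}=(n_i)+V,\qquad V\coloneqq\bigl\{x\in\bR^m\mid\textstyle\sum_i x_iv_i=0\bigr\}.
\]
Here $V$ is the kernel of an integer matrix, hence a rational subspace admitting an integral basis; as $\bZ[1/p]$ is dense in $\bR$, the set of $\bZ[1/p]$-combinations of such a basis is dense in $V$. Since $L$ contains the \emph{lattice} point $(n_i)$, translating shows $L\cap\bZ[1/p]^m=(n_i)+(V\cap\bZ[1/p]^m)$ is dense in $L$ for the Euclidean topology. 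So it suffices to locate a point of $L$ that lies, together with a whole neighbourhood, inside $[0,1)^m$ — after cutting down to the appropriate coordinate subspace.

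To produce such a point, set $I\coloneqq\{i\mid c_i>0\text{ or }n_i>0\}$ and $W\coloneqq\{x\in\bR^m\mid x_i=0\text{ for }i\notin I\}$, noting $(c_i),(n_i)\in W$. For $\lambda>0$ small the convex combination $z\coloneqq(1-\lambda)(c_i)+\lambda(n_i)$ lies in $L\cap W$, has $z_i=0$ for $i\notin I$, and has $0<z_i<1$ for $i\in I$ — the bound $z_i<1$ holding for small $\lambda$ by continuity since each $c_i<1$. Now $L\cap W$ is again a rational affine subspace containing the lattice point $(n_i)$, and it has positive dimension: otherwise $L\cap W=\{(n_i)\}$ would also contain $(c_i)$, forcing $c_i=n_i\in\bN\cap[0,1)=\{0\}$ for all $i$, hence $w=0$. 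Therefore $L\cap W\cap\bZ[1/p]^m$ is dense in $L\cap W$; since $z$ has all its $I$-coordinates strictly between $0$ and $1$, a small enough box around $z$ inside $W$ is contained in $[0,1)^m$, and any $\bZ[1/p]$-point $d$ of $L\cap W$ in that box gives what we want: $d\in[0,1)^m\cap\bZ[1/p]^m$ and $\sum_i d_iv_i=w$.

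The only genuinely load-bearing ingredients are that $L$ must contain an integer point — this is precisely what \eqref{eqMagicClaim} supplies, and the statement is false without it (for instance $m=2$, $n=1$, $v_1=v_2=2$, $w=1$, $p$ odd: then $w=\tfrac14 v_1+\tfrac14 v_2$ but no $\bZ[1/p]$-representation with coefficients in $[0,1)$ exists) — and that the $\bZ[1/p]$-approximation must keep \emph{all} coordinates non-negative, which is the sole reason for passing to the coordinate subspace $W$ spanned by the indices that are already nonzero in both chosen representations. Everything else is routine: density of $\bZ[1/p]$ in $\bR$, and the fact that a nonempty rational affine subspace with a lattice point is the translate of a rational linear subspace.
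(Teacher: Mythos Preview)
Your proof is correct and takes a genuinely different route from the paper's. The paper gives an explicit arithmetic construction: starting from the two representations $w=\sum c_iv_i=\sum n_iv_i$, it partitions the index set according to which of $c_i,n_i$ vanish, rewrites the resulting relation with a common denominator $r$, and then carefully chooses $q=p^e$ and a replacement denominator $s$ so that the manufactured coefficients $a_i/q$, $\bigl((q-r)n_j+a_j\bigr)/q$, $(q-r)n_k/q$ all land in $[0,1)$. Your argument is instead topological: you recognise that the solution set $L=\{x:\sum x_iv_i=w\}$ is a rational affine subspace containing the lattice point $(n_i)$ supplied by \eqref{eqMagicClaim}, so its $\bZ[1/p]$-points are automatically dense, and the convex combination $z=(1-\lambda)(c_i)+\lambda(n_i)$ gives an interior target; the passage to the coordinate subspace $W$ is exactly what is needed to keep the zero coordinates from going negative under approximation. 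The paper's approach is constructive---one can read off the coefficients---while yours isolates the structural reason the lemma holds and avoids the somewhat delicate inequality-chasing at the end of the paper's argument. Your counterexample showing that \eqref{eqMagicClaim} is genuinely necessary is a nice addition not present in the paper.
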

\begin{proof}
   Let $v$ be a lattice point in $\langle v_1,\ldots,v_m \rangle_{[0,1)}$. Since the generators $v_i$ are in $\bQ^n$,
    \[
    \langle v_1,\ldots,v_m \rangle_{[0,1)} \cap  \bQ^n = \langle v_1,\ldots,v_m \rangle_{[0,1) \cap \bQ}.
    \]
    In particular, we may write $v=c_1v_1 + \cdots +c_m v_m$ with $c_i \in [0,1) \cap \bQ$. We may assume that $c_i \in [0,1) \cap \bZ \cdot r^{-1}$ for some $r \in \bN$ sufficiently divisible. The hypothesis \autoref{eqMagicClaim} lets us write
    \[
    c_1v_1 + \cdots +c_m v_m = n_1v_1 + \cdots +n_m v_m
    \]
    for some $n_1,\ldots,n_m \in \bN$. Write a partition
    \[
    [m] \coloneqq \{1,\ldots,m\} = I \sqcup J \sqcup K
    \]
    where $J=\{j \mid c_j, n_j \neq 0\}$, $I=\{i \mid c_i \neq 0, n_i = 0\}$, and $K=\{k \mid c_k = 0, n_k \neq 0\}$. Then we may write
    \[
    \sum_{i \in I} c_i v_i = \sum_{j\in J} (n_j-c_j) v_j + \sum_{k \in K} n_k v_k.
    \]
Now, write $c_i=a_i/r$ (so $0\leq a_i<r$) and $\tilde{n}_j \coloneqq n_j - c_j>0$. Then
\[
\sum_{i \in I} a_i v_i = \sum_{j\in J} r \tilde{n}_j v_j + \sum_{k \in K} r n_k v_k.
\]
 Letting $e\gg 0$ such that $q = p^e\geq r$, add $\sum_{j\in J}(qn_j-r\tilde{n}_j)v_j + \sum_{k \in K}(q-r)n_k v_k$ on both sides to get
 \[
 \sum_{i \in I} a_i v_i + \sum_{j\in J}(qn_j-r\tilde{n}_j)v_j + \sum_{k \in K}(q-r)n_k v_k = q (n_1v_1 + \cdots +n_m v_m) =q v.
 \]
 In particular,
 \[
 v= \sum_{i \in I} \frac{a_i}{q} v_i + \sum_{j\in J}\frac{(q-r)n_j+a_j}{q}v_j + \sum_{k \in K}\frac{(q-r)n_k}{q} v_k.
 \]
 We are done if we can arrange $r$ and $q$ such that the displayed coefficients (which are all non-negative and have integral numerators) are strictly less than $1$. Those of $I$ are fine as $q\geq r>a_i$. For those in $J$ or $K$, we need to arrange for
 \[
 \frac{q}{r} < 1 + \frac{1-c_j}{n_j-1} \quad \forall j \in J \qquad \text{and} \qquad
     \frac{q}{r} < 1 + \frac{1}{n_k - 1} \quad \forall k \in K.
    \]
    Let $\bN \ni l \gg 0$ be such that
    \[
    1/l <  \min \left\{ \left\{\frac{1}{n_k - 1}\right\}_{k\in K} \cup \left\{\frac{1-c_j}{n_j-1}\right\}_{j \in J} \right\}.
    \]
    It suffices to find $e \gg 0$ and $r$ sufficiently divisible such that $1\leq q/r < 1 + 1/l$. First, choose $e \gg 0$ such that $ t \coloneqq \left\lceil q/r \right\rceil > l+1$. Then, for $s \coloneqq (t-1)r$ we readily verify that  $1 \leq q/s < 1 + 1/l$; as required.
\end{proof}

\section{Basic Frobenius Geometry of Toric Varieties} In this section, we continue using the notation of \autoref{sec.ToricGeometryPreliminaries} but we relax the condition on $\bQ$-factoriality back to normality. In particular, we set $X$ as a (normal and projective) $d$-dimensional toric variety.

According to Achinger \cite{AchingerCharacterizationOfToricVarieties}, smooth toric varieties are characterized (among smooth projective varieties) as those such that the Frobenius pushforwards of invertible sheaves split as direct sums of invertible sheaves. Moreover, we have the following formula:

\begin{theorem}[{\cite[Theorem 2]{AchingerCharacterizationOfToricVarieties}, \cf \cite[Theorem 1]{ThomsenFrobeniusDirectImagesOfLineBundlesToricVarieties}, \cite{BogvadSplittingFrobenius}}]\label{thm.SplittingFormula}
Let $D$ be a divisor on $X$ and $0 \neq e \in \bN$. Then, 
\[
    F_*^e\sO_X(-D) \simeq \bigoplus_{[E] \in \Cl(X) }\sO_X(-E)^{\oplus m_D(E;q)},
\]
where $m_D(E;q)$ is the number of $\bT$-invariant divisors with coefficients in $\{0,\dots,q-1\}$ linearly equivalent to $qE-D$, i.e., the number of $\bT$-invariant divisors in the linear system $|qE-D|$ with coefficients $<q$. 
\end{theorem}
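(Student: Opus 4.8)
The plan is to pin down $F^e_*\sO_X(-D)$ on the affine toric charts and then reorganize the answer by divisor class. Recall that $F^e$ is the toric endomorphism of $X$ given by multiplication by $q$ on $N$ (equivalently on $M$; see \cite{ThomsenFrobeniusDirectImagesOfLineBundlesToricVarieties}); concretely, on the chart $U_\sigma=\Spec\kay[S_\sigma]$ with $S_\sigma=\sigma^\vee\cap M$ it is the ring map $\chi^m\mapsto\chi^{qm}$. Write $D=d_1P_1+\dots+d_rP_r$, and fix once and for all a set of representatives $\mu_1,\dots,\mu_{q^d}\in M$ for the cosets of $qM$ in $M$. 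Since $F^e$ is the identity on the underlying space, $\Gamma(U_\sigma,F^e_*\sO_X(-D))=\Gamma(U_\sigma,\sO_X(-D))=\bigoplus_m \kay\chi^m$ where $m$ ranges over the lattice points with $\langle u_i,m\rangle\ge d_i$ for every ray $\rho_i\subseteq\sigma$; the only new feature is that its $\kay[S_\sigma]$-module structure is now $\chi^{m'}\cdot\chi^m=\chi^{qm'+m}$. Decomposing this module along the cosets $m+qM$ (legitimate since $qm'+m\equiv m$) and re-indexing the $\bar\mu$-summand via $m=\mu+qm''$, the assignment $\chi^{\mu+qm''}\mapsto\chi^{m''}$ identifies it, as a $\kay[S_\sigma]$-module, with $\Gamma(U_\sigma,\sO_X(-E_\mu))$, where
\[
E_\mu\coloneqq\sum_{i=1}^{r}\left\lceil\frac{d_i-\langle u_i,\mu\rangle}{q}\right\rceil P_i
\]
is a $\bT$-invariant Weil divisor (one uses $\langle u_i,m''\rangle\in\bZ$ to replace the bound $(d_i-\langle u_i,\mu\rangle)/q$ by its ceiling).

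Next I would glue. Because the coset decomposition of $M$ and the chosen representatives are global, these identifications are compatible with the restrictions $U_\tau\subseteq U_\sigma$, so for each $\bar\mu$ the summands glue to an $\sO_X$-submodule $\sL_{\bar\mu}\subseteq F^e_*\sO_X(-D)$, and $F^e_*\sO_X(-D)=\bigoplus_{\bar\mu\in M/qM}\sL_{\bar\mu}$; moreover the global assignment $\chi^m\mapsto\chi^{(m-\mu)/q}$ is $\sO_X$-linear and, by the previous paragraph, an isomorphism on every chart, so $\sL_{\bar\mu}\cong\sO_X(-E_\mu)$. Since $E_{\mu+qm''}=E_\mu-\Div\chi^{m''}$, the class $[E_{\bar\mu}]\in\Cl(X)$ depends only on $\bar\mu$, and we are reduced to showing: for each $[E]\in\Cl(X)$, the number of cosets $\bar\mu$ with $[E_{\bar\mu}]=[E]$ equals $m_D(E;q)$.

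For this I would exhibit an explicit bijection. Replacing $E$ by a $\bT$-invariant representative $\sum_i e_iP_i$ of its class (harmless, as $m_D(E;q)$ depends only on $[E]$), set $\Gamma_{\bar\mu}\coloneqq qE_\mu-D+\Div\chi^\mu$. A short computation, using $q\lceil a/q\rceil-a\in\{0,\dots,q-1\}$ for $a\in\bZ$ and $\lceil(a-qt)/q\rceil=\lceil a/q\rceil-t$ for $t\in\bZ$, shows that $\Gamma_{\bar\mu}$ depends only on $\bar\mu$, has all coefficients in $\{0,\dots,q-1\}$, and is linearly equivalent to $qE_\mu-D$; hence, restricted to the cosets with $[E_{\bar\mu}]=[E]$, the map $\bar\mu\mapsto\Gamma_{\bar\mu}$ lands among the $\bT$-invariant divisors in $|qE-D|$ with all coefficients $<q$. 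Conversely, any such $\Gamma=\sum_i c_iP_i$ can be written $\Gamma=qE-D+\Div\chi^\mu$ for some $\mu\in M$ (two $\bT$-divisors are linearly equivalent iff they differ by the divisor of a character, by \autoref{eqn.SESPicard}), and then $0\le c_i=qe_i-d_i+\langle u_i,\mu\rangle\le q-1$ forces $\lceil(d_i-\langle u_i,\mu\rangle)/q\rceil=e_i$, so $E_\mu=E$ as divisors and $\Gamma_{\bar\mu}=\Gamma$; this proves surjectivity. For injectivity, given $\bar\mu,\bar\nu$ with $[E_{\bar\mu}]=[E_{\bar\nu}]$, after replacing $\nu$ by a suitable representative of its coset we may assume $E_\mu=E_\nu$ as divisors; then $\Gamma_{\bar\mu}=\Gamma_{\bar\nu}$ forces $\Div\chi^\mu=\Div\chi^\nu$, hence $\langle u_i,\mu\rangle=\langle u_i,\nu\rangle$ for all $i$, hence $\mu=\nu$ since the primitive ray generators span $N_{\bR}$ (the fan $\Sigma$ being complete). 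Assembling everything gives $F^e_*\sO_X(-D)\cong\bigoplus_{[E]\in\Cl(X)}\sO_X(-E)^{\oplus m_D(E;q)}$.

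The chart computation and the arithmetic of ceilings are routine; the genuinely delicate part is the last paragraph — sorting the $q^d$ cosets of $M/qM$ by divisor class and matching that count against the ``coefficients $<q$'' condition in $|qE-D|$ — together with checking that the chartwise isomorphisms $\sL_{\bar\mu}|_{U_\sigma}\cong\sO_X(-E_\mu)|_{U_\sigma}$ are mutually compatible. A pleasant feature of this local-to-global argument is that it proves the statement verbatim in the merely-normal case, where the reflexive sheaves $\sO_X(-E)$ need not be line bundles, so no extra work is needed to go beyond the smooth setting.
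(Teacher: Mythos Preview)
The paper does not supply its own proof of this theorem; it is quoted from the cited references \cite{AchingerCharacterizationOfToricVarieties,ThomsenFrobeniusDirectImagesOfLineBundlesToricVarieties,BogvadSplittingFrobenius} and used as a black box. Your argument is correct and is precisely the classical one in those sources: split $\Gamma(U_\sigma,F^e_*\sO_X(-D))$ along the cosets of $qM$ in $M$, identify the $\bar\mu$-summand with $\sO_{U_\sigma}(-E_\mu)$ via the ceiling formula $E_\mu=\sum_i\lceil(d_i-\langle u_i,\mu\rangle)/q\rceil P_i$, observe that these identifications are compatible with restriction so they globalize, and then match the $q^d$ cosets against the count $m_D(E;q)$ via the bijection $\bar\mu\mapsto qE_\mu-D+\Div\chi^\mu$. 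Your verification of this last bijection (well-definedness on cosets, coefficients landing in $\{0,\dots,q-1\}$, surjectivity and injectivity using that $\Div$ is injective) is clean and complete, and your closing remark that the argument works verbatim for normal---not just smooth---toric varieties is exactly why the paper can state the result in that generality.
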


This lets us compute the Frobenius-trace kernel of $X$ right away.

\begin{corollary}\label{cor.EforTorics}
   The Frobenius-trace kernel of $X$ is given by
    \[
\sE_{X,e} \simeq \bigoplus_{0 \neq [E] \in \Cl(X)} \sO_X(E)^{\oplus m(E;q)},
\]
where $m(E;q)\coloneqq m_0(E;q)$ is the number of $\bT$-invariant divisors on $X$ with coefficients in $\{0,\ldots,q-1\}$ that are linearly equivalent to $q E$. In particular, $\sE_{X,e}$ is pseudo-effective.
\end{corollary}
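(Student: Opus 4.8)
The plan is to run the defining short exact sequence of the Frobenius-trace kernel through Achinger's splitting formula (\autoref{thm.SplittingFormula}) and then pin down which direct summand the trace map splits off. Recall that $\sE_{X,e}=\ker\tau_X^e$, where
\[
0 \to \sE_{X,e} \to F_*^e\omega_X^{1-q} \xrightarrow{\tau_X^e} \sO_X \to 0,
\]
and that on a toric variety $\omega_X\simeq\sO_X(K_X)$ with $K_X=-(P_1+\cdots+P_r)$, so $\omega_X^{1-q}\simeq\sO_X\big((q-1)(P_1+\cdots+P_r)\big)=\sO_X(-D)$ for $D\coloneqq(q-1)K_X$.

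First I would apply \autoref{thm.SplittingFormula} to $F_*^e\sO_X(-D)$, obtaining
\[
F_*^e\omega_X^{1-q} \simeq \bigoplus_{[E]\in\Cl(X)}\sO_X(-E)^{\oplus m_D(E;q)},
\]
where $m_D(E;q)$ counts $\bT$-invariant divisors $\sum_i a_iP_i$ with $0\le a_i<q$ and $\sum_i a_iP_i \sim qE-D = qE+(q-1)(P_1+\cdots+P_r)$. The substitution $a_i\mapsto q-1-a_i$ turns that condition into $\sum_i(q-1-a_i)P_i\sim -qE$, so $m_D(E;q)=m(-E;q)$; reindexing $[E]\mapsto[-E]$ yields $F_*^e\omega_X^{1-q}\simeq\bigoplus_{[E]}\sO_X(E)^{\oplus m(E;q)}$.

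Next I would show that $\tau_X^e$ is, up to a nonzero scalar, the projection onto the $[E]=0$ summand. The multiplicity $m(0;q)$ counts effective $\bT$-invariant divisors linearly equivalent to $0$; by the identity $\Div(M)\cap\langle P_1,\ldots,P_r\rangle_{\bN}=0$ established in the proof of \autoref{prop.PseudoEffectiveCone} (this is where $H^0(X,\sO_X)=\kay$ enters), the only such divisor is $0$, so $m(0;q)=1$ and $\sO_X$ occurs exactly once. Moreover
\[
\Hom_{\sO_X}\big(F_*^e\omega_X^{1-q},\sO_X\big)=\bigoplus_{[E]}H^0\big(X,\sO_X(-E)\big)^{\oplus m(E;q)},
\]
and a class $[E]$ contributes only if $-E$ is effective, while $m(E;q)\neq0$ forces $qE$, hence $E$, to be pseudo-effective; strong convexity of $\eff(X)$ (\autoref{prop.PseudoEffectiveCone}) then gives $\llbracket E\rrbracket=0$, and an effective, numerically trivial class must already vanish in $\Cl(X)$ by the same $\Div(M)\cap\langle P_i\rangle_{\bN}=0$ argument. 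Thus $\Hom_{\sO_X}(F_*^e\omega_X^{1-q},\sO_X)=\kay$, spanned by the projection onto the unique $\sO_X$-summand. Since $X$ is $F$-split, $\tau_X^e$ is surjective, hence a nonzero multiple of that projection, and therefore $\sE_{X,e}=\bigoplus_{0\neq[E]}\sO_X(E)^{\oplus m(E;q)}$. Pseudo-effectivity is then immediate: each $E$ with $m(E;q)\neq0$ satisfies $q\llbracket E\rrbracket=\sum_i a_i\pi_i\in\eff(X)$, so every invertible summand is pseudo-effective and hence so is their finite direct sum.

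The one delicate point—the main obstacle—is the bookkeeping that isolates the summand killed by $\tau_X^e$: one must know that $\Hom_{\sO_X}(F_*^e\omega_X^{1-q},\sO_X)$ is one-dimensional, which rests on the strong convexity of $\eff(X)$ together with the fact that an effective, numerically trivial divisor class is trivial in $\Cl(X)$. Everything else is a direct substitution into \autoref{thm.SplittingFormula}.
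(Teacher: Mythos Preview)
Your argument is correct and matches what the paper intends; the paper states the corollary without proof, treating it as immediate from \autoref{thm.SplittingFormula}. The most direct route is to apply \autoref{thm.SplittingFormula} with $D=0$ to $F_*^e\sO_X$, observe $m(0;q)=1$ via $\Div(M)\cap\langle P_i\rangle_{\bN}=0$, use $H^0(X,F_*^e\sO_X)=\kay$ to identify the unique $\sO_X$-summand with the image of $F^{e,\#}$, and then dualize---your approach via $F_*^e\omega_X^{1-q}$ and the substitution $a_i\mapsto q-1-a_i$ is the dual of this and lands on the same $\Hom$ computation, so the two are equivalent.
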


\begin{question}
    For which varieties of Fano-type $X$ is $\sE_{X,e}$ pseudo-effective for all $e>0$? Is this true for globally $F$-regular varieties?
\end{question}

Note that if $E$ is a divisor on $X$ (or rather a divisor class) such that $m(E;q) \neq 0$ for some $e>0$, then $m(E;q') \neq 0$ for all $q' \geq q$. This justifies the following definition.

\begin{definition}[Frobenius support]\label{def.SupportingDivisors}
    The following definitions are in order:
    \begin{enumerate}
        \item  We say that a divisor $E$ on $X$, or rather its divisor class in $\Cl(X)$, \emph{supports the Frobenius-trace kernel} if $m(E;q) \neq 0$ for some $e>0$.
        \item  We define the \emph{Frobenius support} of $X$ as the set of numerical classes $\FSupp(X) \subset N^1(X)$ of divisor classes supporting the Frobenius-trace kernel.
        \item Given $\llbracket E \rrbracket \in \FSupp(X)$, we define
    \[
    \tilde{m}(E;q) = \sum_{\{[D] \in \Cl(X) \mid D \equiv E\}} m(D;q) =  \sum_{[T]  \in \Cl(X)_{\mathrm{tor}}} m(E+T;q).
    \]
    \item We also introduce the sets
    \begin{align*}
      \BFS(X)  &\coloneqq \bigc(X) \cap \FSupp(X), \\
       \AFS(X) & \coloneqq \ample(X) \cap \FSupp(X), \text{ and} \\
       \NFS(X) & \coloneqq \nef(X) \cap \FSupp(X),
    \end{align*}
    which we call, respectively, the \emph{big/ample/nef Frobenius supports} of $X$.
    \end{enumerate}
    
\end{definition}
\begin{remark}
    There are finitely many divisor classes that support the Frobenius-trace kernel, and so $\FSupp(X)$ is a finite set.  If $X$ is smooth, then $\FSupp(X)$ is the set of divisor classes supporting the Frobenius-trace kernel. More generally, the Kunz theorem can be rephrased by saying that $X$ is smooth if and only if every divisor supporting its Frobenius trace kernel is Cartier.
\end{remark}
We may reinterpret \autoref{thm.SplittingFormula} to compute $\FSupp(X)$ as follows. Recall that we may think of $N^1(X) \simeq \bZ^{\rho}$ as the free part of $\Cl(X)$ and as a lattice inside $N^1(X)_{\bR} \simeq \bR^{\rho}$. Let us write
\[
\Cl(X)_{\mathrm{tor}} = \Cl(X)_{p\textrm{-tor}} \oplus \Cl(X)_{\textrm{tor}}'
\]
where $\Cl(X)_{p\textrm{-tor}}$ is the largest $p$-subgroup of $\Cl(X)_{\textrm{tor}}$. In particular, $\cdot p \:  \Cl(X)_{\textrm{tor}}' \to  \Cl(X)_{\textrm{tor}}'$ is an automorphism. Let $e_0>0$ be minimal such that $q_0 \Cl(X)_{p\textrm{-tor}} = 0$.

We further consider the following ``half-open'' convex polytope
\begin{equation*}
    \sQ \coloneqq \sQ_X \coloneqq \langle \pi_1,\ldots,\pi_r \rangle_{[0,1)} \subset \eff(X) \subset N^1(X)_{\bR}.
\end{equation*}

\begin{corollary} \label{cor.SupportOfE_X}
    With notation as above,
     \[
        \FSupp(X) = \sQ_X \cap N^1(X)\setminus 0.
    \]
    In particular, no torsion divisor class supports the Frobenius-trace kernel and
    \[
        \BFS(X) = \sQ_X^{\circ} \cap N^1(X).
    \]
\end{corollary}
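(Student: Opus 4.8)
The plan is to deduce \autoref{cor.SupportOfE_X} from \autoref{thm.SplittingFormula} (with $D=0$) by a careful unwinding of the condition $m(E;q)\neq 0$ in terms of the exact sequence \autoref{eqn.SESPicard}. Recall that $m(E;q)$ counts $\bT$-invariant divisors $a_1P_1+\cdots+a_rP_r$ with $a_i\in\{0,\ldots,q-1\}$ that are linearly equivalent to $qE$. Under the surjection $\varpi\colon\bZ^r\twoheadrightarrow\Cl(X)$ of \autoref{eqn.SESPicard}, such a tuple $(a_1,\ldots,a_r)$ exists if and only if the class $q[E]$ lies in the image of $\{0,\ldots,q-1\}^r$, which is precisely the statement that $q[E]\in\langle\pi_1,\ldots,\pi_r\rangle_{\{0,\ldots,q-1\}}$ after passing to $N^1(X)$ (modulo torsion this is harmless as I explain below). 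Dividing by $q$, this says $[E]\in\langle\pi_1,\ldots,\pi_r\rangle_{\{0,1/q,\ldots,(q-1)/q\}}\subset\sQ_X$, giving the inclusion $\FSupp(X)\subset\sQ_X\cap N^1(X)\setminus 0$. Conversely, if a lattice point $[E]\in\sQ_X\setminus 0$ is given, write $[E]=\sum c_i\pi_i$ with $c_i\in[0,1)$; one uses \autoref{lemma.LatticePointsArepLatticePoints} (applicable since $\eff(X)\cap N^1(X)=\langle\pi_1,\ldots,\pi_r\rangle_{\bN}$ by \autoref{prop.PseudoEffectiveCone}) to arrange $c_i\in[0,1)\cap\bZ[1/p]$, and then for $q$ divisible enough the $qc_i$ are integers in $\{0,\ldots,q-1\}$, so $m(E;q)\neq 0$.

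The one genuine subtlety is the torsion in $\Cl(X)$. The counting function $m(E;q)$ is literally about linear equivalence classes, i.e. about $\Cl(X)$, whereas $\FSupp(X)$ and $\sQ_X$ live in $N^1(X)$. I would handle this exactly as set up before the statement: split $\Cl(X)_{\mathrm{tor}}=\Cl(X)_{p\text{-tor}}\oplus\Cl(X)'_{\mathrm{tor}}$, with $e_0$ minimal so that $q_0\Cl(X)_{p\text{-tor}}=0$ and with multiplication by $p$ invertible on $\Cl(X)'_{\mathrm{tor}}$. If $[E]$ has a torsion part, that part maps to $q[E]$ by multiplication by $q$; for $e\geq e_0$ the $p$-torsion part of $q[E]$ vanishes, while the prime-to-$p$ torsion part is just another torsion class, which is absorbed harmlessly into the choice of the $\bT$-invariant representative of $qE$ (the fibers of $\varpi$ over a fixed $N^1$-class are a torsor under $\Cl(X)_{\mathrm{tor}}$ mapped into $\bZ^r$, and this shift does not affect whether some representative has all coefficients in $\{0,\ldots,q-1\}$, by the same box-counting argument as above after replacing $q$ by a large multiple). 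This is also where the assertion ``no torsion divisor class supports the Frobenius-trace kernel'' drops out: if $\llbracket E\rrbracket=0$ in $N^1(X)$ then $E\neq 0$ would force a nonzero lattice point in $\sQ_X$ lying on the zero class, contradicting strong convexity of $\eff(X)$ from \autoref{prop.PseudoEffectiveCone}, unless $E=0$, which is excluded.

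With the equality $\FSupp(X)=\sQ_X\cap N^1(X)\setminus 0$ in hand, the statement about $\BFS(X)$ is essentially formal: by definition $\BFS(X)=\bigc(X)\cap\FSupp(X)$, and by \autoref{cor.TheBigCone} we have $\bigc(X)=\langle\pi_1,\ldots,\pi_r\rangle_{\bR_{>0}}$, which is the interior of $\eff(X)$. Intersecting the half-open polytope $\sQ_X=\langle\pi_1,\ldots,\pi_r\rangle_{[0,1)}$ with this interior gives exactly $\sQ_X^{\circ}$, the (topological) interior of $\sQ_X$ inside $N^1(X)_{\bR}$: a point $\sum c_i\pi_i$ with all $c_i\in[0,1)$ is in $\bigc(X)$ precisely when it can be written with all coefficients strictly positive, and — using that $\sQ_X$ is a (half-open) parallelepiped-type region spanned by the $\pi_i$ — this is the same as being in its relative interior, which here is its interior since $\dim\sQ_X=\rho$. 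Since $0\notin\sQ_X^{\circ}$, removing $0$ is automatic, so $\BFS(X)=\sQ_X^{\circ}\cap N^1(X)$.

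The main obstacle I anticipate is not any single deep step but the bookkeeping around torsion and the precise translation of ``$m(E;q)\neq 0$'' into a membership statement in the half-open polytope — in particular making sure that the quantifier ``for some $e>0$'' in \autoref{def.SupportingDivisors} matches the ``for $q$ sufficiently divisible'' that comes out of \autoref{lemma.LatticePointsArepLatticePoints}. These match because $m(E;q)\neq 0$ is monotone in $q$ (noted just before \autoref{def.SupportingDivisors}: if it holds for some $q$ it holds for all larger $q$ in the same chain, and after the remark following \autoref{que.MainQuestion} we may pass freely between ``some $e$'' and ``all $e\gg 0$''). Once that alignment is explicit, the rest is the elementary convex geometry packaged in \autoref{prop.PseudoEffectiveCone}, \autoref{cor.TheBigCone}, and \autoref{lemma.LatticePointsArepLatticePoints}.
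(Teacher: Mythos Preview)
Your overall approach coincides with the paper's: both directions rest on \autoref{prop.PseudoEffectiveCone}, \autoref{cor.TheBigCone}, and \autoref{lemma.LatticePointsArepLatticePoints}, and the $\BFS$ statement is in each case an immediate consequence of \autoref{cor.TheBigCone}. The one genuine imprecision is your handling of torsion in the reverse inclusion. You write that the prime-to-$p$ torsion is ``absorbed harmlessly into the choice of the $\bT$-invariant representative of $qE$''; that is not where it goes, and your parenthetical about $\varpi$-fibers being torsors does not substitute for the missing step, since different torsion shifts within a fixed $N^1$-class correspond to distinct $\Div(M)$-cosets in $\bZ^r$ and it is not automatic that each meets the box $\{0,\ldots,q-1\}^r$.

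The paper's argument is clean and worth internalizing: from $q\llbracket E\rrbracket=\sum c_i\pi_i$ with $c_i\in\{0,\ldots,q-1\}$ one gets $qE+T+T'\sim\sum c_iP_i$ for unique $[T]\in\Cl(X)_{p\text{-tor}}$ and $[T']\in\Cl(X)'_{\mathrm{tor}}$. The prime-to-$p$ piece is absorbed into \emph{the divisor $E$} by writing $T'=qT''$ (multiplication by $q$ is invertible on $\Cl(X)'_{\mathrm{tor}}$) and replacing $E$ by $E+T''$ in the same numerical class. The $p$-torsion $T$ is then killed by multiplying the entire linear equivalence by $q_0$, which sends $c_i\mapsto q_0c_i\in[0,qq_0)$ and exhibits $m(E+T'';qq_0)\neq0$. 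Your sentence about ``the $p$-torsion part of $q[E]$'' conflates the torsion of the chosen lift $[E]$ (which one can take to be torsion-free via $\varsigma$) with the torsion obstruction $T+T'$ arising from the difference between numerical and linear equivalence; these are different objects. Likewise, your argument that torsion classes do not support the kernel (``a nonzero lattice point in $\sQ_X$ lying on the zero class'') is garbled; the relevant point, again from \autoref{prop.PseudoEffectiveCone}, is that an effective $\bT$-invariant divisor whose class is torsion must be the zero divisor.
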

\begin{proof}
The formula for the big Frobenius support is a direct consequence due to \autoref{cor.TheBigCone}. It follows directly from \autoref{cor.EforTorics} that
\[
\FSupp(X) \subset \sQ_{\bZ[1/p]} \cap N^1(X)  \subset \sQ \cap N^1(X)
\]
where
\[
\sQ_{\bZ[1/p]} \coloneqq \langle \pi_1,\ldots,\pi_r\rangle_{[0,1) \cap \bZ[1/p]}. 
\]
Moreover, $0 \notin \FSupp(X)$ as a torsion divisor class cannot support the Frobenius trace kernel by \autoref{prop.PseudoEffectiveCone}. In conclusion,
 \[
        \FSupp(X) \subset \sQ_X \cap N^1(X)\setminus 0.
    \]
    
    For the converse containment, observe that \autoref{lemma.LatticePointsArepLatticePoints} and \autoref{prop.PseudoEffectiveCone} imply that
\[
\sQ_{\bZ[1/p]} \cap N^1(X) \supset \sQ \cap  N^1(X),
\] 
so that it remains to show the inclusion
\[
\FSupp(X) \supset \sQ_{\bZ[1/p]} \cap N^1(X).
\]
Let $E$ be a divisor whose numerical class sits on the right-hand side of this containment. This means that there is $e \gg 0$ such that
\[
q E \equiv c_1 P_1 + \cdots +c_r P_r
\]
for some integers $0 \leq c_1,\ldots,c_r \leq q-1$. Therefore,
\[
q E + T + T' \sim c_1 P_1 + \cdots +c_r P_r
\]
for some uniquely determined divisor classes $[T] \in \Cl(X)_{p\textrm{-tor}}$ and $[T']\in \Cl(X)_{\textrm{tor}}'$. Write $T'= qT''$ for some uniquely determined $[T'']\in \Cl(X)_{\textrm{tor}}'$. Multiplying the displayed linear equivalence by $q_0$ then yields
\[
qq_0(E +T'') \sim c'_1 P_1 + \cdots +c'_r P_r
\]
where $c'_i \coloneqq q_0 c_i \in [0,q_0(q-1)] \subset [0,qq_0-1]$. In other words, $\sO_X(E+T'')$ is a direct summand of $\sE_{X,e+e_0}$ and so $E'\coloneqq E+T''$ supports the Frobenius trace kernel. Since $E' \equiv E$, this shows that $\llbracket E \rrbracket \in \FSupp(X)$ and so the required, remaining inclusion.
\end{proof}

\begin{corollary}\label{cor.multiplicityOfDirectSummands}
Let $E$ be a divisor supporting the Frobenius-trace kernel of $X$. Then, there is $\alpha(E) \in \bQ \cap [0,1]$ such that
    \[
    \tilde{m}(E;q) = \alpha(E) q^{d}+ O(q^{d-1}).
    \]
    Moreover, $\alpha(E)>0$ if and only if $E$ is big. In fact,
    \[
    \vol_X(E) \geq \frac{\alpha(E) d!}{|\Cl(X)_{\mathrm{tor}}|}.
    \]
    In particular, $\BFS(X) \neq \emptyset$.
\end{corollary}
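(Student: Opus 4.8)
The strategy is to realize $\tilde{m}(E;q)$ as, up to lower-order terms, an Ehrhart function and then read off its leading coefficient. Write $\varpi\colon \bZ^r \twoheadrightarrow N^1(X)$, $e_i \mapsto \pi_i$, for the surjection of \autoref{prop.PseudoEffectiveCone}, let $\Lambda \coloneqq \ker\varpi$ (a sublattice of $\bZ^r$ of rank $d$ containing $\Div(M)$ with index $|\Cl(X)_{\mathrm{tor}}|$, since on a toric variety numerical and linear equivalence differ only by torsion), and set
\[
\overline{P_E} \;\coloneqq\; \{\, b \in [0,1]^r \mid b_1\pi_1 + \cdots + b_r\pi_r = \llbracket E \rrbracket \,\} \;\subseteq\; A_E \;\coloneqq\; \varpi_{\bR}^{-1}(\llbracket E \rrbracket),
\]
a rational polytope inside the $d$-dimensional affine subspace $A_E$. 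By definition $\tilde{m}(E;q) = \sum_{[T] \in \Cl(X)_{\mathrm{tor}}} m(E+T;q)$, and unwinding \autoref{thm.SplittingFormula} (together with the standard lattice-point description of $\bT$-invariant divisors) shows that each $m(E+T;q)$ is the Ehrhart quasi-polynomial of a dilated lattice-translate of $\overline{P_E}$; evaluating the resulting quasi-polynomials at $q = p^e$ one obtains $\tilde{m}(E;q) = \alpha(E)\, q^{d} + O(q^{d-1})$ for some $\alpha(E) \in \bQ_{\geq 0}$, with $\alpha(E) > 0$ precisely when $\dim \overline{P_E} = d$. That $\alpha(E) \leq 1$ is immediate from $\tilde{m}(E;q) \leq \sum_{[D] \in \Cl(X)} m(D;q) = \rk F^e_*\sO_X = q^d$.

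It then remains to decide when $\dim \overline{P_E} = d$. First one checks that $\dim \overline{P_E} = d$ if and only if $A_E \cap (0,1)^r \neq \emptyset$: the subspace $A_E$ is a translate of $\Lambda_{\bR} = \Div_{\bR}(M_{\bR})$ and so lies in no coordinate hyperplane $\{b_i = c\}$ (each primitive ray generator $u_i$ is nonzero), whence, if $A_E$ were disjoint from the open cube, $\overline{P_E}$ would be contained in a finite union of codimension-one slices of $A_E$ and could not be $d$-dimensional. If $A_E \cap (0,1)^r \neq \emptyset$, any point of it writes $\llbracket E \rrbracket$ as a strictly positive combination of the $\pi_i$, so $\llbracket E \rrbracket \in \bigc(X)$ by \autoref{cor.TheBigCone}, i.e.\ $E$ is big. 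For the converse, suppose $E$ is big, so $\llbracket E \rrbracket = \sum c'_i \pi_i$ with all $c'_i > 0$; this is where the hypothesis enters, for since $E$ supports the Frobenius-trace kernel, \autoref{cor.SupportOfE_X} also gives $\llbracket E \rrbracket = \sum c_i \pi_i$ with all $c_i \in [0,1)$, and then $(1-t)(c_i) + t(c'_i) \in A_E \cap (0,1)^r$ for $t > 0$ small. I expect this converse to be the main obstacle: bigness by itself does \emph{not} make $\overline{P_E}$ full-dimensional---for instance $\sO_{\bP^1}(2)$ is big while its $\overline{P_E}$ is empty---so one really needs the representative in $\sQ_X$ furnished by \autoref{cor.SupportOfE_X} to make the convex-combination argument work.

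For the volume estimate, I would use that a $\bT$-invariant divisor counted by $m(D;q)$ is in particular an effective $\bT$-invariant divisor linearly equivalent to $qD$, so that $m(D;q) \leq h^0(X, \sO_X(qD))$. Summing over torsion twists, and using that $E + T \equiv E$ forces $\vol_X(E + T) = \vol_X(E)$, for every $\epsilon > 0$ and all $q \gg 0$ one gets
\[
\tilde{m}(E;q) \;=\; \sum_{[T] \in \Cl(X)_{\mathrm{tor}}} m(E+T;q) \;\leq\; \sum_{[T] \in \Cl(X)_{\mathrm{tor}}} h^0\bigl(X, \sO_X(q(E+T))\bigr) \;\leq\; |\Cl(X)_{\mathrm{tor}}| \, \bigl(\vol_X(E) + \epsilon\bigr) \frac{q^d}{d!}.
\]
Comparing with $\tilde{m}(E;q) = \alpha(E) q^d + O(q^{d-1})$ and letting first $q$ grow without bound and then $\epsilon$ tend to $0$ yields $\vol_X(E) \geq \alpha(E)\, d! / |\Cl(X)_{\mathrm{tor}}|$.

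Finally, $\BFS(X) \neq \emptyset$ follows formally: since $m(0;q) = 1$ by \autoref{prop.PseudoEffectiveCone}, we have $\sum_{\llbracket E\rrbracket \in \FSupp(X)} \tilde{m}(E;q) = q^d - 1$ for $q \gg 0$, a finite sum growing like $q^d$; hence some summand has $\alpha(E) > 0$, so the corresponding $E$ is big by the second step, and therefore $\llbracket E \rrbracket \in \bigc(X) \cap \FSupp(X) = \BFS(X)$.
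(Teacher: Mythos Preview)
Your proof is correct and follows the same architecture as the paper's: both identify $\tilde{m}(E;q)$ with a lattice-point count in a dilation of the polytope $A_E \cap [0,1]^r$, read off $\alpha(E)$ as its (normalized) volume, characterize $\alpha(E)>0$ via $A_E \cap (0,1)^r \neq \emptyset$ and \autoref{cor.TheBigCone}, derive the volume bound from $m(E;q) \le h^0(X,qE)$, and conclude $\BFS(X)\neq\emptyset$ from $\sum \alpha(E)=1$. Two cosmetic differences: you are more explicit than the paper about why bigness forces $A_E \cap (0,1)^r \neq \emptyset$ (your convex-combination argument, using both a strictly positive representative and a $[0,1)$-representative from \autoref{cor.SupportOfE_X}, makes visible the role of the hypothesis $\llbracket E\rrbracket\in\FSupp(X)$), and for the inequality $m(E;q)\le h^0(X,qE)$ you count monomial sections directly, whereas the paper obtains it by twisting the decomposition of $F^e_*\sO_X$ by $\sO_X(E)$ and applying the projection formula, which incidentally yields the exact identity $h^0(X,qE)=\sum_{[D]} m(D;q)\,h^0(X,E-D)$.
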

\begin{proof}
   Let $\bm{c} \in [0,1)^{\times r} $ be such that $
   \llbracket E \rrbracket=  c_1 \pi_1 + \cdots +c_r \pi_r \in N^1(X).$ 
    According to the proof of \autoref{cor.SupportOfE_X}, $\tilde{m}(E;q)$ is the number of any such $\bm{c}$'s with entries in $\bZ \cdot q^{-1}$ as long as $e \geq e_0$. That is, if $e>e_0$ then $\tilde{m}(E;q)$ is the cardinality of the set
    \[
    (\bm{c}+K) \cap \big([0,1) \cap \bZ \cdot q^{-1} \big)^{\times r} \subset \bR^r
    \]
    where $K \simeq \bR^{d}$ is the kernel of the $\rho \times r$ real matrix whose $i$-th column is the vector $\pi_i$ in some fixed basis of $N^1(X)_{\bR}$. In particular, $\alpha(E)$ is the measure of the polytope
    \[
    \sP \coloneqq  K \cap (-\bm{c}+[0,1]^{\times r}) \subset K \simeq \bR^d.
    \]
    This proves the asymptotic equality regarding $\tilde{m}(E;q)$. However, it also tells us that $\alpha(E)>0$ if and only if $\sP \subset \bR^d$ is a $d$-dimensional polytope.

    On the other hand, by \autoref{cor.TheBigCone}, we conclude that $E$ is big if and only if $(\bm{c}+K)$ intersects $(0,1)^{\times r}$. That is, $E$ is big if and only if
    \[
    \emptyset \neq K \cap (-\bm{c}+(0,1)^{\times r})  \subset  \sP^{\circ}.
    \]
    In which case $\dim \sP = d$ and so $\alpha(E)>0$. Hence, $\alpha(E)$ is positive if $E$ is big.
    
    For the converse, Recall that a divisor is big if and only if it has positive volume. It then suffices to show that $\vol_X(E)$ is bounded below by $\alpha(E)d!/|\Cl(X)_{\mathrm{tor}}|$. Observe that
    \[
    \vol_X(E)/d! \coloneqq \limsup_{n \rightarrow \infty} \frac{h^0(X,nE)}{n^d} \geq \limsup_{e \rightarrow \infty} \frac{h^0(X,qE)}{q^d} 
    \]
    However, twisting 
    \[
    F^e_*\sO_X = \bigoplus_{[D] \in \Cl(X)} \sO_X(-D)^{\oplus m(D;q)}
    \] by $\sO_X(E)$ and using the projection formula yields
    \[
    F^e_* \sO_X(qE) =\bigoplus_{[D] \in \Cl(X)} \sO_X(E-D)^{\oplus m(D;q)}.
    \]
   Taking global sections (and using that $F^e$ is affine) then yields
    \[
    h^0(X,qE)   = \sum_{ [D] \in \Cl(X) } m(D;q)h^0(X,E-D) \geq m(E;q).
    \]
Therefore,
\[
\sum_{[T] \in \Cl(X)_{\mathrm{tor}}} h^0(X,q(E+T)) \geq \tilde{m}(E;q)
\]
    Dividing by $q^d$ and taking $e \rightarrow \infty$, we conclude that 
    \[
   \sum_{[T] \in \Cl(X)_{\mathrm{tor}}} \vol_X(E+T) \geq \alpha(E) d!.
    \]
    However, being volumes a numerical invariant, the left-hand side of this inequality is $|\Cl(X)_{\mathrm{tor}}| \vol_X(E)$; as required.

For the final statement, observe that
\[
q^d-1 = \sum_{[E] \in \Cl(X)} m(E;q) =  \sum_{\llbracket E \rrbracket \in \FSupp(X)} \tilde{m}(E;q
)
\]
and so
\[
1 = \sum_{\llbracket E \rrbracket \in \FSupp(X)} \alpha(E) =  \sum_{\llbracket E \rrbracket \in \BFS(X)} \alpha(E).
\]
Since the latter is a sum of positive numbers, $\BFS(X)$ cannot be empty.
\end{proof}

From the argument in \autoref{cor.multiplicityOfDirectSummands}, the following exact formula is obtained for the volume of divisors on smooth projective varieties. It illustrates the principle behind this work, namely basic classical invariants are determined by the Frobenius.

\begin{scholium}[Volume formula]
    Let $X$ be a smooth $d$-dimensional toric variety. The following formula holds for every divisor $E$ on $X$:
    \[
    \vol_X(E)/d! = \sum_{[D] \in \BFS(X)} \alpha(D) h^0(X,E-D).
    \]
    In particular, $E$ is big if and only if $h^0(X,E-D) \neq 0$ for some $[D] \in \BFS(X)$. That is, for every big divisor $E$ on $X$ there is $[D] \in \BFS(X)$ and $N \geq 0 $ such that $E \sim D + N$.
\end{scholium}

\begin{corollary} \label{cor.FrobeniusSupport}
    With notation as in \autoref{rem.NefConesOfQFactorialModifications}, the toric small $\bQ$-factorial modifications of $X$ share the same Frobenius support.
\end{corollary}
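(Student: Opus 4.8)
The plan is to deduce the statement purely from the combinatorial description of the Frobenius support in \autoref{cor.SupportOfE_X}, together with the identifications recorded in \autoref{rem.NefConesOfQFactorialModifications}. Fix one of the finitely many toric small $\bQ$-factorial modifications $f_i \: X \dashrightarrow X_i$ of $X$.

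First I would recall from \autoref{rem.NefConesOfQFactorialModifications} that, since $f_i$ is an isomorphism in codimension one, one has $\Sigma_X(1)=\Sigma_{X_i}(1)$, so $X$ and $X_i$ carry the \emph{same} list $u_1,\dots,u_r$ of primitive ray generators and hence the same list $P_1,\dots,P_r$ of $\bT$-invariant prime divisors (identified via the common big open subset). Moreover, the linear and numerical relations among the $P_j$ are the same on $X$ and on $X_i$, so pullback along $f_i$ induces an isomorphism $N^1(X)_{\bR}\xrightarrow{\ \sim\ }N^1(X_i)_{\bR}$ which carries the lattice $N^1(X)$ onto $N^1(X_i)$ and sends the numerical class $\pi_j$ of $P_j$ on $X$ to the numerical class $\pi_j$ of $P_j$ on $X_i$, for every $j$; it also carries $\eff(X)$ onto $\eff(X_i)$.

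Next I would apply \autoref{cor.SupportOfE_X} to both varieties, obtaining $\FSupp(X)=\sQ_X\cap N^1(X)\setminus 0$ and $\FSupp(X_i)=\sQ_{X_i}\cap N^1(X_i)\setminus 0$, where $\sQ_X=\langle\pi_1,\dots,\pi_r\rangle_{[0,1)}$ is formed inside $N^1(X)_{\bR}$ and $\sQ_{X_i}=\langle\pi_1,\dots,\pi_r\rangle_{[0,1)}$ inside $N^1(X_i)_{\bR}$. Since the isomorphism above matches the generating vectors $\pi_j$ and the coefficient set $[0,1)$ is intrinsic, it carries $\sQ_X$ onto $\sQ_{X_i}$; since it also matches the lattices and fixes $0$, it restricts to a bijection $\FSupp(X)\xrightarrow{\ \sim\ }\FSupp(X_i)$. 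Under the canonical identification $N^1(X)\cong N^1(X_i)$ this says precisely that the two modifications share the same Frobenius support. If desired, one can add the stronger observation that the multiplicities match as well: $\tilde m(E;q)$ and the leading densities $\alpha(E)$ of \autoref{cor.multiplicityOfDirectSummands} are computed from the same polytope data (the vectors $\pi_j$ and the kernel $K$), hence they agree under the identification.

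There is essentially no obstacle here: the content has already been isolated in \autoref{cor.SupportOfE_X} and in the bookkeeping of \autoref{rem.NefConesOfQFactorialModifications}. The only point requiring a little care is the mild abuse of language: one should state the conclusion relative to the canonical isomorphism $N^1(X)\cong N^1(X_i)$ rather than as a literal equality of subsets of a priori distinct vector spaces — but under that isomorphism the polytopes $\sQ_X$ and $\sQ_{X_i}$ have identical defining data, so the supports coincide on the nose.
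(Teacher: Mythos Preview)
Your proposal is correct and matches the paper's intended argument: the corollary is stated without proof immediately after \autoref{cor.SupportOfE_X} and \autoref{cor.multiplicityOfDirectSummands}, precisely because it follows at once from the description $\FSupp(X)=\sQ_X\cap N^1(X)\setminus 0$ together with the identifications of \autoref{rem.NefConesOfQFactorialModifications}. Your write-up simply makes explicit what the paper leaves implicit.
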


\begin{remark}[Frobenius support and finite toric quotients] \label{rem.FrobeniusSupportAndQuotients}
    Let $X$ be a $\bQ$-factorial toric variety and $G \subset \bT$ be a toric subgroup. Recall that $G$ is determined by a square $\bZ$-matrix $\Lambda$ of size $d = \dim X$ with a non-zero determinant; see \autoref{subsec.QFactorialToricSingularities}. Moreover, the quotient $f\: X \to X/G$ is given by the fan $\Sigma_{X/G} = \{\Lambda(\sigma) \mid \sigma \in \Sigma_X\}$. Consider the pullback homomorphism $f^*\: \Cl(X/G) \to \Cl(X)$, which is well-defined as $f$ is a finite cover; see, e.g., \cite[\S2.2]{SchwedeTuckerTestIdealFiniteMaps}. Observe that this is a quotient map. Indeed, the prime $\bT$-invariant divisors are preserved under this pullback, i.e., ``$f^* P_i = P_i$'' for all $i=1,\ldots,r$. What changes are the relations, which do change under the action of $\Lambda^{\top}$. This is formally explained by the following commutative diagram of short exact sequences
    \[
    \xymatrixcolsep{4.5pc}\xymatrix{
    0 \ar[r]   & \bZ^d \ar[d]_-{\Lambda^{\top}}\ar[r]^-{[\Lambda u_1 \cdots \Lambda u_r]^{\top}} & \bZ^r \ar[d]^-{\id} \ar[r] & \Cl(X/G) \ar[r]\ar[d]^-{f^*}\ar[r] & 0 \\
 0  \ar[r] & \bZ^d \ar[r]^-{[u_1\cdots u_r]^{\top}} & \bZ^r  \ar[r] & \Cl(X) \ar[r]  & 0 
    }
    \]

This further implies that $\ker f^* \xrightarrow{\simeq} \coker \Lambda^{\top}$ by the snake lemma. In particular, $\ker f^*$ is a finite group of order $|\det \Lambda^{\top}| = |G|$ and so
\[
(f^*)^{-1}(\Cl(X)_{\mathrm{tor}}) = \Cl(X/G)_{\mathrm{tor}}.
\]
Consequently, $f^*\: \Cl(X/G) \to \Cl(X)$ defines a $\bZ$-linear isomorphism 
\[
f^*\: N^1(X/G)_{\bR} \xrightarrow{\simeq} N^1(X)_{\bR}
\] and so an $\bR$-linear one $f^* \: N^1(X/G)_{\bR} \to N^1(X)_{\bR}$. Moreover, under this isomorphism, the corresponding pseudo-effective (resp. big/nef/ample) cones are identified. More importantly, $f^*$ establishes a bijection 
    \[
    f^*\:\FSupp(X/G) \xrightarrow{\simeq} \FSupp(X)
    \]
    between the Frobenius supports, as well as between the corresponding big/nef/ample Frobenius supports; respectively. Also, we readily see that  $\alpha(f^* E ) = \alpha(E)$  for all $\llbracket E \rrbracket \in \FSupp(X/G)$ using the description of $\alpha(-)$ as the measure of a certain polytope in $\bR^d$ as explained in the proof of \autoref{cor.multiplicityOfDirectSummands}. 
\end{remark}

\begin{remark}[Asymptotic behavior] \label{rem.AsymptoticBehavior}
If $X$ is smooth, there is a projective system \[ 
\sE_{X,1} \twoheadleftarrow  \sE_{X,2} \twoheadleftarrow  \sE_{X,3} \twoheadleftarrow \cdots .\]
More generally, there are exact sequences
\[
 0 \to F^{e}_* \left(\sE_{X,e'} 
\otimes \omega_X^{1-q }\right) \to \sE_{X,e+e'} \to \sE_{X,e} \to 0
\]
for all $e,e'>0$. Since $X$ is $F$-split, these exact sequences are split and so
\[
\sE_{X,e+e'} = \sE_{X,e} \oplus F^{e}_* \left(\sE_{X,e'} 
\otimes \omega_X^{1-q }\right).
\]
See \cite[Remark 5.2]{CarvajalRojasPatakfalviVarietieswithAmpleFrobenius-traceKernel} for details. Therefore, when we say that $\sE_{X,e}$ has a positivity property such as nefness or ampleness for all $e>0$, we mean that this property holds for 
\[
\sF_{e,e'} \coloneqq F^e_*(\sE_{X,e'} \otimes \omega_X^{1-q}) = (F^e_* \sB_{X,e'}^1)^{\vee} 
\] 
for all $e \geq 0$ and any $e'>0$. Note that 
\[
\sF_{0,e'} = \sE_{X,e'}, \quad \text{and} \quad \sE_{X,e+e'}=\sF_{0,e'} \oplus \sF_{1,e'} \oplus \cdots \oplus \sF_{e,e'}.
\] 
Also, $\rk \sF_{e,e'} = q^d(q'^d-1)$.

We then conclude the following. For $[E] \in \FSupp(X)$, we have
\[
F^e_*\sO_X(-E) = \bigoplus_{[D] \in \FSupp(X)} \sO_{X}(-D)^{\oplus m_E(D;q)}.
\]
That is, every divisor $D$ that appears in the above direct sum must be in the Frobenius support of $X$. In other words, $m_E(D;q)=0$ if $[D] \notin \FSupp(X)$. We may rewrite the above as
\[
\sE_{X,e}[E] \coloneqq F^e_*\sO_X(E+(1-q)K_X) = (F^e_*\sO_X(-E))^{\vee} = \bigoplus_{[D] \in \FSupp(X)} \sO_{X}(D)^{\oplus m_E(D;q)}.
\]
Therefore,
\[
\sF_{e,e'} = \bigoplus_{[E] \in \FSupp(X)} \sE_{X,e}[E]^{\oplus m(E;q')}  = \bigoplus_{[D] \in \FSupp(X)} \sO_{X}(D)^{\oplus m(D;q,q')},
\]
where
\[
m(D;q,q') \coloneqq \sum_{[E] \in \FSupp(X)} m(E;q') m_E(D;q).
\]
We will come back to this in \autoref{sec.AmpleFSignHomogeneity}.
\end{remark}

\subsubsection{Relationship with boundaries and log structures} \label{subsubse.LogStructures}
Recall that a \emph{log pair} $(X,\Delta)$ is the combined data of a (normal quasi-projective) variety $X$ and a \emph{boundary} $\Delta$ on $X$. This means that $\Delta$ is an effective $\bQ$-divisor on $X$ with coefficients $\leq 1$ such that $K_X + \Delta$ is $\bQ$-Cartier. That is, there is $0 \neq n \in \bN$ such that $n(K_X+ \Delta)$ is an integral Cartier divisor. The smallest such $n$ is called \emph{(Cartier) index} of the pair $(X,\Delta)$. 

It is worth noting that the $\bQ$-Cartier condition above is free if $X$ is $\bQ$-factorial. In that case, what multiplying by $n$ does is simply to clear the denominators in the coefficients of $K_X+\Delta$ and then be divisible enough to annihilate the corresponding integral divisor in the torsion group $\Cl(X)/\Pic(X)$. That is, the index of $(X,\Delta)$ is the index of $K_X + \Delta$ as an element in the group $\DIV(X)_{\bQ}/\DIV(X)$, say $0 \neq a \in \bN$, times the index of $a(K_X+\Delta)$ in $\Cl(X)/\Pic(X)$. 

We will be, however, interested in another kind of index for log pairs. Namely, the index of $K_X+\Delta$ as an element of $\Cl(X)_{\bQ}=N^1(X)_{\bQ}$ modulo $N^1(X)$, i.e., the smallest $0\neq n \in \bN$ such that $n(K_X+\Delta) \sim_{\bQ} E$ for some integral divisor $E$ on $X$.\footnote{More succinctly, we do not care about clearing denominators.} We will refer to it as the \emph{class index} of $(X,\Delta)$. We are unaware of any other term used in the literature for it. Of course, the class index always divides the Cartier index.  

Let $X$ be a $\bQ$-factorial $d$-dimensional toric variety, in particular $-K_X=P_1 + \cdots +P_r$. Then a \emph{toric boundary} $\Delta$ is defined as a $\bQ$-divisor $\Delta = \delta_1 P_1 + \cdots +\delta_r P_r$ with $\delta_1,\ldots,\delta_r \in (0,1]$. As mentioned above, the effective $\bQ$-divisor
\[
-(K_X + \Delta) = (1-\delta_1)P_1 +\cdots +(1-\delta_r) P_r \eqqcolon \Delta' 
\]
is automatically $\bQ$-Cartier. We refer to the pair $(X,\Delta)$ as a \emph{toric log pair}.

We are interested in toric log pairs $(X,\Delta)$ of class index $1$. To see why, let $(X,\Delta)$ be one and write $\Delta' \sim_{\bQ} E$ for $E$ some integral divisor. Then $\Delta$ is a big divisor and the associated integral divisor $E$ must be in $\FSupp(X)$ as long as $\Delta \neq -K_X$. Conversely, every element in $\FSupp(X)$ arises in this way. Moreover, if $\lfloor \Delta \rfloor = 0$ then the associated divisor $E$ is big and all the big divisors in $\FSupp(X)$ are obtained in this way. Of course, many different toric boundaries $\Delta$ may correspond to a divisor $E$ in $\FSupp(X)$. However, as explained in the proof of \autoref{cor.multiplicityOfDirectSummands}, the set of such boundaries is parametrized by the rational points of a polytope in $\bR^d$, whose measure is $\alpha(E)$.

The following is a well-known fact among experts. We prove it for the sake of completeness.

\begin{proposition} \label{pro.KLTnessToriclOGpAIRS}
    Let $(X,\Delta)$ be a toric log pair where $X$ is a $d$-dimensional $\bQ$-factorial toric variety. The following statements are equivalent:
    \begin{enumerate}
        \item $(X,\Delta)$ is $F$-regular.
        \item $(X,\Delta)$ is KLT (i.e. Kawamata log terminal).
        \item $\lfloor \Delta \rfloor = 0$.
    \end{enumerate}
\end{proposition}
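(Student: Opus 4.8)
The plan is to take (c) as a hub and prove the equivalences $\text{(b)}\Leftrightarrow\text{(c)}$ and $\text{(a)}\Leftrightarrow\text{(c)}$. Both conditions are meaningful here: $\bQ$-factoriality makes $K_X+\Delta$ automatically $\bQ$-Cartier, so KLT-ness is defined, and since a toric boundary (\autoref{subsubse.LogStructures}) has all coefficients in $(0,1]$, condition (c) just reads $\delta_i<1$ for all $i$, where $\Delta=\delta_1P_1+\cdots+\delta_rP_r$. As KLT-ness and strong $F$-regularity are both local, after passing to affine toric charts we may localize at will.

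For $\text{(b)}\Leftrightarrow\text{(c)}$ I would run the classical toric discrepancy computation. Because $\Sigma_X$ is simplicial, every primitive $u\in N$ lies in a unique cone $\sigma=\langle u_{i_1},\dots,u_{i_d}\rangle$ with a unique expression $u=\lambda_1u_{i_1}+\cdots+\lambda_du_{i_d}$, all $\lambda_j\geq0$, and the log discrepancy of the associated toric valuation is $a_\ell(E_u;X,\Delta)=\sum_j\lambda_j(1-\delta_{i_j})$---the value at $u$ of the function that is linear on each maximal cone of $\Sigma_X$ with value $1-\delta_i$ at $u_i$; see \cite[\S11]{CoxLittleSchenckToricVarieties}, which also reduces KLT-ness to such toric valuations via a toric log resolution. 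Specializing to $u=u_i$ gives $a_\ell(P_i;X,\Delta)=1-\delta_i$, so some $\delta_i=1$ forces non-KLT-ness, whereas all $\delta_i<1$ makes $\sum_j\lambda_j(1-\delta_{i_j})>0$ on every cone for $u\neq0$, so $(X,\Delta)$ is KLT. Simpliciality, i.e.\ $\bQ$-factoriality, is essential here.

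For $\text{(c)}\Rightarrow\text{(a)}$ I would reduce to a simple normal crossings pair. Fix a purely $\bQ$-factorial affine chart $U$ with its finite cover $f\colon\bA^d=\Spec\kay[t_1,\dots,t_d]\to U$ as in \autoref{subsec.QFactorialToricSingularities}: $f$ is \etale{} in codimension $1$ (so $K_{\bA^d}=f^*K_U$, with no ramification divisor), it is split by the trace since $G=\mathfrak D(\Cl(U))$ is diagonalizable, and $f^*P_i=\Div t_i$. Thus $f^*(K_U+\Delta)=K_{\bA^d}+\sum_i\delta_i\,\Div t_i$, and since $\lfloor\Delta\rfloor=0$ the SNC pair $\bigl(\bA^d,\sum_i\delta_i\,\Div t_i\bigr)$ is strongly $F$-regular (its test ideal is the monomial ideal $\prod_i(t_i)^{\lfloor\delta_i\rfloor}=\sO_{\bA^d}$). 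Strong $F$-regularity then descends along the split finite cover $f$, compatibly with the boundaries, by the easy inclusion $\tau(U,\Delta)\supseteq\Tr_f\bigl(\tau(\bA^d,\sum_i\delta_i\Div t_i)\bigr)$ for test ideals under finite maps (\cite{SchwedeTuckerTestIdealFiniteMaps}), using $K_{\bA^d}+\sum_i\delta_i\Div t_i=f^*(K_U+\Delta)$; hence $(U,\Delta)$, and so $(X,\Delta)$, is strongly $F$-regular. Conversely, for $\text{(a)}\Rightarrow\text{(c)}$, if some $\delta_{i_0}=1$ I would localize at the generic point of $P_{i_0}$---a DVR with uniformizer $t$ on which $\Delta$ restricts to $1\cdot\{t=0\}$---and test strong $F$-regularity against $c=t$, reducing to the familiar fact that $\sO\to F_*^e\sO(q\cdot\{t=0\})$ admits no splitting carrying $1$ to $1$, i.e.\ that $(\bA^1,\{0\})$ is $F$-pure but not strongly $F$-regular. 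I expect the descent step in $\text{(c)}\Rightarrow\text{(a)}$ to be the main obstacle: since $|G|$ may be divisible by $p$, naive \etale{} descent of $F$-regularity is unavailable, so one must genuinely use that $f$ is split by the trace and unramified in codimension one, and verify that the round-ups $\lceil(q-1)\Delta\rceil$ transport correctly under $f^*$; the remainder is routine discrepancy bookkeeping together with the standard SNC test-ideal computation.
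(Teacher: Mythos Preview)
Your proof is correct and the crucial implication $\text{(c)}\Rightarrow\text{(a)}$ is handled exactly as in the paper: pass to a purely $\bQ$-factorial chart, pull back along the diagonalizable quotient $f\colon\bA^d\to U$, compute the test ideal of the SNC pair upstairs, and push down via the trace using the transformation rule of \cite{SchwedeTuckerTestIdealFiniteMaps}.

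The organizational difference is minor but worth noting. The paper argues in a cycle $\text{(a)}\Rightarrow\text{(b)}\Rightarrow\text{(c)}\Rightarrow\text{(a)}$: the first is the general fact that $F$-regular implies KLT, and $\text{(b)}\Rightarrow\text{(c)}$ is dispatched in one line by observing that any $P_i$ with $\delta_i=1$ is a log canonical center. You instead take (c) as a hub, giving a self-contained toric discrepancy computation for $\text{(b)}\Leftrightarrow\text{(c)}$ and a direct DVR localization for $\text{(a)}\Rightarrow\text{(c)}$. Your route is slightly longer but more explicit and avoids invoking the general $F$-regular$\Rightarrow$KLT theorem; the paper's route is terser but leans on that blackbox. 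Either is fine.
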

\begin{proof}
In general, $F$-regular log pairs are KLT and so $(a)\Longrightarrow (b)$ holds. Furthermore, note that every $P_i$ with $\delta_i=1$ in $\Delta=\delta_1P_1 + \cdots +\delta_r P_r$ is necessarily a log canonical center and an $F$-pure center of the pair $(X,\Delta)$. Thus, if $(X,\Delta)$ is KLT, then $\lfloor \Delta \rfloor = 0$ (i.e., $(b)\Longrightarrow (c)$ holds). The most interesting statement is the implication $(c)\Longrightarrow (a)$; which we do next.

For $(c)\Longrightarrow (a)$, we may assume that $f\:\bA^d \to X$ is a purely $\bQ$-factorial affine variety as in \autoref{subsec.QFactorialToricSingularities}. Then, using the transformation rule for the test ideals \cite{SchwedeTuckerTestIdealFiniteMaps,CarvajalStablerFsignaturefinitemorphisms}, we have 
\[
\bm{\tau}(X,\Delta) = \Tr(\bm{\tau}(\bA^d,f^* \Delta)) = \Tr(\bm{\tau}(\bA^d, \delta_1 \Div t_1 + \cdots + \delta_d \Div t_d)) =\Tr(\kay[t_1,\ldots,t_d])=\sO_X, 
\]
where $\Tr\: \kay[t_1,\ldots,t_d] \to \sO_X= \kay[t_1,\ldots,t_d]^G$ is the splitting trace map constructed in \cite{CarvajalFiniteTorsors}. The fact that $(\bA^d, \delta_1 \Div t_1 + \cdots + \delta_d \Div t_d)$ is $F$-regular precisely when $\lfloor \Delta \rfloor = 0$ follows from e.g. \cite[Example 4.18]{BlickleSchwedeTuckerFSigPairs1}.\footnote{A more in-depth analysis, as in \cite{CarvajalFiniteTorsors}, would show that $(\bA^d, f^* \Delta)$ is $F$-regular if and only if so is $(X,\Delta)$.}
\end{proof}

In particular, big divisors $E \in \FSupp(X)$ correspond to the KLT toric log pairs $(X,\Delta)$ of class index $1$, where the boundary $\Delta$ is unique up to numerical equivalence. This observation allows us to conclude the following.

\begin{corollary} \label{cor.ExistenceOfAmpleClassesInFrobSupport}
    Let $X$ be a $\bQ$-factorial toric variety. Then, there is an ample divisor $\llbracket E \rrbracket \in \FSupp(X)$ if and only if there is a toric log Fano pair $(X,\Delta)$ of class index $1$.
\end{corollary}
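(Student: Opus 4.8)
The plan is to read off both implications from three ingredients already in place: the description $\FSupp(X)=\sQ_X\cap N^1(X)\setminus 0$ from \autoref{cor.SupportOfE_X} (with $\sQ_X=\langle\pi_1,\dots,\pi_r\rangle_{[0,1)}$), the dictionary of \autoref{subsubse.LogStructures} between a toric boundary $\Delta=\delta_1P_1+\cdots+\delta_rP_r$ (so $\delta_i\in(0,1]$) and the effective $\bQ$-Cartier divisor $-(K_X+\Delta)=\sum_i(1-\delta_i)P_i$, and the characterization of KLT-ness in \autoref{pro.KLTnessToriclOGpAIRS}. Two standard facts will be used freely: that ampleness of a $\bQ$-Cartier $\bQ$-divisor depends only on its numerical class (Kleiman), and that on a normal projective toric variety numerical and $\bQ$-linear equivalence of $\bQ$-divisors coincide, so that $(X,\Delta)$ has class index $1$ precisely when $\llbracket K_X+\Delta\rrbracket$ lies in the lattice $N^1(X)\subset N^1(X)_{\bR}$. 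Here I take ``toric log Fano pair'' to mean a toric log pair $(X,\Delta)$ that is KLT with $-(K_X+\Delta)$ ample.

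For the ``if'' direction I would start from a toric log Fano pair $(X,\Delta)$ of class index $1$, choose an integral divisor $E$ with $E\sim_{\bQ}-(K_X+\Delta)$ (available by the class index hypothesis), and note that $\llbracket E\rrbracket=\sum_i(1-\delta_i)\pi_i$ has all coefficients in $[0,1)$, hence lies in $\sQ_X$; it is a nonzero lattice point since $E$, being numerically equivalent to the ample divisor $-(K_X+\Delta)$, is ample. By \autoref{cor.SupportOfE_X} this places $\llbracket E\rrbracket$ in $\FSupp(X)$, and it is an ample class. (KLT-ness is not actually needed here.)

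For the ``only if'' direction I would take an ample class $\llbracket E\rrbracket\in\FSupp(X)$ and first improve its presentation. Since an ample class is big, \autoref{cor.TheBigCone} gives $\llbracket E\rrbracket=\sum_i a_i\pi_i$ with all $a_i>0$, while membership in $\FSupp(X)\subset\sQ_X$ gives $\llbracket E\rrbracket=\sum_i b_i\pi_i$ with all $b_i\in[0,1)$. Both presentations lie in the fiber $\varpi^{-1}(\llbracket E\rrbracket)\subset\bR^r$ of the surjection $\varpi$ from \autoref{prop.PseudoEffectiveCone}, which is a rational affine subspace; moving along this fiber from the $b$-presentation a short distance towards the $a$-presentation produces a point all of whose coordinates are strictly between $0$ and $1$, and since $\varpi^{-1}(\llbracket E\rrbracket)\cap(0,1)^r$ is then a nonempty open subset of a rational affine subspace it contains a rational point $(c_1,\dots,c_r)$. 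Putting $\delta_i\coloneqq 1-c_i\in(0,1)\cap\bQ$ and $\Delta\coloneqq\sum_i\delta_iP_i$ gives a toric boundary with $\lfloor\Delta\rfloor=0$, hence $(X,\Delta)$ is KLT by \autoref{pro.KLTnessToriclOGpAIRS}; furthermore $-(K_X+\Delta)=\sum_i c_iP_i$ has numerical class $\llbracket E\rrbracket\in N^1(X)$, so the class index is $1$, and $-(K_X+\Delta)$ is ample because $\llbracket E\rrbracket$ is. Thus $(X,\Delta)$ is a toric log Fano pair of class index $1$.

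The only step beyond routine unwinding of definitions is the passage, in the last direction, from an \emph{ample} (equivalently: big) class of $\FSupp(X)$ to a presentation $\sum_i c_i\pi_i$ with every $c_i$ strictly inside $(0,1)$: this strict interiority is exactly what forces $\lfloor\Delta\rfloor=0$, i.e.\ KLT-ness rather than mere log canonicity, and it is where bigness of an ample class — via the description of $\bigc(X)$ in \autoref{cor.TheBigCone} — genuinely enters. Everything else is bookkeeping with $\FSupp(X)$, the class index, and the boundary dictionary.
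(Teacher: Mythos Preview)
Your proof is correct and follows the same route as the paper: both directions go through the dictionary set up in \autoref{subsubse.LogStructures} between elements of $\FSupp(X)$ and toric boundaries of class index $1$, combined with \autoref{pro.KLTnessToriclOGpAIRS}. The paper treats the corollary as immediate from the sentence ``big divisors $E\in\FSupp(X)$ correspond to the KLT toric log pairs $(X,\Delta)$ of class index $1$'' just before its statement, whereas you spell out the one nontrivial point there---namely, producing a presentation $\sum_i c_i\pi_i$ with all $c_i\in(0,1)\cap\bQ$ from bigness via \autoref{cor.TheBigCone} and a convexity/density argument in the fiber---which the paper leaves implicit.
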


\begin{remark}
    Recall that a toric variety $X$ can always be endowed with a toric boundary $\Delta$ such that $(X,\Delta)$ is a toric log Fano pair. The problem is that in doing so, the index of $(X,\Delta)$ is potentially very large. Indeed, one starts with an ample toric divisor $A=a_1 P_1+\cdots + a_r P_r$ and then takes $n \gg 0$ such that
    \[
    \Delta \coloneqq -K_X - \frac{1}{n} A = \left(1-\frac{a_1}{n}\right) P_1 + \cdots +\left(1-\frac{a_r}{n}\right) P_r
    \]
    has coefficients in $(0,1)$. What is interesting about the toric log Fano structure \autoref{cor.ExistenceOfAmpleClassesInFrobSupport} is that at least the class index is $1$. 
\end{remark}

We further obtain the following useful fact.

\begin{proposition} \label{pro.BigElementsComeInPairs}
    Let $X$ be a $\bQ$-factorial toric variety and $\llbracket E \rrbracket \in \FSupp(X)$. Then, $E$ is big if and only if $-K_X-E$ is also in $\FSupp(X)$. In particular, if every big divisor in $\FSupp(X)$ is ample (resp. nef) then $X$ is Fano (resp. weak Fano).
\end{proposition}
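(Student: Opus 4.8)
The plan is to reduce everything to the combinatorial description of the Frobenius support. Recall that $\FSupp(X) = \sQ_X \cap N^1(X)\setminus 0$ with $\sQ_X = \langle \pi_1,\ldots,\pi_r\rangle_{[0,1)}$ by \autoref{cor.SupportOfE_X}, that $\bigc(X) = \langle \pi_1,\ldots,\pi_r\rangle_{\bR_{>0}}$ by \autoref{cor.TheBigCone}, and that $\llbracket -K_X\rrbracket = \pi_1 + \cdots + \pi_r$. The key mechanism is the involution $\sum_i c_i \pi_i \mapsto \sum_i (1-c_i)\pi_i$, which exchanges coefficients in $[0,1)$ with coefficients in $(0,1]$; the content of the statement is precisely that, for a \emph{big} class, one can replace a $[0,1)$-representation by a strictly-$(0,1)$ one, after which the involution lands back inside $\sQ_X$.

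For ``$E$ big $\Longrightarrow \llbracket -K_X-E\rrbracket \in \FSupp(X)$'', I would first use $\llbracket E\rrbracket \in \sQ_X$ to write $\llbracket E\rrbracket = \sum_i c_i\pi_i$ with $c_i \in [0,1)$, and $\llbracket E\rrbracket \in \bigc(X)$ (bigness) to write $\llbracket E\rrbracket = \sum_i t_i \pi_i$ with $t_i > 0$. A convex combination $c_i' \coloneqq \epsilon t_i + (1-\epsilon) c_i$ with $0<\epsilon\ll 1$ then gives $\llbracket E\rrbracket = \sum_i c_i'\pi_i$ with $c_i' \in (0,1)$ for every $i$ --- the only routine verification, which uses $1-c_i>0$. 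Applying the involution, $\llbracket -K_X-E\rrbracket = \sum_i(1-c_i')\pi_i$ has all coefficients in $(0,1)$, hence lies in $\langle \pi_1,\ldots,\pi_r\rangle_{(0,1)} \subset \sQ_X$ and in $\bigc(X)$; in particular it is nonzero (as $0\notin\bigc(X)$, a consequence of the strong convexity of $\eff(X)$ from \autoref{prop.PseudoEffectiveCone}), so $\llbracket -K_X-E\rrbracket \in \FSupp(X)$ and moreover $-K_X-E$ is itself big. The converse is immediate: if $\llbracket -K_X-E\rrbracket = \sum_i d_i\pi_i$ with $d_i\in[0,1)$, then $\llbracket E\rrbracket = \sum_i(1-d_i)\pi_i$ with all coefficients strictly positive, so $\llbracket E\rrbracket\in\bigc(X)$ and $E$ is big.

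For the ``in particular'' clause I would pick $\llbracket E\rrbracket \in \BFS(X)$, which is nonempty by \autoref{cor.multiplicityOfDirectSummands}. By the equivalence just proved, $-K_X-E$ is also a big element of $\FSupp(X)$. If every big divisor in $\FSupp(X)$ is ample (resp. nef), then $\llbracket E\rrbracket$ and $\llbracket -K_X-E\rrbracket$ both lie in $\ample(X)$ (resp. $\nef(X)$), hence so does their sum $\llbracket -K_X\rrbracket = \llbracket E\rrbracket + \llbracket -K_X-E\rrbracket$; since $-K_X$ is $\bQ$-Cartier ($X$ being $\bQ$-factorial), this says $-K_X$ is ample (resp. nef), i.e. $X$ is Fano (resp. weak Fano --- note that $-K_X = \sum_i P_i$ is automatically big on a toric variety by \autoref{cor.TheBigCone}).

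No step here is difficult; the whole argument is bookkeeping with the half-open polytope $\sQ_X$. The one point requiring a little care --- and the only place where bigness is used in an essential rather than formal way --- is the upgrade of a $[0,1)$-representation of a big class to one with all coefficients strictly inside $(0,1)$; this is exactly the dichotomy between the effective cone and its interior, and is already implicit in the polytope computation carried out in the proof of \autoref{cor.multiplicityOfDirectSummands}.
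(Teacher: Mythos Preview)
Your proof is correct and follows essentially the same approach as the paper. The paper phrases the involution $c_i \mapsto 1-c_i$ in the language of toric log pairs as the exchange $\Delta \leftrightarrow \Delta'$ (with bigness corresponding to the KLT condition $\lfloor\Delta\rfloor=0$ via \autoref{pro.KLTnessToriclOGpAIRS}), but the underlying combinatorics---including your convex-combination step upgrading a $[0,1)$-representation to a $(0,1)$-representation---is identical.
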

\begin{proof}
    Let $(X,\Delta = \delta_1 P_1 + \cdots +\delta_r P_r)$ be a toric log pair corresponding to $E$. In particular,
    \[
    -(K_X+\Delta) \eqqcolon \Delta' \sim_{\bQ} E.
    \]
    Equivalently, we may express this as
    \[
    -(K_X+\Delta') = \Delta \sim_{\bQ} -K_X - E.
    \]

   If $E$ is big then we may take $\Delta$ such that $(X,\Delta)$ is KLT. In that case, $(X,\Delta' =(1-\delta_1) P_1 + \cdots +(1-\delta_r)P_r)$ is itself a KLT toric log pair and further $-K_X-E$ is in $\BFS(X)$. Conversely, if $\llbracket-K_X-E \rrbracket \in \FSupp(X)$, then we may take $\Delta$ such that $\Delta'$ is a toric boundary, and so $E$ is big. 

    For the last statement, let $E$ be a big divisor in $\FSupp(X)$. By what we just showed, $-K_X-E$ is another big divisor in $\FSupp(X)$. By hypothesis, we would then have that $E$ and $-K_X-E$ are ample (resp. nef). Adding them together implies that $-K_X$ is ample (resp. nef).
\end{proof}

\begin{remark}
    Observe that the above explains why $-K_X$ nor $0$ ever belong to $\FSupp(X)$. In general, we see that if $\llbracket E \rrbracket \in \FSupp(X)$ with toric boundary $\Delta$, then $\llbracket -K_X-E-\lceil \Delta \rceil \rrbracket \in \FSupp(X)$ with toric boundary $\{\Delta\}\coloneqq\Delta-\lceil\Delta \rceil$---the so-called \emph{fractional part} of $\Delta$.
\end{remark}

\begin{remark}
    Given $\llbracket E \rrbracket \in \BFS(X)$, so that $\llbracket E'\coloneqq - K_X-E \rrbracket \in\BFS(X)$, it is not necessarily true that $\tilde{m}(E;q)=\tilde{m}(E';q)$. However, we have $\alpha(E)=\alpha(E')$. 
\end{remark}

\subsubsection{The Frobenius support in derived categories}
    The negativity of $F^e_*\sO_X$, or equivalently the positivity of its dual, has also been studied in the context of derived categories. It started when A. Bondal claimed in \cite{bondal2006derived}, without proof, that for a smooth toric variety $X$, the line bundles corresponding to the divisor classes in $\FSupp(X)$, together with the structure sheaf $\sO_X$, generate $D^b(X)$, the bounded derived category of coherent sheaves over $X$. Let us denote this set of generators of $D^b(X)$ by $\sC$. Bondal further argued that if $(F^e_*\sO_X)^{\vee}$, or equivalently $\sE_{X,e}$, is nef, then $\sC$ is a full strong exceptional collection. 
    
    Some years later, H.~Uehara proved Bondal's first claim \cite{UeheraExceptionalCollectionsOnToricFanoThreefolds}---which was known as Bondal's conjecture---for toric Fano threefolds and constructed full strong exceptional collections for each of them. When $\sE_{X,e}$ is not nef, the idea is to carefully choose a subset $\sS$ of $\sC$ that forms a strongly exceptional collection and then to show that $\sS$ and $\sC$ generate the same category. This method was also used in \cite{DerivedCategoryOfToricVarietiesWithPicardNumberThree,DerivedCategoryOfToricVarietiesWithSmallPicardNumber}. However, as shown by A.~I.~Efimov \cite{EfimovCounterExampleToKingsConjectureForToricFano}, there are toric Fano varieties that do not admit a full strongly exceptional collection of invertible sheaves, so Uehara's method does not always work. 
    
    Bondal's conjecture was recently proven in full generality by Hanlon, Hicks, and Lazarev \cite{HanlonHicksLazarevResolutionsOfToricSubvarietiesByLineBundles}. In particular, \autoref{thmNefnessOfE} below implies that the extremal toric Fano varieties admit a full strong exceptional collection of line bundles. An interesting consequence of this is the following corollary: 
\begin{corollary}
    Let $X$ be a smooth toric variety with $\sE_{X,e}$ nef for all $e>0$, or equivalently, let $X$ be a toric extremal Fano variety. Then, $|\FSupp(X)| = s-1 = |\Sigma(d)| - 1$.  
\end{corollary}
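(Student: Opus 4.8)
The plan is to deduce this from the derived-category facts recalled immediately above, combined with the classical computation of the rank of the Grothendieck group of a smooth complete toric variety. Set
\[
\sC \coloneqq \{\sO_X\} \cup \{ \sO_X(E) \mid \llbracket E \rrbracket \in \FSupp(X)\}.
\]
The first step is to record that $\sC$ consists of exactly $1 + |\FSupp(X)|$ pairwise non-isomorphic line bundles. Since $X$ is smooth, numerical and linear equivalence of divisors coincide (see \autoref{eqn.DivisorGroups} and the surrounding discussion), so $\FSupp(X)$ is genuinely a set of divisor classes and $\llbracket E \rrbracket \mapsto \sO_X(E)$ is injective on it; moreover $0 \notin \FSupp(X)$ by \autoref{cor.SupportOfE_X}, so $\sO_X$ does not coincide with any of the $\sO_X(E)$. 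Hence $|\sC| = 1 + |\FSupp(X)|$.

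Next I would bring in the hypothesis. By \autoref{thmNefnessOfE}, asserting that $\sE_{X,e}$ is nef for all $e>0$ is the same as asserting that $X$ is an extremal toric Fano variety (in particular smooth and projective). As explained in the paragraph preceding this corollary, the theorem of Hanlon--Hicks--Lazarev \cite{HanlonHicksLazarevResolutionsOfToricSubvarietiesByLineBundles} shows that $\sC$ generates $D^b(X)$, and Bondal's observation in \cite{bondal2006derived}---applicable precisely because $\sE_{X,e}$ is nef---upgrades this to the statement that, suitably ordered, $\sC$ is a \emph{full strong exceptional collection} of line bundles in $D^b(X)$.

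Then I would invoke the standard fact that a full exceptional collection $E_1,\dots,E_n$ in $D^b(X)$ yields a semiorthogonal decomposition of $D^b(X)$ into $n$ copies of $D^b(\kay)$, so that the classes $[E_1],\dots,[E_n]$ form a $\bZ$-basis of $K_0(X)$; in particular $n = \rank_{\bZ} K_0(X)$. Applied to $\sC$ this gives $1 + |\FSupp(X)| = \rank_{\bZ} K_0(X)$. Finally, for any smooth complete toric variety a Bia\l ynicki--Birula decomposition along a generic one-parameter subgroup of $\bT$ stratifies $X$ into affine cells indexed by the $\bT$-fixed points, i.e.\ by the maximal cones of $\Sigma_X$; this gives that $K_0(X)$ is generated by $s$ elements, and together with the rational isomorphism $K_0(X)\otimes\bQ \simeq A_*(X)\otimes\bQ$ and the classical fact that $A_*(X)$ is free of total rank $s = |\Sigma(d)|$ for smooth complete toric $X$, one concludes $\rank_{\bZ} K_0(X) = s$. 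Combining the two displayed equalities yields $|\FSupp(X)| = s - 1 = |\Sigma(d)| - 1$.

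I do not anticipate a genuine obstacle: the corollary is essentially bookkeeping built on \autoref{thmNefnessOfE} and the Bondal--Hanlon--Hicks--Lazarev circle of results. The one point that genuinely needs attention is the one settled in the first step---that the members of $\sC$ are pairwise distinct line bundles, so that an exceptional collection supported on them really has length $1+|\FSupp(X)|$---and this is exactly where smoothness and $0\notin\FSupp(X)$ enter.
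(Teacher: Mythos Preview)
Your proposal is correct and follows essentially the same route as the paper: both arguments use the discussion preceding the corollary to conclude that $\sC$ is a full strong exceptional collection, then equate its length with $\rank_{\bZ} K_0(X) = |\Sigma(d)|$. You spell out more carefully why $|\sC| = 1 + |\FSupp(X)|$ and give a Bia\l ynicki--Birula justification for the rank of $K_0$, whereas the paper simply cites \cite{AltmannWittTheStructureOfExceptionalSequencesToricVarieties} for the latter, but the substance is identical.
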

\begin{proof}
    By the discussion above, if $\sE_{X,e}$ is nef, the line bundles $\{\sO_X\} \cup \{\sO_X(E)\}_{[E] \in \FSupp(X)}$ form a full strong exceptional collection. The length of a full exceptional sequence is equal to the rank of the K-group $K_0(X)$ which, for smooth toric varieties, is equal to $|\Sigma(d)|$, see for instance the discussion in \cite[1.1]{AltmannWittTheStructureOfExceptionalSequencesToricVarieties} 
\end{proof}

\section{On the Bigness of the Frobenius-Trace Kernel} \label{sec.BIGNESS}

 In this section, we prove the following result, and thus Theorem A in the Introduction.

\begin{theorem} \label{thm.MainTheoremBignessAmplenessProjSpace}
    Let $X$ be a smooth $d$-dimensional toric variety such that $\sE_{X,e}$ is big for all $e \gg 0$, i.e., $\BFS(X)=\FSupp(X)$. Then, $X \simeq \bP^d$.
\end{theorem}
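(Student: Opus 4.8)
The plan is to convert the hypothesis into a statement about lattice points of the zonotope $\sQ_X=\langle\pi_1,\dots,\pi_r\rangle_{[0,1)}$ and then to invoke the characterization of projective space by centrally symmetric primitive relations (\autoref{prop.ToricCharacterizationOfProjectiveSpace}). By \autoref{cor.SupportOfE_X} the assumption $\BFS(X)=\FSupp(X)$ reads $\sQ_X^{\circ}\cap N^1(X)=\sQ_X\cap N^1(X)\setminus 0$, i.e.\ \emph{every nonzero lattice point of $\sQ_X$ is interior}, hence (as $\sQ_X^{\circ}\subset\bigc(X)$ by \autoref{cor.TheBigCone}) is the class of a big divisor. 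Since $X\simeq\bP^d$ as soon as $X$ carries a centrally symmetric primitive relation of degree $d+1$ (equivalently, a primitive collection of cardinality $d+1$), I would argue by contradiction: assume $X\not\simeq\bP^d$, so that every primitive collection has at most $d$ elements, while by \cite[Proposition 3.2]{BatyrevPrimitiveCollection} there is at least one centrally symmetric primitive collection; relabel it $\sP=\{\rho_1,\dots,\rho_k\}$ with primitive relation $u_1+\cdots+u_k=0$, where $2\le k\le d$.

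The engine will be a ``moving‑curve orthogonality'' principle. The relation $u_1+\cdots+u_k=0$ is a nonzero class $\gamma\in N_1(X)$ with $\gamma\cdot P_i=1$ for $i\le k$ and $\gamma\cdot P_i=0$ for $i>k$; since $\gamma\cdot\pi_i\ge 0$ for all $i$, it lies in $\mov(X)=\eff(X)^{\vee}$. As $\eff(X)$ is a full‑dimensional strongly convex rational cone (\autoref{prop.PseudoEffectiveCone}) and $\gamma\ne 0$ lies in its dual, $\gamma$ is strictly positive on $\bigc(X)=\eff(X)^{\circ}$; equivalently, a pseudo‑effective class $D$ with $\gamma\cdot D=0$ cannot be big. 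Hence it suffices to produce a \emph{nonzero} lattice point $D\in\sQ_X$ with $\gamma\cdot D=0$: such a $D$ lies in $\FSupp(X)\setminus\BFS(X)$, the desired contradiction. Writing $D=\sum_i c_i\pi_i$ with $c_i\in[0,1)$, the condition $\gamma\cdot D=\sum_{i\le k}c_i=0$ forces $c_1=\cdots=c_k=0$, i.e.\ $D\in\langle\pi_{k+1},\dots,\pi_r\rangle_{[0,1)}$ (consistently, $\langle\pi_{k+1},\dots,\pi_r\rangle_{\bR_{\ge 0}}$ lies in the proper face $\eff(X)\cap\gamma^{\perp}$, re‑explaining non‑bigness).

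So the remaining task is to find a nonzero lattice point in $\langle\pi_{k+1},\dots,\pi_r\rangle_{[0,1)}$; the cone $C\coloneqq\langle\pi_{k+1},\dots,\pi_r\rangle_{\bR_{\ge 0}}\subset\eff(X)$ is rational and strongly convex, with $r-k$ generators inside $N^1(X)\simeq\bZ^{\rho}$, $\rho=r-d$. If $k<d$ then $r-k>\rho$, and \autoref{lemma.FindingPlatticePoints} supplies the point immediately; this disposes of most cases easily. \textbf{The main obstacle is the tight case $k=d$}, where $C$ has exactly $\rho$ generators and \autoref{lemma.FindingPlatticePoints} does not apply directly. Here the extra input I would use is that $\{\pi_{d+1},\dots,\pi_r\}$ cannot be a $\bZ$‑basis of $N^1(X)$: if it were, chasing $0\to M\xrightarrow{\Div}\bZ^r\to N^1(X)\to 0$ shows that $\chi\mapsto(\langle u_i,\chi\rangle)_{i\le d}$ is an isomorphism $M\xrightarrow{\ \sim\ }\bZ^{d}$, whence $u_1,\dots,u_d$ would be a $\bZ$‑basis of $N$ — impossible as $u_1+\cdots+u_d=0$. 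Given that $\{\pi_{d+1},\dots,\pi_r\}$ is not a $\bZ$‑basis, the half‑open box $\langle\pi_{d+1},\dots,\pi_r\rangle_{[0,1)}$ contains a nonzero lattice point: if these vectors are $\bR$‑independent this is because $|\det(\pi_{d+1}\mid\cdots\mid\pi_r)|\ge 2$ and the box is a non‑unimodular fundamental parallelepiped; if they are $\bR$‑dependent it is the point $\sum_{c_j>0}\pi_j$ attached, as in the proof of \autoref{lemma.FindingPlatticePoints}, to a $\bZ$‑linear relation $\sum c_j\pi_j=0$ among them (nonzero by strong convexity of $C$). In every case this yields the required $D$, contradicting $\BFS(X)=\FSupp(X)$, so $X\simeq\bP^d$.
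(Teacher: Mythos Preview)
Your proof is correct and follows essentially the same strategy as the paper's: use a centrally symmetric primitive relation $\gamma$ of degree $k\le d$ (via \autoref{prop.ToricCharacterizationOfProjectiveSpace}) to cut out the proper face $\eff(X)\cap\gamma^{\perp}$ of $\eff(X)$, and then produce a nonzero lattice point of $\sQ_X$ on that face using \autoref{lemma.FindingPlatticePoints}.

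The only difference is that your handling of the ``tight case'' $k=d$ is more elaborate than necessary. You have already observed (parenthetically) that $C=\langle\pi_{k+1},\dots,\pi_r\rangle_{\bR_{\ge 0}}$ lies in the proper face $\eff(X)\cap\gamma^{\perp}$; since this face has dimension at most $\rho-1$, the $\rho$ vectors $\pi_{d+1},\dots,\pi_r$ are automatically $\bR$-dependent, and the ``In fact'' clause of \autoref{lemma.FindingPlatticePoints} applies directly from a nontrivial integral relation among them. The paper exploits exactly this dimension drop, treating all $k\le d$ uniformly. Your $\bZ$-basis argument (and the consequently vacuous $\bR$-independent subcase) is correct but can be dropped.
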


\begin{proof}
    Suppose that $X \not\simeq \bP^d$ and so $\rho=\rho(X)\geq 2$. By \autoref{cor.SupportOfE_X}, we must show that 
    \[
    (\sQ_X \cap N^1(X) \setminus 0) \cap \partial \eff(X) \neq \emptyset.
    \]
    
Since $\eff(X)$ is a strongly convex rational cone (see \autoref{prop.PseudoEffectiveCone}), then so are each of its faces. Then, by \autoref{lemma.FindingPlatticePoints}, it suffices to find a face $F$ of $\eff(X)$ containing $\pi_1,\dots,\pi_k$ with $k \geq \dim F+1 =\rho$ (possibly after relabeling the prime toric divisors). Indeed,  \autoref{lemma.FindingPlatticePoints} then implies the existence of a nonzero lattice point in $\sQ_F \coloneqq \langle \pi_1, \ldots, \pi_k \rangle_{[0,1)} 
      \subset \sQ_X$. However, $\sQ_F \subset F \subset \partial \eff(X)$.
 
Given an element $0\neq w \in \mov(X)$, the set $F_w \coloneqq \{v \in \eff(X) \;|\; \langle v,w\rangle = 0\}$ is a face of $\eff(X)$. In our setting, the elements of $\mov(X)$ correspond to the effective linear relations between the primitive ray generators of $\Sigma=\Sigma_X$. Take such an element to be a centrally symmetric primitive relation as in \autoref{Rem.CentrallySymmetricPrimitiveRelations}. Say,
\[
  R\coloneqq R_{\sP}\: u_1 +\cdots + u_k = 0,
\] 
where $k \leq d$ as $X \not\simeq \bP^d$; see \autoref{prop.ToricCharacterizationOfProjectiveSpace}. Then, there are at least $\rho$ primitive ray generators of $\Sigma$ not appearing in $R$; say $u_{k+1},\ldots,u_{k+\rho}$. In particular, $\pi_i \cdot R = 0$ for all $i\in \{k+1,\ldots,k+\rho\}$. In other words, the face $F_R$ of $\eff(X)$ defined by $R \in \mov(X)$ contains $\pi_{k+1}, \ldots, \pi_{k+\rho}$. Note that $F_R$ is a proper face as it does not contain $\pi_1$. Since $\dim \eff(X) = \rho$, then $\dim F_R < \rho$. This finishes the proof.
\end{proof}

\begin{corollary} \label{cor.FrobeniusAndMori}
    Let $X$ be a smooth toric variety. The following statements are equivalent.
\begin{enumerate}
    \item $\sT_X$ is ample.
    \item $X$ is a projective space.
    \item $\sE_{X,e}$ is ample for all $e>0$.
\end{enumerate}
\end{corollary}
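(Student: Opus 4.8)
The plan is to reduce this three-way equivalence to inputs already available: the classical ampleness of $\sT_{\bP^d}$, Mori's characterization of projective space, the explicit computation for $\bP^d$ in \cite[Corollary 3.5]{CarvajalRojasPatakfalviVarietieswithAmpleFrobenius-traceKernel}, and \autoref{thm.MainTheoremBignessAmplenessProjSpace} above. Concretely, I would establish $(a)\Longleftrightarrow(b)$ and $(b)\Longleftrightarrow(c)$ separately.

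For $(b)\Longrightarrow(a)$: the Euler sequence $0\to\sO_{\bP^d}\to\sO_{\bP^d}(1)^{\oplus(d+1)}\to\sT_{\bP^d}\to0$ exhibits $\sT_{\bP^d}$ as a quotient of an ample bundle, hence ample, and this argument is characteristic-free. For $(a)\Longrightarrow(b)$ I would invoke Mori's theorem \cite{MoriCharacterizationProjSpace}, which holds in arbitrary characteristic: a smooth projective variety with ample tangent sheaf is a projective space. In the toric world one can also sidestep Mori: by the list of equivalences recalled in \autoref{sec.Intro} (see \cite{FujinoSatoToricVarietiesWhoseNefBundlesAreBig,ArzhantsevGaiffulinHomogeneousToricVarieties}), nefness of $\sT_X$ already forces $X=\bP^{d_1}\times\cdots\times\bP^{d_m}$, and restricting $\sT_X=\bigoplus_j\mathrm{pr}_j^*\sT_{\bP^{d_j}}$ to the slice through a torus-fixed point along the $i$-th factor produces a trivial direct summand $\sO^{\oplus(d-d_i)}$ unless $m=1$; so ampleness of $\sT_X$ forces $X\simeq\bP^d$.

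For $(b)\Longrightarrow(c)$: this is the direct computation recalled in the Introduction, namely \cite[Corollary 3.5]{CarvajalRojasPatakfalviVarietieswithAmpleFrobenius-traceKernel}, which gives that $\sE_{\bP^d,e}$ is ample for every $e>0$. For $(c)\Longrightarrow(b)$: an ample locally free sheaf is big, so $\sE_{X,e}$ is big for all $e>0$, and \autoref{thm.MainTheoremBignessAmplenessProjSpace} (equivalently, Theorem~A) then yields $X\simeq\bP^d$.

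I do not expect a genuine obstacle here: all the substantive work lives in \autoref{thm.MainTheoremBignessAmplenessProjSpace}, which is already proved. The only point deserving a word of care is that the imported statements---ampleness of $\sT_{\bP^d}$, Mori's theorem, and the $\bP^d$-computation of $\sE_{X,e}$---all remain valid in characteristic $p>0$; the first and third are elementary, and the second is Mori's original result.
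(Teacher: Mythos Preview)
Your proposal is correct and matches the paper's own treatment: the paper states \autoref{cor.FrobeniusAndMori} as an immediate corollary without a written proof, relying precisely on Mori's theorem for $(a)\Longleftrightarrow(b)$, on \cite[Corollary 3.5]{CarvajalRojasPatakfalviVarietieswithAmpleFrobenius-traceKernel} for $(b)\Longrightarrow(c)$, and on \autoref{thm.MainTheoremBignessAmplenessProjSpace} for $(c)\Longrightarrow(b)$. Your extra toric-only argument for $(a)\Longrightarrow(b)$ via the product decomposition is a pleasant bonus not in the paper.
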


\begin{remark}
    The following observations are in order with respect to \autoref{cor.FrobeniusAndMori}:
\begin{enumerate}
    \item The authors do not know whether a smooth toric variety with big tangent sheaf must be a projective space. See \cite{WuToricMoriTheorem}.
    \item The quantifier on $e$ may be replaced by ``for some $e>0$'' if $\dim X \leq 3$. See \cite{CarvajalRojasPatakfalviVarietieswithAmpleFrobenius-traceKernel}. The point is that if $\sE_{X,e}$ is ample for some $e>0$ then fibrations and smooth blowups are immediately ruled out. In low dimensions, this is enough to conclude that $X$ is a projective space. In dimensions $\geq 4$, small contractions seem to be an issue.
\end{enumerate}
\end{remark}

Suppose that, in the proof of \autoref{thm.MainTheoremBignessAmplenessProjSpace}, the centrally symmetric primitive relation $R$ happens to be extremal, i.e., if we take an extremal primitive relation of the form $R\: b_1u_1 + \cdots +b_k u_k=0$ so that $k\leq d$ (\autoref{thm.MoriTheoryOfFanoVarieties}).  Then the same argument gives the following result, which we will describe more geometrically in \autoref{subsubsection.MorFibrationFeffectiveness}.

\begin{scholium} \label{scholium.BignessAndFibrations}
    Let $X$ be a $\bQ$-factorial toric variety. If $X$ admits a Mori fibration $X\to S$ with $\dim S \neq 0$, then there is a non-big class in the Frobenius support of $X$. In fact, there is an element of $\FSupp(X)$ in every facet of $\eff(X)$ that contains a facet of the moving cone of divisors and in particular of $\nef(X)$.
\end{scholium}

\begin{remark}[On the $\bQ$-factorial case]
If $X$ is a weighted projective space, then $\sE_{X,e}$ is ample. In particular, \autoref{thm.MainTheoremBignessAmplenessProjSpace} does not hold in the singular case. More generally, we readily see that if $\rho(X)=1$ then $\sE_{X,e}$ is ample for all $e>0$. The $\bQ$-factorial toric varieties with Picard rank $1$ are the so-called fake weighted projective spaces, which are quotients of weighted projective spaces by finite toric subgroups. For more on this type of prime Fano varieties, see \cite{KaprzykFakeWeightedProjectiveSpaces}. We may wonder whether, for a $\bQ$-factorial toric variety $X$, its Frobenius support being big implies that $X$ is a prime Fano variety, i.e., $\rho(X)=1$. This would follow \emph{verbatim} as in the proof of \autoref{thm.MainTheoremBignessAmplenessProjSpace} if we were granted the  existence of a primitive relation of the form $R\: b_1 u_1 + \cdots + b_k u_k=0$ with $b_1,\ldots,b_k \geq 0$ and $k\leq d$. This is a particular type of nef primitive relation as coined by Rossi and Terracini in their classification work for $\bQ$-factorial toric varieties; see \cite{RossiTerraciniClassificationofQFactorialToricVarieties}. However, as they explain in their work, nef primitive relations may fail to exist beyond the smooth case. We will see below in \autoref{example.FatalExample} an example of a singular $\bQ$-factorial toric variety of Picard rank $2$ whose Frobenius support is big and nef.
\end{remark}

\section{On the Numerical Effectiveness of the Frobenius-trace Kernel} \label{sec.Nefness}

In this section, we study the interaction between the Frobenius support and the cone of moving divisors. We start by describing intersection number in terms of Frobenius.

\subsection{Intersection numbers and the smooth case} We are ready to prove Theorem B from \autoref{sec.Intro}. For the reader's convenience, we break it down into smaller statements.

\begin{proposition}
     Let $X$ be a $\bQ$-factorial toric variety and 
    \[
        R \: b_1u_1 + \dots + b_ku_k = 0
    \]
    be a primitive minimal effective relation in $N_1(X)$, i.e., the coefficients $0\neq b_1,\ldots,b_k \in \bN$ have no common factor and $R$ spans an extremal ray of $\mov(X)$ and so it corresponds to a facet of $\eff(X)$. For every $i=1,\ldots ,k$, there is $\llbracket E_i \rrbracket \in \BFS(X)$ such that $E_i \cdot R = \pi_i \cdot R=  b_i$.
\end{proposition}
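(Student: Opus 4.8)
The plan is to reduce the statement, for each fixed $\iota\in\{1,\dots,k\}$, to a single application of \autoref{lemma.FindingPlatticePoints} to a carefully chosen relation among the classes $\pi_1,\dots,\pi_r$, followed by an appeal to \autoref{pro.BigElementsComeInPairs} for the bigness. I would start from the observation that, since $R$ spans an extremal ray of $\mov(X)$, it cuts out a facet $F_R$ of $\eff(X)=\langle\pi_1,\dots,\pi_r\rangle_{\bR_{\geq 0}}$; because $\pi_j\cdot R=b_j>0$ for $j\leq k$ and $\pi_j\cdot R=0$ for $j>k$, this facet is $F_R=\langle\pi_{k+1},\dots,\pi_r\rangle_{\bR_{\geq 0}}$ and it has dimension $\rho-1$. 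Dualizing \autoref{eqn.SESPicard}, a relation among the $\pi_j$ supported on an index set $T$ is the same datum as a character $\chi\in M_{\bR}$ with $u_j(\chi)=0$ for $j\notin T$, so $\dim\langle\pi_j:j\in T\rangle_{\bR}=|T|-d+\rk\{u_j:j\notin T\}$; taking $T=\{k+1,\dots,r\}$ and using $\dim F_R=\rho-1=r-d-1$ forces $\rk\{u_1,\dots,u_k\}=k-1$. Hence $R$ is, up to scaling, the unique linear relation among $u_1,\dots,u_k$; in particular $u_1,\dots,\widehat{u_\iota},\dots,u_k$ are linearly independent, span a strongly convex cone $\tau$, and $u_\iota\notin\tau$ (a non-negative expression $u_\iota=\sum_{l\neq\iota}c_lu_l$ would produce a relation among $u_1,\dots,u_k$ proportional to $R$ yet with coefficients of mixed signs, contradicting that the $b_l$ are all positive).

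With this in hand I would build the relation on the divisor side. Since $\tau$ is strongly convex and $u_\iota\notin\tau$, the cone $\langle u_\iota\rangle_{\bR_{\geq 0}}+\langle -u_l:l\leq k,\ l\neq\iota\rangle_{\bR_{\geq 0}}$ is strongly convex, so there is $\chi_0\in M_{\bR}$ with $u_\iota(\chi_0)>0$ and $u_l(\chi_0)<0$ for all $l\leq k$ with $l\neq\iota$. Perturbing $\chi_0$ within this open set and clearing denominators, I obtain $\chi\in M$ satisfying the same strict inequalities and, in addition, $u_j(\chi)\neq 0$ for every $j>k$; thus $\mathbf{c}:=\Div\chi$ is an integral relation $\sum_j c_j\pi_j=0$ of full support with $c_\iota>0$ and $c_l<0$ for $l\leq k$, $l\neq\iota$. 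Applying \autoref{lemma.FindingPlatticePoints} to $\pi_1,\dots,\pi_r$ (a strongly convex configuration with $r>\rho$ by \autoref{prop.PseudoEffectiveCone}) using $\mathbf{c}$ and $i_0=\iota$ produces $0\neq\llbracket E_\iota\rrbracket:=\sum_{j:\,c_jc_\iota>0}\pi_j\in\langle\pi_1,\dots,\pi_r\rangle_{[0,1)}\cap N^1(X)$, hence $\llbracket E_\iota\rrbracket\in\FSupp(X)$ by \autoref{cor.SupportOfE_X}. Since among the indices $j\leq k$ only $c_\iota$ is positive, $E_\iota\cdot R=\sum_{j\leq k,\,c_j>0}b_j=b_\iota=\pi_\iota\cdot R$.

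For bigness I would exploit that $\mathbf{c}$ has full support: strong convexity of $\eff(X)$ forbids all $c_j\geq 0$, so choosing $j_0$ with $c_{j_0}<0$ and running \autoref{lemma.FindingPlatticePoints} again with $i_0=j_0$ yields $\sum_{j:\,c_j<0}\pi_j\in\FSupp(X)$; but $E_\iota+\sum_{j:\,c_j<0}\pi_j=\sum_{j:\,c_j\neq 0}\pi_j=\sum_{j=1}^r\pi_j=-K_X$, so $\llbracket -K_X-E_\iota\rrbracket\in\FSupp(X)$ and \autoref{pro.BigElementsComeInPairs} shows $E_\iota$ is big. Therefore $\llbracket E_\iota\rrbracket\in\bigc(X)\cap\FSupp(X)=\BFS(X)$, as required. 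The step I expect to be the main obstacle is the first paragraph — extracting $\rk\{u_1,\dots,u_k\}=k-1$ and $u_\iota\notin\tau$ from the extremality of $R$ in $\mov(X)$ — together with the elementary but essential separation argument producing $\chi$; once $\chi$ is available, everything afterward is a mechanical use of \autoref{lemma.FindingPlatticePoints} and \autoref{pro.BigElementsComeInPairs}.
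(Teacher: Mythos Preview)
Your proof is correct and takes a genuinely different route from the paper's.

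Both arguments feed a relation among $\pi_1,\dots,\pi_r$ into \autoref{lemma.FindingPlatticePoints} to produce an element of $\FSupp(X)$ with the correct intersection against $R$. The paper chooses a minimal relation among $\pi_1,\pi_2$ together with a basis $\pi_{k+1},\dots,\pi_{k+\rho-1}$ of $F_R$, and then establishes bigness by a case split on $\dim\langle\pi_1,\dots,\pi_k\rangle_{\bR}$: when this dimension is $\ge 2$ one argues directly that $\varepsilon_1=\pi_1+\delta$ with $0\neq\delta\in F_R$, while when it equals $1$ a separate construction is needed, starting from an auxiliary $\beta\in\BFS(X)$ and shifting along the ray $\xi=\pi_i/b_i$. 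You instead invest more upfront---extracting $\rk\{u_1,\dots,u_k\}=k-1$ from $\dim F_R=\rho-1$ and then using a separation argument to manufacture $\chi\in M$ whose divisor $\Div\chi$ has full support with the single positive coefficient among the first $k$ sitting at position $\iota$. The payoff is a uniform proof of bigness with no case distinction: full support forces $E_\iota$ and $\sum_{c_j<0}\pi_j$ to partition $\{\pi_1,\dots,\pi_r\}$, so their sum is $-K_X$, and \autoref{pro.BigElementsComeInPairs} finishes. This is a clean use of that proposition (which the paper proves independently of the present statement), and it sidesteps the somewhat delicate $l=1$ case in the paper's argument.
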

\begin{proof}
    We may assume that the classes $\pi_{k+1},\ldots,\pi_{k+\rho-1} \in N^1(X)$ are linearly independent, for $F_R \coloneqq \langle \pi_{k+1},\ldots,\pi_r \rangle_{\bR_{\geq 0}}$ is the facet of $\eff(X)$ cut out by $R$. There is a non-trivial relation
    \[
    c_{1} \pi_{1} +  c_{2} \pi_{2} +  c_{k+1} \pi_{k+1} + \cdots +  c_{l+\rho-1} \pi_{l+\rho-1}=0
    \]
    where $c_{1}, c_{2}, c_{k+1}, \ldots, c_{l+\rho - 1} \in \bZ$ and $(c_1,c_2) \neq 0$. Intersecting it with $R$ yields the equality
    \[
    0=c_{1}b_{1}+c_{2}b_{2}.
    \]
    In particular, $c_{1}c_{2}<0$ as $b_1 b_2 >0$. By \autoref{prop.PseudoEffectiveCone}, \autoref{lemma.FindingPlatticePoints}, and  \autoref{cor.SupportOfE_X}, we obtain that
    \[
    \llbracket E_{1} \rrbracket \coloneqq \varepsilon_1 \coloneqq  \sum_{i \: c_i c_{1} >0} \pi_i \in \FSupp(X).
    \]
    Moreover, $E_{1} \cdot R = b_{1}$. Of course, there is nothing special about $i=1$ and the statement for the other indices follows after relabeling.

    It remains to show that $\varepsilon_1$ can be taken to be big. To this end, we look at $l = \dim \langle \pi_1,\ldots,\pi_k \rangle_{\bR} \geq 1$. If $l \geq 2$, then in the above argument we could have taken $\pi_1$ and $\pi_2$ to be linearly independent. This means that $\varepsilon_1 - \pi_1 \eqqcolon \delta \in F_R \setminus 0$. In particular, $\varepsilon_1= \pi_1 + \delta$ is big as $\pi_1 \notin F_R$.

    Suppose now that $l = 1$. Then, $\xi \coloneqq \pi_1/b_1 = \cdots = \pi_k/b_k$. Moreover, $b_i \xi = \pi_i = \varepsilon_i \in \FSupp(X)$ for all $i=1,\ldots,k$. Observe that $\xi$ spans the only ray of $\eff(X)$ not contained in $F_R$, and so $\xi$ is not big. Assuming $b_1\leq \cdots \leq b_k$, the above implies that $b_1 | b_2 | \cdots | b_k$. In particular, $b_1 = 1$ and $\xi = \pi_1 \in \FSupp(X)$.
    
    Let $\beta \in \BFS(X)$, which exists by \autoref{cor.multiplicityOfDirectSummands}. Let us write $\beta = c_1 \pi_1 + \cdots + c_r \pi_r$ for some $0\leq c_1,\ldots,c_r < 1$. Then,
    \[
    \beta = (\beta \cdot R) \xi + c_1 \pi_{k+1} +\cdots + c_r \pi_r
    \]
    where $\beta \cdot R = c_1 b_1 + \cdots +c_k b_k$. Observe that $0 \neq \beta \cdot R \in \bN$ as $\beta$ is also an integral linear combination of the $\pi_1,\ldots,\pi_r$ whose intersection numbers against $R$ are integral by hypothesis. 
   
    With the above in place, for $i=1,\ldots,r$, we may write
    \[
    \beta_i \coloneqq \beta - (\beta \cdot R - b_i) \xi = \pi_i + c_{k+1} \pi_{k+1} + \cdots +c_r \pi_r = c_1'\pi_{1} + \cdots + c'_i \pi_k + c_{k+1} \pi_{k+1} + \cdots +c_r \pi_r, 
    \]
    where $c_i' \coloneqq b_i/(b_1+\cdots+b_k) \in (0,1)$. The first, defining equality shows that $0 \neq \beta_i \in N^1(X)$ as $\beta,\xi \in N^1(X)$ and $\beta$ is big while $\xi$ is not. The last equality shows that $\beta_i \in \mathcal{Q}_X^{\circ}$. By \autoref{cor.SupportOfE_X}, this means that $\beta_i \in \BFS(X)$. Moreover, $\beta_i \cdot R =b_i$.
    \end{proof}

\begin{proposition} \label{prop.NefnessSmallContractions}
    Let $X$ be a $\bQ$-factorial toric variety. Suppose that $X$ admits a small extremal contraction given by an extremal primitive relation
    \[
        R \: b_1u_1 + \dots + b_ku_k - b_{k+1}u_{k+1} - \dots - b_lu_l = 0
    \]
    with $l \geq k+2$. For every $i=1,\ldots ,l$, there is $\llbracket E_i \rrbracket \in \FSupp(X)$ such that $E_i \cdot R = \pi_i \cdot R$. In particular, $\sE_{X,e}$ is not nef for any $e\gg 0$. Moreover, the following statements hold:
    \begin{enumerate}
        \item For the indices $i$ for which $\pi_i$ is big, we may further take $E_i$ to be big.
        \item If $\langle \pi_{k+1},\ldots,\pi_{k+\rho-1} \rangle_{\bR_{\geq 0}}$ is a facet of $\eff(X)$, we can take $E_i$ to be big for all $i=k+1,\ldots,k+\rho-1$.
        \item If $\langle \pi_{1},\ldots,\pi_{\rho-1} \rangle_{\bR_{\geq 0}}$ is a facet of $\eff(X)$, we can take $E_i$ to be big for all $i=1,\ldots,\rho-1$.
    \end{enumerate}
    
\end{proposition}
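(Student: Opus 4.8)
The plan is to use \autoref{cor.SupportOfE_X}, which identifies $\FSupp(X)$ with the nonzero lattice points of the half-open polytope $\sQ_X=\langle\pi_1,\dots,\pi_r\rangle_{[0,1)}$; thus, to produce $\llbracket E_i\rrbracket\in\FSupp(X)$ with $E_i\cdot R=\pi_i\cdot R$ it suffices to exhibit a nonzero lattice point of $\sQ_X$ pairing with $R$ exactly as $\pi_i$ does. The engine will be \autoref{lemma.FindingPlatticePoints}: from an integral relation among a subcollection of the $\pi_j$ whose $\pi_i$-coefficient is nonzero, it extracts the lattice point $\sum_{j\,:\,c_jc_i>0}\pi_j\in\sQ_X$. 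I will arrange the relation so that, among the $\pi_j$ entering this sum, only $\pi_i$ meets $R$ nontrivially; then the resulting $E_i$ automatically satisfies $E_i\cdot R=\pi_i\cdot R$.

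\textbf{Building the relation.} First I would record that, $\phi_R$ being small, \autoref{thm.MoriTheoryOfFanoVarieties} gives that its positive-dimensional fibers have dimension $k-1\geq 1$ and its exceptional locus codimension $l-k\geq 2$; hence $k\geq 2$ and $l-k\geq 2$, so both $\{1,\dots,k\}$ and $\{k+1,\dots,l\}$ have at least two elements. Note that $\pi_j\cdot R$ equals $b_j$ for $j\leq k$, equals $-b_j$ for $k<j\leq l$, and equals $0$ for $j>l$. By \autoref{keyLemma}, $V\coloneqq\langle\pi_j\mid\pi_j\cdot R=0\rangle_{\bR}$ has dimension $\rho-1$, and since these are precisely the $\pi_j$ with $j>l$, I may fix (after relabelling) a basis $\pi_{j_1},\dots,\pi_{j_{\rho-1}}$ of $V$ with all $j_m>l$. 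Now fix $i\in\{1,\dots,l\}$ and pick $i'\neq i$ lying in the same one of $\{1,\dots,k\}$, $\{k+1,\dots,l\}$ as $i$, so that $\pi_i\cdot R$ and $\pi_{i'}\cdot R$ share a sign. Because $\pi_i\notin V$, the vectors $\pi_i,\pi_{j_1},\dots,\pi_{j_{\rho-1}}$ form a basis of $N^1(X)_{\bR}\simeq\bR^{\rho}$, so the $\rho+1$ classes $\pi_i,\pi_{i'},\pi_{j_1},\dots,\pi_{j_{\rho-1}}$ satisfy a nontrivial relation, which after clearing denominators I write as $c_i\pi_i+c_{i'}\pi_{i'}+\sum_{m}c_m\pi_{j_m}=0$ with integer coefficients. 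Pairing with $R$ kills the last $\rho-1$ terms and gives $c_i(\pi_i\cdot R)+c_{i'}(\pi_{i'}\cdot R)=0$; combined with the linear independence of $\pi_i,\pi_{j_1},\dots,\pi_{j_{\rho-1}}$ this forces $c_i,c_{i'}\neq 0$, and the equal-sign choice then forces $c_ic_{i'}<0$.

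\textbf{Extracting $E_i$; non-nefness and (a).} The cone generated by $\pi_i,\pi_{i'},\pi_{j_1},\dots,\pi_{j_{\rho-1}}$ is a subcone of the strongly convex rational cone $\eff(X)$ (\autoref{prop.PseudoEffectiveCone}) with $\rho+1$ generators, so \autoref{lemma.FindingPlatticePoints} with $i_0=i$ produces
\[
0\neq E_i\coloneqq\pi_i+\sum_{m\,:\,c_mc_i>0}\pi_{j_m}\ \in\ \sQ_X\cap N^1(X)\setminus 0=\FSupp(X)
\]
(the point lies in $\langle\pi_i,\pi_{i'},\pi_{j_1},\dots,\pi_{j_{\rho-1}}\rangle_{[0,1)}\subseteq\sQ_X$, and $\pi_{i'}$ drops out because $c_{i'}c_i<0$; then apply \autoref{cor.SupportOfE_X}); and since every $\pi_{j_m}\cdot R=0$ we get $E_i\cdot R=\pi_i\cdot R$, as desired. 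Taking $i=k+1$ yields $\llbracket E_{k+1}\rrbracket\in\FSupp(X)$ with $E_{k+1}\cdot R=-b_{k+1}<0$; as $R$ is a nonzero class of $\moriCone(X)$, $E_{k+1}$ is not nef, so the corresponding line-bundle summand of $\sE_{X,e}$ for all $e\gg 0$ (via \autoref{cor.EforTorics}) is not nef, hence $\sE_{X,e}$ is not nef — a direct summand of a nef locally free sheaf being nef. For (a): if $\pi_i$ is big then $\pi_i\in\bigc(X)=\eff(X)^{\circ}$ (\autoref{cor.TheBigCone}), and since $\sum_m\pi_{j_m}\in\eff(X)$ we get $E_i\in\eff(X)^{\circ}+\eff(X)=\eff(X)^{\circ}$, so $E_i$ is big.

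\textbf{Parts (b) and (c): the hard part.} Here $\pi_i$ lies on the prescribed facet of $\eff(X)$, so it is not big and the argument for (a) fails; I must instead engineer $E_i$ to land in $\eff(X)^{\circ}$. The plan is to exploit the standard convex-geometry fact that a sum of generators of a full-dimensional cone lies in its interior precisely when those generators are not jointly contained in any facet. Using \autoref{keyLemma}, this shows that $\pi_i$ together with all the zero-$R$-intersection classes $\{\pi_j\mid j>l\}$ is not contained in any facet of $\eff(X)$: a facet containing them would have supporting hyperplane equal to $V$, forcing $\pi_i\in V$, i.e. $\pi_i\cdot R=0$, impossible for the relevant indices. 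Hence a suitable nonnegative combination of $\pi_i$ with the zero-$R$-intersection classes is big. The real work — and the step I expect to be the main obstacle — is to realize such a big combination as an output of \autoref{lemma.FindingPlatticePoints}, i.e. inside the half-open polytope $\sQ_X$ and still with $R$-intersection $\pi_i\cdot R$: concretely, rerun the construction above with $i'$ chosen appropriately and with all of $\pi_{l+1},\dots,\pi_r$ (not merely a basis of $V$) adjoined, and choose the integral relation so that the zero-$R$-intersection classes surviving into $E_i$ span $V$. Everything else is linear algebra together with \autoref{lemma.FindingPlatticePoints}.
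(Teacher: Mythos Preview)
Your argument for the main statement and for (a) is correct and follows the same template as the paper: pick a partner class $\pi_{i'}$, adjoin a basis of $R^\perp$ drawn from the $\pi_j$ with $j>l$ (via \autoref{keyLemma}), extract an integral relation, and feed it to \autoref{lemma.FindingPlatticePoints}. One pleasant difference: you treat all $i\in\{1,\dots,l\}$ uniformly by choosing $i'$ on the \emph{same} side of the relation as $i$, whereas the paper only runs the construction for $i\in\{k+1,\dots,l\}$ and obtains the indices $i\le k$ by performing the flip and invoking the invariance of the Frobenius support under small $\bQ$-factorial modifications (\autoref{cor.FrobeniusSupport}). Your route avoids the flip entirely; the paper's route, on the other hand, is what makes (c) a one-line consequence of (b).

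For (b) and (c) there is a genuine gap. Your plan is to adjoin \emph{all} the $\pi_j$ with $j>l$ and then ``choose the integral relation so that the zero-$R$-intersection classes surviving into $E_i$ span $V$.'' But you have no control over which $c_{j}$ end up with the same sign as $c_i$: the relation is essentially forced (up to scaling) once you fix the participating vectors, and there is no mechanism guaranteeing that the surviving subsum spans $V$ --- in particular it could collapse to $\pi_i$ alone. The paper sidesteps this entirely. It exploits the facet hypothesis directly: since $\pi_{k+1},\dots,\pi_{k+\rho-1}$ span a $(\rho-1)$-dimensional face, the two chosen classes $\pi_{i}$ and $\pi_{i'}$ (both among these generators) are \emph{linearly independent}, which forces the residual term $\delta=\varepsilon_i-\pi_i$ to be a \emph{nonzero} element of $\eff(X)\cap R^\perp$. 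Bigness of $\varepsilon_i=\pi_i+\delta$ then follows from the very face-containment principle you stated: if $\varepsilon_i$ lay in a facet, both $\pi_i$ and $\delta$ would lie there, forcing $\pi_i$ and a nonzero $R^\perp$-class into a common facet, which is impossible. Part (c) is then the flipped version of (b). So the missing idea is not more bookkeeping with all the $\pi_{j>l}$, but rather using the facet hypothesis to force $\delta\neq 0$ via linear independence of $\pi_i$ and its partner.
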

\begin{proof}
   By \autoref{keyLemma}, we may assume that the prime classes $\pi_{l+1},\ldots, \pi_{l+\rho -1}$ are linearly independent. Then, there is a non-trivial relation
    \[
    c_{k+1} \pi_{k+1} +  c_{k+2} \pi_{k+2} +  c_{l+1} \pi_{l+1} + \cdots +  c_{l+\rho-1} \pi_{l+\rho-1}=0
    \]
    where $c_{k+1}, c_{k+2}, c_{l+1}, \ldots, c_{l+\rho - 1} \in \bZ$ and $(c_{k+1},c_{k+2}) \neq (0,0)$. Intersecting this relation with $R$ yields the equality
    \[
    0=-c_{k+1}b_{k+1}-c_{k+2}b_{k+2}.
    \]
    In particular, $c_{k+1}c_{k+2}<0$ as $b_{k+1} b_{k+2} >0$. By \autoref{lemma.FindingPlatticePoints} and  \autoref{cor.SupportOfE_X}, we have
    \[
    \llbracket E_{k+1} \rrbracket \coloneqq \varepsilon_{k+1} \coloneqq \sum_{i \: c_i c_{k+1} >0} \pi_i \in \FSupp(X).
    \]
    Note that this also relies on \autoref{prop.PseudoEffectiveCone}. Moreover, $ E_{k+1} \cdot R = -b_{k+1}$. Of course, there is nothing special about $i=k+1$ and the statement for the other indices $\geq k+2$ follows by relabeling. For the indices $\leq k$, one does a flip and uses \autoref{cor.FrobeniusSupport}.

It remains to prove the claims (a) and (b). To this end, write
\[
\varepsilon_{k+1} = \pi_{k+1} + \delta
\]
with $\delta$ being a certain sum of $\sum_{i\in I} \pi_i $ with $I \subset \{l+1,\ldots,l+\rho-1\}$. Since $\delta \in \eff(X)$, it follows that $\varepsilon_{k+1}$ is big if so is $\pi_{k+1}$. This shows (a).

For (b), assume that the $\pi_{k+1}, \ldots, \pi_{k+\rho-1}$ span a facet of $\eff(X)$. Then, $\pi_{k+1}$ and $\pi_{k+2}$ are linearly independent and so $\delta$ is a nonzero element in $\eff(X) \cap R^{\perp}$.  Therefore, $\varepsilon_{k+1}=\pi_{k+1} + \delta$ must be big, as otherwise $\pi_{k+1}$ and $\delta$ would have to be in the same facet, which is absurd.

Finally, (c) is just the flipped version of (b).
\end{proof}

\begin{corollary} \label{cor.KetCorollary}
With notation as in \autoref{prop.NefnessSmallContractions}, suppose that $\dim \langle \pi_{k+1},\ldots,\pi_{l}\rangle_{\bR} \geq \rho -1$. Then there is $\llbracket E \rrbracket \in \BFS(X)$ such that $E \cdot R < 0$.    
\end{corollary}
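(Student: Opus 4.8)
The plan is to imitate the proof of \autoref{prop.NefnessSmallContractions}, the new ingredient being that the dimension hypothesis lets us arrange the lattice point produced by \autoref{lemma.FindingPlatticePoints} to lie in the interior of $\eff(X)$, i.e.\ to be big.

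Sort the indices $1,\dots,r$ by the sign of $\pi_i\cdot R$, which is $b_i$ for $i\le k$, $-b_i$ for $k<i\le l$, and $0$ for $i>l$. By \autoref{keyLemma} the classes $\pi_i$ with $\pi_i\cdot R=0$ span the $(\rho-1)$-dimensional subspace $R^{\perp}$, so after relabelling these we may take $\pi_{l+1},\dots,\pi_{l+\rho-1}$ to be a basis of $R^{\perp}$; by the hypothesis $\dim_{\bR}\langle\pi_{k+1},\dots,\pi_l\rangle\ge\rho-1$ we may likewise relabel the classes with $\pi_i\cdot R<0$ so that $\pi_{k+1},\dots,\pi_{k+\rho-1}$ are $\bR$-linearly independent. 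As in \autoref{prop.NefnessSmallContractions}, the $\rho+1$ classes $\pi_{k+1},\pi_{k+2},\pi_{l+1},\dots,\pi_{l+\rho-1}$ satisfy a nontrivial relation; intersecting it with $R$ annihilates the $\pi_{l+j}$ and, since these are independent, forces the coefficients of $\pi_{k+1}$ and $\pi_{k+2}$ to be nonzero of opposite sign. Then \autoref{lemma.FindingPlatticePoints} and \autoref{prop.PseudoEffectiveCone}, followed by \autoref{cor.SupportOfE_X}, produce a class $\llbracket E\rrbracket=\pi_{k+1}+\delta\in\FSupp(X)$, where $\delta\in R^{\perp}\cap\eff(X)$ is a nonnegative integral combination of $\pi_{l+1},\dots,\pi_{l+\rho-1}$ and $E\cdot R=\pi_{k+1}\cdot R=-b_{k+1}<0$. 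It remains to make $E$ big.

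By \autoref{cor.TheBigCone}, $E=\pi_{k+1}+\delta$ is big iff it lies on no facet of $\eff(X)$; if it lies on a facet $G$, then $\pi_{k+1}\in G$ and $\delta\in G$. If some class with $\pi_i\cdot R<0$ is itself big, \autoref{prop.NefnessSmallContractions}(a) finishes immediately, so assume all of them lie on $\partial\eff(X)$. The plan for this crux is to run the construction above with each of $\pi_{k+1},\dots,\pi_{k+\rho-1}$ in the role of $\pi_{k+1}$ and with varying choices of basis of $R^{\perp}$ drawn from the classes with $\pi_i\cdot R=0$, and to verify that not all the resulting classes can fail to be big. The mechanism: $R^{\perp}$ is not the hyperplane underlying any facet (it separates the positive from the negative classes, so $\eff(X)$ straddles it), hence a facet-hyperplane meets $R^{\perp}$ in codimension one; this lets one choose the basis so that $\delta\ne 0$ and indeed so that $\delta$ escapes any prescribed facet, while the linear independence of $\pi_{k+1},\dots,\pi_{k+\rho-1}$ prevents them all from lying on one face of dimension $\le\rho-2$. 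Balanced against $\dim\eff(X)=\rho$, this forces some choice to land $E$ off every facet, i.e.\ in $\bigc(X)$. I expect the facet bookkeeping here to be the real obstacle; when $\rho$ is small — sharpest for $\rho=2$, where $\eff(X)\subset\bR^{2}$ is a plane wedge, $R^{\perp}$ a line through the origin separating the positive from the negative $\pi_i$, and $E=\pi_{k+1}+\delta$ is visibly between the two extreme rays once $\delta\ne 0$ — it is cleanest to argue the bigness directly.
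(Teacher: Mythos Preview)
Your setup mirrors the paper: by part (a) of \autoref{prop.NefnessSmallContractions} you may assume every $\pi_i$ with $k < i \le l$ lies on $\partial \eff(X)$, and you correctly produce $\llbracket E \rrbracket = \pi_{k+1} + \delta \in \FSupp(X)$ with $E \cdot R = -b_{k+1} < 0$. The genuine gap is exactly where you flag it: the ``facet bookkeeping'' meant to force bigness by varying the pair $(\pi_{k+1}, \pi_{k+2})$ and the basis $\pi_{l+1}, \ldots, \pi_{l+\rho-1}$ of $R^{\perp}$ is only a sketch, and the mechanism you describe (``a facet-hyperplane meets $R^{\perp}$ in codimension one, so choose $\delta$ to escape any prescribed facet'') does not obviously terminate, since each new choice of $\delta$ may land $\pi_{k+1}+\delta$ on a \emph{different} facet.

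The paper's proof is two lines and sidesteps this completely. After reducing via (a), it observes that the dimension hypothesis forces $\rho - 1$ of the vectors $\pi_{k+1}, \ldots, \pi_l$ --- all now on $\partial \eff(X)$ --- to span a facet of $\eff(X)$. That is precisely the hypothesis of part (b) of \autoref{prop.NefnessSmallContractions}, which then delivers a big $E$ with $E \cdot R < 0$ directly. You already have the pieces: you note that $\pi_{k+1}, \ldots, \pi_{k+\rho-1}$ are linearly independent and on the boundary; the step you are missing is to recognize that this puts you in the situation of (b) and to invoke it, rather than rerunning and varying the relation argument from scratch.
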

\begin{proof}
   By (a) in \autoref{prop.NefnessSmallContractions}, we may assume that $\pi_{k+1},\ldots,\pi_{l} \in \partial \eff(X)$. The hypothesis on the dimension then implies that there are $\rho-1$ vectors in $\{\pi_{k+1},\ldots,\pi_{l}\}$ spanning a facet of $\eff(X)$. Then one may apply (b) in \autoref{prop.NefnessSmallContractions}.
\end{proof}

\begin{corollary}
    Let $X$ be a smooth toric variety such that $E \cdot C \geq -1$ for all $\llbracket E \rrbracket \in \FSupp(X)$ and curves $C \subset X$. Then, all toric $\bQ$-factorial modifications of $X$ are smooth.
\end{corollary}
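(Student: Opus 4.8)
The plan is to argue by contradiction, working with the toric $\bQ$-factorial modifications (SQMs) of $X$: these are connected to $X$ by chains of toric flips and, by \autoref{cor.FrobeniusSupport}, all carry the same Frobenius support $\FSupp(X)$. The first ingredient is a purely combinatorial description of how a flip affects smoothness. Let $\phi\: Y \dashrightarrow Y^{+}$ be a toric flip with $Y$ smooth, given by the extremal primitive relation of $\moriCone(Y)$
\[
R\: b_1 u_1 + \dots + b_k u_k - b_{k+1}u_{k+1} - \dots - b_l u_l = 0, \qquad l \ge k+2.
\]
By \autoref{thm.MoriTheoryOfFanoVarieties}(c) we have $b_1 = \dots = b_k = 1$, and the star-subdivision picture of the flip (\cite[Ch.~15]{CoxLittleSchenckToricVarieties}) shows that each maximal cone of $\Sigma_{Y^{+}}$ over the contracted locus is obtained from a maximal cone $\sigma$ of $\Sigma_{Y}$ by replacing some $u_i$ (with $i \le k$) by some $u_j$ (with $k+1 \le j \le l$); expanding $u_i$ in the unimodular basis of $N$ given by $\sigma$, the coefficient of $u_j$ equals $\pm b_j/b_i = \pm b_j$, so the new cone has lattice index $b_j$. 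Hence $Y^{+}$ is smooth if and only if $b_{k+1} = \dots = b_l = 1$; in particular a negative coefficient $\ge 2$ forces $Y^{+}$ to be singular.

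Now assume some SQM of $X$ is not smooth, and choose a non-smooth SQM $X''$ at minimal flip-distance $m \ge 1$ from $X$. Along a shortest chain $X = Y_0 \dashrightarrow \dots \dashrightarrow Y_m = X''$, minimality of $m$ forces $X' \coloneqq Y_{m-1}$ to be smooth. Applying the previous paragraph to $X' \dashrightarrow X''$, its relation $R$ has $b_1 = \dots = b_k = 1$ and some $b_{j_0} \ge 2$ with $k+1 \le j_0 \le l$. Since $X'$ is smooth, $R$ is itself the numerical class of the irreducible rational $\bT$-invariant curve $C' \subset X'$ generating the contracted ray (the proof of \autoref{thm.MoriTheoryOfFanoVarieties}(c)). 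By \autoref{prop.NefnessSmallContractions}, applied to the $\bQ$-factorial toric variety $X'$, there is $\llbracket E \rrbracket \in \FSupp(X') = \FSupp(X)$ with
\[
E \cdot C' = E \cdot R = \pi_{j_0} \cdot R = -b_{j_0} \le -2 .
\]

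The step I expect to be the crux is to upgrade this into a genuine violation of the hypothesis on $X$ itself, i.e.\ to produce an honest curve $C \subset X$ with $E \cdot C \le -2$. If $m = 1$ this is automatic, since then $X' = X$ and $C = C'$ works. For $m > 1$ one has to transport $C'$ through the flips joining $X'$ to $X$. The key observation is that, on a shortest chain, the chain leaves $X'$ towards $X$ through a flipping contraction attached to an extremal ray $R^{*} \ne R$ of $\moriCone(X')$ (otherwise the chain would not be shortest), so the $\bT$-invariant curve $C'$ of class $R$ is not contracted by it; being a $\bT$-curve, $C'$ then meets that exceptional locus in at most its two $\bT$-fixed points. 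I would either (i) show that $C'$ may be chosen so as to avoid all the exceptional loci of the intermediate flips, whence its closure in $X$ is again a $\bT$-invariant curve of the same numerical class; or (ii) prove that $\min_{C} E \cdot C$, over all generators of flipping rays of all SQMs, is already attained by a curve on $X$, via the wall-class bookkeeping of \cite[Ch.~15]{CoxLittleSchenckToricVarieties}. The delicate point in (i), and the reason I flag this as the main obstacle, is controlling the behaviour of $C'$ at its two fixed points as the intermediate flips are iterated; route (ii) is conceptually cleaner but needs the full flip combinatorics. In either case, producing such a $C$ contradicts the standing assumption that $E \cdot C \ge -1$ for all $\llbracket E \rrbracket \in \FSupp(X)$ and all curves $C \subset X$, and the corollary follows.
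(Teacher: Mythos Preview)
Your approach matches what the paper has in mind: combine \autoref{prop.NefnessSmallContractions} with \autoref{cor.FrobeniusSupport} (invariance of $\FSupp$ under small $\bQ$-factorial modifications) to force every flipping relation seen from a smooth SQM to have all negative coefficients equal to $-1$, and then use the combinatorics of the flip to propagate smoothness. The paper states the corollary without proof, so its intended argument is precisely the direct one you carry out through your step~3; your combinatorial lemma on when $Y^{+}$ is smooth is correct and is the standard star-subdivision computation.

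You are right, however, to flag the transport of $C'$ from $X'$ back to $X$ as the crux when $m>1$. The hypothesis is literally about curves $C\subset X$, while \autoref{prop.NefnessSmallContractions} applied to $X'$ produces $E\in\FSupp(X)$ with $E\cdot C'\le -2$ only for a curve $C'\subset X'$. Neither of your proposed routes closes this cleanly: for (i), $\bT$-invariant curves are rigid and may well meet the intermediate exceptional loci at their fixed endpoints, so ``choosing $C'$ to avoid them'' is not available for free; for (ii), the minimum of $E\cdot R$ over flipping classes of all SQMs need not be realized inside $\moriCone(X)$---after a single flip along $R\colon u_1+\cdots+u_k-u_{k+1}-\cdots-u_l=0$ (all coefficients $\pm1$, $k\ge 3$), the flipped curve has class $-R$ and one can have $E\cdot(-R)=-(k-1)\le -2$ for some $E\in\FSupp(X)$, even though $E\cdot C\ge -1$ for every curve $C\subset X$. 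So the subtlety you isolate is genuine and the paper appears to elide it; your proposal is otherwise on target, and for $m=1$ it is complete.
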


\begin{proposition} \label{prop.NonSmoothBlowups}
    Let $X$ be a $\bQ$-factorial toric variety. Suppose that $X$ admits a divisorial extremal contraction given by an extremal primitive relation
    \[
        R \: b_1u_1 + \dots + b_ku_k - b_{k+1}u_{k+1} = 0.
    \]
Then, for every $i=1,\ldots,k$ there is $\llbracket E_i \rrbracket \in \FSupp(X)$ such that $E_i \cdot R = b_i-b_{k+1}$. In particular, if $\sE_{X,e}$ is nef (resp. ample) for all $e\gg 0$ then $b_{k+1} \leq b_i$ (resp. $b_{k+1} < b_i$) for all $i=1,\ldots,k$.
\end{proposition}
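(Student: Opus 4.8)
The plan is to adapt the argument of \autoref{prop.NefnessSmallContractions} almost verbatim; the only structural difference is that a divisorial contraction has a single ``negative'' ray $u_{k+1}$ rather than two or more, so the sign bookkeeping comes out slightly differently. First I would record, via \autoref{thm.MoriTheoryOfFanoVarieties}, that a divisorial contraction is exactly the case $l=k+1$, so that $R$ has the displayed shape and the intersection numbers are $\pi_i\cdot R=b_i$ for $i\leq k$, $\pi_{k+1}\cdot R=-b_{k+1}$, and $\pi_j\cdot R=0$ for $j\geq k+2$. Fixing $i\in\{1,\dots,k\}$, I would then invoke \autoref{keyLemma}: the classes orthogonal to $R$, which are precisely $\pi_{k+2},\dots,\pi_r$, span a subspace of $N^1(X)_{\bR}$ of dimension $\rho-1$, so after relabeling I may take $\pi_{k+2},\dots,\pi_{k+\rho}$ linearly independent (note $\rho\geq 2$ since a divisorial contraction drops the Picard rank by one onto a projective target).

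Next, $\pi_i,\pi_{k+1},\pi_{k+2},\dots,\pi_{k+\rho}$ are $\rho+1$ lattice vectors in the rank-$\rho$ lattice $N^1(X)$, hence satisfy a nontrivial $\bZ$-relation $c_i\pi_i+c_{k+1}\pi_{k+1}+\sum_{j=k+2}^{k+\rho}c_j\pi_j=0$. Independence of $\pi_{k+2},\dots,\pi_{k+\rho}$ rules out $c_i=c_{k+1}=0$; intersecting with $R$ gives $c_ib_i=c_{k+1}b_{k+1}$, and since $b_i,b_{k+1}>0$ this forces $c_i$ and $c_{k+1}$ to be nonzero and of the \emph{same} sign. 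Applying \autoref{lemma.FindingPlatticePoints} with distinguished index $i_0=i$ (legitimate because $\eff(X)$ is strongly convex by \autoref{prop.PseudoEffectiveCone}) yields $\varepsilon_i\coloneqq\sum_{j\,:\,c_jc_i>0}\pi_j\in\langle\pi_1,\dots,\pi_r\rangle_{[0,1)}\cap N^1(X)\setminus 0$; this sum contains the index $i$ (as $c_i^2>0$) and the index $k+1$ (as $c_{k+1}c_i>0$), and no other index $\leq k$. By \autoref{cor.SupportOfE_X}, $\llbracket E_i\rrbracket\coloneqq\varepsilon_i\in\FSupp(X)$, and since the remaining $\pi_j$ meet $R$ trivially, $E_i\cdot R=\pi_i\cdot R+\pi_{k+1}\cdot R=b_i-b_{k+1}$, as desired.

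For the ``in particular'' I would argue as follows: if $\sE_{X,e}$ is nef for all $e\gg 0$, then choosing $e$ large enough that $E_i$ supports $\sE_{X,e}$, \autoref{cor.EforTorics} exhibits some $\sO_X(D)$ with $\llbracket D\rrbracket=\llbracket E_i\rrbracket$ as a direct summand of the nef sheaf $\sE_{X,e}$, hence nef; as $R$ is a nonnegative multiple of the class of a $\bT$-invariant curve, this gives $E_i\cdot R\geq 0$, i.e.\ $b_{k+1}\leq b_i$, and the ample case is identical with strict inequalities. I do not expect a genuine obstacle: the whole argument is a direct analogue of the small-contraction case. The one point that needs care is precisely the sign analysis of $c_i$ and $c_{k+1}$ — they must share a sign here, as opposed to having opposite signs in \autoref{prop.NefnessSmallContractions} — since this is what guarantees that $\varepsilon_i$ picks up both $\pi_i$ and $\pi_{k+1}$ and therefore realizes the intersection number $b_i-b_{k+1}$ rather than $b_i$ or $-b_{k+1}$ in isolation.
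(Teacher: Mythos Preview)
Your proof is correct and follows essentially the same approach as the paper: invoke \autoref{keyLemma} to obtain $\rho-1$ linearly independent classes $\pi_{k+2},\dots,\pi_{k+\rho}$ orthogonal to $R$, force a nontrivial $\bZ$-relation among $\pi_i,\pi_{k+1},\pi_{k+2},\dots,\pi_{k+\rho}$, intersect with $R$ to see that $c_i$ and $c_{k+1}$ share a sign, and then apply \autoref{lemma.FindingPlatticePoints} together with \autoref{cor.SupportOfE_X} and \autoref{prop.PseudoEffectiveCone}. The only cosmetic difference is that the paper takes $i_0=k+1$ rather than $i_0=i$ in \autoref{lemma.FindingPlatticePoints}, but since $c_ic_{k+1}>0$ these produce the same element $\varepsilon_i$; your treatment of the ``in particular'' clause is also slightly more explicit than the paper's.
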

\begin{proof}
By \autoref{keyLemma}, we may assume that the classes $\pi_{k+2},\ldots, \pi_{k+\rho }$ are linearly independent. In particular, there is a non-trivial relation
    \[
    c_{1} \pi_{1} + c_{k+1} \pi_{k+1} + c_{k+2} \pi_{k+2} + \cdots +  c_{k+\rho} \pi_{k+\rho} = 0
    \]
    where $c_1,c_{k+1}, \ldots, c_{k+\rho} \in \bZ$ and $(c_1,c_{k+1}) \neq (0,0)$. Intersecting this relation with $R$ yields
    \[
    0=c_1b_1-c_{k+1}b_{k+1}.
    \]
    In particular, $c_1 c_{k+1} >0$ as $b_1b_{k+1}>0$. Applying \autoref{lemma.FindingPlatticePoints} and \autoref{cor.SupportOfE_X} (as well as  \autoref{prop.PseudoEffectiveCone}), we conclude that
    \[
    \llbracket E_1 \rrbracket \coloneqq \sum_{i \: c_i c_{k+1} >0} \pi_i \in \FSupp(X).
    \]
    Observe that $ E_1 \cdot R = b_1-b_{k+1}$. The cases $i=2,\ldots,k$ follow by relabeling. 
\end{proof}

\begin{corollary} [{\cf \cite{CarvajalRojasPatakfalviVarietieswithAmpleFrobenius-traceKernel}}] \label{scholium.SmoothBlowudownsFrobeSupport}
        Let $X$ be a smooth toric variety. Suppose that $X$ admits an extremal divisorial contraction given by an extremal primitive relation  
        \[
        R \: u_1 + \dots + u_k - a_{k+1} u_{k+1} = 0.
        \]
        The following statements hold:
\begin{enumerate}
    \item   If the divisorial contraction is not a smooth blowup (i.e., $a_{k+1}>1$) then $\sE_{X,e}$ is not nef for any $e \gg 0$.
\item   If the divisorial contraction is a smooth blowup (i.e., $a_{k+1}=1$). Then there is $\llbracket E \rrbracket \in \FSupp(X)$ such that $E \cdot R = 0$. In particular, $\sE_{X,e}$ is not ample for any $e\gg 0$.

\end{enumerate}
\end{corollary}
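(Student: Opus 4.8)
The plan is to obtain both statements as immediate specializations of \autoref{prop.NonSmoothBlowups} to the smooth case.

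First, since $X$ is smooth, \autoref{thm.MoriTheoryOfFanoVarieties}(c) forces $b_1=\cdots=b_k=1$ on the positive side of any extremal primitive relation; thus, in the notation of \autoref{prop.NonSmoothBlowups}, the relation $R\: u_1+\cdots+u_k-a_{k+1}u_{k+1}=0$ has $b_i=1$ for $i\leq k$ and $b_{k+1}=a_{k+1}$ (it has $l=k+1$, matching the divisorial case of that proposition). Moreover, by the proof of \autoref{thm.MoriTheoryOfFanoVarieties}(c), $R$ is the numerical class of the irreducible rational curve generating the extremal ray; in particular it is an effective curve class, so a line bundle $L$ on $X$ fails to be nef whenever $L\cdot R<0$ and fails to be ample whenever $L\cdot R\leq 0$.

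Next I would apply \autoref{prop.NonSmoothBlowups}: for each $i=1,\ldots,k$ there is $\llbracket E_i\rrbracket\in\FSupp(X)$ with $E_i\cdot R=b_i-b_{k+1}=1-a_{k+1}$. Because $X$ is smooth, $\Cl(X)=N^1(X)$ is torsion-free, so each $E_i$ is a genuine divisor class supporting the Frobenius-trace kernel; hence $m(E_i;q)\neq 0$ for all $e\gg 0$, and by \autoref{cor.EforTorics} the line bundle $\sO_X(E_i)$ occurs as a direct summand of $\sE_{X,e}$ for all $e\gg 0$. Since a direct sum of line bundles on a toric variety is nef (resp. ample) exactly when each of its summands is, the two cases now follow: if $a_{k+1}>1$ then $E_i\cdot R=1-a_{k+1}<0$, so $\sO_X(E_i)$ is not nef and therefore $\sE_{X,e}$ is not nef for $e\gg 0$, which proves (a); if $a_{k+1}=1$ then $E\coloneqq E_i$ satisfies $E\cdot R=0$, which is the displayed assertion of (b), and $\sO_X(E)$ then has degree $0$ against the effective curve $R$, hence is not ample, so $\sE_{X,e}$ is not ample for $e\gg 0$.

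There is no genuine obstacle here: all the real work sits in \autoref{prop.NonSmoothBlowups}, and the only points needing care are the bookkeeping ones just indicated — that smoothness (via torsion-freeness of $\Cl(X)$ and the stabilization of the multiplicities $m(-;q)$ for $e\gg 0$) promotes a class in $\FSupp(X)$ to an honest line-bundle summand of $\sE_{X,e}$, and that $R$ is an effective curve class, so that the sign of $E_i\cdot R$ genuinely obstructs nefness, resp.\ ampleness, of that summand.
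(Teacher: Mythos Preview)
Your argument is correct and is exactly the approach the paper intends: the corollary is stated without proof precisely because it is the specialization of \autoref{prop.NonSmoothBlowups} to the smooth case, where $b_1=\cdots=b_k=1$ and $b_{k+1}=a_{k+1}$, and the ``In particular'' clause of that proposition already records the consequences for nefness and ampleness of $\sE_{X,e}$. The extra bookkeeping you spell out (torsion-freeness of $\Cl(X)$, stabilization of $m(-;q)$, and $R$ being an effective curve class) is accurate and simply makes explicit what the paper leaves implicit.
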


\begin{remark} 
We will see below in \autoref{cor.FSIntersectInternalFacetMovingCone} that $E$ in \autoref{scholium.SmoothBlowudownsFrobeSupport} may be taken big. However, it is unclear to the authors whether it can always be taken nef. 
\end{remark}

\begin{remark}
    Putting together \autoref{scholium.BignessAndFibrations}, \autoref{prop.NefnessSmallContractions}, and \autoref{scholium.SmoothBlowudownsFrobeSupport}, we see that a smooth toric variety with ample Frobenius support must be a projective space without using the existence of centrally symmetric primitive relations. Further, a $\bQ$-factorial toric variety with big and nef Frobenius support admits only divisorial extremal contractions.
\end{remark}

\begin{proposition}\label{prop.AllDivisorialContractionsSmoothBlowUpImpliesENef}
   Let $X$ be a $\bQ$-factorial toric variety and $R$ be an extremal primitive relation defining an extremal contraction $\phi$. The following statements hold:
   \begin{enumerate}
       \item If $\phi$ is a Mori fibration then $E \cdot R \geq 0$ for all $\llbracket E \rrbracket \in \FSupp(X)$.
       \item If $X$ is smooth and $\phi$ is a smooth blowup then $E \cdot R \geq 0$ for all $\llbracket E \rrbracket \in \FSupp(X)$.
       \item In particular, $X$ is an extremal Fano variety then $\sE_{X,e}$ is nef for all $e>0$
   \end{enumerate}
\end{proposition}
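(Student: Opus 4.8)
The plan is to prove (a) and (b) by a short direct computation with the description $\FSupp(X)=\sQ_X\cap N^1(X)\setminus 0$ of \autoref{cor.SupportOfE_X}, and then to obtain (c) by feeding them into the structural results recalled in \autoref{rem.BoundsForLengths}. Throughout I use that a relation $\sum_i a_iu_i=0$ in $N_1(X)$ has $\pi_i\cdot R=a_i$ for every $i$ (this is exactly the identification of relations with $1$-cycles made in \autoref{sec.ToricGeometryPreliminaries}).

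For (a): if $\phi$ is a Mori fibration then, by \autoref{thm.MoriTheoryOfFanoVarieties}, the defining extremal primitive relation has the shape $R:b_1u_1+\dots+b_ku_k=0$ with $b_1,\dots,b_k$ positive integers and no other ray generator occurring, so $\pi_i\cdot R=b_i\geq 0$ for $i\leq k$ and $\pi_i\cdot R=0$ for $i>k$. Since $\eff(X)=\langle\pi_1,\dots,\pi_r\rangle_{\bR_{\geq 0}}$ (\autoref{prop.PseudoEffectiveCone}) and $\FSupp(X)\subset\eff(X)$ (\autoref{cor.EforTorics}), we get $E\cdot R\geq 0$ for every $\llbracket E\rrbracket\in\FSupp(X)$ at once; no integrality input is needed, and this argument works for any $\bQ$-factorial $X$.

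For (b), the substantive case, observe that since $X$ is smooth and $\phi$ is a smooth blowup, \autoref{thm.MoriTheoryOfFanoVarieties} forces the defining relation to be $R:u_1+\dots+u_k-u_{k+1}=0$ (all the relevant coefficients are $1$), with $P_{k+1}$ the exceptional divisor; hence $\pi_i\cdot R=1$ for $1\leq i\leq k$, $\pi_{k+1}\cdot R=-1$, and $\pi_i\cdot R=0$ otherwise. Now take $\llbracket E\rrbracket\in\FSupp(X)$ and, using $\FSupp(X)\subset\sQ_X$, write $\llbracket E\rrbracket=\sum_{i=1}^r c_i\pi_i$ with each $c_i\in[0,1)$; then
\[
E\cdot R \;=\; \sum_{i=1}^{k}c_i \;-\; c_{k+1}.
\]
The right-hand side is $>-1$ because $\sum_{i=1}^k c_i\geq 0$ and $c_{k+1}<1$, while $E\cdot R\in\bZ$ since $X$ is smooth and (again by \autoref{thm.MoriTheoryOfFanoVarieties}) $R$ is the class of an honest rational curve. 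Hence $E\cdot R\geq 0$, which is (b). Equivalently, one may phrase this through the splitting $\Cl(X)=\phi^*\Cl(S)\oplus\bZ[P_{k+1}]$: the $[P_{k+1}]$-coordinate of $\llbracket E\rrbracket$ is $c_{k+1}-\sum_{i\leq k}c_i\in(-\infty,1)\cap\bZ$, hence nonpositive, and $E\cdot R$ is its negative.

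For (c): let $X$ be a smooth toric extremal Fano variety. Each extremal ray $R$ of $\moriCone(X)$ is either a Mori fibration, covered by (a), or a birational extremal contraction of maximal length; the latter, for a smooth toric variety, is a smooth blowup (\autoref{rem.BoundsForLengths}), hence covered by (b). Thus $E\cdot R\geq 0$ for every $\llbracket E\rrbracket\in\FSupp(X)$ and every extremal ray $R$, and since $\moriCone(X)$ is a rational polyhedral cone spanned by its extremal rays this gives $\llbracket E\rrbracket\cdot C\geq 0$ for all curves $C$, \ie every $\sO_X(E)$ with $\llbracket E\rrbracket\in\FSupp(X)$ is nef. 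By \autoref{cor.EforTorics}, $\sE_{X,e}$ is therefore a direct sum of nef line bundles, hence nef, for every $e>0$; the very same computation applies to the line bundles appearing in the sheaves $\sF_{e,e'}$ of \autoref{rem.AsymptoticBehavior}, so the conclusion holds for the whole system. The only place anything delicate occurs is (b): the $[0,1)$-coefficient description of the Frobenius support, together with the fact that a smooth-blowup primitive relation pairs with the $\pi_i$ only in the values $1,-1,0$, confines $E\cdot R$ to the interval $(-1,\infty)$, and then integrality of $E\cdot R$ closes the argument; everything else is bookkeeping with results already established above.
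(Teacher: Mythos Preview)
Your proof is correct and follows essentially the same approach as the paper's: write $\llbracket E\rrbracket=\sum c_i\pi_i$ with $c_i\in[0,1)$ using \autoref{cor.SupportOfE_X}, compute $E\cdot R$ directly from the shape of the primitive relation, and for (b) use integrality of the intersection number (from smoothness) to upgrade $E\cdot R>-1$ to $E\cdot R\geq 0$. Your treatment of (a) in the general $\bQ$-factorial form $b_1u_1+\cdots+b_ku_k=0$ is in fact slightly more careful than the paper's, and your unpacking of (c) via \autoref{rem.BoundsForLengths} makes explicit what the paper leaves implicit.
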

\begin{proof}
    Let $\llbracket E \rrbracket \in \FSupp(X)$ and write $\llbracket E \rrbracket=c_1 \pi_1+\cdots +c_r \pi_r$ with $c_1,\ldots,c_r \in [0,1)$. If $R \: u_1 + \cdots + u_k=0$, then $E \cdot R = c_1 + \cdots + c_k \geq 0$. This proves (a). For (b), note that if $R$ is of the form $R \: u_1 + \cdots + u_k - u_{k+1}=0$ then
    \[
   \bZ \ni E \cdot R = c_1 + \cdots + c_k -c_{k+1} > -1,
    \]
    where we use that $\llbracket E \rrbracket \in N^1(X)$ together with smoothness/factoriality to say that $E \cdot R \in \bZ$. Therefore, $E \cdot R \geq 0$. Statement (c) follows at once from the former two.
\end{proof}

Putting everything together, we can deliver the promised Theorem B from \autoref{sec.Intro}.

\begin{theorem}\label{thmNefnessOfE}
Let $X$ be a smooth toric variety. Then, $\sE_{X,e}$ is nef for all $e > 0$ if and only if $X$ is an extremal Fano variety. In that case, $\FSupp(X)$ intersects every facet of $\nef(X)$.
\end{theorem}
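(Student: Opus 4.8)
Here is the plan.

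First, observe that ``$\sE_{X,e}$ is nef for all $e>0$'' is equivalent to $\FSupp(X)\subseteq\nef(X)$: by \autoref{rem.AsymptoticBehavior} every sheaf $\sF_{e,e'}$, and in particular every $\sE_{X,e}$, is a direct sum of line bundles $\sO_X(D)$ with $\llbracket D\rrbracket\in\FSupp(X)$, and conversely each class of $\FSupp(X)$ arises as such a summand of some $\sE_{X,e}$; since a finite direct sum of line bundles is nef exactly when each summand is, and a line bundle on a toric variety is nef exactly when its class lies in $\nef(X)$, the claim follows. With this rephrasing, the ``if'' direction is \autoref{prop.AllDivisorialContractionsSmoothBlowUpImpliesENef}(c).

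For the ``only if'' direction, suppose $\sE_{X,e}$ is nef for all $e>0$. Then $X$ is Fano, since nef sheaves on toric varieties are globally generated (\cite[Proposition 5.7]{CarvajalRojasPatakfalviVarietieswithAmpleFrobenius-traceKernel}). Moreover $X$ has no small extremal contraction, for otherwise \autoref{prop.NefnessSmallContractions} would forbid nefness for $e\gg 0$; and every extremal divisorial contraction is a smooth blowup, for otherwise \autoref{scholium.SmoothBlowudownsFrobeSupport}(a) would forbid it. Hence every birational extremal contraction of $X$ is a smooth blowup, so $X$ is an extremal Fano variety by the characterization recorded in \autoref{rem.BoundsForLengths}.

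For the last assertion, let $X$ be a smooth toric extremal Fano variety, so that $\FSupp(X)\subseteq\nef(X)$ by the above; we may assume $\rho\coloneqq\rho(X)\geq 2$, the case $X\simeq\bP^d$ being trivial. By the duality between $\nef(X)$ and $\moriCone(X)$, every facet $F$ of $\nef(X)$ is of the form $F=\nef(X)\cap R^{\perp}$ for an extremal primitive relation $R$, where $\phi_R$ is either a Mori fibration or a smooth blowup. If $\phi_R$ is a smooth blowup, \autoref{scholium.SmoothBlowudownsFrobeSupport}(b) produces $\llbracket E\rrbracket\in\FSupp(X)$ with $E\cdot R=0$, and then $\llbracket E\rrbracket\in F$ since $\FSupp(X)\subseteq\nef(X)$. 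If $\phi_R$ is a Mori fibration, then by \autoref{thm.MoriTheoryOfFanoVarieties} and smoothness $R\colon u_1+\cdots+u_k=0$, with $k\leq d$ because $X\not\simeq\bP^d$ (\autoref{prop.ToricCharacterizationOfProjectiveSpace}); hence at least $r-k=\rho+d-k\geq\rho$ primitive ray generators, say $u_{k+1},\ldots,u_r$, are absent from $R$, and the classes $\pi_{k+1},\ldots,\pi_r\in R^{\perp}$ span the face $\eff(X)\cap R^{\perp}$ of $\eff(X)$, a strongly convex rational cone (\autoref{prop.PseudoEffectiveCone}) of dimension at most $\rho-1<r-k$. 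Applying \autoref{lemma.FindingPlatticePoints} inside the linear span of this face yields a nonzero lattice point $\llbracket E\rrbracket\in\langle\pi_{k+1},\ldots,\pi_r\rangle_{[0,1)}\cap N^1(X)$, which belongs to $\FSupp(X)$ by \autoref{cor.SupportOfE_X} and satisfies $E\cdot R=0$, so again $\llbracket E\rrbracket\in F$. In either case $\FSupp(X)\cap F\neq\emptyset$.

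Most of the genuinely new content has already been supplied by the earlier propositions of this section, so the argument above is largely an assembly. The point that requires the most care is the Mori-fibration case of the last assertion: one must ensure that the lattice point extracted from the boundary face of $\eff(X)$ actually lands on the prescribed facet of $\nef(X)$ --- which is precisely why it is essential to establish $\FSupp(X)\subseteq\nef(X)$ first --- together with the minor bookkeeping guaranteeing $r-k\geq\rho$ and the disposal of the degenerate Picard-rank-one case.
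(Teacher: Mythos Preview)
Your proof is correct and follows essentially the same route as the paper, which merely states ``putting everything together'' and leaves the assembly of \autoref{prop.AllDivisorialContractionsSmoothBlowUpImpliesENef}, \autoref{prop.NefnessSmallContractions}, and \autoref{scholium.SmoothBlowudownsFrobeSupport} to the reader. For the final assertion your Mori-fibration case reproves \autoref{scholium.BignessAndFibrations} inline (via the same argument as in the proof of \autoref{thm.MainTheoremBignessAmplenessProjSpace}), which is exactly what the paper has in mind; the only cosmetic point is that the $\rho=1$ case is not literally ``trivial'' but rather vacuous or degenerate, since the unique facet of $\nef(\bP^d)$ is $\{0\}$ and $0\notin\FSupp(X)$---an edge case the paper's statement also glosses over.
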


\begin{remark}
    We obtain a new proof of \autoref{cor.FrobeniusAndMori} using \autoref{thmNefnessOfE} in conjunction with \cite[Propositions 4.2 and 5.12]{CarvajalRojasPatakfalviVarietieswithAmpleFrobenius-traceKernel}. 
\end{remark}

\subsubsection{Blowing down to homogeneous spaces} For \autoref{thmNefnessOfE} to reach its full potential, we should be able to keep track of the positivity of the Frobenius-trace kernels when we perform these simplest Mori contractions. We explain how to achieve this next.

\begin{lemma} \label{lemma.DescendNefnessFibrations}
    Let $f\:X \to S$ be a smooth proper morphism admitting a section $i \: S \to X$. Then, if $\sE_{X,e}$ is nef, then so is $\sE_{S,e}$. 
\end{lemma}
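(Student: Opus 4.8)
The plan is to realize $\sE_{S,e}$ as a direct summand of the pullback $i^*\sE_{X,e}$; since pullbacks of nef bundles along arbitrary morphisms are nef and direct summands (being quotients) of nef bundles are nef, this finishes the proof. To begin, the section $i$ makes $f$ surjective, so $S$ is smooth (as are $X$ and $f$); hence, recalling $\sB^1_{X,e}=\coker(F^{e,\#}_X)$ and the analogous $\sB^1_{S,e}$, all of $\sB^1_{X,e},\sB^1_{S,e},\sE_{X,e},\sE_{S,e}$ are locally free, $\sE_{X,e}\cong(\sB^1_{X,e})^\vee$ and $\sE_{S,e}\cong(\sB^1_{S,e})^\vee$ by Grothendieck--Serre duality for the finite flat morphism $F^e$, and ``nef'' has its usual meaning. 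Thus it suffices to prove that $\sB^1_{S,e}$ is a direct summand of $i^*\sB^1_{X,e}$ and then dualize.

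The mechanism is to restrict everything along $i$ through the relative Frobenius. Write $F^e_X=W\circ F_{X/S}$, where $F_{X/S}\colon X\to X^{(e)}\coloneqq X\times_{S,F^e_S}S$ is the (finite flat) relative Frobenius, with relative Frobenius homomorphism $F^{\#}_{X/S}\colon\sO_{X^{(e)}}\to F_{X/S,\ast}\sO_X$, and $W\colon X^{(e)}\to X$ is the first projection; note $W$ is flat, being the base change of the flat morphism $F^e_S$ (Kunz). The section $i$ lifts to a section $i^{(e)}\coloneqq(i\circ F^e_S,\,\id_S)\colon S\to X^{(e)}$ of $X^{(e)}\to S$, and one checks that $F_{X/S}\circ i=i^{(e)}$ and that the square with top row $i^{(e)}\colon S\to X^{(e)}$, bottom row $i\colon S\to X$, and verticals $F^e_S$ and $W$ is cartesian. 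Flat base change along it gives $i^*F^e_{X,\ast}\sO_X=i^*\bigl(W_\ast F_{X/S,\ast}\sO_X\bigr)=(F^e_S)_\ast\bigl((i^{(e)})^*F_{X/S,\ast}\sO_X\bigr)$, and under this identification $i^*F^{e,\#}_X$ becomes $(F^e_S)_\ast$ of $(i^{(e)})^*F^{\#}_{X/S}\colon\sO_S\to(i^{(e)})^*F_{X/S,\ast}\sO_X$, precomposed with $F^{e,\#}_S\colon\sO_S\to F^e_{S,\ast}\sO_S$.

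The key point is that the relation $F_{X/S}\circ i=i^{(e)}$ forces the relative Frobenius to split after restriction to $i^{(e)}(S)$: pushing the quotient $\sO_X\twoheadrightarrow i_\ast\sO_S$ forward along the affine morphism $F_{X/S}$ and pulling back along $i^{(e)}$ produces a surjection $(i^{(e)})^*F_{X/S,\ast}\sO_X\twoheadrightarrow\sO_S$ whose composite with $(i^{(e)})^*F^{\#}_{X/S}$ is the identity --- a one-line local check using that $F^{\#}_{X/S}$ is a map of $\sO$-algebras and that the Frobenius power of any ideal $I$ satisfies $I^{[q]}\subseteq I$. Hence $(i^{(e)})^*F^{\#}_{X/S}$ is a split injection of locally free sheaves. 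Applying the exact functor $(F^e_S)_\ast$ and passing to cokernels of $F^{\#}$ then yields a splitting $i^*\sB^1_{X,e}\cong\sB^1_{S,e}\oplus(F^e_S)_\ast\mathcal{C}$ into locally free sheaves, where $\mathcal{C}$ is the kernel of the surjection above; dualizing gives $i^*\sE_{X,e}\cong\sE_{S,e}\oplus\bigl((F^e_S)_\ast\mathcal{C}\bigr)^\vee$, and as $i^*\sE_{X,e}$ is nef so is its summand $\sE_{S,e}$.

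I expect the main obstacle to be exactly this splitting. The naive approach --- restricting the defining sequence $0\to\sE_{X,e}\to F^e_{X,\ast}\omega_X^{1-q}\xrightarrow{\tau^e_X}\sO_X\to0$ along $i$ --- only realizes $\sE_{S,e}$ as a \emph{sub}bundle of $i^*\sE_{X,e}$, and even then only after a twist by $i^*\omega_{X/S}$, which is useless for nefness; indeed the statement is outright false for a general closed immersion of a smooth variety into a smooth variety, since every projective variety embeds into some $\bP^N$, whose $\sE$ is ample. Extracting a genuine direct summand is precisely where the hypothesis that $i$ is a \emph{section} of a smooth morphism is used, through the identity $F_{X/S}\circ i=i^{(e)}$. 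A secondary, purely bookkeeping difficulty is to propagate the $F^{\#}$-compatibility of all the base-change identifications so that the summand decompositions of $i^*F^e_{X,\ast}\sO_X$ and of $i^*\sB^1_{X,e}$ line up as asserted.
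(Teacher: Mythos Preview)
Your proof is correct. Both your argument and the paper's realize $\sE_{S,e}$ as a quotient of $i^*\sE_{X,e}$, but the routes differ. The paper simply cites a surjection $\varepsilon_{f,e}\colon \sE_{X,e}\twoheadrightarrow f^*\sE_{S,e}$ established in \cite[Proposition~2.4]{CarvajalRojasPatakfalviVarietieswithAmpleFrobenius-traceKernel} (which exists for any smooth $f$, built from the relative Frobenius factorization and the splitting of $F^{\#}_{X/S}$ available for smooth morphisms), and then pulls it back along $i$ to obtain $i^*\sE_{X,e}\twoheadrightarrow \sE_{S,e}$ in one stroke. You instead work entirely on $S$: you pull the relative Frobenius back via the lifted section $i^{(e)}$ and use the identity $F_{X/S}\circ i=i^{(e)}$ to produce an explicit retraction of $(i^{(e)})^*F^{\#}_{X/S}$, yielding a genuine \emph{direct-summand} decomposition $i^*\sE_{X,e}\cong \sE_{S,e}\oplus\bigl((F^e_S)_*\mathcal{C}\bigr)^{\vee}$ rather than just a quotient. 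Your approach is self-contained and makes the role of the section transparent (it is precisely what furnishes the splitting, whereas the paper's cited surjection exists without a section and the section only enters to undo $f^*$); the paper's approach is shorter because the relative-Frobenius machinery is packaged in the external reference.
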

\begin{proof}
     We have a surjective morphism $\varepsilon_{f,e} \: \sE_{X,e} \to f^* \sE_{S,e}$; see \cite[Proposition 2.4]{CarvajalRojasPatakfalviVarietieswithAmpleFrobenius-traceKernel}. Pulling it back along $i$ then yields a surjection $i^* \sE_{X,e} \to \sE_{S,e}$ which shows the claimed result for $i$ is a closed immersion as $f$ is separated (see \cite[\href{https://stacks.math.columbia.edu/tag/01KT}{Tag 01KT}]{stacks-project}).
\end{proof}

\begin{lemma} \label{lemma.DescendNefnessBlowdowns}
    Let $f\: X \to S$ be a smooth blowup. If $\sE_{X,e}$ is split and nef then so is $\sE_{S,e}$.
\end{lemma}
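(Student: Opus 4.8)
The plan is to descend nefness along $f$ by comparing the Frobenius push-forwards and then testing against torus-invariant curves. Since $f\colon X\to S$ is a smooth blowup, $S$ is a smooth projective toric variety, so by \autoref{thm.SplittingFormula} $\sE_{S,e}$ is a direct sum of line bundles; write $\sE_{X,e}=\bigoplus_i\sO_X(D_i)$ with $\llbracket D_i\rrbracket\in\FSupp(X)$, each $\sO_X(D_i)$ being nef by hypothesis. Let $E_0$ be the (irreducible, torus-invariant) exceptional divisor of $\phi=\phi_R$, where $R\colon u_1+\dots+u_k-u_{k+1}=0$ is the extremal primitive relation of the blowup (so $a_{k+1}=1$ by \autoref{scholium.SmoothBlowudownsFrobeSupport} and $Z:=\phi(E_0)$ has codimension $k$ by \autoref{thm.MoriTheoryOfFanoVarieties}). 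First I would prove $\sB^1_{S,e}\simeq f_*\sB^1_{X,e}$: since $f\circ F^e_X=F^e_S\circ f$ on schemes we get $f_*F^e_{X,*}\sO_X=F^e_{S,*}\sO_S$, and applying $f_*$ to $0\to\sO_X\to F^e_{X,*}\sO_X\to\sB^1_{X,e}\to 0$ together with $f_*\sO_X=\sO_S$ and $R^1f_*\sO_X=0$ (blowup of a smooth variety along a smooth center) yields the claim. Dualizing gives $\sE_{S,e}\simeq(f_*\sE_{X,e}^{\vee})^{\vee}=\bigoplus_i(f_*\sO_X(-D_i))^{\vee}$; each $f_*\sO_X(-D_i)$ is a direct summand of the locally free sheaf $\sE_{S,e}^{\vee}$ (of rank $q^d-1$, by Kunz) with generic rank $1$, hence a line bundle $\sO_S(-E_i')$, and by Krull--Schmidt $\{\sO_S(E_i')\}_i$ is the multiset of invertible summands of $\sE_{S,e}$. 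The counit $f^*f_*\sO_X(-D_i)\to\sO_X(-D_i)$ is an isomorphism over $X\setminus E_0\cong S\setminus Z$, so $f^*\sO_S(E_i')=\sO_X(D_i+m_iE_0)$ for some $m_i\in\bZ$; restricting to a line $\ell$ in a fibre of $f$ (so $[\ell]=R$ and $E_0\cdot\ell=-1$) and using triviality of $f^*\sO_S(E_i')|_\ell$ gives $m_i=D_i\cdot R=:a_i\ge 0$, the inequality by \autoref{prop.AllDivisorialContractionsSmoothBlowUpImpliesENef}(b).

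Thus it suffices to show each $\sO_S(E_i')$ is nef on $S$, and since $f$ is proper and surjective this is equivalent to $f^*\sO_S(E_i')=\sO_X(D_i+a_iE_0)$ being nef on $X$, which we may test against the torus-invariant curves $\gamma$ of $X$. If $E_0\cdot\gamma\ge 0$ then $(D_i+a_iE_0)\cdot\gamma=D_i\cdot\gamma+a_i(E_0\cdot\gamma)\ge 0$ since $\sO_X(D_i)$ is nef and $a_i\ge 0$; and $(D_i+a_iE_0)\cdot R=0$. The remaining torus-invariant curves are those contained in $E_0$ with $E_0\cdot\gamma<0$ and not contracted by $f$; by the star-subdivision description $\Sigma_X=\Sigma_S^*(u_{k+1})$ these are exactly the lifts $\gamma_{\tau,j}$ ($1\le j\le k$) of a torus-invariant curve $\gamma_\tau$ of $S$ lying in $Z$ (i.e. $\tau\in\Sigma_S(d-1)$ with $\tau\supset\langle u_1,\dots,u_k\rangle$), obtained by substituting $u_j=u_{k+1}-\sum_{m\le k,\,m\ne j}u_m$ in the wall relation of $\gamma_\tau$; one finds $E_0\cdot\gamma_{\tau,j}=\beta_j$ and $\gamma_{\tau,j}=\tilde\gamma_\tau-\beta_j R$, where $\beta_j=\gamma_\tau\cdot P_j$ and $\tilde\gamma_\tau$ denotes the lift of $\gamma_\tau$ with vanishing $u_{k+1}$-coordinate. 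As $f_*\gamma_{\tau,j}=\gamma_\tau$, the projection formula gives $\sO_S(E_i')\cdot\gamma_\tau=(D_i+a_iE_0)\cdot\gamma_{\tau,j}=D_i\cdot\tilde\gamma_\tau$, independently of $j$, so the proof reduces to the inequality $D_i\cdot\tilde\gamma_\tau\ge 0$ for all such $\gamma_\tau$.

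This last inequality is the crux, and it is where the hypothesis that $\sE_{X,e}$ is nef — equivalently, by \autoref{thmNefnessOfE}, that $X$ is an extremal Fano variety — must be used in full. If $\beta_j\ge 0$ for some $j$, then $\tilde\gamma_\tau=\gamma_{\tau,j}+\beta_j R\in\moriCone(X)$ and $D_i\cdot\tilde\gamma_\tau\ge 0$ because $\sO_X(D_i)$ is nef; summing over the lifts, $k\,(D_i\cdot\tilde\gamma_\tau)=\sum_{j=1}^k(D_i\cdot\gamma_{\tau,j})+\big(\sum_{j\le k}\beta_j\big)a_i$, which is $\ge 0$ whenever $\sum_{j\le k}\beta_j\ge 0$. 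The hard case is $\beta_1,\dots,\beta_k<0$; here I expect one must first argue that $S$ is again Fano — transporting ampleness of $-K_X$ through $\phi$ via $K_X=f^*K_S+(k-1)E_0$ and the same curve analysis — and then exclude $\beta_1,\dots,\beta_k<0$ (or obtain $D_i\cdot\tilde\gamma_\tau\ge 0$ directly), either from the geometry of the $\bP^{k-1}$-bundle $E_0\to Z$ together with the restrictions of the nef divisors $D_i$ and $-K_X$ to $E_0$, or by a combinatorial estimate on the star subdivision of the smooth fan $\Sigma_S$ along the smooth cone $\langle u_1,\dots,u_k\rangle$; this is where essentially all of the difficulty lies. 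Once $D_i\cdot\tilde\gamma_\tau\ge 0$ is established in all cases, every invertible summand of $\sE_{S,e}$ is nef, hence $\sE_{S,e}$ is nef, which completes the proof.
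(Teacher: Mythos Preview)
Your proposal is incomplete, as you yourself acknowledge: the ``hard case'' $\beta_1,\dots,\beta_k<0$ is not resolved, and the sketched strategies (transport Fano-ness to $S$, exclude the case combinatorially, or argue via the $\bP^{k-1}$-bundle $E_0\to Z$) are not carried out. The difficulty you isolate is real, but it is an artifact of the way you set up the test: you check nefness of $f^*\sL_i$ against \emph{all} torus-invariant curves on $X$, which forces you into the geometry of curves lying inside $E_0$.

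The paper avoids this entirely by testing on $S$ rather than on $X$. After identifying the summands (the paper does this via the Cartier operator, getting $\sE_{S,e}=f_*\bigl(\sE_{X,e}\otimes\sO_X((c-1)E_0)\bigr)$ and computing $f_*\sO_X(mE_0)$ explicitly, which also yields the two-sided bound $0\le a_i\le c-1$), one simply takes a curve $C\subset S$ and its strict transform $C'\subset X$. Since $C'\not\subset E_0$, one has $E_0\cdot C'\ge 0$, and then
\[
\sL_i\cdot C \;=\; f^*\sL_i\cdot C' \;=\; D_i\cdot C' + a_i\,(E_0\cdot C') \;\ge\; D_i\cdot C' \;\ge\; 0,
\]
using only $a_i\ge 0$ and the nefness of $\sO_X(D_i)$. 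No case analysis on torus-invariant curves in $E_0$ is needed. In your notation this is exactly the inequality $D_i\cdot\tilde\gamma_\tau\ge 0$ that you were trying to prove, but obtained for free once you work with honest strict transforms rather than formal lifts of curves in $Z$.
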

\begin{proof}
    One observes that the pushforward of $\kappa_X^e \: F^e_* \omega_X \to \omega_X$ is precisely $\kappa_S^e \: F^e_* \omega_S \to \omega_S$. In particular, letting $E\subset X$ be the exceptional divisor and $c$ the codimension of the center of $f$, it follows that
    \[
    \sE_{S,e} \otimes \omega_S = f_*(\sE_{X,e} \otimes \omega_X)=f_*(\sE_{X,e} \otimes f^* \omega_S \otimes \sO_X((c-1)E)) = f_*(\sE_{X,e} \otimes \sO_X((c-1)E)) \otimes \omega_S
    \]
    and so
    \[
    \sE_{S,e} =  f_*(\sE_{X,e} \otimes \sO_X((c-1)E)).
    \]
    
    By hypothesis, we may write a decomposition
    \[
    \sE_{X,e} = \bigoplus_{i=1}^{q-1} \sM_i, \quad   \sM_i = f^*\sL_i \otimes \sO_X(n_i E)
    \]
    where the $\sL_i$ are invertible sheaves on $S$ and $n_i \in \bZ$. Since $\sE_{X,e}$ is assumed to be nef, we may further say that $n_i \leq 0$ (by, say, intersecting $ \sM_i=f^*\sL_i \otimes \sO_X(n_i E)$ with a contracted curve inside $E$).

    On the other hand,  
    \[
    f_* \sM_i((c-1)E) =  \sL_i \otimes f_*\sO_X((c-1+n_i)E)
    \]
    is a direct summand of $\sE_{S,e}$ and, in particular, an invertible sheaf. However, since \[
    f_* \sO_X(mE)=\begin{cases}
        \sI^m & \text{for } m<0,\\
        \sO_S & \text{otherwise},
    \end{cases}
    \]
    where $\sI \subset \sO_X$ is the prime ideal sheaf being blown up, this implies that $0 \geq n_i \geq -(c-1)$ and further that $f_* \sM_i((c-1)E) =  \sL_i$. In particular, $\sE_{S,e} = \bigoplus_{i=1}^{q-1} \sL_i$ and so all we have to do is to observe that $\sL_i$ must be nef. Indeed, letting $C \subset S$ be a curve and $C' \subset X$ be its strict transform, we have $E \cdot C'\geq 0$ and so
    \[
    \sL_i \cdot C = f^* \sL_i \cdot C'\geq f^* \sL_i \cdot C' + n_iE \cdot C' = \sM_i \cdot C' \geq 0;
    \]
    as required.
\end{proof}

\begin{remark}
    The authors do not know whether the splitting hypothesis on $\sE_{X,e}$ in \autoref{lemma.DescendNefnessBlowdowns} can be dropped.
\end{remark}

\begin{scholium} \label{scholium.SmoothBlowupsFTKDescription}
Let $f\: X \to S$ be a smooth blowup between toric varieties. Let $f_{!} \: \Cl(X) \to \Cl (S)$ be the canonical retraction of $f^* \:\Cl(S) \to \Cl(X)$ that leads to the decomposition $\Cl(X) = \Cl(S) \oplus \langle E \rangle_{\bZ}$ where $E\subset X$ is the exceptional divisor of $f$. Then $f_{!}$ induces a surjection $
f_{!} \:\FSupp(X) \to \FSupp(S) $ where nef (resp. ample) classes are sent to nef (resp. ample) classes.\footnote{This also uses that on toric varieties ampleness and strict nefness are equivalent notions.} Moreover,
\[
\sE_{X,e} = \bigoplus_{\substack{D \in \FSupp(S) \\ n\geq -(c-1)}} \sO_X(f^* D+ n E)^{\oplus m(D,n;q)}
\]
where $\sum_{n} m(D,n;q)=m(D;q)$. Hence, $\alpha(D) = \sum_{n \geq -(c-1)} \alpha(f^*D + n E )$ and, in particular, $f_{!} \:\FSupp(X) \to \FSupp(S)$ sends big classes to big classes.
\end{scholium}

\begin{corollary} \label{cor.descendingNefness}
    Let $X$ be a toric extremal Fano variety and $X \to S$ be a Mori contraction. Then $S$ is a toric extremal Fano variety.
\end{corollary}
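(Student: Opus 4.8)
The plan is to combine the two descent results just established, \autoref{lemma.DescendNefnessFibrations} and \autoref{lemma.DescendNefnessBlowdowns}, with the characterization of (toric) extremal Fano varieties via the nefness of the Frobenius-trace kernel provided by \autoref{thmNefnessOfE}. Note first that a toric extremal Fano variety $X$ is automatically smooth: either it admits no birational extremal contraction, in which case it is a product of projective spaces, or it is a smooth blowup of a smooth toric variety; either way it is smooth. Hence \autoref{thmNefnessOfE} gives that $\sE_{X,e}$ is nef for all $e>0$, and \autoref{cor.EforTorics} (through \autoref{thm.SplittingFormula}) gives that $\sE_{X,e}$ is split as a direct sum of invertible sheaves.

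Next I would run the case analysis on the extremal contraction $\phi_R\: X \to S$. Since $X$ is extremal Fano, $\phi_R$ is not a small contraction, so it is either a Mori fibration or a smooth blowup. In both cases $S$ is a smooth projective toric variety: for a smooth blowup this is part of the definition, while for a Mori fibration we invoke the structure theorem recalled in \autoref{rem.BoundsForLengths}, by which $\phi_R$ is a projective bundle $X = \bP(\sF) \to S$ with $\sF$ a direct sum of invertible sheaves on $S$; in particular $\phi_R$ is smooth and proper and $S$ is smooth.

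Finally I would finish by descent. If $\phi_R$ is a smooth blowup, then \autoref{lemma.DescendNefnessBlowdowns} applies directly (using that $\sE_{X,e}$ is split and nef) and shows that $\sE_{S,e}$ is nef for all $e>0$. If $\phi_R$ is a Mori fibration, write $\sF = \sL_1 \oplus \cdots \oplus \sL_m$; any of the projections $\sF \twoheadrightarrow \sL_j$ determines a section $i\: S \to X$ of $\phi_R$, so \autoref{lemma.DescendNefnessFibrations} applies to the smooth proper morphism $\phi_R$ and again yields that $\sE_{S,e}$ is nef for all $e>0$. In either case, \autoref{thmNefnessOfE} applied to the smooth toric variety $S$ shows that $S$ is a (toric) extremal Fano variety, as claimed.

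The step I expect to need the most care is the fibration case: \autoref{lemma.DescendNefnessFibrations} requires an honest section, which is exactly why one cannot treat $\phi_R$ as an abstract fibration and must appeal to the fact that a toric Mori fibration of a smooth toric variety is a projective bundle with split $\sF$. Everything else is bookkeeping around the previously proven lemmas.
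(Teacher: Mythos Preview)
Your proof is correct and follows exactly the approach implicit in the paper: translate ``extremal Fano'' into nefness of $\sE_{X,e}$ via \autoref{thmNefnessOfE}, descend nefness through the Mori contraction using \autoref{lemma.DescendNefnessFibrations} or \autoref{lemma.DescendNefnessBlowdowns} according to whether $\phi_R$ is a fibration or a smooth blowup, and then apply \autoref{thmNefnessOfE} to $S$. Your preliminary argument that $X$ is smooth is unnecessary, since in the paper's conventions (see \autoref{rem.BoundsForLengths}) the notion of toric extremal Fano variety is set up only for smooth toric varieties.
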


\begin{corollary} \label{cor.NefnessChainSmoothBlowyups}
Let $X$ be a smooth toric variety such that $\sE_{X,e}$ is nef for all $e>0$. Then, there is a finite chain of smooth blowups
\[
X \to X_1 \to X_2 \to X_3 \to \cdots \to X_n
\]
such that $X_n$ is a homogeneous space. In particular, $n < \rho(X)$.
\end{corollary}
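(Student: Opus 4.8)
The plan is a descending induction on the Picard rank, contracting one extremal ray at a time via a smooth blowup. The inputs are \autoref{thmNefnessOfE}, which identifies the varieties in the statement as precisely the smooth toric extremal Fano varieties; the fact (recalled in \autoref{rem.BoundsForLengths}) that every birational extremal contraction of such a variety is a smooth blowup; and \autoref{cor.descendingNefness}, which says this class is stable under Mori contractions. In place of the last input one could instead combine \autoref{lemma.DescendNefnessBlowdowns} with the splitting of $\sE_{X,e}$ furnished by \autoref{thm.SplittingFormula}.

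First I would handle the base case: if $X$ has no birational extremal contraction, equivalently $\nef(X)=\eff(X)$, then $X$ is a homogeneous space by the equivalences recalled after Theorem B in \autoref{sec.Intro} (see \cite{FujinoSatoToricVarietiesWhoseNefBundlesAreBig,ArzhantsevGaiffulinHomogeneousToricVarieties}), and we are done with the empty chain ($n=0$, $X_0=X$). Otherwise $X$ admits a birational extremal contraction $f_1\:X\to X_1$, which by \autoref{thmNefnessOfE} together with \autoref{rem.BoundsForLengths} is a smooth blowup; in particular $X_1$ is again a smooth projective toric variety, and, the contraction being divisorial, $\rho(X_1)=\rho(X)-1$.

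Now I would iterate. Since $f_1$ is a Mori contraction, \autoref{cor.descendingNefness} shows that $X_1$ is again a toric extremal Fano variety, so $\sE_{X_1,e}$ is nef for all $e>0$ by \autoref{thmNefnessOfE}; thus $X_1$ satisfies the hypothesis of the corollary with strictly smaller Picard rank. Repeating the previous step produces smooth blowups $X=X_0\to X_1\to X_2\to\cdots$ between smooth toric extremal Fano varieties with $\rho(X_i)=\rho(X)-i$. Because the Picard rank of a projective variety is at least $1$, the process must terminate: there is a least $n$ for which $X_n$ admits no birational extremal contraction, whence $X_n$ is a homogeneous space. Finally $1\le\rho(X_n)=\rho(X)-n$ yields $n\le\rho(X)-1<\rho(X)$.

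I do not expect a serious obstacle: the argument is essentially bookkeeping of Picard ranks on top of \autoref{thmNefnessOfE} and \autoref{cor.descendingNefness}. The one point worth a careful sentence is that a smooth toric extremal Fano variety which is \emph{not} a product of projective spaces must possess a genuinely \emph{birational} extremal contraction (not merely Mori fibrations), so that the inductive step can actually be performed — but this is exactly the cited equivalence $\nef(X)=\eff(X)\Longleftrightarrow X$ homogeneous, and since the contraction is a blowup along a smooth center, smoothness (as well as projectivity and toricness) is preserved all along the chain.
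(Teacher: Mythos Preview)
Your proposal is correct and follows exactly the approach the paper intends: the corollary is stated without proof precisely because it is the evident induction on Picard rank using \autoref{cor.descendingNefness} (or equivalently \autoref{lemma.DescendNefnessBlowdowns} plus \autoref{thm.SplittingFormula}) together with the characterization of homogeneous toric varieties recalled after Theorem~B. Your write-up simply makes this implicit argument explicit.
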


\begin{remark}[Analogy with Campana--Peternell's conjecture]
    The equivalence between the homogeneity of a smooth variety $X$ and the global generation of its tangent sheaf holds as long as $\Aut(X)$ is reduced; e.g., in characteristic zero. See \cite[Proposition 2.1]{MunozOcchettaGianlucaWatanabeWizniweskiCampanaPeternellSurvey}. Examples of homogeneous spaces include abelian varieties, which have a trivial tangent sheaf. Another one is given by \emph{rational} homogeneous spaces, i.e., quotients of semi-simple Lie groups by parabolic subgroups. Any other homogeneous space is a product of homogeneous spaces of this kind. The rational homogeneous spaces are the Fano homogeneous spaces. Campana--Peternell's conjecture asserts that a Fano variety with nef tangent sheaf must be a rational homogeneous space. For more on this wonderful problem, see \cite{MunozOcchettaGianlucaWatanabeWizniweskiCampanaPeternellSurvey}. As said in the Introduction, the toric varieties confirm this conjecture. This raises two questions:
    \begin{enumerate}
        \item What can be said about the Frobenius-trace kernels on homogeous spaces?
        \item What strengthening on nefness for Frobenius-trace kernels characterizes homogeneity for toric varieties?
    \end{enumerate}
    We will pursue the first question elsewhere. For the second, inspired by the notion of $F$-signature from the theory of $F$-singularities, we will work on this in \autoref{sec.AmpleFSignHomogeneity}.
\end{remark}

\subsection{The general $\bQ$-factorial case}

\autoref{thmNefnessOfE} characterizes when $\FSupp(X) \subset \nef(X)$, at least in the smooth case. We aim to also understand the $\bQ$-factorial case, even if the answer will not be as exact. To do so, we will examine first when \emph{$\FSupp(X)$ moves}, i.e., 
 \[
 \FSupp(X) \subset \mov^{1}(X).
 \]
To this end, the following notion will be very useful.

\begin{definition} \label{def.InertStuff}
    Let $X$ be a $\bQ$-factorial toric variety. We say that $X$ is \emph{divisorially inert} if all its divisorial extremal contractions are inert. Furthermore, $X$ is said to be \emph{birationally inert} if all its birational extremal contractions are inert divisorial contractions.
\end{definition}

\begin{example}
     In the smooth case, an inert divisorial extremal contraction is the same as a smooth blowdown. Hence, a smooth toric variety is birationally inert if and only if it is an extremal Fano variety.
\end{example}

The next key notion is that of \emph{Frobenius effective curves and divisors}.

\begin{definition}[Frobenius effective cycles]
    Let $X$ be a toric variety. We define:
    \begin{enumerate}
        \item The \emph{$F$-effective cone} of $X$ is
        \[
        \Frob(X) \coloneqq \langle \FSupp(X) \rangle_{\bR_{\geq 0}} \subset \eff(X). 
        \]
        \item A divisor is said to be \emph{$F$-effective} if its class belongs to $\Frob(X)$.
        \item The \emph{cone of $F$-effective curves} of $X$ is
        \[
        \FE(X) \coloneqq  \Frob(X)^{\vee},
        \]
        and refer to its elements as \emph{$F$-effective $1$-cycles}.
    \end{enumerate}
\end{definition}

\begin{example}
    By \autoref{cor.multiplicityOfDirectSummands} and \autoref{pro.BigElementsComeInPairs}, the canonical divisor $-K_X$ is $F$-effective. In fact, these further show that $-K_X$ sits in the interior of $\Frob(X)$.
\end{example}

Observe that $\mov(X) \subset \FE(X)$, in particular, $\dim \FE(X) = \rho(X)$ and so $\Frob(X)$ is a strongly convex rational cone. We shall see below in \autoref{cor.DimensionOfFrobeniusCone} that $\dim \Frob(X) = \rho(X)$ and so $\FE(X)$ is strongly convex as well.

Our next task is to analyze the $F$-effectiveness of extremal rays of the Mori cone. It turns out that birational and fibration extremal rays exhibit opposite behavior. 

\subsubsection{Birational extremal contractions and $F$-effectiveness}

Let $\phi: X \to S$ be an extremal divisorial contraction as in \autoref{prop.LocalStructureExtremalDivContraction}. Consider the exact sequence
\[
     0 \to \langle [P_{k+1}] \rangle_{\bZ} \to \Cl(X) \xrightarrow{\phi_!} \Cl(S) \to 0
     \]
     where we use the canonical isomorphisms
     \[
       \Cl(S) = \Cl(S\setminus C) = \Cl(X \setminus P_{k+1}).
      \]
      This induces the exact sequence
       \begin{equation} \label{eqn.SESNeronSeveriSapcesDivContraction}
               0 \to \langle \pi_{k+1} \rangle_{\bR} \to N^1(X)_{\bR} \xrightarrow{\phi_!} N^1(S)_{\bR} \to 0.
       \end{equation}
Observe that $\phi_!(\eff(X)) = \eff(S)$. Furthermore, $\phi_! \varepsilon$ is nef (resp. ample) if so is $\varepsilon$. Likewise, since $\phi_!$ is open, $\phi_! \varepsilon$ is big if so is $\varepsilon$.
     
      The pullback map $\phi^* \:N^1(S)_{\bR} \to N^1(X)_{\bR}$ provides a canonical splitting of \autoref{eqn.SESNeronSeveriSapcesDivContraction} so that
      \begin{equation} \label{eqn.NeronSeveriDivisorialContractions}
           N^1(X)_{\bR} =\langle \pi_{k+1}\rangle_{\bR} \oplus \phi^*(N^1(S)_{\bR})
      \end{equation}
      where 
      \[
      \phi^*(N^1(S)_{\bR}) = \langle \pi_{k+1},\ldots,\pi_{k+\rho} \rangle_{\bR} = R^{\perp} \subset N^1(X)_{\bR}
      \] 
       is the subspace of numerical classes that do not intersect $R$; see \autoref{proposition.nefConeContainedInSomeCone}. 
       In particular, for every $\varepsilon \in N^1(X)_{\bR}$ there is a unique $\delta  \in N^1(S)_{\bR}$ such that 
      \[
      \varepsilon = -(\varepsilon \cdot R/b_{k+1}) \pi_{k+1} + \phi^* \delta.
      \]

We may rephrase and generalize \autoref{prop.NefnessSmallContractions} and \autoref{prop.AllDivisorialContractionsSmoothBlowUpImpliesENef} as follows.

\begin{proposition}[$F$-effectiveness of birational extremal contractions] \label{MainProposition}
    Let $X$ be a $\bQ$-factorial toric variety and $R$ be an extremal primitive relation defining a birational extremal contraction $\phi\: X \to S$. The following statements hold.
    \begin{enumerate}
        \item If $\phi$ is a small contraction, then $R \notin \FE(X)$.
        \item If $\phi$ is a divisorial contraction, then $R \in \FE(X)$ if and only if $\phi$ is inert. 
        \item Suppose that $\phi$ is an inert divisorial contraction. Then:
        \begin{enumerate}
            \item  For a given $\delta \in \FSupp(S)$, the set of values $ \lfloor \tilde{D} \cdot R \rfloor$, where $\tilde{D}$ is the strict transform of a $\bT$-invariant $[0,1)$-divisor $D$ on $S$ such that $\llbracket D \rrbracket = \delta$, is a discrete interval $J_{\delta} \coloneqq [i_\delta,k_{\delta}] \cap \bN \subset [0,b_1+\cdots+b_k-1]$.
            \item With notation as above, $\phi_{!}$ induces a surjection $\phi_{!} \:\FSupp(X) \twoheadrightarrow \FSupp(S)$  whose fiber at $\delta \in \FSupp(S)$ is 
  $\{-j \pi_{k+1} + \phi^* \delta \mid j \in J_{\delta}\}$.
  \item In particular, $ \alpha(\delta) = \sum_{j \in J_{\delta}} \alpha(-j \pi_{k+1} + \phi^* \delta)$
  and so for every $\delta \in \BFS(S)$ there is $j \in J_{\delta}$ such that $-j \pi_{k+1} + \phi^* \delta \in \BFS(X)$.
        \end{enumerate} 
    \end{enumerate}
\end{proposition}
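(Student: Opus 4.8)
The plan is to handle the three parts in turn. Throughout, recall that $\Frob(X)=\langle\FSupp(X)\rangle_{\bR_{\ge0}}$, so $R\in\FE(X)=\Frob(X)^{\vee}$ \emph{iff} $E\cdot R\ge 0$ for every $\llbracket E\rrbracket\in\FSupp(X)$; hence to prove $R\notin\FE(X)$ it suffices to produce one class of $\FSupp(X)$ negative against $R$. Write $R$ in the stated form with $l-k=\codim(\mathrm{Exc}\,\phi)$ (\autoref{thm.MoriTheoryOfFanoVarieties}), so $\pi_i\cdot R$ is $b_i$ for $i\le k$, $-b_i$ for $k<i\le l$, and $0$ otherwise. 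Two facts will be used constantly: the pairing $N^1(X)\times N_1(X)\to\bZ$ is integral (classes in $N^1(X)=\Cl(X)/\mathrm{tor}$ have $\bT$-invariant integral representatives and $R$ is an integral relation), so $E\cdot R\in\bZ$ for $\llbracket E\rrbracket\in N^1(X)$; and, writing $\llbracket E\rrbracket\in\FSupp(X)$ as $\sum_i c_i\pi_i$ with $c_i\in[0,1)$ (\autoref{cor.SupportOfE_X}), one has $E\cdot R=\sum_{i\le k}b_ic_i-\sum_{k<i\le l}b_ic_i$. Part (a) is then immediate: for a small contraction \autoref{prop.NefnessSmallContractions} gives $\llbracket E_{k+1}\rrbracket\in\FSupp(X)$ with $E_{k+1}\cdot R=\pi_{k+1}\cdot R=-b_{k+1}<0$.

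For (b), $\phi$ is divisorial, so $l=k+1$ and $P_{k+1}=\mathrm{Exc}\,\phi$. If $\phi$ is inert then $b_{k+1}=1$ (\autoref{prop.CharacterizationInert}), so for any $\llbracket E\rrbracket\in\FSupp(X)$ we get $E\cdot R=\sum_{i\le k}b_ic_i-c_{k+1}>-1$, and integrality forces $E\cdot R\ge0$; hence $R\in\FE(X)$. The converse is the delicate direction. Assuming $\phi$ non-inert --- i.e. $b_{k+1}\ge2$, equivalently $u_{k+1}\notin\langle u_1,\dots,u_k\rangle_{\bN}$ --- I would use the local fake-weighted-blowup model (\autoref{prop.LocalStructureExtremalDivContraction}, \autoref{prop.CharacterizationInert}) to build a $\bT$-invariant $[0,1)$-divisor $D$ on $X$ with non-torsion class (hence $\llbracket D\rrbracket\in\FSupp(X)$ by \autoref{cor.SupportOfE_X}) whose coefficient along $P_{k+1}$ dominates those along $P_1,\dots,P_k$ enough to force $D\cdot R<0$; the failure of $u_{k+1}$ to be an $\bN$-combination of the $u_i$ is exactly what leaves room for such a $D$. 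I expect this explicit construction, and checking that the class is non-torsion, to be the main obstacle of the statement.

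For (c) we are in the inert divisorial case, so $b_{k+1}=1$ and $\pi_{k+1}\cdot R=-1$. First note that $\phi^*P_i^S-\pi_i$ is a class supported on $P_{k+1}$, hence of the form $m_i\pi_{k+1}$ with $m_i\in\bQ$; dotting with $R$ gives $-\pi_i\cdot R=-m_ib_{k+1}$, so $m_i=\pi_i\cdot R\in\bZ$. Thus $\phi^*\colon\Cl(S)\to\Cl(X)$ is integral; being a section of $\phi_!$ with $\ker\phi_!=\langle[P_{k+1}]\rangle_{\bZ}$, it yields $N^1(X)=\bZ\pi_{k+1}\oplus\phi^*N^1(S)$, with $\phi^*N^1(S)=R^{\perp}$ (\autoref{proposition.nefConeContainedInSomeCone}). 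For a $\bT$-invariant $[0,1)$-divisor $D=\sum_{i\ne k+1}d_iP_i^S$ on $S$ with $\llbracket D\rrbracket=\delta$, its strict transform $\tilde D=\sum_{i\ne k+1}d_iP_i$ satisfies $\tilde D\cdot R=\sum_{i\le k}b_id_i\in[0,b_1+\dots+b_k)$, and since $\phi_!\llbracket\tilde D\rrbracket=\delta=\phi_!\phi^*\delta$ we get $\llbracket\tilde D\rrbracket=\phi^*\delta-(\tilde D\cdot R)\pi_{k+1}$. The admissible tuples $(d_i)$ form the nonempty convex polytope computing $\alpha(\delta)$ (\autoref{cor.multiplicityOfDirectSummands}), on which $(d_i)\mapsto\tilde D\cdot R$ is affine-linear; its image is a subinterval of $[0,b_1+\dots+b_k)$, and applying $\lfloor\,\cdot\,\rfloor$ to a connected set gives a discrete interval $J_\delta=[i_\delta,k_\delta]\cap\bN\subseteq[0,b_1+\dots+b_k-1]$. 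This proves (i).

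For (ii)--(iii): given $\llbracket E\rrbracket\in\FSupp(X)$ with $[0,1)$-representation $\sum_i c_i\pi_i$, applying $\phi_!$ gives $\phi_!\llbracket E\rrbracket=\sum_{i\ne k+1}c_i\pi_i^S\in\sQ_S\cap N^1(S)$, which is nonzero --- otherwise $\llbracket E\rrbracket=m\pi_{k+1}$ with $m\ge1$ by strong convexity of $\eff(X)$, forcing $E\cdot R=-m\le-1$ against $E\cdot R>-1$. So $\phi_!(\FSupp(X))\subseteq\FSupp(S)$. With $\delta\coloneqq\phi_!\llbracket E\rrbracket$ and $\tilde D$ the strict transform of the $[0,1)$-divisor on $S$ with coefficients $(c_i)_{i\ne k+1}$, one computes $\llbracket E\rrbracket=\llbracket\tilde D\rrbracket+c_{k+1}\pi_{k+1}=\phi^*\delta+(c_{k+1}-\tilde D\cdot R)\pi_{k+1}$; the $\pi_{k+1}$-coefficient lies in $\bZ$ and $c_{k+1}\in[0,1)$, so it equals $-\lfloor\tilde D\cdot R\rfloor=-j$ with $j\in J_\delta$. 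Conversely, for $j\in J_\delta$ a $[0,1)$-representation of $\delta$ realizing $\lfloor\tilde D\cdot R\rfloor=j$ exhibits $-j\pi_{k+1}+\phi^*\delta$ with the $[0,1)$-representation $\sum_{i\ne k+1}d_i\pi_i+(\tilde D\cdot R-j)\pi_{k+1}$, so it belongs to $\FSupp(X)$ and maps to $\delta$, and distinct $j$ give distinct classes; this is the surjection and the fiber description. For (iii), under the isomorphism $K_X\cong K_S$ induced by $\phi_!$ the above bijection realizes the $\alpha(\delta)$-polytope as the essentially disjoint union over $j\in J_\delta$ of the $\alpha(-j\pi_{k+1}+\phi^*\delta)$-polytopes (the slices $\lfloor\tilde D\cdot R\rfloor=j$), so comparing volumes --- exactly as in the smooth-blowup identity \autoref{scholium.SmoothBlowupsFTKDescription}, with the normalizations matched as in \autoref{rem.FrobeniusSupportAndQuotients} --- gives $\alpha(\delta)=\sum_{j\in J_\delta}\alpha(-j\pi_{k+1}+\phi^*\delta)$. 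Finally, if $\delta\in\BFS(S)$ then $\alpha(\delta)>0$ (\autoref{cor.multiplicityOfDirectSummands}), so some summand is positive, i.e. some $-j\pi_{k+1}+\phi^*\delta$ is big; being already in $\FSupp(X)$, it lies in $\BFS(X)$.
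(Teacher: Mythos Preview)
Your arguments for part (a), the forward direction of (b), and part (c) are correct and track the paper's proof closely --- in particular your computation $\llbracket E\rrbracket=\phi^*\delta-\lfloor\tilde D\cdot R\rfloor\,\pi_{k+1}$ via the fractional-part trick is exactly the mechanism the paper uses, and your convexity argument for $J_\delta$ being a discrete interval is the same as theirs. One small remark: in (c), when you write ``$-\pi_i\cdot R=-m_ib_{k+1}$, so $m_i=\pi_i\cdot R\in\bZ$'', you are silently using $b_{k+1}=1$; this is fine since you are already in the inert case, but it is worth flagging that integrality of $\phi^*$ on $N^1$ genuinely fails when $\phi$ is not inert.

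The genuine gap is the converse in (b). You acknowledge that producing a class in $\FSupp(X)$ negative on $R$ from the local fake-weighted-blowup model is ``the main obstacle'', and indeed you do not carry it out. The paper does \emph{not} proceed via an explicit local construction. Instead it argues as follows: assuming $R\in\FE(X)$, \autoref{prop.NonSmoothBlowups} supplies $\varepsilon_i\in\FSupp(X)$ with $\varepsilon_i\cdot R=b_i-b_{k+1}\ge 0$, so $b'_i:=b_i/b_{k+1}\ge 1$ for all $i\le k$. One then writes $\pi_i=-b'_i\pi_{k+1}+\phi^*\delta_i$ and observes that every $\varepsilon\in\sQ_X$ decomposes as $(c_{k+1}-\sum_{j\le k}c_jb'_j)\,\pi_{k+1}+\phi^*(\text{something in }\sQ_S)$, giving $\sQ_X\subset(-b,1)\,\pi_{k+1}\oplus\phi^*\sQ_S$ with $b=\sum b'_j$. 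The lattice points of this set must pair integrally with $R$, and applying this to the $\pi_i$ themselves (which lie in $N^1(X)\cap\sQ_X$) forces the $b'_i$ to be integers, hence $b_{k+1}=1$. The key input you are missing is \autoref{prop.NonSmoothBlowups}: rather than building a bad class by hand, one uses the already-available classes with controlled intersection $b_i-b_{k+1}$ and turns the question into an integrality constraint.
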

\begin{proof}
   Recall that (a) follows from \autoref{prop.NefnessSmallContractions}. Only then do (b) and (c) have to be proved. Let us start with (b). Recall that $\phi$ is inert if and only if $b_{k+1}=1$ (\autoref{prop.CharacterizationInert}). Note that $P_i \cdot R \in \bZ$ for all $i = 1,\ldots,r$ and so $E \cdot R \in \bZ$ for all $\llbracket E \rrbracket \in N^1(X)$. Thus, if $\phi$ is inert, for any $\llbracket E \rrbracket = c_1 \pi_1 + \cdots + c_r \pi_r \in \FSupp(X)$, we have
     \[
     \bZ \ni E \cdot R = b_1 c_1+\cdots +b_kc_k-c_{k+1} > -1
     \]
     and so $E\cdot R \geq 0$. That is, $R \in \FE(X)$ if $\phi$ is inert. To prove the converse, for each $i=1,\ldots,k$, write
      \[
      \pi_{i} = -b'_{i} \pi_{k+1} + \phi^* \delta_i, \qquad b_i'\coloneqq b_i/b_{k+1}.
      \]
Then, for an element $\varepsilon = c_1 \pi_1+\cdots +c_r \pi_r \in \mathcal{Q}_X$, we have
\[
\varepsilon = (c_{k+1}-c_1 b'_1- \cdots -c_k b'_k) \pi_{k+1} + \phi^*( c_{1} \delta_{1} + \cdots + c_{k} \delta_k) + c_{k+2} \pi_{k+2} + \cdots +
c_{r} \pi_{r}.
\]
 It then follows that
   \[
   \mathcal{Q}_X \subset (-b,1) \pi_{k+1} \oplus \phi^*\langle \delta_1,\ldots,\delta_k, \pi_{k+2},\ldots,\pi_{r} \rangle_{[0,1)} \eqqcolon \mathcal{Q}'
   \]
   where $b\coloneqq b'_1+\cdots +b'_k$ and, with a slight abuse of notation, we write $\phi_! \pi_{i} = \pi_i$ for all $i=k+2,\ldots,r$. 
   
   Suppose now that $R \in \FE(X)$. By \autoref{prop.NonSmoothBlowups}, we may let $\varepsilon_1,\ldots,\varepsilon_k \in \FSupp(X)$ such that $\varepsilon_i \cdot R /b_{k+1} = b'_i-1 \geq 0$, in particular $b'_1,\ldots,b'_k \geq 1$. Since $k \geq 2$, this implies that a lattice point of $\mathcal{Q}'$ must intersect $R/b_{k+1}$ with integral values in the interval $[0,-b)$.
   However, $\pi_i$ is such a lattice point for every $i=1,\ldots,k$. This forces $b'_i$ to be integers for all $i=1,\ldots,k$, i.e., $b_{k+1} = 1$. This proves (b).

To prove (c), note that
   \[
   \mathcal{Q}_S=\langle \delta_1,\ldots,\delta_k, \pi_{k+2},\ldots,\pi_{r} \rangle_{[0,1)}.
   \]
   When $\phi$ is inert (i.e., $b'_i = b_i$), the above shows that $\varepsilon = c \pi_{k+1} + \phi^*\delta$ is a lattice point of $\mathcal{Q}_X$ only if $-c\in \{0,\ldots,b-1\}$ and $\delta$ is a lattice point of $\mathcal{Q}_S$. In fact, if we let $\delta = c_1\delta_1+ \cdots + c_k \delta_k + c_{k+2} \pi_{k+2} +\cdots + c_{r} \pi_{r}$, then
   \[
   -c=\lfloor c_1 b_1 + \cdots +c_k b_k \rfloor
   \]
   and $\varepsilon = c_1 \pi_1 + \cdots + c_r \pi_r$ with $c_{k+1}$ being the fractional part of $c_1b_1 + \cdots + c_k b_k$. The point here is that if $D$ is the $\bT$-invariant divisor $ c_1 P_1 + \cdots + c_k P_k + c_{k+2} P_{k+2} + \cdots + c_r P_r$ on $S$, then its strict transform $\tilde{D}$ (which is given by the same expression but considering the prime divisors on $X$) intersects $R$ with value $c_1 b_1 + \cdots + c_k b_k$. This means that $-c \in A_{\delta}$. In other words, if $\delta \in \FSupp(S)$ then its fiber in $\FSupp(X)$ is contained in $\{-j \pi_{k+1} + \phi^* \delta \mid j \in J_{\delta}\}$.

   For the remaining, converse inclusion, note that the above shows that the $j$ such that $-j \pi_{k+1} + \phi^* \delta \in \FSupp(X)$ are exactly those for which there is a $\bT$-invariant $[0,1)$-divisor $D$ on $S$ such that $\llbracket D \rrbracket = \delta$ and $\lfloor \tilde{D} \cdot R \rfloor = -j$. In particular, this holds for the extremal values $j=i_{\delta}$ and $j=k_{\delta}$. For the intermediate values, notice that the line segment from $-i_{\delta} \pi_{k+1} + \phi^* \delta$ to $-k_{\delta} \pi_{k+1} + \phi^* \delta$ is contained in $\mathcal{Q}_X$ as it is convex.
\end{proof}

Let us now illustrate how this proposition can be used to compute the Frobenius support of a weighted toric blowup of a projective space.

\begin{proposition}\label{prop.FrobSuppInertDivCont}
With notation as part (d) in \autoref{MainProposition}, suppose that $S=\bP^d$ and relabel the $\bT$-invariant prime divisors such that the extremal primitive relation is
\[R\: b_1u_1 + \dots + b_ku_k - u_{d+2} = 0,\] 
with $b_1 \leq b_2 \leq \dots \leq b_k$. Set $\FSupp(\bP^d)=\{l\eta \mid l=1,\ldots,d\}$ with $\eta$ the hyperplane class. Then, $i_l  \coloneqq i_{l\eta} = b_1 + \cdots + b_n$
where $n\coloneqq l - (d+1-k)$, so that $i_l = 0$ if and only if $l \geq d+1 -k$. Likewise, $k_l  \coloneqq k_{l\eta} = b_{k+1-l}+ \cdots + b_k -1$, so that $k_l = b_1 + \cdots + b_k-1$ if and only if $l\geq k$. In particular, for every $j\in \{0, \ldots, b_1+ \cdots +b_k-1\}$ there is $l\in\{1,\ldots, d\}$ such that $-j\pi_{d+2}+l\phi^* \pi_1$ is a big and nef element in $\FSupp(X)$. For $j=0$, we may take $l=1$.
\end{proposition}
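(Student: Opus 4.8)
The plan is to read off the geometry of $\phi$, reduce the computation of $i_{l}$ and $k_{l}$ to a linear optimization over $\bT$-invariant boundaries on $\bP^{d}$, solve it, and then glue the resulting intervals. Since the coefficient of $u_{d+2}$ in $R$ equals $1$, $\phi$ is an \emph{inert} divisorial contraction and $\Sigma_{X}$ is the star subdivision of $\Sigma_{\bP^{d}}$ at $u_{d+2}=b_{1}u_{1}+\cdots+b_{k}u_{k}$; thus $X$ has exactly the rays $u_{1},\dots,u_{d+1},u_{d+2}$, so $\rho(X)=2$, while $\Cl(\bP^{d})=\bZ\eta$ with $\pi^{S}_{1}=\cdots=\pi^{S}_{d+1}=\eta$, hence $\FSupp(\bP^{d})=\{\eta,2\eta,\dots,d\eta\}$ by \autoref{cor.SupportOfE_X}. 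Reading intersection numbers off $R$ (so $\pi_{d+2}\cdot R=-1$, $\pi_{i}\cdot R=b_{i}$ for $i\le k$, $\pi_{i}\cdot R=0$ for $k<i\le d+1$) together with $\phi_{!}\pi_{i}=\eta$ for $i\le d+1$ gives, in the $\bR$-basis $\{\phi^{*}\eta,\pi_{d+2}\}$ of $N^{1}(X)_{\bR}$, the identities $\pi_{i}=\phi^{*}\eta-b_{i}\pi_{d+2}$ for $1\le i\le k$ and $\pi_{i}=\phi^{*}\eta$ for $k<i\le d+1$; hence $\eff(X)=\langle\pi_{d+2},\ \phi^{*}\eta-b_{k}\pi_{d+2}\rangle_{\bR_{\ge0}}$ by \autoref{prop.PseudoEffectiveCone}, with interior $\bigc(X)$ by \autoref{cor.TheBigCone}.

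By \autoref{MainProposition} (the inert divisorial case), $J_{l\eta}=[i_{l},k_{l}]\cap\bN$ is the set of values $\lfloor\tilde D\cdot R\rfloor$ as $D=e_{1}P_{1}+\cdots+e_{d+1}P_{d+1}$ runs over the $\bT$-invariant divisors on $\bP^{d}$ with $e_{i}\in[0,1)\cap\bQ$ and $\sum_{i=1}^{d+1}e_{i}=l$, and $\tilde D\cdot R=\sum_{i=1}^{k}e_{i}b_{i}$ because among the non-exceptional primes only $P_{1},\dots,P_{k}$ meet $R$. So one must minimize and maximize $\sum_{i=1}^{k}e_{i}b_{i}$ over this polytope and take floors. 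For the minimum: the $d+1-k$ coordinates $e_{k+1},\dots,e_{d+1}$ can carry total weight arbitrarily close to $d+1-k$, so if $l\le d+1-k$ one may take $e_{1}=\cdots=e_{k}=0$ and obtain $0$; if $l>d+1-k$ then $\sum_{i\le k}e_{i}>n\coloneqq l-(d+1-k)$ is forced, with $0\le n\le k-1$ since $l\le d$, and since $b_{1}\le\cdots\le b_{k}$ the sum is minimized by loading the smallest weights, with infimum $b_{1}+\cdots+b_{n}$ (approached, never attained), so $i_{l}=b_{1}+\cdots+b_{n}$. For the maximum: $\sum_{i\le k}e_{i}\le\min(l,k)$ — the bound $l$ from $\sum_{i=1}^{d+1}e_{i}=l$, the bound $k$ from $e_{i}<1$ — and loading the largest weights gives supremum $b_{k+1-\min(l,k)}+\cdots+b_{k}$, again never attained, so $k_{l}=b_{k+1-\min(l,k)}+\cdots+b_{k}-1$, which equals $b_{1}+\cdots+b_{k}-1$ exactly when $l\ge k$. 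Realizability of both extremes follows by perturbing the loading configuration by a small $\varepsilon$ spilled onto a zero-weight coordinate, and that the whole integer interval is attained is part of \autoref{MainProposition}.

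Both $i_{l}$ and $k_{l}$ are non-decreasing in $l$, with $i_{1}=0$ and $k_{d}=b_{1}+\cdots+b_{k}-1$; a short case distinction on whether $l\ge k$, using $b_{1}\le\cdots\le b_{k}$, gives $i_{l+1}\le k_{l}+1$, so the intervals $[i_{1},k_{1}],\dots,[i_{d},k_{d}]$ abut without gaps and $\bigcup_{l=1}^{d}[i_{l},k_{l}]=[0,b_{1}+\cdots+b_{k}-1]$. Hence for every $j$ in that range there is $l\in\{1,\dots,d\}$ with $j\in J_{l\eta}$, and then \autoref{MainProposition} yields $-j\pi_{d+2}+l\phi^{*}\pi_{1}\in\FSupp(X)$; for $j=0$ one may take $l=1$, as $0=i_{1}\le 0\le k_{1}$. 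For bigness (when $j\ge 1$) one writes $-j\pi_{d+2}+l\phi^{*}\eta=l\,(\phi^{*}\eta-b_{k}\pi_{d+2})+(lb_{k}-j)\pi_{d+2}$, whose coefficients are positive since $1\le j\le k_{l}\le lb_{k}-1$, so the class lies in $\bigc(X)$. For nefness, \autoref{proposition.nefConeContainedInSomeCone} applied to the standard charts of $X$ (computing $\bigcap_{x}\langle J_{x}\rangle$, two of the charts contributing $\langle\pi_{1},\phi^{*}\eta\rangle$ and $\langle\pi_{1},\pi_{d+2}\rangle$) identifies $\nef(X)=\langle\phi^{*}\eta,\ \pi_{1}\rangle_{\bR_{\ge0}}=\langle\phi^{*}\eta,\ \phi^{*}\eta-b_{1}\pi_{d+2}\rangle_{\bR_{\ge0}}$, so $-j\pi_{d+2}+l\phi^{*}\eta$ is nef precisely when $0\le j\le b_{1}l$.

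The main obstacle is the coordination at the very end: one must pick $l$ so that $j\in[i_{l},k_{l}]$ \emph{and} $j\le b_{1}l$ hold simultaneously, i.e.\ rerun the interval-patching of the previous paragraph with the additional lower bound $l\ge\lceil j/b_{1}\rceil$ on the admissible values of $l$. I expect the bookkeeping here — comparing the threshold $\lceil j/b_{1}\rceil$ with the upper threshold $\max\{l:i_{l}\le j\}$ (which equals $(d{+}1{-}k)+\max\{t:b_{1}+\cdots+b_{t}\le j\}$), and checking the resulting window of admissible $l$ is nonempty inside $\{1,\dots,d\}$ — to be the technical heart of the argument, to be carried out with the same monotonicity and sorting estimates on the $b_{i}$ used above; the case $j=0$ is immediate, since then $\phi^{*}\pi_{1}=\phi^{*}\eta$ is a (nef) generator of $\nef(X)$.
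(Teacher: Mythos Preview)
Your computation of $i_{l}$ and $k_{l}$ is correct and follows the same route as the paper: both reduce to optimizing $\sum_{i\le k}c_{i}b_{i}$ over the slice $\sum_{i=1}^{d+1}c_{i}=l$ of the half-open cube and then taking floors. The paper establishes the bounds by an explicit induction on $k$ and writes down concrete realizing vectors $\bm c$, whereas you argue by a loading/sorting heuristic; the content is the same, and your appeal to \autoref{MainProposition} for the interval property is exactly right.

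On the ``In particular'' clause you actually go further than the paper, whose proof stops after the formulas for $i_{l}$ and $k_{l}$ and does not treat the big-and-nef claim at all. Your additional steps --- that the intervals $[i_{l},k_{l}]$ cover $[0,b_{1}+\cdots+b_{k}-1]$, that bigness follows from $j\le k_{l}<lb_{k}$, that $\nef(X)=\langle\phi^{*}\eta,\pi_{1}\rangle$ so nefness is $j\le b_{1}l$, and that $j=0$ is immediate --- are all correct.

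Your hesitation about the final coordination step is not a gap in your argument but a symptom of an overstatement in the proposition: the nefness part of the ``In particular'' is false in general. Take $d=3$, $k=2$, $b_{1}=1$, $b_{2}=100$. Then $J_{1}=[0,99]$, $J_{2}=[0,100]$, $J_{3}=[1,100]$, while your description of $\nef(X)$ gives nefness $\Leftrightarrow j\le l$; for $j=50$ there is no admissible $l\in\{1,2,3\}$. (One can check directly that the second extremal ray of $\moriCone(X)$ is the divisorial relation $u_{3}+u_{4}+u_{5}-99u_{2}=0$, confirming the nef cone.) The only subsequent use of this proposition is the case $j=0$, which you handle, so the issue is harmless for the paper's applications.
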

\begin{proof}
For $l \in \{1,\ldots,d\}$, we have
\[
J_l = \{\lfloor c_1 b_1 + \cdots + c_k b_k \rfloor \mid c_1 + \cdots +c_{d+1} = l, 0 \leq c_1,\ldots, c_{d+1} <1 \}
\]
and $i_l = \min J_l$, $k_l = \max J_l$. Observe that for $\bm{c} = (c_1,\dots,c_{d+1}) \in [0,1)^{\times(d+1)}$ such that $|\bm{c}| = c_1 + \cdots + c_{d+1} = l$, we have
\[
c_1 + \cdots + c_k = l -(c_{k+1}+\cdots + c_{d+1}) > l-(d+1-k).
\]
\begin{claim}
    $\lfloor c_1 b_1 + \cdots + c_k b_k \rfloor \geq b_1 + 
\cdots +b_n$
\end{claim}
\begin{proof}[Proof of claim]
  It suffices to show that $c_1 + \cdots + c_k >n$ implies that $c_1 a_1 + \cdots + c_k a_k > a_1 + \cdots + a_n$ for every increasing sequence of integers $1\leq a_1 \leq \cdots \leq a_k$. We prove this by induction on $k\geq 2$. If $k=2$, then we have $2 > c_1 + c_2 > n$ and so $n \leq 1$. The case $n\leq 0$ is trivial, thus we may assume that $n=1$. In that case $c_1 > 1 - c_2$ and then
    \[
    c_1 a_1 + c_2 a_2 > (1-c_2) a_1 + c_2 a_2 = a_1 + (a_2-a_1) c_2 \geq a_1.
    \]
    This shows the base case $k=2$. For the inductive step, note that $c_1 + \cdots + c_k > n$ implies that $c_2 + \cdots + c_{k} > n - c_{1} > n-1$. The inductive hypothesis yields
    \begin{align*}
         c_1 a_1 + \cdots + c_{k} a_k >{} &(n-c_2-\cdots -c_k)a_1 +c_2 a_2 + \cdots  + c_k a_k\\
         ={} &n a_1 + (a_2-a_1)c_2 + \cdots + (a_k-a_1)c_k \\
         >{} &n a_1 + (a_2- a_1) + \cdots + (a_n-a_1) \\
         ={} &a_1 + a_2 + \cdots + a_n.
    \end{align*}
   The inductive hypothesis is applied for the last strict inequality. This proves the claim.
\end{proof}

Therefore, $i_{l} \geq b_1 + \cdots + b_n $. To prove the equality, define $\bm{c}$ as
\[
c_i \coloneqq \begin{cases}
    1-\epsilon &\text{for } i=1,\ldots,n,\\
    0 &\text{for } i=n+1,\ldots,k-1, \\
    \epsilon l &\text{for } i = k, \\
    1-\epsilon &\text {for } i = k+1,\ldots, d+1.
\end{cases}
\]
for some $0< \epsilon \ll 1/d$ still to be determined how small. Observe that
\[
|\bm{c}| = (1-\epsilon)n + \epsilon l + (1-\epsilon)(d+1-k) = l + (1-\epsilon)(n-l+d+1-k) = l .
\]
On the other hand,
\[
b_1 c_1 + \cdots + b_k c_k = (1-\epsilon)(b_1 + \cdots + b_n) + \epsilon l b_k = b_1 + \cdots + b_n + \epsilon (lb_k - b_1 - \cdots -b_n),
\]
where $m \coloneqq lb_k - b_1 - \cdots -b_n \in \bN$. Hence, taking $\epsilon < 1/m$ implies that $\lfloor b_1 c_1 + \cdots + b_k c_k \rfloor = b_1 + \cdots + b_n$ and so $i_{l} = b_1 + \cdots + b_n$.

The computation for $k_l$ is worked out analogously. Let $\bm{c}$ be as above with $|\bm{c}| = l$.

\begin{claim}
   $ \lfloor c_1 b_1 + \cdots + c_k b_k \rfloor \leq b_{k+1-l} + \cdots + b_k -1 $
\end{claim}
\begin{proof}
    It suffices to show that $c_1 + \cdots +c_{d+1} \leq l$ implies that $c_1 a_1 + \cdots + c_k a_k < a_{k+1-l} + \cdots + a_k$ for all increasing sequences of integers $1 \leq a_1 \leq \cdots \leq a_k$. This can be argued by induction on $k \geq 2$. The statement is trivial unless $l \leq k-1$. Suppose $k=2$, so $l=1$ and
    \[
    c_1 a_1 + c_2 a_2 \leq c_1 a_1 + (1-c_1)a_2 = a_2-(a_2-a_1)c_1<a_2.
    \]
    For the inductive step, note that $c_2 + \cdots + c_{d+1} \leq l$
    \begin{align*}
c_1 a_1 + \cdots + c_k a_k ={} & (l-c_2-\cdots-c_k) a_1 + c_2 a_2 + \cdots + c_{k} a_{k}   \\
={} & l a_1 + (a_2-a_1)c_2 + \cdots + (a_{k}-a_1)c_{k} \\
<{} & l a_1 + (a_{k+1-l}-a_1) + \cdots + (a_{k}-a_1) \\
={} & a_{k+1-l} + \cdots + a_{k},
\end{align*}
where the inductive hypothesis is applied in the last inequality. This proves the claim.
\end{proof}

This implies $k_l \leq b_{k+1-l} + \cdots + b_k -1$. To see that equality holds, we consider two cases. If $l \geq k$, take $\bm{c}$ such that $c_i = 1-\epsilon$ for all $i=1,\ldots,l$, $c_{l+1} =  \epsilon l$, and $c_i=0$ if $i\geq l+2$. If $l\leq k-1$, take $\bm{c}$ such that $c_1=\epsilon l$, $c_i = 1 -\epsilon$ for $i=k+1-l,\ldots,k$, and $c_i=0$ otherwise. In either case, we see that $|\bm{c}| = l$ and that by choosing $0<\epsilon \ll 1$ we get $\lfloor c_1 b_1 + \cdots +c_k b_k  \rfloor = b_{k+1-l} + \cdots + b_k -1$.
\end{proof}

\begin{example}\label{example.FatalExample}
We can use \autoref{prop.FrobSuppInertDivCont} to construct examples of singular $\bQ$-factorial toric varieties of Picard rank $2$ with big and nef Frobenius support. To provide a concrete example, consider the case in which $d=3$ and the relation is given by $R\: 3u_2 + 2u_3 - u_5 = 0$. Using \autoref{prop.FrobSuppInertDivCont}, we easily see that $i_1 = i_2 = 0$, $i_3 = 2$ while $k_1 = 2$, $k_2 = k_3 = 4$. The whole situation is depicted in \autoref{fig:NeronSeveriOfWeightedBlowUp}, from which we conclude that $\FSupp(X)$ is big and nef.
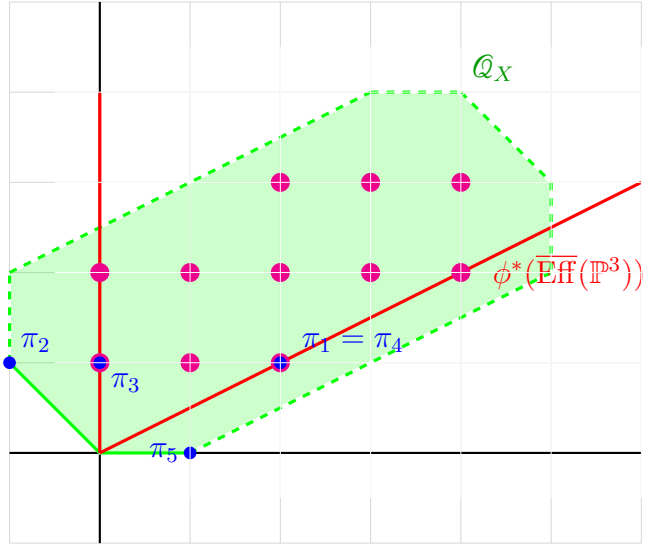
\begin{figure}[h]
    \centering
    \begin{tikzpicture}[scale=1.2, xscale = -1, rotate = 90]
\draw[step=1cm, gray!30, thin] (-1,-1) grid (5,6);

\draw[thick, black] (-1,0) -- (5,0); 
\draw[thick, black] (0,-1) -- (0,6); 

\draw[very thick, white, fill = green, fill opacity = 0.2] (0,0) -- (1,-1) -- (2,-1) -- (4,3) -- (4,4) -- (3,5) -- (2,5) -- (0,1) -- cycle;
\draw [very thick, green] (0,0) -- (1,-1);
\draw [very thick, green, dashed] (1,-1) -- (2,-1);
\draw [very thick, green, dashed] (2,-1) -- (4,3);
\draw [very thick, green, dashed] (4,3) -- (4,4);
\draw [very thick, green, dashed] (4,4) -- (3,5);
\draw [very thick, green, dashed] (3,5) -- (2,5);
\draw [very thick, green, dashed] (2,5) -- (0,1);
\draw [very thick, green] (0,1) -- (0,0);
Btw \draw[white] (4,4) circle (1pt) node[above right, text = green!60!black]{$\mathcal{Q}_X$};
\draw[very thick, red] (0,0) -- (3,6);
\draw [white, text = red] (2,5.2) node{$\phi^*(\eff(\P^3))$};
\draw[very thick, red] (0,0) -- (4,0);

\foreach \x/\y in {
  1/0, 1/1, 1/2, 
  2/0, 2/1, 2/2, 2/3, 2/4,
  3/2, 3/3, 3/4
}
  \fill[magenta] (\x,\y) circle (3pt);

\fill[blue] (1,0) circle (2pt) node[below right, blue] {$\pi_3$};
\fill[blue] (0,1) circle (2pt) node[left, blue] {$\pi_5$};
\fill[blue] (1,2) circle (2pt) node[above right, xshift = 4pt, blue] {$\pi_1 = \pi_4$};
\fill[blue] (1,-1) circle (2pt) node[above right, blue] {$\pi_2$};

\foreach \x in {1,2,3,4}
  \draw[white, very thin] (\x,-0.5) -- (\x,6.5);
\foreach \y in {1,2,3,4,5,6}
  \draw[white, very thin] (-0.5,\y) -- (4.5,\y);
\end{tikzpicture}
\caption{The Néron-Severi space of $X$ in \autoref{example.FatalExample} with its nef and pseudo-effective cones as well as Frobenius support. The pseudo-effective cone is generated by $\pi_2$ and $\pi_5$. The red rays generate the nef cone. In particular, $\pi_1$ generates the facet of $\nef(X)$ given by $\phi^*(\eff(\P^3))$. The green half-open polytope is $\mathcal{Q}_X$ and the purple lattice points are the elements of $\FSupp(X)$.}
\label{fig:NeronSeveriOfWeightedBlowUp}
\end{figure}
\end{example}

\begin{corollary} \label{cor.FSIntersectInternalFacetMovingCone}
    Let $X$ be a toric variety admitting a divisorial contraction $\phi = \phi_R\: X \rightarrow S$. Then, there exists $\llbracket E \rrbracket \in \BFS(X)$ such $E \cdot R = 0$.
\end{corollary}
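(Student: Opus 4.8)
The plan is to transfer the problem to the base $S$ of the contraction. Write the extremal primitive relation as $R\colon b_1u_1+\dots+b_ku_k-b_{k+1}u_{k+1}=0$, so that $P_{k+1}$ is the exceptional divisor, $\pi_{k+1}\cdot R=-b_{k+1}$, $\pi_i\cdot R=b_i$ for $i\le k$, and $\pi_i\cdot R=0$ for $i\ge k+2$. By \autoref{prop.LocalStructureExtremalDivContraction} the variety $S$ is again $\bQ$-factorial, and by \autoref{eqn.NeronSeveriDivisorialContractions} we have the splitting $N^1(X)_{\bR}=\bR\pi_{k+1}\oplus R^{\perp}$ with $R^{\perp}=\phi^*N^1(S)_{\bR}$, which is already defined over $\bZ$ (here $\phi_!$ retracts $\phi^*$ and $\ker\phi_!=\langle\pi_{k+1}\rangle_{\bZ}$). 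A class $E$ on $X$ satisfies $E\cdot R=0$ if and only if $E\in R^{\perp}$, hence $E=\phi^*\beta$ for a unique $\beta\in N^1(S)$; and since $\phi$ is birational, $\phi^*$ preserves bigness. Therefore it suffices to produce a big lattice class $\beta\in N^1(S)$ with $\phi^*\beta\in\FSupp(X)$: then $E\coloneqq\phi^*\beta\in\bigc(X)\cap\FSupp(X)=\BFS(X)$ and $E\cdot R=0$.

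To decide when $\phi^*\beta$ lies in $\FSupp(X)$ I would unwind the description of $\mathcal{Q}_X$ from the proof of \autoref{MainProposition}. Writing $b_i'\coloneqq b_i/b_{k+1}$ and $\delta_i\coloneqq\phi_!\pi_i$, one has $\phi^*\delta_i=\pi_i+b_i'\pi_{k+1}$ and $\phi^*\pi_j=\pi_j$ for $j\ge k+2$. Expanding a $[0,1)$-representation $\beta=\sum_{i\le k}c_i\delta_i+\sum_{j\ge k+2}c_j\pi_j$ through $\phi^*$ gives
\[
\phi^*\beta=\sum_{i\le k}c_i\pi_i+\Big(\sum_{i\le k}c_ib_i'\Big)\pi_{k+1}+\sum_{j\ge k+2}c_j\pi_j,
\]
which is itself a $[0,1)$-representation exactly when $\sum_{i\le k}c_ib_i<b_{k+1}$. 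By \autoref{cor.SupportOfE_X} this means: $\phi^*\beta\in\FSupp(X)$ if and only if $\beta$ admits a $[0,1)$-representation that is \emph{slim}, i.e.\ with $\sum_{i\le k}c_ib_i<b_{k+1}$. Equivalently --- since for any $\gamma_0\in\BFS(X)$ (which exists by \autoref{cor.multiplicityOfDirectSummands}) the projection $\beta_0\coloneqq\phi_!\gamma_0$ is big and $\phi^*\beta_0=\gamma_0+(\gamma_0\cdot R/b_{k+1})\pi_{k+1}$ --- the corollary reduces to the claim that \emph{some} big class $\gamma_0\in\BFS(X)$ admits a slim $[0,1)$-representation.

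This last claim is the crux, and is the part of \autoref{cor.FSIntersectInternalFacetMovingCone} that goes beyond the trivial case \autoref{scholium.SmoothBlowudownsFrobeSupport}(b), where slimness is automatic because $b_i=b_{k+1}=1$. I expect to argue by contradiction: if no $\gamma\in\BFS(X)$ has a slim representation, then choosing for each such $\gamma$ the representation minimizing $\sum_{i\le k}c_ib_i$ gives $\gamma\cdot R=\sum_{i\le k}c_ib_i-c_{k+1}b_{k+1}\ge b_{k+1}-c_{k+1}b_{k+1}>0$, so $\gamma\cdot R>0$ for \emph{every} $\gamma\in\BFS(X)$; applying the involution $\gamma\mapsto -K_X-\gamma$ of $\BFS(X)$ from \autoref{pro.BigElementsComeInPairs} then forces in addition $\gamma\cdot R<\deg R$ for every $\gamma\in\BFS(X)$, so in particular $\deg R>0$ and $\BFS(X)\cap R^{\perp}=\emptyset$. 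To rule this out I would split into cases. If $\phi$ is inert ($b_{k+1}=1$, \autoref{prop.CharacterizationInert}), then by \autoref{MainProposition}(c) the surjection $\phi_!\colon\FSupp(X)\twoheadrightarrow\FSupp(S)$ carries big classes to big classes with fibre over $\delta$ equal to $\{-j\pi_{k+1}+\phi^*\delta: j\in J_\delta\}$ and $(-j\pi_{k+1}+\phi^*\delta)\cdot R=j$; the failure assumption says $\min J_\delta\ge 1$ for all $\delta\in\BFS(S)$, so one is reduced to showing that $\min J_\delta=0$ for at least one big $\delta$ on $S$, which I would establish by the same explicit extremal choice of coefficients as in \autoref{prop.FrobSuppInertDivCont}. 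If $\phi$ is not inert, then $R\notin\FE(X)$ by \autoref{MainProposition}(b), and I would combine \autoref{prop.NonSmoothBlowups} (which locates $\llbracket E_i\rrbracket\in\FSupp(X)$ with $E_i\cdot R=b_i-b_{k+1}<0$ for suitable $i$) with the involution and the larger slack afforded by $b_{k+1}\ge 2$ to contradict $\gamma\cdot R>0$.

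The main obstacle is precisely this last step, the existence of a slim big class; within it the delicate points are the non-inert case and the explicit verification that the minimal fibre index $\min J_\delta$ vanishes for some big $\delta$ on $S$. By contrast, the reduction to $S$ and the "slim representation" criterion are essentially bookkeeping on top of \autoref{MainProposition} and \autoref{cor.SupportOfE_X}.
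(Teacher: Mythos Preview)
Your reduction is correct: the splitting $N^1(X)_{\bR}=\bR\pi_{k+1}\oplus\phi^*N^1(S)_{\bR}$ and the ``slim representation'' criterion (that $\phi^*\beta\in\FSupp(X)$ iff $\beta$ admits a $[0,1)$-representation with $\sum_{i\le k}c_ib_i<b_{k+1}$) are right, and the reformulation as ``find some $\gamma\in\BFS(X)$ with a slim $[0,1)$-representation'' is equivalent to the statement. The gap is exactly where you place it, and your sketches do not close it. In the non-inert case, \autoref{prop.NonSmoothBlowups} only produces $E_i\in\FSupp(X)$ with $E_i\cdot R<0$, not $E_i\in\BFS(X)$, so it does not contradict your hypothesis ``$\gamma\cdot R>0$ for all $\gamma\in\BFS(X)$''; the ``larger slack'' remark is not an argument. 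In the inert case, \autoref{prop.FrobSuppInertDivCont} is genuinely specific to $S=\bP^d$: its extremal coefficient vectors exploit that $\FSupp(\bP^d)=\{\eta,\dots,d\eta\}$ and that there is always a prime divisor $P_{d+1}$ \emph{not} through the center onto which one can dump almost all the weight, forcing $i_\delta=0$. For general $S$ there is no such structure available, and you give no mechanism to guarantee $i_\delta=0$ for some big $\delta$.

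The paper bypasses this obstruction with an idea you do not have: it shows the relevant quantity $\sum_{\delta}\alpha(\phi^*\delta)$ is \emph{local} around the generic point of the center. Restricting to a purely $\bQ$-factorial chart $U\subset S$, one gets $\sum_{\delta}\alpha(\phi^*\delta)=\alpha_{X_U}(0)$; by \autoref{prop.LocalStructureExtremalDivContraction} and \autoref{rem.FrobeniusSupportAndQuotients} this depends only on the weighted blowup $\Bl_H^{w}\bA^d\to\bA^d$ (the finite group $G$ washes out), and is therefore unchanged if one replaces $S$ by $\bP^d$. After that reduction, the inert case is literally \autoref{prop.FrobSuppInertDivCont}, and the non-inert case is a two-line computation: since $k\le d$, the divisor $D=(1-\epsilon)P_{d+1}+\epsilon P_1$ on $\bP^d$ has $\llbracket D\rrbracket=\eta$ and $\phi^*\eta=(1-\epsilon)\pi_{d+1}+\epsilon\pi_1+\epsilon(b_1/b_{k+1})\pi_{k+1}\in\sQ_X^\circ$ for small $\epsilon$, so $\phi^*\eta\in\BFS(X)$ with $\phi^*\eta\cdot R=0$. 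This localisation-to-$\bP^d$ trick is the missing ingredient; without it, proving the existence of a slim big class over an arbitrary $\bQ$-factorial $S$ directly appears to be a genuinely hard combinatorial problem.
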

\begin{proof}
We explain first why we may assume that $S = \bP^d$. The point is that the statement is local around the generic point of the center, and so we may replace $S$ by any other variety that shares the same affine chart around that point. This idea is borrowed from \cite{CarvajalRojasPatakfalviVarietieswithAmpleFrobenius-traceKernel}.

Note that the statement can be reinterpreted as the inequality 
   \[
   \sum_{\delta \in N^1(S)_{\bR}} \alpha(\phi^* \delta) = \sum_{\delta \in \FSupp(S)} \alpha(\phi^* \delta) > 0.
   \]
With notation as in \autoref{prop.LocalStructureExtremalDivContraction}, let $\phi_U \: X_U \to U$ be the restriction of $\phi$ to a purely $\bQ$-factorial toric affine chart $U \subset S$ that contains the generic point of $C$. Then, the restriction map $X_U \to X$ induces a homomorphism $N^1(X)_{\bR} \to N^1(X_U)_{\bR}$ that can be identified with the projection $N^1(X)_{\bR} \to \langle \pi_{k+1} \rangle_{\bR}$ from \autoref{eqn.NeronSeveriDivisorialContractions}. In particular,
\[
\sum_{\delta \in N^1(S)_{\bR}} \alpha(\phi^* \delta) = \sum_{\varepsilon \in N^1(X)_{\bR} \: \varepsilon|_{X_U} = 0} \alpha(\varepsilon)
\]
On the other hand, $\sE_{X,e}|_{X_U} = \sE_{X_U,e}$, hence this is further equal to $\alpha_{X_U}(0) \coloneqq \lim_{e \rightarrow \infty} t_e/q^d $ where $t_e$ is the rank the direct summand of $\sE_{X_U,e} = \sO(E_1) \oplus \cdots \oplus \sO(E_q)$ made up of those $\sO(E_i)$ such that $E_i\equiv 0$. 

In conclusion, the sum $\sum \alpha(\phi^* \delta)$ only depends on $U$, or more precisely on $\phi_U \: X_U \to U$. By \autoref{rem.FrobeniusSupportAndQuotients}, we see that it depends only on $\beta \: \Bl_H^{\mathrm{w}} \bA^d \to \bA^d$ as $\phi_U = \beta/G$; see 
\autoref{prop.LocalStructureExtremalDivContraction}. Therefore, $\sum \alpha(\phi^* \delta) = \sum \alpha(\psi^* \delta)$ where $\psi \: \Bl_H^{\mathrm{w}} \bP^d \to \bP^d$. In other words, we may assume that $S=\bP^d$.

To conclude, we explain the case $S = \bP^d$. Note that if $ \phi_R$ is an inert divisorial contraction, this is a direct consequence of \autoref{prop.FrobSuppInertDivCont}, indeed, $\phi^* \eta \in \BFS(X)$ is such an element. The non-inert is obtained in the same way. Indeed, let $u_1,\dots, u_{d+1}$ be the primitive ray generators of $\P^d$ which, by abuse of notation, we also consider as the primitive ray generators of the fan of $X$ and let $u_{d+2}$ be the remaining primitive ray generator. Then $R$ is given by the relation $R\: b_1u_1+ \dots + b_ku_k - b_{d+2}u_{d+2} = 0$ with $k < d$. Consider $D \coloneqq (1-\epsilon)P_{\P^d,d+1} + \epsilon P_{\P^d,1}$ with $1 \gg \epsilon > 0$, then $\llbracket D \rrbracket = \pi_1 = \eta \in \AFS(\P^d)$ is the hyperplane class. Then, 
   \[
   \phi^* \eta = (1-\epsilon)\pi_{X,d+1} + \epsilon \pi_{X,1} + \epsilon(b_{1}/b_{d+2})\pi_{X,d+2}.
   \] 
   Taking $\epsilon$ small enough such that $ \epsilon(b_{1}/b_{d+2}) < 1$, we conclude that $\phi^* \eta \in \FSupp(X)$. Since $\eta$ is ample, $\phi^* \eta$ is big and nef.
\end{proof}

\begin{remark}
   Suppose that in \autoref{cor.FSIntersectInternalFacetMovingCone} the map $\phi$ is a smooth blowup. From \cite[Proposition 4.2]{CarvajalRojasPatakfalviVarietieswithAmpleFrobenius-traceKernel}, it then follows that
\[
\sum_{[D] \in \FSupp(S)} m(\phi^* D;q) = q^c \binom{q+d-c}{d-c}
\]
where $c \coloneqq k-1$ is the codimension of $C$ in $S$. Therefore,
\[
\sum_{[D] \in \FSupp(S)} \alpha(f^*D) = \frac{1}{(d-c)!}>0.
\]
\end{remark}

\subsubsection{Mori Fibrations and $F$-effectiveness} \label{subsubsection.MorFibrationFeffectiveness}
 
Let $X$ be a $\bQ$-factorial toric variety and $\phi \: X \to S$ be a Mori fibration cut out by the relation $
R\: b_1u_1 + \cdots +b_k u_k=0$. In particular, $R \in  \FE(X)$. Relabel if necessary so that $b_1 \geq \cdots \geq b_k$. We have $\dim \langle u_1,\ldots, u_{k} \rangle_{\bR} = k-1$. Then, we may complete $u_1,\ldots,u_{k-1}$ to a basis $\mathcal{B} \coloneqq \{u_1,\ldots,u_{k-1},v_k,\ldots,v_d\} \subset N$ of $N_{\bR}$. This lets us write an exact sequence
\[
0 \to \langle u_1,\ldots,u_k \rangle_{\bR} \to N_{X,\bR} \to N_{S,\bR} \to 0,
\]
where $N_{-,\bR}$ denotes the $\bR$-linear spaces of one parameter subgroups of the variety $-$; the space where the respective fans live. Thus, we may think of $v_k,\ldots, v_d$ as a basis for $N_{S,\bR}$. Write the matrix of column vectors of $u_1,\ldots,u_r$ with respect to the basis $\mathcal{B}$ as
\[
[u_1 \cdots u_r] = \begin{bmatrix}
I_{k-1} & -\bm{a} & A & \\
\bm{0}_{(d-k+1)\times (k-1)} & \bm{0} & B & 
\end{bmatrix}
\]
where $\bm{a}$ is the column vector $[b_1/b_{k} \cdots b_{k-1}/b_{k}]^{\top}$. On the other hand, $A=(a_{ij})$ is a matrix of size $(k-1) \times (r-k)$. Likewise, $B$ has size $(d-k+1) \times (r-k)$ and its columns are the column vectors of the primitive ray generators of $S$ with respect to the basis $v_k,\ldots,v_d$.

The pullback map $\phi^* \: N^1(S)_{\bR} \to N^1(X)_{\bR}$ then corresponds to the induced map
\[
\coker B^{\top} \to \coker [u_1 \cdots u_r]^{\top}
\]
where $\pi_{i,S} \mapsto \pi_{k+i,X}$ for all $i=1,\ldots,r-k$. In particular, $\phi^*$ restricts itself to a map $N^1(S) \to N^1(X)$. Moreover, we have the relations
\[
\pi_{i} = a_i \pi_k - \phi^* \delta_i
\]
where $a_i \coloneqq b_i/b_k \geq 1$ and $\delta_i \coloneqq \sum_{j=1}^{r-k+1} a_{ij} \pi_{j}$. We then arrive at the decomposition
\[
N^1(X)_{\bR} = \bR \cdot \xi \oplus \phi^* N^1(S)_{\bR}.
\] 
where $\xi= \pi_k$ plays the role of the tautological class of the smooth case, in which case $\phi$ is a projective bundle.

Now, let $\varepsilon = c_1 \pi_1 +\cdots + c_r \pi_r \in \mathcal{Q}_X$ with $c_1,\ldots,c_r \in [0,1)$. Write the row vector $\bm{c} = [c_1 \cdots c_{k-1}]$. Then $-\bm{c} A = [c'_1 \cdots c'_{r-k}]$ is another row vector. Thus we have
\begin{align*}
    \varepsilon = &(\bm{c} \bm{a}+c_k) \xi + (c'_1+ c_{k+1})\pi_{k+1} + \cdots +
(c'_{r-k} + c_r)\pi_{r} \\
=&(\bm{c} \bm{a}+c_k) \xi + (c'_1+ c_{k+1}) \phi^*\pi_{1} + \cdots +
(c'_{r-k} + c_r)\phi^* \pi_{r-k}
\end{align*}
From this, it seems to be extremely difficult to relate $\mathcal{Q}_X$ with $\mathcal{Q}_S$ and so to relate $\FSupp(X)$ with $\FSupp(S)$. At the very least, we obtain the following improvement upon \autoref{scholium.BignessAndFibrations}.

\begin{proposition}[$F$-effectiveness of Mori fibrations] \label{prop.FEffectivenessMoriFibrations}
  With notation as above, the following two statements hold:
    \begin{enumerate}
        \item $\phi^* \FSupp(S) \subset  \FSupp(X)$.
        \item $\FSupp(X) \subset [0,b_1/b_k+\cdots +b_{k-1}/b_k+1) \xi \oplus \phi^* N^1(S)_{\bR}$.
    \end{enumerate}
    In particular, there is a point of $\FSupp(X)$ in the interior of the facet of $\eff(X)$ cut out by $R$.
\end{proposition}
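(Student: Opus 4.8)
The plan is to derive both containments directly from the combinatorial description $\FSupp(Y)=\sQ_Y\cap N^1(Y)\setminus 0$ of \autoref{cor.SupportOfE_X}, feeding in the basis-change identities $\pi_i=a_i\xi-\phi^*\delta_i$ for $1\le i\le k-1$, $\pi_k=\xi$, and $\pi_{k+j}=\phi^*\pi_{j,S}$ for $1\le j\le r-k$ (with $a_i=b_i/b_k$) that were recorded just before the statement. The closing assertion will then drop out of part (a) once I identify the facet of $\eff(X)$ cut out by $R$ with $\phi^*\eff(S)$ and quote the nonemptiness of $\BFS(S)$ from \autoref{cor.multiplicityOfDirectSummands}. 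For that last assertion I will assume $\dim S>0$, the case $\dim S=0$ being degenerate (then $\eff(X)$ is a half-line whose only proper face is $0\notin\FSupp(X)$, so nothing is claimed).

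For (a) I would take $\delta\in\FSupp(S)$, write $\delta=\sum_{j=1}^{r-k}c_j\pi_{j,S}$ with all $c_j\in[0,1)$ via \autoref{cor.SupportOfE_X}, and note that $\phi^*\delta=\sum_{j=1}^{r-k}c_j\pi_{k+j}$: this lies in $\sQ_X=\langle\pi_1,\dots,\pi_r\rangle_{[0,1)}$ (its coefficients on $\pi_1,\dots,\pi_k$ vanish), it lies in $N^1(X)$ because $\phi^*$ maps $N^1(S)$ into $N^1(X)$ as recalled above, and it is nonzero because $\phi^*$ is injective (the summand $\phi^*N^1(S)_{\bR}$ of $N^1(X)_{\bR}$ recalled above has dimension $\rho(X)-1=\dim N^1(S)_{\bR}$). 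Then \autoref{cor.SupportOfE_X} again gives $\phi^*\delta\in\FSupp(X)$. For (b) I would take $\varepsilon\in\FSupp(X)\subseteq\sQ_X$, write $\varepsilon=\sum_{i=1}^r c_i\pi_i$ with $c_i\in[0,1)$, and substitute the identities to get
\[
\varepsilon=\Big(\sum_{i=1}^{k-1}c_ia_i+c_k\Big)\xi+\phi^*\Big(\sum_{j=1}^{r-k}c_{k+j}\pi_{j,S}-\sum_{i=1}^{k-1}c_i\delta_i\Big);
\]
since $a_i=b_i/b_k\ge 1$ and $0\le c_i<1$, the coefficient of $\xi$ lies in $[0,\ \sum_{i=1}^{k-1}a_i+1)=[0,\ b_1/b_k+\cdots+b_{k-1}/b_k+1)$, which is exactly (b).

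For the closing assertion I would first note that $R$, being an effective relation, lies in $\mov(X)=\eff(X)^{\vee}$, so $F_R\coloneqq\eff(X)\cap R^{\perp}$ is a genuine face of $\eff(X)=\langle\pi_1,\dots,\pi_r\rangle_{\bR_{\ge 0}}$ (\autoref{prop.PseudoEffectiveCone}). Since $\pi_i\cdot R=b_i>0$ for $i\le k$ and $\pi_i\cdot R=0$ for $i>k$, and a face of a rational cone cut out by a functional nonnegative on it is spanned by exactly those generators on which the functional vanishes, I obtain $F_R=\langle\pi_{k+1},\dots,\pi_r\rangle_{\bR_{\ge 0}}=\phi^*\eff(S)$, which is a facet because $\phi^*$ is injective and $\dim\eff(S)=\rho(S)=\rho(X)-1$. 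Its relative interior is then $\phi^*\bigc(S)$ by \autoref{cor.TheBigCone}. Finally I would pick any $\delta\in\BFS(S)=\bigc(S)\cap\FSupp(S)$, nonempty by \autoref{cor.multiplicityOfDirectSummands}; part (a) gives $\phi^*\delta\in\FSupp(X)$, and $\phi^*\delta\in\phi^*\bigc(S)=\operatorname{relint}F_R$, which is the required point.

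I do not anticipate a real obstacle: the proposition is essentially a bookkeeping exercise on top of \autoref{cor.SupportOfE_X} and \autoref{cor.multiplicityOfDirectSummands}. The only places where care is warranted are the (routine) verifications that $\phi^*$ is injective and lattice-preserving in the merely $\bQ$-factorial setting — both already contained in the matrix description of $\phi^*$ given above — and the separate treatment of the degenerate case $\dim S=0$.
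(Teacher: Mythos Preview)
Your proposal is correct and follows essentially the same approach as the paper. The paper treats parts (a) and (b) as immediate from the basis-change formulas worked out in the preamble to the proposition (exactly the computation you reproduce), and its written proof is a single line handling only the ``In particular'' via a $\delta\in\BFS(S)$ (the paper writes $\BFS(X)$, a typo you have silently corrected); your version simply makes the facet identification $F_R=\phi^*\eff(S)$ and the appeal to \autoref{cor.multiplicityOfDirectSummands} explicit.
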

\begin{proof}
    For the last assertion, take $\delta \in \BFS(X)$ so that $\phi^*\delta \in \FSupp(X)$ is the required point.
\end{proof}

\begin{corollary} \label{cor.DimensionOfFrobeniusCone}
    If $X$ is a $\bQ$-factorial toric variety, then $\dim \Frob(X) = \rho$. In other words,  $\FE(X)$ is strongly convex.
\end{corollary}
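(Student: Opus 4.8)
The statement to prove is $\dim\Frob(X)=\rho(X)$, equivalently (dualizing) that $\FE(X)$ contains no line. Since $\Frob(X)\subseteq\eff(X)$ and $\dim\eff(X)=\rho$ by \autoref{prop.PseudoEffectiveCone}, this amounts to showing that $\Frob(X)$, i.e.\ that $\FSupp(X)$, spans $N^1(X)_{\bR}$. I would argue by induction on $\rho=\rho(X)$, the cases $\rho\leq 1$ being immediate from $\BFS(X)\neq\emptyset$ (\autoref{cor.multiplicityOfDirectSummands}). For the inductive step, note $-K_X=\pi_1+\cdots+\pi_r\in\bigc(X)$ by \autoref{cor.TheBigCone}, so $K_X$ is big and in particular not nef; hence a $K$-negative toric MMP starting at $X$ consists of a (possibly empty) chain of toric flips followed by a $K$-negative extremal contraction $\phi\colon X'\to Y$ that is either a Mori fibration or a divisorial contraction, with defining extremal primitive relation $R$ and $-K_{X'}\cdot R=\deg R>0$. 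Each flip is a small $\bQ$-factorial modification, so it preserves $N^1_{\bR}$, the Picard rank, and — by \autoref{cor.FrobeniusSupport} — the Frobenius support, hence $\Frob(X')=\Frob(X)$; replacing $X$ by $X'$ we may assume $\phi\colon X\to Y$ is such a contraction.

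If $\phi$ is a Mori fibration, then $\rho(Y)=\rho-1$, so by induction $\Frob(Y)$ spans $N^1(Y)_{\bR}$. By \autoref{prop.FEffectivenessMoriFibrations}(a) we have $\phi^*\Frob(Y)\subseteq\Frob(X)$, and $\phi^*(N^1(Y)_{\bR})=R^{\perp}$ is a hyperplane of $N^1(X)_{\bR}$; thus $\Frob(X)$ already spans $R^{\perp}$. On the other hand $-K_X\in\Frob(X)$ — writing $-K_X=E+(-K_X-E)$ with $E\in\BFS(X)$, both summands lying in $\FSupp(X)$ by \autoref{pro.BigElementsComeInPairs} — and $-K_X\cdot R=\deg R\neq 0$, so $-K_X\notin R^{\perp}$. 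Therefore $\Frob(X)$ spans $R^{\perp}+\bR\langle -K_X\rangle=N^1(X)_{\bR}$, as wanted.

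If $\phi$ is divisorial, say with exceptional divisor $P_{k+1}$, then $\rho(Y)=\rho-1$ and $N^1(X)_{\bR}=\bR\pi_{k+1}\oplus R^{\perp}$, where $\phi_!$ is the projection with kernel $\bR\pi_{k+1}$ and $R^{\perp}=\phi^*(N^1(Y)_{\bR})$. First one checks $\phi_!\FSupp(X)=\FSupp(Y)$: the inclusion $\subseteq$ holds because $\phi_!(\sQ_X)\subseteq\sQ_Y$, and conversely every $\delta\in\FSupp(Y)$, written with a $[0,1)$-representation in $\langle\pi_{1,Y},\dots\rangle$, admits a lattice lift to $\sQ_X$ by adding the appropriate integer multiple of $\pi_{k+1}$ (the translate along $\ker\phi_!$ that returns the representation to $\langle\pi_1,\dots,\pi_r\rangle_{[0,1)}$). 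Hence $\phi_!\Frob(X)=\Frob(Y)$, which spans $N^1(Y)_{\bR}$ by induction, so $\langle\FSupp(X)\rangle_{\bR}$ surjects onto $N^1(Y)_{\bR}$ under $\phi_!$ and therefore has dimension $\geq\rho-1$. It remains to exclude dimension exactly $\rho-1$, i.e.\ to show $\pi_{k+1}\in\langle\FSupp(X)\rangle_{\bR}$; equivalently, that some class of $\FSupp(Y)$ has two distinct preimages in $\FSupp(X)$ (two such differ by a nonzero element of $\ker(\phi_!|_{N^1(X)})=\bZ\pi_{k+1}$). Here I would combine three inputs: expanding $-K_X$ as a nonnegative combination of $\FSupp(X)$ and using $-K_X\cdot R=\deg R>0$ to produce $E^\ast\in\FSupp(X)$ with $E^\ast\cdot R\neq 0$, hence with nonzero $\pi_{k+1}$-component; the class $E_0=\phi^*\delta_0\in\FSupp(X)$ with $E_0\cdot R=0$ from \autoref{cor.FSIntersectInternalFacetMovingCone}, which realizes $\delta_0\in\FSupp(Y)$ lifted at the ``$j=0$'' level; and the explicit description of the fibres of $\phi_!|_{\FSupp(X)}$ in \autoref{MainProposition}(c) (after reducing to the inert case). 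I expect this last point — squeezing out the final dimension in the divisorial case, i.e.\ ruling out that $\Frob(X)$ lies in a hyperplane transverse to the exceptional direction — to be the main technical obstacle; the Mori-fibration case and the MMP reduction are comparatively routine.
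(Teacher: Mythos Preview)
Your approach is genuinely different from the paper's. The paper gives a two-line direct argument with no induction: by \autoref{cor.FSIntersectInternalFacetMovingCone} (for facets of $\movd(X)$ coming from divisorial contractions of some small $\bQ$-factorial modification of $X$) together with \autoref{prop.FEffectivenessMoriFibrations} (for those coming from Mori fibrations), $\FSupp(X)$ meets the relative interior of every facet of the $\rho$-dimensional cone $\movd(X)$, and such a collection of points spans $N^1(X)_{\bR}$. There is no MMP step, no induction on $\rho$, and no comparison of $\FSupp(X)$ with $\FSupp(Y)$ along a contraction.

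Your Mori-fibration branch and the flip-reduction via \autoref{cor.FrobeniusSupport} are fine. The gap is exactly where you flag it: the divisorial case is not closed. Two concrete problems. First, the equality $\phi_!\FSupp(X)=\FSupp(Y)$ for an arbitrary divisorial contraction is not established in the paper; \autoref{MainProposition}(c) only treats the \emph{inert} case, and your lifting step ``add the appropriate integer multiple of $\pi_{k+1}$ to land back in $\sQ_X$'' tacitly uses $b_{k+1}=1$. Second, even granting surjectivity of $\phi_!$, you still need to produce two classes in $\FSupp(X)$ with distinct $\pi_{k+1}$-components over the same $\delta\in\FSupp(Y)$, and your proposed combination of $-K_X$, \autoref{cor.FSIntersectInternalFacetMovingCone}, and \autoref{MainProposition}(c) is only a sketch---in particular, the element $E^\ast$ you extract from $-K_X$ need not lie over the same $\delta_0$ as the class $E_0=\phi^*\delta_0$. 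The paper's route bypasses all of this: \autoref{cor.FSIntersectInternalFacetMovingCone} already yields a big class in $\FSupp(X)\cap R^{\perp}$ for \emph{any} divisorial $R$ (the non-inert case being handled there by the local reduction to $S=\bP^d$), and that, together with the fibration facets, is all that is needed to span.
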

\begin{proof}
    Putting \autoref{cor.FSIntersectInternalFacetMovingCone} and \autoref{prop.FEffectivenessMoriFibrations} together, we see that $\FSupp(X)$ intersects the interior of every facet of the moving cone of divisors, which is a $\rho(X)$-dimensional cone. These elements of $\FSupp(X)$ then span a $\rho(X)$-dimensional space and so $\dim \Frob(X) = \rho(X)$.
\end{proof}

\begin{corollary} \label{cor.PullingBackIndependentClassesFibrations}
    With notation as in \autoref{prop.FEffectivenessMoriFibrations}, there are linearly independent elements $\varepsilon_1,\ldots,\varepsilon_{\rho -1} \in \FSupp(X)$ such that $\varepsilon_i \cdot R = 0$ for all $i=1,\ldots,n$. Therefore, $R$ is an extremal ray of $\FE(X)$.
\end{corollary}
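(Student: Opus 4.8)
The plan is to produce the classes $\varepsilon_i$ by pulling them back from the base $S$. Since $\phi$ is a toric Mori fibration, $S$ is again a $\bQ$-factorial projective toric variety with $\rho(S)=\rho-1$ (standard toric Mori theory, see \cite[Ch.~15]{CoxLittleSchenckToricVarieties}). Applying \autoref{cor.DimensionOfFrobeniusCone} to $S$ gives $\dim_{\bR}\Frob(S)=\rho-1$, and as $\Frob(S)=\langle \FSupp(S)\rangle_{\bR_{\geq 0}}$ I can choose $\delta_1,\dots,\delta_{\rho-1}\in\FSupp(S)$ that are $\bR$-linearly independent. Setting $\varepsilon_i\coloneqq\phi^*\delta_i$, part (a) of \autoref{prop.FEffectivenessMoriFibrations} yields $\llbracket\varepsilon_i\rrbracket\in\FSupp(X)$; linear independence is preserved because $\phi^*$ is a split injection, being the inclusion of the summand $\phi^*N^1(S)_{\bR}$ in the decomposition $N^1(X)_{\bR}=\bR\xi\oplus\phi^*N^1(S)_{\bR}$; and $\varepsilon_i\cdot R=\phi^*\delta_i\cdot R=\delta_i\cdot\phi_*R=0$ by the projection formula, since $R$ is contracted by $\phi$ (equivalently, $\phi^*N^1(S)_{\bR}=R^\perp$, because $P_{k+j}\cdot R=0$ for all $j$). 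This settles the first assertion.

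For the ``therefore'' clause I would argue via polyhedral duality between $\Frob(X)$ and $\FE(X)=\Frob(X)^\vee$. Because $\phi$ is a Mori fibration, $E\cdot R\geq 0$ for every $\llbracket E\rrbracket\in\FSupp(X)$ (\autoref{prop.AllDivisorialContractionsSmoothBlowUpImpliesENef}(a)), hence $R\in\Frob(X)^\vee=\FE(X)$; and $R\neq 0$ since $-K_X\cdot R=b_1+\cdots+b_k>0$. The exposed face $\Frob(X)\cap R^\perp$ then contains the $(\rho-1)$-dimensional cone $\langle\varepsilon_1,\dots,\varepsilon_{\rho-1}\rangle_{\bR_{\geq 0}}$ while lying inside the hyperplane $R^\perp$, so its dimension is exactly $\rho-1$; as $\dim\Frob(X)=\rho$ by \autoref{cor.DimensionOfFrobeniusCone}, it is a facet of $\Frob(X)$. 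Under the inclusion-reversing, dimension-complementary bijection between faces of a full-dimensional pointed rational polyhedral cone and faces of its dual, this facet corresponds to an extremal ray of $\FE(X)$, namely $\FE(X)\cap(\Frob(X)\cap R^\perp)^\perp$; and $R$ lies in it ($R\in\FE(X)$ and $R\perp\Frob(X)\cap R^\perp$ since $\Frob(X)\cap R^\perp\subseteq R^\perp$), so this $1$-dimensional face must be $\bR_{\geq 0}R$.

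The main obstacle I anticipate is bookkeeping rather than substance: checking that a toric Mori fibration indeed has $\bQ$-factorial base of Picard rank $\rho-1$ so that \autoref{cor.DimensionOfFrobeniusCone} is applicable to $S$, and invoking the face/dual-face correspondence for polyhedral cones in the precise form ``facets of $C$ $\leftrightarrow$ extremal rays of $C^\vee$'' in order to pass from the computed dimension of $\Frob(X)\cap R^\perp$ to the extremality of $\bR_{\geq 0}R$. Everything else is a direct combination of \autoref{prop.FEffectivenessMoriFibrations}, \autoref{prop.AllDivisorialContractionsSmoothBlowUpImpliesENef}, and \autoref{cor.DimensionOfFrobeniusCone}.
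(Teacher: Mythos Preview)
Your argument is correct and is essentially the approach the paper has in mind: pull back linearly independent classes from $\FSupp(S)$ via \autoref{prop.FEffectivenessMoriFibrations}(a), then use duality between $\Frob(X)$ and $\FE(X)$. The only wrinkle is logical ordering: you invoke \autoref{cor.DimensionOfFrobeniusCone} (both for $S$, to produce the $\delta_i$, and for $X$, to know $\dim\Frob(X)=\rho$), but in the paper that corollary is stated \emph{after} \autoref{cor.PullingBackIndependentClassesFibrations}. This is not circular, since the proof of \autoref{cor.DimensionOfFrobeniusCone} relies only on \autoref{cor.FSIntersectInternalFacetMovingCone} and \autoref{prop.FEffectivenessMoriFibrations}, not on the corollary you are proving; but strictly speaking the two corollaries should be swapped, or you should appeal directly to the argument behind \autoref{cor.DimensionOfFrobeniusCone} rather than to its statement.
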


\begin{remark}
    This is what we meant when we said that extremal divisorial contractions and Mori fibrations exhibit opposite behaviors. In the former case, we can compare $\FSupp(X)$ and $\FSupp(S)$ very well, but it is very difficult to pull back elements of $\FSupp(S)$ to elements in  $\FSupp(X)$. In the latter case, the exact opposite occurs.
\end{remark}

\subsubsection{Main Theorem}

Putting everything together, we obtain at once our main result.

\begin{theorem}[Main Theorem] \label{thm.MainTheorem}
    Let $X$ be a $\bQ$-factorial toric variety. The following holds:
\begin{enumerate}
    \item $\FSupp(X)$ moves if and only if all toric small $\Q$-factorial modifications of $X$ are divisorially inert. In that case, 
    \[
    \Frob(X) 
    \subset \mov^1(X)
    \]
    and they share every facet that is contained in $\partial \eff(X)$.
    \item $\FSupp(X)$ is nef if and only if $X$ is a birationally inert Fano variety. In that case, the cones $\Frob(X)
    \subset \nef(X)$ share every facet that is contained in $\partial \eff(X)$. Moreover, there is a finite sequence of inert extremal divisorial contractions
\[
X \to X_1 \to X_2 \to X_3 \to \cdots \to X_n
\]
such that $\nef(X_n) = \eff(X_n)$ and $n < \rho(X)$.
    \item  $\FSupp(X)$ is ample if and only if $\rho(X)=1$, i.e., $X$ is a prime Fano variety.
\end{enumerate}
\end{theorem}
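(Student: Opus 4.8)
The plan is to prove the equivalence in (c) by treating the two directions separately, with the forward implication reduced to a short case analysis along the trichotomy of toric extremal contractions (Mori fibration / divisorial / small) already set up in \autoref{subsection.ToricExtremalContractions} and analyzed, Frobenius-theoretically, in \autoref{MainProposition} and \autoref{prop.FEffectivenessMoriFibrations}.

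First I would dispatch the implication $\rho(X)=1 \Longrightarrow \FSupp(X)$ ample, which is essentially formal. When $\rho(X)=1$, the space $N^1(X)_{\bR}$ is a line and $\eff(X)$ is a closed half-line (\autoref{prop.PseudoEffectiveCone}); since $X$ is projective it carries an ample class, which is effective, so necessarily $\ample(X)=\eff(X)\setminus\{0\}=\bigc(X)$ (the last equality by \autoref{cor.TheBigCone}). As $\FSupp(X)\subset\eff(X)$ by \autoref{cor.EforTorics} and $0\notin\FSupp(X)$ by \autoref{cor.SupportOfE_X}, we get $\FSupp(X)\subset\ample(X)$; equivalently every invertible direct summand of every $\sE_{X,e}$ is ample, which is what ``$\FSupp(X)$ ample'' means.

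For the converse I would argue by contradiction. Suppose $\FSupp(X)$ is ample; in particular it is nef, so part (b) of the theorem applies and $X$ is a birationally inert Fano variety---hence $X$ admits no small extremal contraction, so every extremal contraction of $X$ is either divisorial or a Mori fibration. Assume $\rho\coloneqq\rho(X)\geq 2$. Then $\moriCone(X)$ is a strongly convex cone of dimension $\rho\geq 2$, so it has an extremal ray, spanned by an extremal primitive relation $R$, with contraction $\phi_R\colon X\to S$. If $\phi_R$ is divisorial, then \autoref{cor.FSIntersectInternalFacetMovingCone} provides $\llbracket E\rrbracket\in\BFS(X)\subset\FSupp(X)$ with $E\cdot R=0$; since $R$ is a nonzero class in $\moriCone(X)$, Kleiman's criterion shows $E$ cannot be ample, contradicting our assumption. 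If $\phi_R$ is a Mori fibration, then $\rho(X)=\rho(S)+1\geq 2$ forces $\dim S\geq 1$, and \autoref{cor.PullingBackIndependentClassesFibrations} yields $\rho-1\geq 1$ linearly independent classes $\varepsilon_1,\dots,\varepsilon_{\rho-1}\in\FSupp(X)$ with $\varepsilon_i\cdot R=0$; the nonzero class $\varepsilon_1$ then again fails to be ample, a contradiction. Hence $\rho(X)=1$, and a $\bQ$-factorial projective toric variety of Picard rank $1$ is a fake weighted projective space, i.e. a prime Fano variety.

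The only subtlety worth flagging is that the two cases above must exhaust the possibilities: once $\rho(X)\geq 2$, a Mori fibration to a point is excluded (that would force $\rho(X)=1$) and small contractions were already ruled out by birational inertness, so every extremal ray indeed yields a contraction of one of the two handled types. I expect the genuinely substantive ingredient to be \autoref{cor.FSIntersectInternalFacetMovingCone}---and behind it the local reduction to weighted blowups of $\bP^d$ in \autoref{prop.FrobSuppInertDivCont}---since that is exactly where a divisorial contraction is shown to produce a big class in $\FSupp(X)$ orthogonal to the contracted ray; everything else here is a direct application of results already established.
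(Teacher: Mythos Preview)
Your proposal only treats part (c). The statement you are asked to prove has three parts, and the paper's own proof handles all of them (briefly): (a) via \autoref{cor.FrobeniusSupport} and \autoref{MainProposition}, together with \autoref{cor.PullingBackIndependentClassesFibrations} for the facet-sharing claim; (b) via (a) plus the observation that nefness of $\FSupp$ descends along inert divisorial contractions (the content of \autoref{MainProposition}(c)); and only then (c). You invoke part (b) as an established fact in your argument for (c), but you never supply a proof of (a) or (b). As written, the proposal is therefore incomplete.

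Restricting attention to (c), your argument is correct and runs along the same lines as the paper's, which records (c) as a direct consequence of (b) and \autoref{scholium.BignessAndFibrations}. Two minor differences are worth noting. First, for the Mori fibration case you quote \autoref{cor.PullingBackIndependentClassesFibrations} to produce classes orthogonal to $R$, whereas the paper uses \autoref{scholium.BignessAndFibrations} to produce a non-big class; either works, and both ultimately rest on \autoref{prop.FEffectivenessMoriFibrations}. Second, you treat the divisorial case explicitly via \autoref{cor.FSIntersectInternalFacetMovingCone}, obtaining a big $E\in\FSupp(X)$ with $E\cdot R=0$; the paper's one-line proof of (c) does not cite this corollary separately, so your version is arguably the cleaner unpacking of that step. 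The backward direction ($\rho(X)=1\Rightarrow\FSupp(X)$ ample) is handled correctly in both.
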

\begin{proof}
    The first statement of (a) follows by putting together \autoref{cor.FrobeniusSupport} and \autoref{MainProposition}. Its second statement then follows from \autoref{cor.PullingBackIndependentClassesFibrations}. Part (b) follows from (a) and the fact that the nefness of the Frobenius support is inherited down through inert extremal divisorial contractions; see part (d) of \autoref{MainProposition}. Part (c) is a direct consequence of part (b) and \autoref{scholium.BignessAndFibrations}.
\end{proof}

\begin{remark}
    According to Fujino--Sato \cite[Proposition 5.3]{FujinoSatoToricVarietiesWhoseNefBundlesAreBig}, the condition $\nef(X) = \eff(X)$ in part (b) of \autoref{thm.MainTheorem} means that $X$ admits a finite toric cover from a product of projective spaces (such a cover being an isomorphism if $X$ is smooth). However, we may adapt their proof to conclude that $\nef(X) = \eff(X)$ also means that $X$ is a product of varieties of Picard rank $1$.
\end{remark}

\begin{remark}
    In \autoref{thm.MainTheorem}, everything seems to indicate that we actually obtain an equality between the cone of $F$-effective divisors and the one of moving divisors. For instance, we conjecture that \autoref{cor.PullingBackIndependentClassesFibrations} also holds for inert divisorial contractions. On the other hand, it also seems that $\nef(X) \subset \Frob(X)$ if $X$ is Fano. If this were true, then we could also say $X$ is a birationally inert Fano variety if and only if $\Frob(X) = \nef(X)$. In sorting this out, answering the following question might help.
\end{remark}

\begin{question}
    What is the intersection of $\FE(X)$ and $\moriCone(X)$? For instance, is there a way to describe $F$-effective $1$-cycles using certain primitive relations in analogy to Batyrev's description of the Mori cone?
\end{question}

\begin{remark}
It remains open to characterize when exactly $\sE_{X,e}$ is big (resp. big and nef) among $\bQ$-factorial toric varieties. From \autoref{thm.MainTheorem} and \autoref{prop.FEffectivenessMoriFibrations}, we conclude that if $\sE_{X,e}$ is big and nef then there is a finite sequence of inert divisorial contractions ending up in a prime Fano variety. But which sequences are allowed to preserve $\sE_{X,e}$ is big and nef? See \autoref{example.FatalExample}.
\end{remark}

\section{Ample $F$-signature and Homogeneity} \label{sec.AmpleFSignHomogeneity}

Aiming to characterize homogeneity among smooth toric varieties, we introduce the notion of \emph{ample $F$-signature} for
toric varieties. Although we believe this invariant can be defined for general varieties, we restrict ourselves to this case, where it is a much simpler task. The main difficulty lies in coming up with a suitable notion of \emph{ample rank} for locally free (or, more generally, reflexive) sheaves. However, whatever this notion might be, it should satisfy the following. To a locally free (or reflexive) sheaf $\sE$ we should attach a non-negative integer $\ark \sE$ such that the following three properties hold:
\begin{enumerate}
    \item $\ark \sE \leq \rk \sE$,
    \item $\ark \sE =\rk \sE$ if and only if $\sE$ is ample, and
    \item $\ark (\sE' \oplus \sE'') = \ark \sE' \oplus \ark \sE''$,
\end{enumerate}
In particular, if $\sE = \bigoplus_{i=1}^n \sL_i$ splits as a direct sum of invertible (or reflexive of rank $1$) sheaves $\sL_i=\sO_X(D_i)$, then
\[
\ark \sE = |\{i \in \{1,\ldots,n\} \mid D_i \text{ is ample}\}|.
\]

Since we are only dealing with split sheaves as such, we consider it instructive to define ample rank only for them. Doing otherwise would be an unnecessarily lengthy tangent for now.

This lets us define the following sequence for any variety $X$ such that $\sE_{X,e}$ is fully split:
\[
a_e(X) \coloneqq \ark \sE_{X,e} \leq q^d-1, \quad 0 \neq e \in \bN. 
\]
This includes toric varieties, but also homogeneous spaces such as ordinary abelian varieties \cite{SannaiTanakaOrdinaryAbelianFrobenius,EjiriSannaACharacterizationOrdnaryAblianVarieties}. For a $d$-dimensional toric variety $X$, we have that
\[
a_e(X) = \mathcal{a}(X) q^d + O(q^{d-1})
\]
where 
\[
\mathcal{a}(X) \coloneqq \sum_{\llbracket E \rrbracket \in  \AFS(X)} \alpha(E)
\]
and $\alpha(E)$ are as in \autoref{cor.multiplicityOfDirectSummands}. In particular, we may define the \emph{ample $F$-signature} of $X$ to be $\mathcal{a}(X) \in [0,1]\cap \bQ$. 

\begin{example}[Ample $F$-signature of Hirzebruch surfaces]
    Let $S_n$ be the projective bundle over $\bP^1$ given by $\bP(\sO\oplus \sO(-n))$. From \cite[\S4.2]{CarvajalRojasPatakfalviVarietieswithAmpleFrobenius-traceKernel}, it immediately follows that $\mathcal{a}(S_n) = 1/n$.
\end{example}

We readily obtain the following.
\begin{proposition} \label{prop.ExtremalValuesAmpleSignature}
    Let $X$ be a $\bQ$-factorial toric variety. Then,
\begin{enumerate}
        \item $\mathcal{a}(X)=0$ if and only if $\AFS(X) = \emptyset$
        \item  $\mathcal{a}(X)=1$ if and only if $\AFS(X) = \BFS(X)$.
    \end{enumerate}
\end{proposition}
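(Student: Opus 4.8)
The plan is to deduce everything from two facts already in hand: the normalization identity $\sum_{\llbracket E\rrbracket\in\BFS(X)}\alpha(E)=1$ from \autoref{cor.multiplicityOfDirectSummands}, and the dichotomy in that same corollary that $\alpha(E)>0$ precisely when $E$ is big. No genuinely new idea is needed; the argument is pure bookkeeping with these inputs.

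First I would record the inclusions $\AFS(X)\subseteq\BFS(X)\subseteq\FSupp(X)$, which hold because ample divisors are big. In particular every $E$ with $\llbracket E\rrbracket\in\AFS(X)$ is big, so $\alpha(E)>0$, and likewise for $\llbracket E\rrbracket\in\BFS(X)$. This positivity is the only point that keeps the statement from being a tautology, so it is worth isolating up front.

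For (a): if $\AFS(X)=\emptyset$ the defining sum for $\mathcal{a}(X)$ is empty and $\mathcal{a}(X)=0$. Conversely, if $\mathcal{a}(X)=0$ then $\mathcal{a}(X)=\sum_{\llbracket E\rrbracket\in\AFS(X)}\alpha(E)$ is a finite sum of strictly positive rationals, which forces the index set $\AFS(X)$ to be empty. For (b): using $\AFS(X)\subseteq\BFS(X)$ and the normalization identity, I would write
\[
\mathcal{a}(X)=\sum_{\llbracket E\rrbracket\in\BFS(X)}\alpha(E)-\sum_{\llbracket E\rrbracket\in\BFS(X)\setminus\AFS(X)}\alpha(E)=1-\sum_{\llbracket E\rrbracket\in\BFS(X)\setminus\AFS(X)}\alpha(E).
\]
Every term $\alpha(E)$ in the subtracted sum is strictly positive because $E$ is big, so $\mathcal{a}(X)=1$ if and only if $\BFS(X)\setminus\AFS(X)=\emptyset$, i.e.\ $\AFS(X)=\BFS(X)$.

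There is no real obstacle here — the only thing to be careful about is invoking $\sum_{\BFS(X)}\alpha(E)=1$ rather than the a priori weaker $\sum_{\FSupp(X)}\alpha(E)=1$, and remembering that $\alpha$ vanishes exactly on the non-big classes so that it never interferes with the two sums above. Both of these are supplied verbatim by \autoref{cor.multiplicityOfDirectSummands}.
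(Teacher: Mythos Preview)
Your proof is correct and matches the paper's (implicit) approach: the paper simply writes ``We readily obtain the following'' before the proposition and gives no proof, leaving exactly the bookkeeping you supply from \autoref{cor.multiplicityOfDirectSummands}. Your decomposition $\mathcal{a}(X)=1-\sum_{\BFS(X)\setminus\AFS(X)}\alpha(E)$ is precisely the intended reasoning.
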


Our next task is to give more geometric meaning to these extremal values for the ample $F$-signature. For its positivity, we use \autoref{cor.ExistenceOfAmpleClassesInFrobSupport} to obtain the following result directly.

\begin{corollary}[Positivity of the ample $F$-signature] \label{cor.PositivityAmpleFSignature}
    Let $X$ be a $\bQ$-factorial toric variety. Then, its ample $F$-signature $\mathcal{a}(X)$ is positive if and only if there is a toric log Fano pair $(X,\Delta)$ of class index $1$.
\end{corollary}

We now focus on the maximality of the ample $F$-signature and its relationship with homogeneity. One direction is clear: homogeneous spaces have ample $F$-signature equal to $1$. In particular, ample $F$-signature equal to $1$ does not imply that $\sE_e$ is ample for all $e>0$. This is in contrast to what happens in the local case with the $F$-signature, where a local ring $R$ having $F$-signature equal to $1$ is equivalent to $F^e_* R $ being free for all $e>0$ \cite[Corollary 16]{HunekeLeuschkeTwoTheoremsAboutMaximal}. Thus, one may wonder why the local argument for the $F$-signature does not work in our global, projective setup. Let us try to adapt it to our case and see what it yields. In what follows, we use the notation introduced in \autoref{rem.AsymptoticBehavior} and assume that $X$ is smooth. 

Let us start by writing
\[
\sE_{X,e+e'} =  \sE_{X,e} \oplus \sF_{e,e'} = \sE_{X,e} \oplus \bigoplus_{[E] \in \FSupp(X)} \sE_{X,e}[E]^{\oplus m(E;q')},
\]
where $\sE_{X,e}[E]\coloneqq F^e_*\sO_X(E+(1-q)K_X)$. Then
\begin{equation} \label{eqn.Decomposition}
\frac{\ark \sE_{X,e+e'}}{q^dq'^d} = \frac{\ark \sE_{X,e}}{q^dq'^ d} + \sum_{[E] \in \FSupp(X)} \frac{m(E;q')}{q'^d} \frac{ \ark \sE_{X,e}[E]}{q^d}.
\end{equation}
Taking the limit $e' \rightarrow \infty$ yields
\[
\mathcal{a}(X) = \sum_{[E] \in \BFS(X)} \alpha(E) \frac{ \ark \sE_{X,e}[E]}{q^d}.
\]
This implies that $\mathcal{a}(X)=1$ if and only if for some/all $e$ it follows that $\sE_{X,e}[E]$ is ample for all $[E] \in \BFS(X)$.

On the other hand, taking the limit $e \rightarrow \infty$ in \autoref{eqn.Decomposition} yields
\[
\mathcal{a}(X) = \frac{\mathcal{a}(X)}{q'^d} + \sum_{[E] \in \FSupp(X)} \frac{m(E;q')}{q'^d} \mathcal{a}_E(X)
\]
where 
\[
\mathcal{a}_E(X) \coloneqq \lim_{e \rightarrow \infty} \frac{ \ark \sE_{X,e}[E]}{q^d},
\]
assuming that these limits exist. They do in fact exist by an argument similar to that showing the existence of $\mathcal{a}(X)$. Alternatively, one may use $\limsup$ or $\liminf$ instead. Anyhow, the conlusion we want to drop from this is that $\mathcal{a}(X)=1$ if and only if $\mathcal{a}_E(X)=1$ for all $[E] \in \FSupp(X)$.\footnote{Where we are claiming that $\mathcal{a}_E(X)$ exist and it is equal to $1$.} Let us summarize everything as follows.

\begin{proposition}\label{prop.NaiveInterpretationOfMaximalAmpleFsIGNATURE}
    Let $X$ be a smooth toric variety. The following statements are equivalent:
    \begin{enumerate}
        \item $\mathcal{a}(X)=1$.
        \item There is $e>0$ such that $\sE_{X,e}[E]$ is ample for all $[E] \in \BFS(X)$.
        \item $\sE_{X,e}[E]$ is ample for all $e>0$ and all $[E] \in \BFS(X)$.
        \item $\mathcal{a}_E(X)=1$ for all $E \in \FSupp(X)$.
    \end{enumerate}
\end{proposition}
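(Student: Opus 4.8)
The plan is to establish the cycle of implications
$(c)\Rightarrow(b)\Rightarrow(a)\Rightarrow(d)\Rightarrow(c)$, following the ``local-to-global'' argument sketched in the discussion immediately preceding the statement, and then filling in the two points that were only asserted there: the existence of the limits $\mathcal{a}_E(X)$ and the equivalence of $(a)$ with $(d)$.

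\begin{proof}
We first record that for each $[E]\in\FSupp(X)$ the limit defining $\mathcal{a}_E(X)$ exists. Indeed, by \autoref{rem.AsymptoticBehavior} we have $\sE_{X,e}[E] = \bigoplus_{[D]\in\FSupp(X)}\sO_X(D)^{\oplus m_E(D;q)}$, and the same lattice-point counting argument proving \autoref{cor.multiplicityOfDirectSummands} (applied to the half-open polytope $\sQ_X$ translated by $-\llbracket E\rrbracket$ and intersected with the relevant affine subspace) shows $m_E(D;q) = \alpha_E(D)q^d + O(q^{d-1})$ for some $\alpha_E(D)\in[0,1]\cap\bQ$. Hence
\[
\frac{\ark\sE_{X,e}[E]}{q^d} = \sum_{[D]\in\AFS(X)}\frac{m_E(D;q)}{q^d}\xrightarrow{e\to\infty}\sum_{[D]\in\AFS(X)}\alpha_E(D)=:\mathcal{a}_E(X),
\]
so the limit exists and lies in $[0,1]\cap\bQ$; the same computation gives $\mathcal{a}_E(X)=1$ iff $\alpha_E(D)=0$ for all non-ample big $[D]\in\FSupp(X)$, i.e. iff $\sE_{X,e}[E]$ is ample for all (equivalently some) $e>0$.

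Now for the main equivalences. The implication $(c)\Rightarrow(b)$ is trivial. For $(b)\Rightarrow(a)$ and $(a)\Rightarrow(c)$, start from the splitting
\[
\sE_{X,e+e'} = \sE_{X,e}\oplus\bigoplus_{[E]\in\FSupp(X)}\sE_{X,e}[E]^{\oplus m(E;q')}
\]
of \autoref{rem.AsymptoticBehavior}, which upon dividing by $q^dq'^d$ gives \autoref{eqn.Decomposition}. Taking $e'\to\infty$ in \autoref{eqn.Decomposition}, using $a_{e'}(X)/q'^d\to\mathcal{a}(X)$, $\ark\sE_{X,e}/q'^d\to 0$ and $m(E;q')/q'^d\to\alpha(E)$ (with $\alpha(E)>0$ only for $[E]\in\BFS(X)$ by \autoref{cor.multiplicityOfDirectSummands}), yields
\[
\mathcal{a}(X) = \sum_{[E]\in\BFS(X)}\alpha(E)\,\frac{\ark\sE_{X,e}[E]}{q^d}.
\]
Since $\ark\sE_{X,e}[E]/q^d\le (q^d-1)/q^d<1$ and $\sum_{[E]\in\BFS(X)}\alpha(E)=1$ by the last part of \autoref{cor.multiplicityOfDirectSummands}, this forces: $\mathcal{a}(X)=1$ iff $\ark\sE_{X,e}[E]=q^d-1=\rk\sE_{X,e}[E]$ for every $[E]\in\BFS(X)$, i.e. iff $\sE_{X,e}[E]$ is ample for all $[E]\in\BFS(X)$. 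As this holds for this fixed but arbitrary $e$, we get $(a)\Leftrightarrow(b)\Leftrightarrow(c)$ at once.

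It remains to fold in $(d)$. First, $(a)\Rightarrow(d)$: taking $e\to\infty$ in \autoref{eqn.Decomposition} instead gives
\[
\mathcal{a}(X) = \frac{\mathcal{a}(X)}{q'^d} + \sum_{[E]\in\FSupp(X)}\frac{m(E;q')}{q'^d}\,\mathcal{a}_E(X),
\]
and rearranging, $(1-q'^{-d})\mathcal{a}(X) = \sum_{[E]}(m(E;q')/q'^d)\mathcal{a}_E(X)$; since $\sum_{[E]\in\FSupp(X)}m(E;q') = q'^d-1$ (from $\sum_{[D]}m(D;q')=q'^d-1$, as all such $[D]$ lie in $\FSupp(X)$), the right side is a weighted average of the $\mathcal{a}_E(X)\in[0,1]$ with weights summing to $1-q'^{-d}$, the same total weight as on the left. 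Thus $\mathcal{a}(X)=1$ forces $\mathcal{a}_E(X)=1$ for every $[E]$ with $m(E;q')\neq 0$; letting $e'\to\infty$ exhausts all of $\FSupp(X)$, giving $(d)$. Conversely $(d)\Rightarrow(c)$: if $\mathcal{a}_E(X)=1$ for all $[E]\in\FSupp(X)$ then in particular $\mathcal{a}_0(X)=\mathcal{a}(X)=1$ (the $E=0$ term recovers $\mathcal{a}(X)$ since $\sE_{X,e}[0]=\sE_{X,e}$), and we are back in case $(a)$, hence $(c)$. This closes the cycle.
\end{proof}

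The step I expect to require the most care is the existence and rationality of the auxiliary limits $\mathcal{a}_E(X)$ together with the bookkeeping identity $\sum_{[E]\in\FSupp(X)}m(E;q') = q'^d-1$; both are ``morally obvious'' from \autoref{cor.multiplicityOfDirectSummands} and \autoref{rem.AsymptoticBehavior}, but one must be a little careful that every direct summand appearing in $\sE_{X,e}[E]$ genuinely has its class in $\FSupp(X)$ (this is exactly the content of the displayed computation in \autoref{rem.AsymptoticBehavior}, namely $m_E(D;q)=0$ for $[D]\notin\FSupp(X)$), so that the averaging arguments have the clean total weights claimed above.
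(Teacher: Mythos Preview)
Your approach is essentially the paper's own --- use the splitting in \autoref{rem.AsymptoticBehavior}, derive \autoref{eqn.Decomposition}, and take limits in each variable separately --- so the strategy is fine. There are, however, two slips that break the argument as written.

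First, the rank of $\sE_{X,e}[E]$ for $[E]\in\BFS(X)$ is $q^d$, not $q^d-1$: by definition $\sE_{X,e}[E]=F^e_*\sO_X(E+(1-q)K_X)$ is a full Frobenius pushforward of a line bundle, so its rank is $q^d$. (Only $\sE_{X,e}$ itself, which is a \emph{kernel}, has rank $q^d-1$.) Your inequality $\ark\sE_{X,e}[E]/q^d\le (q^d-1)/q^d<1$ would then force $\mathcal{a}(X)<1$ unconditionally, so your ``iff'' collapses. The fix is trivial: use $\ark\sE_{X,e}[E]/q^d\le 1$ with equality iff $\sE_{X,e}[E]$ is ample, and then the weighted-average argument with $\sum_{[E]\in\BFS(X)}\alpha(E)=1$ genuinely gives $(a)\Leftrightarrow(b)\Leftrightarrow(c)$.

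Second, your $(d)\Rightarrow(c)$ step appeals to ``$\mathcal{a}_0(X)=\mathcal{a}(X)$ since $\sE_{X,e}[0]=\sE_{X,e}$'', but $0\notin\FSupp(X)$ by definition (see \autoref{cor.SupportOfE_X}), so $(d)$ says nothing about $E=0$; and in any case $\sE_{X,e}[0]=F^e_*\omega_X^{1-q}=\sE_{X,e}\oplus\sO_X\ne\sE_{X,e}$. The clean way to close the cycle is to observe that the identity you already derived,
\[
\mathcal{a}(X)\,(q'^d-1)=\sum_{[E]\in\FSupp(X)}m(E;q')\,\mathcal{a}_E(X),
\]
is a weighted average in \emph{both} directions: if every $\mathcal{a}_E(X)=1$ then the right side equals $\sum m(E;q')=q'^d-1$, hence $\mathcal{a}(X)=1$. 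So $(d)\Rightarrow(a)$ follows from the same equation you used for $(a)\Rightarrow(d)$, with no need to invoke $E=0$.
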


\begin{remark}
Taking the limit $e,e' \rightarrow \infty$ in \autoref{eqn.Decomposition} yields
\[
\mathcal{a}(X) =  \sum_{[E] \in \BFS(X)} \alpha(E) \mathcal{a}_E(X).
\]    
\end{remark}

\begin{example} \label{example.Sharpness} The following shows that the quantifier on $E$ in (c) of \autoref{prop.NaiveInterpretationOfMaximalAmpleFsIGNATURE} cannot be changed to \emph{for all $[E] \in \FSupp(X)$}, which means that $\sE_{X,e
}$ is ample. Let $X = \bP^1 \times \bP^ 1$. Then \[
\sE_{X,e} = \sO(1,0)^{\oplus(q-1)}\oplus \sO(0,1)^{\oplus(q-1)} \oplus \sO(1,1)^{\oplus(q-1)^2}.\]
In particular, $\FSupp(X)=\{(1,0),(0,1),(1,1)\}$. Moreover,
\[
\sE_{X,e}(1,0) = \sO(1,0)^{\oplus q } \oplus \sO(1,1)^{\oplus q(q-1)}, \quad  \sE_{X,e}(1,0) = \sO(0,1)^{\oplus q } \oplus \sO(1,1)^{\oplus q(q-1)},
\]
and
\[
\sE_{X,e}(1,1) = \sO(1,1)^{\oplus(q-1)^2}.
\]
So $\sE_{X,e}(1,1)$ is ample while $\sE_{X,e}(1,0)$ and $\sE_{X,e}(0,1)$ are not, which shows that \autoref{prop.NaiveInterpretationOfMaximalAmpleFsIGNATURE} is sharp. However, $\sE_{X,e}(1,0)$ and $\sE_{X,e}(0,1)$ are nef.
\end{example}

Our closing theorem is the following.

\begin{theorem} [Maximality of the ample $F$-signature]\label{thm.CharacterizationHomogenousSpaces}
    Let $X$ be a $\bQ$-factorial toric variety. Then, $\mathcal{a}(X)=1$ if and only if $\eff(X) = \nef(X)$. In particular, for $X$ smooth, $\mathcal{a}(X)=1$ if and only $X$ is homogeneous. 
\end{theorem}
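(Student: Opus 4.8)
The plan is to prove the numerical equivalence $\mathcal{a}(X)=1 \iff \eff(X)=\nef(X)$ first, and then read off the smooth homogeneous case from the list of equivalences recalled in \autoref{sec.Intro} (Fujino--Sato, Arzhantsev--Gaifullin) together with the characteristic-free \autoref{cor.NefnessChainSmoothBlowyups}. By \autoref{prop.ExtremalValuesAmpleSignature}(b), $\mathcal{a}(X)=1$ is equivalent to $\AFS(X)=\BFS(X)$, i.e.\ every big class in the Frobenius support is ample. So the real content is: \emph{every big class in $\FSupp(X)$ is ample $\iff$ $\eff(X)=\nef(X)$.}

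For the easy direction ($\Leftarrow$): if $\eff(X)=\nef(X)$ then $\bigc(X)=\eff(X)^{\circ}=\nef(X)^{\circ}=\ample(X)$, so every big divisor is ample, and in particular $\BFS(X)=\bigc(X)\cap\FSupp(X)=\ample(X)\cap\FSupp(X)=\AFS(X)$; hence $\mathcal{a}(X)=1$. For the hard direction ($\Rightarrow$), I would argue by contrapositive: suppose $\nef(X)\subsetneq\eff(X)$, so $X$ admits a birational extremal contraction, i.e.\ there is an extremal primitive relation $R$ with $F_R$ a facet of $\nef(X)$ not lying on $\partial\eff(X)$. I need to produce a big class in $\FSupp(X)$ that is not ample, equivalently a big $\llbracket E\rrbracket\in\FSupp(X)$ with $E\cdot C\le 0$ for some curve class $C$ in $\moriCone(X)$. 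The natural candidate for $C$ is $R$ itself (or a curve class spanning a birational extremal ray), and the tools to manufacture $E$ are exactly \autoref{prop.NefnessSmallContractions}(b),(c) in the small case and \autoref{cor.FSIntersectInternalFacetMovingCone} (together with \autoref{MainProposition}, \autoref{prop.FrobSuppInertDivCont}) in the divisorial case. Concretely: if the birational contraction attached to $R$ is \emph{small}, \autoref{cor.KetCorollary} (or \autoref{prop.NefnessSmallContractions}) already yields $\llbracket E\rrbracket\in\BFS(X)$ with $E\cdot R<0$, so $E$ is big and not nef, hence not ample. If it is \emph{divisorial}, \autoref{cor.FSIntersectInternalFacetMovingCone} gives $\llbracket E\rrbracket\in\BFS(X)$ with $E\cdot R=0$; since $R$ generates an extremal ray of $\moriCone(X)$ that is not in $\partial\eff(X)$ (equivalently $F_R$ is an internal facet of $\nef(X)$), having $E\cdot R=0$ places $\llbracket E\rrbracket$ on the boundary of $\nef(X)$, so $E$ is big but not ample. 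Either way $\BFS(X)\ne\AFS(X)$, contradicting $\mathcal{a}(X)=1$; this proves the first assertion.

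For the smooth case: if $X$ is smooth and $\mathcal{a}(X)=1$ then by the above $\eff(X)=\nef(X)$, and by the equivalences recalled after Theorem B in \autoref{sec.Intro} (items (a)--(e), citing \cite{FujinoSatoToricVarietiesWhoseNefBundlesAreBig,ArzhantsevGaiffulinHomogeneousToricVarieties}) this is equivalent to $X$ being a product of projective spaces, hence a homogeneous space. Conversely, if $X$ is a smooth homogeneous toric variety it is a product of projective spaces, so $\eff(X)=\nef(X)$ and $\mathcal{a}(X)=1$ by the easy direction. I would also remark that $\mathcal{a}(X)=1$ need not force $\sE_{X,e}$ to be ample (see \autoref{example.Sharpness}), which is why this statement is genuinely weaker than, and complementary to, \autoref{thmNefnessOfE} and \autoref{cor.FrobeniusAndMori}.

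The main obstacle I anticipate is the $\Rightarrow$ direction in the divisorial case: one must be sure that the big class $E$ produced by \autoref{cor.FSIntersectInternalFacetMovingCone} with $E\cdot R=0$ genuinely fails to be ample, which requires knowing that $R$ spans an extremal ray of $\moriCone(X)$ contained in the interior of $\eff(X)^{\vee}$-side — precisely the hypothesis that $\nef(X)\ne\eff(X)$ forces the existence of \emph{some} such birational extremal ray. This is where one invokes the secondary fan picture from \autoref{subsection.ToricExtremalContractions}: if $\eff(X)\ne\nef(X)$ there is a chamber of $\Sigma_{\mathrm{GKZ}}$ adjacent to $\nef(X)$ across an internal wall, hence a divisorial-or-flipping extremal ray, and the construction above applies to it. Once that combinatorial point is nailed down, everything else is bookkeeping with the results already in the paper.
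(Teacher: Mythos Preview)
Your argument is correct in the divisorial case but has a genuine gap in the small case. You write that ``\autoref{cor.KetCorollary} (or \autoref{prop.NefnessSmallContractions}) already yields $\llbracket E\rrbracket\in\BFS(X)$ with $E\cdot R<0$,'' but this is not what those results say. \autoref{prop.NefnessSmallContractions} only produces $E_i\in\FSupp(X)$ with $E_i\cdot R=-b_i<0$; the upgrade to $E_i\in\BFS(X)$ in parts (a), (b), (c) requires auxiliary hypotheses (some $\pi_i$ big, or a particular set of $\pi_i$'s spanning a facet of $\eff(X)$) that you have not verified. Likewise \autoref{cor.KetCorollary} assumes $\dim\langle\pi_{k+1},\ldots,\pi_l\rangle_{\bR}\ge\rho-1$, which is not automatic. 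So as written, when the only birational extremal ray of $X$ is small, you have not produced a big non-ample class in $\FSupp(X)$.

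The paper's route is genuinely different and circumvents this. It first applies \autoref{cor.FSIntersectInternalFacetMovingCone} not just to $X$ but to all small $\bQ$-factorial modifications $X_i$ (using that the Frobenius support is shared, \autoref{cor.FrobeniusSupport}): any divisorial contraction of any $X_i$ would yield a big class on a facet of $\mov^1(X)$, hence outside $\ample(X)\subset\mov^1(X)^\circ$. This forces $\mov^1(X)=\eff(X)$, so in particular every $\pi_j$ moves. That consequence is then used to handle the small rays: for a non-big $\varepsilon\in\FSupp(X)$ lying on a facet $F$ of $\eff(X)$ and a birational extremal $R$, one finds $\pi=\pi_j\notin F$ with $\pi\cdot R\le0$, and the fact that $\pi$ moves (i.e.\ $\pi=\sum_{i\ne j}a_i\pi_i$) is exactly what lets one show $\varepsilon+\pi\in\BFS(X)$, hence ample, giving $\varepsilon\cdot R=(\varepsilon+\pi)\cdot R-\pi\cdot R>0$. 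Your contrapositive splitting misses this two-step mechanism; the small case cannot be dispatched by a direct appeal to the cited propositions without first establishing that effective classes move.
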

\begin{proof}
    If $\eff(X) = \nef(X)$ then big divisors are ample and so $\mathcal{a}(X) = 1$. Conversely, suppose that $\mathcal{a}(X)=1$, that is, $\BFS(X) \subset \ample(X)$. As a direct application of \autoref{cor.FSIntersectInternalFacetMovingCone}, we conclude that the facets of the cone of moving divisors correspond to Mori fibrations (on the toric small $\bQ$-factorial modifications of $X$). This readily implies that the cone of moving divisors coincides with $\eff(X)$---consider the statement on the dual cones. In other words, we see that all effective divisors move. 

    In this way, all it remains to prove is that $\FSupp(X)$ is nef. To do so, it suffices to prove that $E \cdot R \geq 0$ for all $\varepsilon \coloneqq \llbracket E \rrbracket \in \FSupp(X) \cap \partial \eff(X)$ and all non-effective extremal primitive relations $R$. This is achieved in a couple of steps. Let $F$ be the facet of $\eff(X)$ where $\varepsilon$ sits.

    \begin{claim}
    There is $j=1,\ldots,r$ such that $\pi \coloneqq \pi_j$ satisfies $\pi \notin F$ and $\pi \cdot R \leq 0$.
    \end{claim}
    \begin{proof}[Proof of claim]
        Suppose, for the sake of contraction, that this is not true. That is, for all $\pi_j \neq F$ we have $\pi_j \cdot R > 0$. That is to say that if $R$ is of the form 
        \[
        R\: b_1u_1 + \dots + b_ku_k - b_{k+1}u_{k+1} - \dots - b_lu_l = 0
        \]
        (with $l\geq k+1$ by assumption) then $F$ must be cut out by an effective relation among the $u_1,
        \ldots,u_k$, say
        \[
        R' \: b'_1 u_1 + \cdots + b'_{k'} u_{k'} = 0.
        \] 
        
        If $k=k'$ then the primitive relation defined by the primitive collection ${u_1,
         \ldots,u_k}$ is $R'$, violating $l \geq k+1$. Hence, $k'<k$ in which case $\sigma \coloneqq \langle u_{s_1}, \dots, u_{s_m}\rangle_{\R \geq 0}$ is a cone in $\Sigma_X$ as $\{u_1,\dots, u_k\}$ is a primitive collection. However, the relation $R'$ would then contradict the strong convexity of $\sigma$. This proves the claim.
    \end{proof}

\begin{claim}
    $\varepsilon + \pi \in \BFS(X)$
\end{claim}
\begin{proof}[Proof of claim]
    Since $\pi$ moves, we can write is as $\pi = \sum_{i \neq j} a_i \pi_i$ for some $a_i \in \bR_{\geq 0}$; see \autoref{prop.MovingConeToricVarieties}. Write $\varepsilon$ as $\sum_{i=1}^r c_i \pi_i$ for some $c_1,\ldots,c_r \in [0,1)$. Then note that
    \[
    \varepsilon + \pi = \varepsilon + (1-1/n)\pi + \epsilon\pi = \sum_{i\neq j}(c_i +  a_i/n)\pi_i + (1-\epsilon)\pi_j.
\]
Taking $n \gg 0$ such that $c_i + a_i/n < 1$ for all $i\neq j$, we conclude that $\varepsilon+ \pi \in \FSupp(X)$. Moreover, $\varepsilon+ \pi$ is big as otherwise $\pi \in F$; which contradicts the construction of $\pi$.
\end{proof}

   With the above two claims, we are ready to conclude as follows:
    \[
    E \cdot R = (\varepsilon + \pi)\cdot R - \pi \cdot R > 0
\]
    where the inequality holds as $\varepsilon + \pi$ is then ample and $\pi \cdot R \leq 0$ as $j \notin \{1,\dots, k\}$. 
\end{proof}

\subsection{Nef $F$-signature and further problems}
In the previous discussion, there was nothing special about ampleness over nefness. We could have written nef instead of ample, \emph{nef rank} instead of ample rank, $\mathcal{n}$ instead of $\mathcal{a}$, etc., and the same arguments are valid. In particular, let us define the \emph{nef rank $\nrk \sE$} of a split locally free sheaf $\sE$ analogously to the ample rank---counting nef invertible summands. Then, we define the \emph{nef $F$-signature} of $X$ as
\[
\mathcal{n} (X)\coloneqq \lim_{e \rightarrow \infty} \frac{\nrk \sE_{X,e}}{q^d} = \sum_{[E] \in  \NFS(X)} \alpha(E) \in [0,1] \cap \bQ.
\]
Notice that the sum traverses all the big and nef divisors in the Frobenius support of $X$. 

Then \autoref{prop.NaiveInterpretationOfMaximalAmpleFsIGNATURE} holds \emph{verbatim} by replacing ``ample'' with ``nef'' and ``$\mathcal{a}$'' with ``$\mathcal{n}$.'' Similarly, in analogy to \autoref{prop.ExtremalValuesAmpleSignature}, we have the following.
\begin{proposition} \label{prop.ExtremalValuesAmpleSignature}
    Let $X$ be a $\bQ$-factorial toric variety. Then,
\begin{enumerate}
        \item $\mathcal{n}(X)=0$ if and only if $\NFS(X) \cap \BFS(X) = \emptyset$. That is, $\mathcal{n}(X)>0$ if and only if $X$ admits a KLT toric log pair $(X,\Delta)$ of index $1$ such that $-(K_X+\Delta)$ is big and nef (aka weakly log Fano pair).
        \item  $\mathcal{n}(X)=1$ if and only if $\BFS(X) \subset \nef(X)$.
    \end{enumerate}
\end{proposition}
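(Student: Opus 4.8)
The plan is to reduce everything to the two properties of $\alpha(-)$ recorded in \autoref{cor.multiplicityOfDirectSummands}: that $\alpha(E)>0$ exactly when $E$ is big, and that $\sum_{\llbracket E\rrbracket\in\BFS(X)}\alpha(E)=1$. First I would note that in the defining expression $\mathcal{n}(X)=\sum_{[E]\in\NFS(X)}\alpha(E)$ the summands with $E$ not big vanish, so that in fact
\[
\mathcal{n}(X)=\sum_{\llbracket E\rrbracket\in\NFS(X)\cap\BFS(X)}\alpha(E),
\]
a sum of strictly positive rationals indexed by the finite set $\NFS(X)\cap\BFS(X)$.

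For part (1), this presentation makes the first equivalence immediate: a sum of positive numbers is zero if and only if the index set is empty, so $\mathcal{n}(X)=0\iff\NFS(X)\cap\BFS(X)=\emptyset$, equivalently $\mathcal{n}(X)>0\iff\NFS(X)\cap\BFS(X)\neq\emptyset$. For the reformulation in terms of log pairs I would invoke the dictionary of \autoref{subsubse.LogStructures}, namely \autoref{pro.KLTnessToriclOGpAIRS} together with the discussion preceding \autoref{cor.ExistenceOfAmpleClassesInFrobSupport}: the big classes in $\FSupp(X)$ are precisely the classes of $-(K_X+\Delta)$ for KLT toric log pairs $(X,\Delta)$ of class index $1$ (with $\lfloor\Delta\rfloor=0$). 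Since bigness and nefness are numerical, such a class lies in $\NFS(X)\cap\BFS(X)$ if and only if $-(K_X+\Delta)$ is big and nef, i.e. $(X,\Delta)$ is weakly log Fano; hence $\mathcal{n}(X)>0$ is equivalent to the existence of such a pair of class index $1$.

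For part (2), I would combine $\sum_{\llbracket E\rrbracket\in\BFS(X)}\alpha(E)=1$ with the inclusion $\NFS(X)\cap\BFS(X)\subseteq\BFS(X)$ to get $\mathcal{n}(X)\le 1$, with equality if and only if the two index sets coincide, that is $\BFS(X)\subseteq\NFS(X)$ (here one uses again that $\alpha(E)>0$ for every $\llbracket E\rrbracket\in\BFS(X)$, so no class can be dropped without strictly decreasing the sum). Since $\BFS(X)\subseteq\FSupp(X)$ automatically, the containment $\BFS(X)\subseteq\NFS(X)=\nef(X)\cap\FSupp(X)$ is equivalent to $\BFS(X)\subseteq\nef(X)$, which is the assertion.

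The whole argument is essentially bookkeeping around \autoref{cor.multiplicityOfDirectSummands}; the only points needing a little care are that the ``nef but not big'' classes of $\FSupp(X)$ genuinely contribute nothing to $\mathcal{n}(X)$, and that the passage to the log‑pair language is faithful in both directions (every weakly log Fano structure of class index $1$ with $\lfloor\Delta\rfloor=0$ yields an element of $\NFS(X)\cap\BFS(X)$ and conversely), which is exactly what the cited results provide. I do not anticipate a genuine obstacle here.
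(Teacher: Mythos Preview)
Your proof is correct and matches the paper's approach exactly: the paper states this proposition without proof, saying only that it holds ``similarly, in analogy to'' the ample version, which itself is said to follow ``readily'' from the definitions and \autoref{cor.multiplicityOfDirectSummands}. You have simply written out the bookkeeping the paper leaves implicit—that non-big classes contribute zero to the sum, that the remaining sum over $\NFS(X)\cap\BFS(X)$ consists of strictly positive terms summing to at most $1$, and that the log-pair reformulation is the dictionary of \autoref{subsubse.LogStructures}.
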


Of course, $\mathcal{n}(X) \geq \mathcal{a}(X)$. However, one disadvantage that $\mathcal{a}$ might have over $\mathcal{n}$ is that it vanishes way more often, in which case it is not as useful as a measurement tool. See, for example, what happens for del Pezzo surfaces. In fact, based on empirical evidence, it becomes much rarer for $\mathcal{a}(X)$ to be nonzero as $\dim X$ increases. However, it may happen that $\mathcal{n}(X)=0$ even for surfaces, as the following example shows. The authors thank Fabio~Bernasconi for kindly suggesting this example to us.

\begin{example}[Toric surface with zero nef signature]
    Let $Y$ be the smooth toric surface given by blowing up $\P^2$ along the three torus-invariant points. Blow up two torus-invariant points of $Y$ to obtain the toric variety $X$ such that the rays of $\Sigma_X$ are genereted by the vectors $(1,0),(1,1),(0,1),(-1,0),(-1,-1),(0,-1),(1,-1)$ and the maximal dimensional cones are the obvious ones. Using the computer algebra software Macaulay2 \cite{M2}, we obtain $\mathcal{n}(X) = 0$.  
\end{example}

However, all the examples we have looked at support affirmative answers for the following.

\begin{question}\label{conjecture.OnMaximalityOfNefFSignature}
    Let $X$ be a $\bQ$-factorial toric variety. Do the following statements hold?
\begin{enumerate}
    \item If $X$ is Fano then $\mathcal{n}(X)>0$.  
    \item  $\mathcal{n}(X)=1$ only if $\sE_{X,e}$ is nef for all $e>0$ (i.e., $X$ is a birationally inert Fano variety).
\end{enumerate}
\end{question}

We close this article with a problem regarding the behavior of ample $F$-signatures under inert divisorial contractions. With notation as (c) in \autoref{MainProposition}, observe that

\[
\mathcal{a}(S) = \sum_{ \delta \in \AFS(S)} \alpha(\delta)   = \sum_{ \substack{\delta \in \AFS(S) \\ j \in J_{\delta}}} \alpha(-j\pi_{k+1} + \phi^* \delta)   \geq  \sum_{ \substack{\delta \in \AFS(S) \\ 0 \neq j \in J_{\delta}}} \alpha(-j\pi_{k+1} + \phi^* \delta) = \mathcal{a}(X).
\]
Moreover, $\mathcal{a}(S) > \mathcal{a}(X)$ if and only if there is $\delta \in \AFS(S)$ with $i_{\delta} = 0$. All examples seem to indicate that this is the case as long as $\mathcal{a}(X)>0$. Recall that we know how to find $\delta \in \BFS(X)$ with $i_{\delta} = 0$; see \autoref{cor.FSIntersectInternalFacetMovingCone}. The challenge is to find such $\delta$ in $\AFS(X)$ whenever $\mathcal{a}(X)>0$.

\bibliographystyle{skalpha}
\bibliography{MainBib}

\end{document}